\DeclareMathAlphabet{\mathbbold}{U}{bbold}{m}{n}
\def\th@plain{%
	\thm@notefont{}
	\itshape 
}
\def\th@definition{%
	\thm@notefont{}
	\normalfont 
}
\theoremstyle{plain}
        \newtheorem{theorem}{Theorem}[section]
        \newtheorem*{theorem*}{Theorem}
        \newtheorem{thmx}{Theorem}
        \newtheorem{thml}{Theorem}
        \newtheorem{lemma}[theorem]{Lemma}
        \newtheorem{prop}[theorem]{Proposition}
        \newtheorem{cor}[theorem]{Corollary}
\theoremstyle{definition}
        \newtheorem{definition}[theorem]{Definition}
        \newtheorem{rem}[theorem]{Remark}
         \newtheorem{ex}[theorem]{Example}
\theoremstyle{remark}
        \newtheorem*{remark}{Remark}
        \newtheorem*{notation}{Notation}
        \newtheorem{question}{Question}
\numberwithin{equation}{section}
\numberwithin{theorem}{section}
\numberwithin{table}{section}
\numberwithin{figure}{section}
\providecommand{\defn}[1]{\emph{#1}}
\renewcommand{\le}{\leqslant}
\renewcommand{\leq}{\leqslant}
\renewcommand{\ge}{\geqslant}
\renewcommand{\geq}{\geqslant}
\newcommand{\diam}  {\operatorname{diam}}
\newcommand{\id} {\operatorname{id}}
\newcommand{\card} {\operatorname{card}}
\newcommand{\supp}{\operatorname{supp}}
\newcommand{\Per}{\mathrm{Per}}
\newcommand{\ve}{\varepsilon}
\newcommand{\R}{\mathbb{R}}
\newcommand{\N}{\mathbb{N}}      
\newcommand{\Z}{\mathbb{Z}}      
\providecommand{\abs}[1]{\lvert#1\rvert}
\providecommand{\Absbig}[1]{\bigl\lvert#1\bigr\rvert}
\providecommand{\norm}[1]{\|#1\|}
\providecommand{\Normbig}[1]{\bigl\|#1\bigr\|}
\renewcommand{\:}{\colon}
\newcommand{\LIP}{\operatorname{LIP}}
\newcommand{\Lip}{\operatorname{Lip}}
\newcommand{\CCC}{C}
\newcommand{\PPP}{\mathcal{P}}
\newcommand{\MMM}{\mathcal{M}}
\newcommand{\Holder}[1] {\CCC^{0,#1}}
\newcommand{\Hseminorm}[2] {\abs{#2}_{#1}}
\newcommand{\Hnorm}[2] {\norm{#2}_{#1}}
\newcommand{\Hnormbig}[2] {\Normbig{#2}_{#1}}
\newcommand{\RR}{\mathcal{L}}
\newcommand{\wt}[1]{\widetilde{#1}}
\renewcommand{\=}{\coloneqq}
\renewcommand{\O}{\mathcal{O}}
\newcommand{\Mmax}{\MMM_{\operatorname{max}}}
\newcommand{\Lock}{\operatorname{Lock}}
\newcommand{\QG}{\operatorname{QG}}
\newcommand{\CQG}{\operatorname{CQG}}
\newcommand{\mpe}{Q}
\newcommand{\nbhd}{B}
\newcommand{\cA}{\mathcal{A}}
\newcommand{\cB}{\mathcal{B}}
\newcommand{\cE}{\mathcal{E}}
\newcommand{\cF}{\mathcal{F}}
\newcommand{\cG}{\mathcal{G}}
\newcommand{\cH}{\mathcal{H}}
\newcommand{\cK}{\mathcal{K}}
\newcommand{\cL}{\mathcal{L}}
\newcommand{\cM}{\mathcal{M}}
\newcommand{\cO}{\mathcal{O}}
\newcommand{\cP}{\mathcal{P}}
\newcommand{\cS}{\mathcal{S}}
\newcommand{\cT}{\mathcal{T}}
\newcommand{\cU}{\mathcal{U}}
\newcommand{\fL}{\mathfrak{L}}
\newcommand{\fS}{\mathfrak{S}}
\newcommand{\sP}{\mathscr{P}}
\newcommand{\oB}{\overline{B}}
\newcommand{\tphi}{\widetilde{\phi}}
\newcommand{\tpsi}{\widetilde{\psi}}
\newcommand{\Sim}{\mathrm{Sim}}
\newcommand{\RLS}{\mathcal{R}}
\newcommand{\Crit}{\mathrm{Crit}}
\newcommand{\emergent}{E}
\newcommand{\myepsilon}{\varepsilon}
\begin{document}
	\title[Joint typical periodic optimization]{Joint typical periodic optimization}

	\author{Zelai~Hao \and Yinying~Huang \and Oliver~Jenkinson \and Zhiqiang~Li}
	
	\address{Zelai~Hao, School of Mathematical Sciences, Peking University, Beijing 100871, China}
	\email{2100010625@stu.pku.edu.cn}
	\address{Yinying~Huang, School of Mathematical Sciences, Peking University, Beijing 100871, China}
	\email{miaoyan@stu.pku.edu.cn}
	\address{Oliver~Jenkinson, School of Mathematical Sciences, Queen Mary, University of London, Mile End Road, London E1 4NS, United Kingdom}
	\email{o.jenkinson@qmul.ac.uk}
	\address{Zhiqiang~Li, School of Mathematical Sciences \& Beijing International Center for Mathematical Research, Peking University, Beijing 100871, China}
	\email{zli@math.pku.edu.cn}

	\dedicatory{Dedicated to the memory of Gonzalo Contreras}
	
	\subjclass[2020]{Primary: 37A99; Secondary: 37A05, 37B25, 37B65, 37C20, 37C50, 37D20, 37D35, 37E05, 37A44.}
	
	\keywords{Typical periodic optimization, ergodic optimization,   maximizing measure, expanding map, beta-transformation, beta-expansion}

	\thanks{
    Z.~H., Y.~H., and Z.~L.\ were partially supported by Beijing Natural Science Foundation (JQ25001 and 1214021) and National Natural Science Foundation of China (12471083, 12101017, 12090010, and 12090015).
	}

	\begin{abstract}
    We prove a generalised Yuan--Hunt--Ma\~n\'e Conjecture:
if $\cF$ is the Banach space of $\alpha$-H\"older functions,
and $\cT$ is either a space of Lipschitz
expanding maps, or of Anosov diffeomorphisms, 
or the family of beta-transformations
on the interval, there is an open dense subset of 
 $\cT\times\cF$ consisting of map-function
 pairs
 whose maximizing invariant measure is unique and supported on a periodic orbit.
	\end{abstract}
	
	\maketitle

    \setcounter{tocdepth}{2}

\enlargethispage{\baselineskip}
    \vspace*{-\baselineskip}
    \thispagestyle{empty}

	\tableofcontents

\section*{Introduction} \label{s:mini-introduction}

In this article we prove a generalised Yuan--Hunt--Ma\~n\'e Conjecture,
in the spirit of related work
of Hunt \& Ott \cite{HO96a, HO96b}, Yang, Hunt \& Ott \cite{YHO00}, and Ma\~n\'e \cite{Man96, Man97}.

Contreras \cite{Co16} proved the Yuan--Hunt Conjecture \cite{YH99} that ground states are generically periodic,
more precisely:

\begin{theorem*}
If $X$ is a compact metric space, and $T\:X\to X$ is an open Lipschitz expanding map, then
there exists an open dense subset $U$ of the space $\Lip(X)$ of real-valued Lipschitz functions,
such that if $\phi\in U$ then the unique $(T,\phi)$-maximizing measure is supported on a periodic orbit.
\end{theorem*}

In other words, Contreras' theorem asserts that for any fixed expanding map $T$, periodic 
optimization is a \emph{typical} property of the potential function.
In this article we go further,
in a direction proposed by Yang, Hunt \& Ott \cite{YHO00},
and establish that this typicality persists 
even when $T$ and $\phi$ are varied \emph{jointly}: 
we refer to this as \emph{joint typical periodic optimization} (Joint TPO).
If $\cE(X)$ denotes the space of all open Lipschitz expanding maps on $X$, we prove:

\begin{theorem*}
If $X$ is a compact locally connected metric space, then
there exists an open dense subset $U$ of $\cE(X) \times \Lip(X)$ 
such that if $(T,\phi) \in U$ then the unique $(T,\phi)$-maximizing measure is supported on a periodic orbit.
\end{theorem*}

The above theorem applies, in particular, to the case where $\cE(X)$ is the space of all Lipschitz expanding maps on a compact Riemannian manifold $X$. For such $X$ we also prove an analogous theorem for the space of all  Anosov diffeomorphisms on $X$.
Going beyond the archetypal uniformly hyperbolic context, we consider the prototypical nonuniformly hyperbolic setting of beta-transformations
$T_\beta(x)=\beta x \pmod 1$ with $\beta>1$ on the unit interval $I$, and prove:

\begin{theorem*}
There is an open dense subset $U$ of $(1,+\infty) \times \Lip(I)$ such that if $(\beta,\phi)\in U$ then
the unique $(T_\beta,\phi)$-maximizing measure is supported on a periodic orbit.
\end{theorem*}

The proof of these results centres on the principle of 
\emph{joint perturbation}, a novel framework allowing for 
the synchronised variation of both map and potential function.
The principle is most naturally realised in the uniformly 
hyperbolic setting, but its most technically demanding 
form is in the beta-transformation case, where 
distinct parts of the argument exploit differing structural 
characteristics of simple and non-simple beta-numbers; 
it is this case that occupies the majority of the article.
The same principle, once established in these foundational 
settings, can be formulated axiomatically and applied to 
further classes of hyperbolic systems: in \cite{HHJL26} it 
is used to establish Joint TPO for Axiom~A diffeomorphisms 
of all regularity classes, hyperbolic rational maps on the 
Riemann sphere, and smooth maps in one-dimensional settings.

\section{Statement of results}   \label{s:introduction}

\subsection{Background}

An important strand in dynamical systems theory is the understanding of those properties that are in some sense typical, or robust under perturbation of the system.
Investigations along these lines include classical works of Oxtoby \& Ulam \cite{OU41}, Halmos \cite{Ha44}, and Rokhlin \cite{Rok48}, on genericity in the context of ergodicity, mixing, and weak mixing,
and the considerable literature on structural stability and generic properties of diffeomorphisms 
(see e.g.~\cite{ACW16, BC04, BCW09, BD96, BDP03, GPS94, Man82, Man88, MV93, New79, Pu67, PS00, Rob71, SW00}), and on
typical properties of one-dimensional systems (see e.g.~\cite{AMM03, GrSw97, GrSw98, Ly97, Ly98, Ly02});
for notions of typicality in dynamical systems, see 
e.g.~the survey by
Hunt \& Kaloshin \cite{HK10}.

Of particular importance, in view of physical applications to predictability and stability, 
are situations where the solution of a variational problem is typically a periodic orbit, 
and the selection of this orbit is robust under all perturbations.
One such problem is described by 
specifying a dynamical system $T\:X\to X$, and a (potential) function $\phi\:X\to\R$, 
and investigating those $T$-invariant probability measures that maximize (or minimize) the corresponding ergodic average.
First considered in the 1990s, numerical work on this \emph{ergodic optimization} problem suggested
that typically there is a simple solution: the maximizing measure seems to often be 
periodic, i.e., supported on a single periodic orbit.
More precisely, if $\phi$ is chosen from a space of suitably regular (e.g.~smooth, or H\"older continuous) functions, and $T$ is chosen from a space of suitably chaotic maps, it appears to be the case that for a typical pair $(T,\phi)$, there exists a $T$-periodic orbit $\cO=\cO_{T,\phi}$ such that the $\phi$-average 
$\frac{1}{\card\cO} \sum_{x\in\cO}\phi(x)$
is strictly larger than the $\phi$-average $\int\! \phi\, \mathrm{d}\mu$ with respect to any $T$-invariant probability measure 
$\mu$ not supported on $\cO$.

In 1996, Hunt \& Ott \cite{HO96a, HO96b} conjectured, on the basis of numerical experiments for various parametrised families of chaotic maps $T$ and smooth functions $\phi$,
that
the set of parameter values for which maximizing measures are periodic has full Lebesgue measure;
this conjecture has been confirmed in a number of specific
settings (see \cite{BZ16, DLZ24, GSZ25}\footnote{In these works, ``full Lebesgue measure'' is expressed in terms of \emph{prevalence}, see e.g.~Hunt \& Kaloshin \cite{HK10}.}). 
In 1999, Yuan \& Hunt \cite{YH99} conjectured a topological analogue, that for any uniformly hyperbolic dynamical system $T$ (specifically, an expanding map or an Axiom A diffeomorphism), the maximizing measure is periodic for an open and dense subset of Lipschitz functions $\phi$.
These two conjectures naturally raise the question of whether topological typicality 
persists when \emph{both} the dynamical system and the potential 
function are allowed to vary jointly; this question, which is the 
central concern of the present article, was raised in the 
continuous-time setting by Yang, Hunt \& Ott \cite{YHO00}.

The Yuan--Hunt Conjecture came to occupy a central role in the development of ergodic optimization, and stimulated a sustained period of work over the following years
(see \cite{Bou01, Bou08, BQ07, CLT01, Mo08, QS12}), and was eventually 
proved by Contreras \cite{Co16} for expanding maps\footnote{More precisely, Contreras' proof is for distance-expanding, Lipschitz, open mappings. Together with novel perturbation arguments, the proof of \cite{Co16} exploits prior work on the so-called Ma\~n\'e cohomology lemma, as pioneered notably by Bousch \cite{Bou00,Bou01,Bou02,Bou08,Bou11}, Contreras, Lopes \& Thieullen \cite{CLT01}, Conze \& Guivarc'h \cite{CG95}, Lopes \& Thieullen \cite{LT03}, and Savchenko \cite{Sa99}, a closing lemma of Bressaud \& Quas \cite{BQ07}, and work of Morris \cite{Mo08} on the genericity of maximizing measures with zero entropy.}, 
and by Huang, Lian, Ma, Xu \& Zhang \cite{HLMXZ25}
for the remaining uniformly hyperbolic cases\footnote{The approach of \cite{HLMXZ25} is more direct than that of \cite{Co16}, using a novel closing lemma combined with the one of Bressaud \& Quas \cite{BQ07}, but without the need for the intermediate generic zero entropy result of \cite{Mo08}; in the setting of expanding maps, this proof strategy has been summarised in the expository article of Bochi \cite{Boc19}.};
going beyond the setting of uniform hyperbolicity, Li \& Zhang \cite{LZ25} proved the analogous
result for expanding Thurston maps from complex dynamics.
Progress towards the Hunt--Ott Conjecture and the Yuan--Hunt Conjecture, and broader advances in the field of ergodic optimization (see e.g.~\cite{Boc18, Je06, Je19} for further details),
prompted the Typical Periodic Optimization (TPO) Conjecture (cf.~\cite[Section~7]{Je19}), that 
if $T$ is suitably hyperbolic, and $\cF$ is a Banach space consisting of suitably regular continuous functions, then
the set of functions with a periodic maximizing measure will contain
an open dense subset of $\cF$. 

A parallel conjecture of Ma\~n\'e (\cite[Problem III]{Man96}, \cite[p.~143]{Man97}) concerning the minimizing measures of Lagrangian flows, as introduced by Mather
\cite{Mat89, Mat91}, is that for a generic Lagrangian there is a single minimizing measure, and it is supported on a periodic orbit.
Ma\~n\'e's Conjecture has been influential in Aubry--Mather theory \cite{CFR15}, and in the study of the Hamilton--Jacobi equation
\cite{DFIZ16, FS04}, and has stimulated the cross-fertilisation of ideas with ergodic optimization
(notably, \cite{CLT01} is influenced by Ma\~n\'e's work on Aubry--Mather theory, while \cite{Co24} employs ergodic optimization techniques to tackle Ma\~n\'e's Conjecture).
In the Ma\~n\'e--Mather formulation, the Lagrangian whose action is to be optimized determines the flow itself, and this binding of potential function to dynamics has inspired workers in ergodic optimization, beginning with \cite{CLT01}, to study generic properties of measures that optimize Lyapunov exponents (see also \cite{GaSh24, JM08, MT13, ST20}).

\subsection{Joint typical periodic optimization}\label{JTPO_intro_subsection}
 
In the present article we 
undertake a programme investigating the most general notion of topological typicality in ergodic optimization, by allowing the independent variation of both dynamical system and potential function.
The conjectures of Hunt \& Ott \cite{HO96a, HO96b} concern 
\emph{measure-theoretic} typicality: that for a typical smooth 
performance function, in the sense of Lebesgue measure on 
parameter space, the maximizing orbit is periodic with low period.
The Yuan--Hunt conjecture \cite{YH99} 
concerns the complementary \emph{topological} notion: for a 
fixed uniformly hyperbolic map, the set of functions with a 
periodic maximizing measure contains an open dense subset.
Our aim is to address the natural further question of whether 
topological typicality persists when both the dynamical system 
and the potential function are varied jointly. This question 
was raised in the continuous-time setting by Yang, Hunt \& Ott 
\cite{YHO00}, whose numerical observations suggested that 
joint variation of system and performance function parameters 
can restore typical periodicity in cases where, for individual 
systems, it fails\footnote{The observation in 
\cite[p.~1951]{YHO00} concerns `enlarging the meaning of 
typicality to be with respect to both variation of system 
parameters and performance function parameters'; this is 
illustrated (see \cite[Section~IV]{YHO00}) in the context 
of the Lorenz equations, where, for standard parameter values, 
individual low-period optimality fails for many performance 
functions, yet joint variation restores it.}.
Our aim is to establish the robustness of this joint typicality 
across rather different dynamical structures, encompassing both 
the structurally stable uniformly hyperbolic systems, as well 
as the prototypical nonuniformly hyperbolic setting of 
beta-transformations.
This joint approach, already proposed in 
\cite{YHO00} and articulated in the ergodic optimization 
setting in \cite[pp.~2610--2611]{Je19}, is most natural from the perspective of 
physical applications, allowing perturbation of all laws 
governing the optimization.

Specifically, our interest is in the \emph{joint typical periodic optimization (Joint TPO)} problem,
allowing joint variation of pairs $(T,\phi)$ of map $T$ and function $\phi$
in suitable \emph{product spaces}, and seeking to establish that periodic optimization is a typical property with respect to the \emph{topology of the product space}.
More precisely,
for any Banach space 
$\cF$ consisting of suitable continuous
real-valued functions on a compact metric space $X$,
and a suitable space $\cT$ of self-maps of $X$,
we say that a pair 
$(T,\phi)\in\cT \times \cF$
has the
 \emph{periodic optimization property} if there exists some $T$-periodic orbit $\cO$,
with corresponding $T$-invariant probability measure 
\begin{equation}\label{periodic_measure_form}
\mu_{\cO} \= \frac{1}{\card \cO} \sum_{x\in \cO} \delta_x,
\end{equation}
such that
\begin{equation}\label{periodic_optimization_property}
\int\! \phi \, \mathrm{d}\mu_{\cO} 
> \int\! \phi \, \mathrm{d}\nu \quad\text{ for all }\nu\in\MMM(X,T)\smallsetminus\{\mu_{\cO}\}, 
\end{equation}
where $\MMM(X,T)$ denotes the set of $T$-invariant Borel probability measures on $X$. In other words,
if a measure $\mu\in\MMM(X,T)$ is called \emph{$(T,\phi)$-maximizing} whenever
$\int\! \phi\, \mathrm{d}\mu = \sup_{\nu\in\MMM(X,T)} \int\! \phi\, \mathrm{d}\nu \eqqcolon Q(T,\phi)$,
the periodic optimization property for $(T,\phi)$ means that there is a unique $(T,\phi)$-maximizing
measure, and it is of the form 
(\ref{periodic_measure_form}) for some $T$-periodic orbit $\cO$.

For a fixed $T\in\cT$, we say that 
$\{T\}\times\cF$
has the
\emph{typical periodic optimization (TPO)}
property if there is an open dense subset of
$\cF$ such that $(T,\phi)$ has the periodic optimization property for every element $\phi$ of the subset. Whenever the function space $\cF$ is clear from the context, we simply say that $T$ \emph{has TPO}.

We say that the pair $(\cT,\cF)$
(or indeed the space  $\cT\times\cF$, equipped with the product topology)
has the \emph{joint typical periodic optimization (Joint TPO)} property 
 if there is an open dense set of pairs $(T,\phi)$ in the product space $\cT \times \cF$ 
that have the periodic optimization property.\footnote{Although conceptually related to Ma\~n\'e's Lagrangian framework 
(where flow and Lagrangian are rigidly coupled variables), and
the Lyapunov optimization of \cite{CLT01},
the Joint TPO results in this article reveal a stronger stability phenomenon: periodic optimization, 
when achieved at a given system-potential pair in a certain open dense set,
persists under arbitrarily small \emph{independent} perturbations of both components.}
In order to distinguish between these notions,
it will sometimes be convenient to refer to TPO as
\emph{Individual TPO}.
As we explain below, typical properties in the product space $\cT \times \cF$ 
neither imply, nor are implied by, typical properties in typical
fibres $\{T\}\times \cF$, so a Joint TPO result is not a straightforward consequence of an Individual TPO result, and an Individual
TPO result is not a straightforward consequence of a Joint TPO result.\footnote{The situation is more subtle if we are content with \emph{generic} statements, 
but even here there is little connection between joint results and fibrewise results; a notable exception is Theorem~\ref{GPO_open_dense}, which is a straightforward consequence of a Joint TPO result.} 

In the initial part of this article 
(Section~\ref{sec_JTPO_distance_expanding}),
we consider joint typical periodic optimization in the foundational setting of uniformly hyperbolic systems, where Individual TPO is known to hold: 
here the members of $\cT$ are either expanding maps or Anosov diffeomorphisms\footnote{It is possible to prove Joint TPO results for more general hyperbolic dynamical systems, 
including Axiom A diffeomorphisms (cf.~\cite{Man88,Pal87,Sma67,Sma70}) and Axiom A systems in
one (real or complex) dimension (cf.~\cite{GrSw97, GrSw98, KSS07, Ly83, Ly86, Ly97, MSS83});
such proofs are beyond the scope of the present work and are presented elsewhere (see \cite{HHJL26}).}, 
and $\cF$ is 
either 
the H\"older space $\Holder{\alpha}(X)$, or 
(where appropriate) the $C^1$ function space. 
While of primary importance for establishing the conceptual framework of our approach, 
this uniformly hyperbolic setting allows for a particularly concise treatment, serving as a precursor to the more intricate analysis 
handled in the majority of the article, which is devoted to Joint TPO in the nonuniformly hyperbolic
setting of beta-transformations, and serves to confirm the robustness of the joint perturbation framework.

We first consider  
Lipschitz
expanding maps, which in particular includes the class of all smooth expanding maps on arbitrary compact connected Riemannian manifolds, as studied notably by Shub \cite{Sh69, Sh70}
(see also e.g.~\cite{ES68, FJ78, GoRo23, Gr81, Hirs70} and \cite[Chapter 6]{URM22}). More specifically, if $X$ is a compact locally connected metric space, and
$\cE(X)$ is the space of Lipschitz open mappings on $X$ that are distance-expanding, 
we prove:

\begin{thmx}[Joint TPO for expanding maps] \label{expandingJTPO}
Suppose $X$ is a compact locally connected metric space, and $\alpha \in (0,1]$.
There is an open dense subset of pairs 
$(T,\phi)\in \cE(X)\times \Holder{\alpha}(X)$ 
with the periodic optimization property.
\end{thmx}

Note that
typicality in $\cE(X)\times\Holder{\alpha}(X)$ is in general not a straightforward consequence of typicality in each $\{T\}\times\Holder{\alpha}(X)$:
indeed purely topological arguments do not even imply a weaker joint \emph{generic} periodic optimization\footnote{By \emph{joint generic periodic optimization} we mean that a \emph{residual} subset of 
the product space $\cE(X)\times\Holder{\alpha}(X)$ has the periodic optimization property.}
result as an automatic consequence of Individual TPO in each fibre.\footnote{To justify this assertion, note that provided $\cE(X)$ contains an embedded curve, there exists a subset $\cL\subseteq \cE(X)\times\Holder{\alpha}(X)$ that is not residual in  $\cE(X)\times\Holder{\alpha}(X)$, yet for each $T\in\cE(X)$, the set $\bigl\{\phi\in\Holder{\alpha}(X): (T,\phi)\in\cL \bigr\}$ contains an open dense subset of $\Holder{\alpha}(X)$; this is a consequence of the fact 
(see \cite[Theorem~15.5]{Ox71}, and cf.~\cite[p.~54]{Ke95}) that there exists a nonmeagre subset of $[0,1]^2$ such that no three of its elements are collinear (cf.~also footnote~\ref{nonmeagrecollinear}).}
We next consider Anosov diffeomorphisms:

\begin{thmx}[Joint TPO for Anosov diffeomorphisms]
\label{anosovJTPO}
Let $M$ be a smooth compact Riemannian manifold,
with distance function induced by the Riemannian metric,
and let $\cA(M)$ be the space of $C^1$ Anosov diffeomorphisms on $M$, equipped with the $C^1$ topology.
  For all $\alpha \in (0,1]$,
there is an open dense subset of pairs 
$(T,\phi)\in \cA(M)\times \Holder{\alpha}(M)$ 
with the periodic optimization property.
\end{thmx}

A more delicate context for joint typical periodic optimization results, beyond the paradigmatic uniformly hyperbolic setting, arises if
we take the space $\cT$ to be a classical one-parameter family of maps of the unit interval $I=[0,1]$, the 
\emph{beta-transformations} $T_\beta(x)= \beta x \pmod 1$, $\beta>1$. 
Unlike for Lipschitz open distance-expanding maps,
not every beta-transformation has TPO: for example if $\beta=2$ then the function $\phi$ given by $\phi(x)\=x$ has no maximizing measure, and it is readily checked that there is an open neighbourhood of $\phi$ in 
$\Holder{1}(I)$  consisting of functions with no maximizing measure. 
Moreover, it can be shown that there is a dense collection of parameters $\beta\in (1,+\infty)$ for which $T_\beta$ does not have TPO (cf.~Remark~\ref{r_counter_example_simple_beta} and \cite[Theorem~5]{Par60}).
Rather remarkably, despite this pervasive failure of Individual TPO, we are nonetheless
able to establish that Joint TPO \emph{does} hold for the class of beta-transformations, where 
$\cT=(1,+\infty)$ is equipped with its usual topology, and
$\cT\times\cF=(1,+\infty)\times\Holder{\alpha}(I)$ is equipped with the product topology:

\begin{thmx}[Joint TPO for beta-transformations]\label{jtpo_tbeta}
    Fix $\alpha\in(0,1]$.
    There is an open dense subset of pairs $(\beta,\phi)\in (1,+\infty)\times\Holder{\alpha}(I)$
    such that $(T_\beta,\phi)$ has the periodic optimization property.
\end{thmx}

There is a noteworthy parallel with the numerical observations of 
Yang, Hunt \& Ott: 
in \cite[Section~IV]{YHO00}, the Lorenz equations at 
standard parameter values exhibit failure of 
individual low-period optimality 
(the optimal orbit has anomalously high 
period), 
while in our beta-transformation
setting the failure is more severe, since for a dense 
set of parameters $\beta$ the map $T_\beta$ 
fails to have Individual TPO altogether.
In both cases, however, the joint setting restores the expected behaviour: 
Yang, Hunt \& Ott observed numerically that jointly varying 
the dynamical and performance function parameters yields typical 
low-period optimality, while Theorem~\ref{jtpo_tbeta} provides 
the first rigorous mathematical confirmation of the analogous 
joint phenomenon in the nonuniformly hyperbolic setting of 
beta-transformations.

From a technical perspective,
the proof of Theorem~\ref{jtpo_tbeta} is significantly
more challenging than for the uniformly hyperbolic analogues (Theorems~\ref{expandingJTPO}
and~\ref{anosovJTPO}): 
the strategy of proof is outlined in Subsection~\ref{subsection:heuristicoverview} below,
and is mediated by symbolic dynamics, exploiting 
amongst other things the fact that the orbit structure 
enjoys certain monotonicity properties as 
$\beta$ varies.
Despite the fact that not every $T_\beta$ has TPO,
we can
nevertheless prove Individual TPO results for \emph{most} parameters $\beta$, in the sense of the following two theorems:

\begin{thmx}[Individual TPO for generic parameters $\beta$]\label{typical_beta}
Fix $\alpha\in(0,1]$.
For a residual set of values $\beta>1$, the 
beta-transformation $T_\beta$ has TPO 
(i.e., there is an open dense subset  $V_\beta\subseteq\Holder{\alpha}(I)$ such that if $\phi\in V_\beta$ then $(T_\beta,\phi)$ has the periodic optimization property). 
\end{thmx}

\begin{thmx}[Individual TPO for almost every parameter $\beta$]\label{almost_every_beta}
Fix $\alpha\in(0,1]$.
For Lebesgue almost every $\beta>1$, the 
beta-transformation $T_\beta$ has TPO 
(i.e., there is an open dense subset  $V_\beta\subseteq\Holder{\alpha}(I)$ such that if $\phi\in V_\beta$ then $(T_\beta,\phi)$ has the periodic optimization property). 
\end{thmx}

\begin{remark}
    From a symbolic dynamics perspective, our Theorems~\ref{typical_beta} and~\ref{almost_every_beta} may be viewed as establishing TPO for a class of systems significantly broader than subshifts of finite type and sofic subshifts; the set of parameters $\beta$ for which the corresponding $\beta$-shift is sofic is merely countable
      (see e.g.~\cite[Proposition~4.2]{Bl89}), hence in particular a set that is both meagre and of zero Lebesgue measure.
\end{remark}

Here again,
a Joint TPO result is not a mere consequence of Individual TPO holding in most fibres\footnote{\label{nonmeagrecollinear}Specifically, the existence of a nonmeagre subset $A\subseteq[0,1]^2$ such that no three of its points are collinear (cf.~\cite[Theorem~15.5]{Ox71}, \cite[p.~54]{Ke95}) means that if $\phi\in\Holder{\alpha}(I)$ is not identically zero then the embedding $\iota\:[0,1]^2\to (1,+\infty)\times\Holder{\alpha}(I)$ given by $\iota(x,y)=(x+2,y\cdot\phi)$ is such that $\iota(A)$ is nonmeagre, yet the set
$\bigl\{\psi\in\Holder{\alpha}(I):(\beta,\psi)\in \iota(A) \bigr\}$ contains at most two points, hence is nowhere dense, for every $\beta\in(1,+\infty)$.}.
Conversely, the joint typical periodic optimization of Theorem~\ref{jtpo_tbeta} does 
not
automatically imply Theorem~\ref{typical_beta}: indeed the nonseparability of $\Holder{\alpha}(I)$ means there exists an open dense subset\footnote{Specifically, for each $x\in I$, set $f_x \coloneqq d(\cdot, x)^\alpha \in \Holder{\alpha}(I)$, then let $\psi \: I \to (1,+\infty)$ be a bijection, and define $\cL$ to be the complement of $\bigcup_{x\in I} \{ \psi(x) \} \times \overline{B}(f_x, 1/3)$.} $\cL$ 
of $(1,+\infty)\times\Holder{\alpha}(I)$ such that for all $\beta>1$, the fibre 
$\bigl\{\phi\in\Holder{\alpha}(I):(\beta,\phi)\in\cL \bigr\}$
is not even dense in $\Holder{\alpha}(I)$.
However, Theorem~\ref{jtpo_tbeta} does readily imply
(cf.~\cite[Lemma~8.42]{Ke95}) the following
Theorem~\ref{GPO_open_dense}, which can be considered
an analogue of Theorem~\ref{typical_beta}, 
with the roles of maps and (potential) functions interchanged.

\begin{thmx}[Individual TPO for generic potentials]\label{GPO_open_dense}

Fix $\alpha\in(0,1]$.
There is a residual subset $R\subseteq \Holder{\alpha}(I)$ such that for all $\phi\in R$,
there is an open and dense set of parameters
$B_\phi\subseteq(1,+\infty)$ such that if $\beta\in B_\phi$ then $(T_\beta,\phi)$ has the periodic optimization property. 
\end{thmx}

Theorems~\ref{expandingJTPO}, \ref{anosovJTPO}, \ref{jtpo_tbeta}, \ref{typical_beta}, \ref{almost_every_beta}, and~\ref{GPO_open_dense} of course all imply corresponding
\emph{typical uniqueness} results: an open dense subset 
for which the maximizing measure is \emph{unique}.
Such results are analogous to Ma\~n\'e's theorem
\cite[Theorem~A]{Man96} on uniqueness of the minimizing measure for generic Lagrangians, and (individual) generic uniqueness results in ergodic optimization (see e.g.~\cite[Theorem~6]{CLT01} and \cite[Theorem~3.2]{Je06}),
though these latter results assert uniqueness only on a residual subset, rather than the open dense subset implied by our theorems. 
One consequence of this \emph{joint typical uniqueness} 
is a \emph{large deviation principle} as $t\to+\infty$
for families of $(T,t\phi)$-equilibrium states,
 for an open dense subset of pairs $(T,\phi)\in \cE(X)\times \Holder{\alpha}(X)$
 (see Remark~\ref{LDPremark}).

\smallskip

\subsection{Joint perturbation formalism: heuristic overview of the proofs}\label{subsection:heuristicoverview}

The strategy for proving Joint TPO, in the various settings of expanding maps, Anosov diffeomorphisms, and beta-transformations, employs a novel framework that we refer to as 
\emph{joint perturbation}, involving simultaneous perturbation of both the dynamical system and the potential function, in order to yield periodic optimization.
For clarity, we summarise below the main ideas behind this approach: the joint perturbation formalism finds its most direct and streamlined form in the context of uniform hyperbolicity,  
while its application to beta-transformations,
requiring a substantially deeper and more exhaustive treatment, reveals joint typicality to be a robust phenomenon likely to persist across a wide range of dynamical settings.

\medskip

\begin{enumerate}
\smallskip
\item[\textbf{Case $\cH$:}] \emph{Uniform hyperbolicity. Joint perturbation formalism for Theorems~\ref{expandingJTPO} and~\ref{anosovJTPO}}. 

\smallskip

    \begin{enumerate}
\smallskip
\item[\textbf{Step~$\cH$.1:}] \emph{(Joint perturbation preparation)}. 
For open expanding maps,
our local connectedness hypothesis ensures suitable control of the surjectivity radius (Proposition~\ref{p.locallyconnected}), yielding the Locally Connected Shadowing Lemma (Lemma~\ref{l_perturbation_T_and_cO}),
guaranteeing a weak form of structural stability for perturbed systems, which is needed in Step~$\cH.2$ below.
For Anosov diffeomorphisms we directly use structural stability. 
 An enhanced \emph{Ma\~n\'e cohomology lemma} (Theorem~\ref{l.mane}) then gives a uniform $L=L(T,\alpha)>0$ such that  H\"older functions $\phi$ admit a H\"older sub-action $u$, with 
      $\Hseminorm{\alpha}{u} \le L \Hseminorm{\alpha}{\phi}$; the need for this additional control is a distinctive feature of Joint TPO (as compared to Individual TPO).
\smallskip
\item[\textbf{Step~$\cH$.2:}] \emph{(Joint perturbation realisation)}.  The joint perturbation theorems
(Theorems~\ref{t.criticaltheorem} and~\ref{jointperturbationuniformhyp}) are the key technical results towards 
Theorems~\ref{expandingJTPO} and~\ref{anosovJTPO}.
Fixing a map $T_0$, and letting $\cO_0$ be any of its periodic orbits, we then show that for all $T$ sufficiently close to $T_0$, there is a $T$-periodic orbit $\cO$ such that for suitable H\"older functions whose unique $T_0$-maximizing measure is supported by $\cO_0$, there is a nearby function whose unique $T$-maximizing measure is supported by $\cO$.
\smallskip
\item[\textbf{Step~$\cH$.3:}] \emph{(Joint periodic locking)}. 
For open expanding maps we define 
the \emph{joint periodic locking set}
\begin{equation*}
    \fL^\alpha (X)\=		
\big\{(T,\phi)\in \cE(X)\times \Holder{\alpha}(X) : \phi \in \Lock^\alpha(T)\big\}, 
\end{equation*}
where each individual locking set
$\Lock^\alpha(T)$
consists of
those $\alpha$-H\"older functions whose $T$-maximizing measure is unique, periodic, and stably maximizing under perturbations.
The Joint Perturbation Theorem
(Theorem~\ref{t.criticaltheorem}) is used to prove\footnote{The openness of $\fL^\alpha (X)$ itself 
(Theorem \ref{openjointperiodiclockingset}) is a noteworthy consequence of the proof method used here; by contrast the analogous joint periodic locking set for beta-transformations is \emph{not} an open subset of $(1,+\infty) \times \Holder{\alpha}(I)$,
though by Theorem~\ref{t.product.typical.periodic} it does \emph{contain} an open dense subset.
This surprising phenomenon for beta-transformations reflects the absence of structural stability \textit{per se},
and the partial compensation for this via the structural monotonicity (cf.~Step~$\beta.2$).}:

    \end{enumerate}
\end{enumerate}
        \smallskip

\begin{thmx}[Open joint periodic locking set]\label{openjointperiodiclockingset}
The joint periodic locking set
$\fL^\alpha (X)$ is open in $\cE(X)\times\Holder{\alpha}(X)$.
\end{thmx}

\begin{enumerate}
\smallskip \item[]
    \begin{enumerate}

To prove Theorem \ref{openjointperiodiclockingset} we consider subsets of the form $\{T\} \times B(\phi,\epsilon)$ such that the unique maximizing measure of
every $\psi \in B(\phi,\epsilon)$ is supported on the same periodic orbit. Using the joint perturbation theorem (Theorem \ref{t.criticaltheorem}), we prove that the piece $\{T\} \times B(\phi,\epsilon)$ can be promoted to a neighbourhood,
in the sense that there exists
$\delta \in (0,\epsilon)$ such that the open rectangle
$B_{\Lip}(T,\delta) \times B(\phi, \epsilon-\delta)$ is contained in the joint periodic locking set.
Contreras' Individual TPO Theorem can
then be used to show that 
$\fL^\alpha (X)$ is \emph{dense} in $\cE(X)\times\Holder{\alpha}(X)$, and Theorem~\ref{expandingJTPO} follows.
Notably, the approach used to prove
Theorem~\ref{expandingJTPO}
allows us to provide the following quantitative refinement of this result: 

   \end{enumerate}
\end{enumerate}
        \smallskip 

\begin{thmx}[Effective Joint TPO for expanding maps]\label{t.JTPO.explicit}
Suppose $X$ is a compact locally connected metric space, and $\alpha \in (0,1]$.
Suppose $(T_0, \phi)$
belongs to the joint periodic locking set $\fL^\alpha(X)$.
Then explicit constants $E,\, r>0$ can be given, such that if 
$(T,\psi) \in \cE(X) \times \Holder{\alpha}(X)$ satisfies $d_{\Lip}(T_0,T)<E$ and $\Hnorm{\alpha}{\phi-\psi}<r$, then $(T,\psi)\in \fL^\alpha(X)$.
\end{thmx}

\begin{enumerate}
\smallskip \item[]
    \begin{enumerate}

For Anosov diffeomorphisms an analogous argument is used, together with the joint perturbation theorem (Theorem~\ref{jointperturbationuniformhyp}), in order to prove Theorem~\ref{anosovJTPO};
an effective version 
of Theorem~\ref{anosovJTPO}
also follows, using the same method as for Theorem~\ref{t.JTPO.explicit}, but for brevity is omitted.
    \end{enumerate}
\bigskip

\end{enumerate}

\begin{rem}
The constituent parts of the joint perturbation formalism serve clearly distinguished purposes:
the preparatory step $\cH$.1
has (weaker but sufficient) versions
     $\beta$.2, $
   \beta$.3, and $\beta$.6 below,
   whereas the actual execution of joint perturbation
   (steps $\cH$.2 and $\cH$.3) corresponds to steps
   $\beta$.8 and $\beta$.9 below, where a more delicate joint perturbation argument revolves around
   non-simple beta-numbers.
   Note that steps $\beta$.1, $\beta$.2, $\beta$.3, $\beta$.4, $\beta$.5, $\beta$.7 can also be interpreted in a purely \emph{individual} sense, as their combined effect is to establish Individual TPO for $T_\beta$, for Lebesgue almost every $\beta>1$.
\end{rem}

\begin{enumerate}

\medskip

  \item[\textbf{Case $\beta$:}] \emph{Beta-transformations. Joint perturbation formalism for Theorems~\ref{jtpo_tbeta}, \ref{typical_beta}, \ref{almost_every_beta}, and~\ref{GPO_open_dense}.}
  \smallskip
    \begin{enumerate}
\smallskip
      \item[\textbf{Step~$\beta$.1:}] \emph{(Compactification)}.  A necessary first step is to develop a theory of ergodic optimization for the $T_\beta$,
accommodating the fact that these maps are not continuous, and that for certain $\beta$
the set $\MMM(I,T_\beta)$ of $T_\beta$-invariant probability measures is not compact.
The tool here is the
\emph{upper beta-transformation}
 $U_\beta\: I\to I$, 
defined 
by $U_\beta(0)\coloneqq 0$ and
$U_\beta (x )\coloneqq\beta x- \lfloor\beta x \rfloor'$
for $x\in (0,1]$,
where $\lfloor \beta x \rfloor' \coloneqq \max\{n\in\Z: n < \beta x\}$.
We show that
the set $\MMM(I,U_\beta)$ of $U_\beta$-invariant probability measures is weak$^*$ compact,
and equal to the closure of $\MMM(I,T_\beta)$,
so a $(U_\beta,\phi)$-maximizing measure exists whenever $\phi$ is continuous.
\smallskip
      \item[\textbf{Step~$\beta$.2:}] \emph{(Structural monotonicity)}. 
We exploit a fundamental property of beta-transformations: the structure of their corresponding symbolic dynamical systems (beta-shifts) has a certain monotonicity as $\beta$ varies (cf.~Lemma~\ref{l_apprioxiation_beta_shif_beta}). This property forms the basis for two steps: the first is simple beta-number perturbation (Step~$\beta.5$), the second is the Joint Perturbation Theorem for beta-transformations (Theorem~\ref{l_8_main_lemma}, cf.~Step~$\beta.9$). Corollary~\ref{l8.shadow}, as employed in Theorem~\ref{l_8_main_lemma}, represents an analogue of the Locally Connected Shadowing Lemma (Lemma~\ref{l_perturbation_T_and_cO}) of Step~$\cH.1$, 
and although the family of beta-transformations does not enjoy structural stability\footnote{As noted in Step~$\cH.1$, the structural stability of Anosov diffeomorphisms is exploited directly during the joint perturbation step. For open expanding maps, although it is possible to prove structural stability (a result that does not appear to be in the literature in the generality of open distance-expanding maps on compact locally connected spaces), it is more convenient to instead establish the Locally Connected Shadowing Lemma
(Lemma~\ref{l_perturbation_T_and_cO}).}, this weaker shadowing property is nevertheless suitable for exploitation in the proof of
Theorem~\ref{l_8_main_lemma}. 
\smallskip
      \item[\textbf{Step~$\beta$.3:}] \emph{(Beta-numbers)}. 
For certain $\beta$,
the \emph{critical orbit} (i.e.,~the orbit of $1$ under $U_\beta$) is finite: in this postcritically finite case $\beta$ is called a \emph{beta-number} (following \cite{Par60}).
A result underpinning the Joint TPO result (Theorem~\ref{jtpo_tbeta})   
is that, for \emph{upper} beta-transformations, Individual TPO holds for \emph{beta-numbers}
(note that the same is \emph{not} true for $T_\beta$, cf.~Remark~\ref{r_counter_example_simple_beta}):

    \end{enumerate}
\end{enumerate}
\smallskip

\begin{thmx}[Individual TPO for beta-numbers]\label{t_TPO_thm_beta_number}
	Fix $\alpha\in(0,1]$. If $\beta>1$ is a beta-number, then $U_\beta$ has TPO
    (i.e., there is an open dense subset  $V_\beta\subseteq\Holder{\alpha}(I)$ such that if $\phi\in V_\beta$ then $(U_\beta,\phi)$ has the periodic optimization property). 
\end{thmx}

\begin{enumerate}
\smallskip \item[]
    \begin{enumerate}

\smallskip 
\item[\textbf{Step~$\beta$.4:}] \emph{(Emergent parameters)}. 
To prove
    Theorems~\ref{typical_beta} and~\ref{almost_every_beta} we introduce 
    the notion of
    \emph{emergent} 
parameters $\beta$: 
for an emergent $\beta>1$, the symbolic dynamics 
for the critical orbit is essentially different from that witnessed in beta-shifts with parameter strictly smaller than $\beta$, so is considered to have newly \emph{emerged} at this particular $\beta$. 
For the complementary parameter set we prove:

    \end{enumerate}
\end{enumerate}
        \smallskip

\begin{thmx}[Individual TPO for non-emergent parameters]\label{t_TPO_thm_non_emergent}
Fix $\alpha\in(0,1]$.
If $\beta>1$  is non-emergent, then both
$T_\beta$ and $U_\beta$ have TPO
(i.e., there is an open dense subset  $V_\beta\subseteq\Holder{\alpha}(I)$ such that if $\phi\in V_\beta$ then both $(T_\beta,\phi)$ 
and $(U_\beta,\phi)$
have the periodic optimization property). 
\end{thmx}

\begin{enumerate}
\smallskip \item[]
    \begin{enumerate}
\smallskip\item[]

  The set of emergent parameters can be shown to be both topologically meagre and of zero Lebesgue measure, so
  Theorems~\ref{typical_beta} and~\ref{almost_every_beta}
  follow from Theorem~\ref{t_TPO_thm_non_emergent}.

\smallskip
\item[\textbf{Step~$\beta$.5:}] \emph{(Simple beta-number perturbation)}.
A key part of our overall strategy,
employed to prove Theorems~\ref{typical_beta}, \ref{almost_every_beta},
\ref{t_TPO_thm_beta_number},
and~\ref{t_TPO_thm_non_emergent},
is a perturbation argument in the space of parameters $\beta>1$. This exploits the fact that the dynamics for a given $\beta$ is approximable by sub-systems corresponding to
    so-called \emph{simple} beta-numbers in $(1,\beta)$.

\smallskip
\item[\textbf{Step~$\beta$.6:}] \emph{(A Ma\~n\'e lemma for beta-transformations)}. 
To prove Theorems~\ref{typical_beta}, \ref{almost_every_beta},
\ref{t_TPO_thm_beta_number},
and~\ref{t_TPO_thm_non_emergent},
the simple beta-number perturbation works in conjunction
with a new Ma\~n\'e cohomology 
lemma\footnote{The terminology \emph{Ma\~n\'e lemma}, first used by Bousch \cite{Bou00}, reflects the structurally similar result 
of Ma\~n\'e \cite{Man96} for Lagrangian flows, and the subsequent discovery of the unpublished \cite{CG95} justifies the
alternative terminology \emph{Ma\~n\'e--Conze--Guivarc'h lemma} \cite{Bou11,Mo09}; the result has also been called
a \emph{Bousch--Ma\~n\'e cohomology lemma} \cite{Je01}
and a \emph{normal form theorem} \cite{Je06}.} 
for beta-transformations
(Theorem~\ref{mane}).
Results of this kind
(cf.~Step~$\cH.1$ above)
have been 
known since the 1990s
(see \cite{Bou00,Bou01,Bou02,Bou08,Bou11, CLT01, CG95, LT03, Sa99}), 
and assert
that cohomology classes of suitably regular functions contain versions
(so-called \emph{revealed versions}, cf.~\cite{Je19})
 for which the maximizing measures are
readily apparent; in favourable settings the revealed version 
inherits the modulus of continuity of the original function
(see e.g.~\cite{Bou00,Bou01,BJ02,CLT01, LZ25}). 
Our Ma\~n\'e lemma
is for H\"older 
functions, 
and asserts the existence of \emph{two} revealed versions,
both enjoying one-sided continuity
(one is left-continuous, the other right-continuous): the critical orbit 
introduces discontinuities, but
away from this orbit both versions are locally H\"older.
In particular, when $\beta$ is a beta-number\footnote{In the case that $\beta$ is a \emph{simple} beta-number, it can be shown that there is a \emph{continuous} revealed version, cf.~Remark~\ref{post_mane_holder_remark}(ii).}, the revealed versions are piecewise H\"older (with finitely many pieces): this analogue of 
Step~$\cH.2$ is a key ingredient for the proof of Theorem~\ref{jtpo_tbeta}.
The 
proof of the beta-transformations Ma\~n\'e lemma
involves an operator fixed point
that is a Borel measurable function 
with one-sided limits everywhere, and locally H\"older away from the critical orbit: this fixed point can then be \emph{regularised}, yielding both a left-continuous and a right-continuous \emph{sub-action}, allowing the definition of left-continuous and right-continuous revealed versions.\footnote{Since beta-transformations are neither continuous nor open, but on the other hand they are transitive, the approach to proving this Ma\~n\'e lemma is rather different from, and more technical than, the one employed by \cite{LiSu26} to establish the
Ma\~n\'e lemma
used in Step~$\cH$.1; see Remark~\ref{Manetechnical} for more details.}
The one-sided continuity of these revealed versions allows us to deduce a Revelation Theorem (Theorem~\ref{l_subordination}),
asserting that 
any maximizing measure
is
supported within the union of the sets of maxima of these revealed versions; this serves as an important part of the following Step~$\beta.7$.

\smallskip
\item[\textbf{Step~$\beta$.7:}] \emph{(Critical-regular analysis)}.
The other key ingredient for proving
Theorems~\ref{typical_beta}, \ref{almost_every_beta},
\ref{t_TPO_thm_beta_number},
and~\ref{t_TPO_thm_non_emergent}
is the identification of
two fundamental subsets of $\Holder{\alpha}(I)$, 
the \emph{critical set} $\Crit^\alpha(\beta)$, and the \emph{regular set} $\RLS^\alpha(\beta)$.
The first of these sets consists of functions for which the critical orbit is maximizing, and the 
second consists of those functions enjoying good restrictions
to various
Cantor subsets on which $T_\beta$ acts as an open expanding map: by
exploiting Contreras' Individual TPO theorem \cite{Co16} for such maps,
we are able to
 prove the Dense Regular Functions Theorem
(Theorem~\ref{p_density_local_locking}), asserting that $\RLS^\alpha(\beta)$ is dense
in $ \Holder{\alpha}(I)$. This, together with the Revelation Theorem (Theorem~\ref{l_subordination}),
yields a proof of
Theorems~\ref{t_TPO_thm_beta_number} and
\ref{t_TPO_thm_non_emergent}.\footnote{Note that, by contrast with \cite{Co16,HLMXZ25, LZ25}, the method for proving our Individual TPO theorems does not make explicit use of shadowing (indeed the shadowing property does not hold for beta-transformations, cf.~\cite{BGS25}).}
  It is an open problem to determine whether $U_\beta$ has TPO for emergent parameters $\beta$: here
  a finer analysis  
leads to a Structural Theorem
 (Theorem~\ref{t_TPO_thm_emergent}),
identifying $\Crit^\alpha(\beta)$
as a potential obstacle to Individual TPO.

\smallskip
\item[\textbf{Step~$\beta$.8:}] \emph{(Joint TPO for beta-transformations)}.
Having established the above Individual TPO theorems for beta-transformations, the conversion of this into the Joint TPO result Theorem~\ref{jtpo_tbeta} is patterned, to some extent, on the approach used
in Steps $\cH.1$--$\cH.3$ to prove Theorems~\ref{expandingJTPO} and~\ref{anosovJTPO}.
For beta-transformations, however,
the technical analysis
required 
is significantly more delicate, and the fine
structure of this particular family of maps
must be exploited.
The strategy consists of proving
a more general Theorem~\ref{t.product.typical.periodic},
asserting that Joint TPO holds for upper beta-transformations as well as for beta-transformations:
in each case we show that the joint periodic locking set 
contains an open dense subset 
of $(1,+\infty)\times\Holder{\alpha}(I)$.

\smallskip
\item[\textbf{Step~$\beta$.9:}] \emph{(Joint perturbation realisation)}.
The proof of Theorem~\ref{t.product.typical.periodic} relies on the Joint Perturbation Theorem for beta-transformations
(Theorem~\ref{l_8_main_lemma}), the most technically challenging result of the article.
Although similar in spirit to the
analogous joint perturbation theorems for expanding maps
(Theorem~\ref{t.criticaltheorem})
and Anosov diffeomorphisms (Theorem~\ref{jointperturbationuniformhyp}),
the additional difficulties encountered in proving
Theorem~\ref{l_8_main_lemma} relate to the non-persistence of certain periodic orbits, the discontinuous variation of some points under perturbation of the parameter, the weaker Ma\~n\'e lemma (Theorem~\ref{mane}), and the consequent difficulties in controlling ergodic averages.\footnote{For further details see in particular the comments in footnotes~\ref{Claim3remarks}
and~\ref{subcase(ii)remarks} concerning Claim 3 and Subcase~(ii) in the proof of Theorem~\ref{l_8_main_lemma}.}
The proof of Theorem~\ref{l_8_main_lemma} 
centres on two key features: on the one hand
\emph{non-simple} beta-numbers $\beta$ (these are 
shown to be dense in parameter space, and their specific properties 
are systematically exploited throughout the proof),
and on the other hand a perturbation using the beta-transformations Ma\~n\'e lemma (Theorem~\ref{mane}).
Once Theorem~\ref{t.product.typical.periodic} is proved, 
Theorem~\ref{GPO_open_dense} follows readily.

    \end{enumerate}
\end{enumerate}

\begin{rem}
The joint perturbation methodology developed throughout this 
article is taken up in \cite{HHJL26}, where an axiomatic 
framework of hyperbolic maps is introduced and applied to 
establish Joint TPO for various further systems, including 
Axiom~A diffeomorphisms, hyperbolic rational maps, and 
one-dimensional $C^r$ maps.
It should be noted, however, that the class of beta-transformations treated here is not amenable to the methods of
\cite{HHJL26}, and in general the effective Joint TPO results such as Theorem~\ref{t.JTPO.explicit} require the approach given here.
\end{rem}

\subsection{Organisation of the article}

Some key notation and terminology used throughout the article is fixed in Subsection~\ref{sct_Notation} below,
including the notion of periodic locking set
$\Lock^\alpha(T)$,
consisting of
those $\alpha$-H\"older functions whose maximizing measure is unique, periodic, and stably maximizing under perturbations.
In Section~\ref{sec_JTPO_distance_expanding} we 
introduce the space $\cE(X)$ of open Lipschitz distance-expanding maps on compact locally connected spaces $X$,
and for all H\"older function spaces $\Holder{\alpha}(X)$
establish joint typical periodic optimization in the product space $\cE(X) \times \Holder{\alpha}(X)$.
This is proved as the slightly stronger Theorem~\ref{t.JTPO.general},
asserting that
$		
\big\{(T,\phi)\in \cE(X)\times \Holder{\alpha}(X) : \phi \in \Lock^\alpha(T)\big\}$ 
	is itself an open dense subset, where openness is proved in Theorem~\ref{openjointperiodiclockingset};
    the effective version of Joint TPO is proved as 
    Theorem~\ref{t.JTPO.explicit}.
The special case when $X$ is a compact Riemannian manifold follows (Theorem~\ref{expandingJTPOmanifolds}),
and in this setting we also establish Joint TPO results
for $\cT$ the space of 
Anosov diffeomorphisms\footnote{The presentation for Anosov diffeomorphisms, 
in Subsection~\ref{anosovsubsection}, is deliberately 
less detailed than for expanding maps and beta-transformations: 
certain arguments and estimates follow the same pattern as 
in the expanding map case, and a more 
comprehensive treatment of Joint TPO for general hyperbolic 
systems is given in \cite{HHJL26}.}: a slight strengthening of Theorem~\ref{anosovJTPO} is proved (as Theorem~\ref{anosovJTPO.moreprecise}), as well as a Joint TPO theorem for $C^1$ function spaces (Theorem~\ref{anosovJTPO_C1}).
Thereafter we focus on the more technically challenging setting of beta-transformations. Section~\ref{betashiftsection}, whose proofs are
deferred to Appendix~\ref{beta_proofs_section}, is preparatory in nature.
Firstly, we establish various
preliminary results about beta-transformations, beta-expansions,
and the closely related beta-shifts, being particularly careful to distinguish the commonalities and differences between these systems; in so doing, we take the opportunity to clarify and correct some aspects of the published literature.
Secondly, in order to develop ergodic optimization in this setting, we undertake
 an analysis of the set 
$\MMM(I,T_\beta)$ of $T_\beta$-invariant measures.
We prove the existence of $(U_\beta,\phi)$-maximizing measures
for all values $\beta>1$, and all continuous functions $\phi$; we also introduce the notion of limit-maximizing 
measure, and establish the equivalence between 
maximizing measures for the upper beta-transformation and the beta-shift, and 
limit-maximizing measures for beta-transformations.
 In Section~\ref{Sec_mane_lemma} we establish the important Ma\~{n}\'{e} cohomology lemma
for beta-transformations, and develop a revelation theorem as its consequence. 
Section~\ref{sec_proof_of_main_results} 
is devoted to proving our Individual TPO results for beta-transformations, and includes proofs of Theorems~\ref{typical_beta}, \ref{almost_every_beta},
\ref{t_TPO_thm_beta_number},
and~\ref{t_TPO_thm_non_emergent}.
To establish these results,
the class of emergent parameters $\beta$ is introduced,
the critical subset $\Crit^\alpha(\beta)$ and
regular subset $\RLS^\alpha(\beta)$ of $\Holder{\alpha}(I)$ are defined, and the Ma\~n\'e lemma of Section~\ref{Sec_mane_lemma} is a fundamental tool.
In Section~\ref{JTPO_beta_section} we establish joint typical periodic optimization in the setting of beta-transformations: Theorem~\ref{jtpo_tbeta} is proved, via the slightly stronger Theorem~\ref{t.product.typical.periodic}, showing 
that both 
$\bigl\{ (\beta,\phi)\in (1,+\infty)\times \Holder{\alpha}(I) : \phi\in \Lock^\alpha(T_\beta) \bigr\}$
and
$\bigl\{ (\beta,\phi)\in (1,+\infty)\times \Holder{\alpha}(I) : \phi\in \Lock^\alpha(U_\beta) \bigr\}$
contain 
    open dense subsets of $(1,+\infty) \times \Holder{\alpha}(I)$.
Theorem~\ref{t.product.typical.periodic} is itself a consequence of the key joint perturbation result
Theorem~\ref{l_8_main_lemma}.
Theorem~\ref{GPO_open_dense} follows readily
from Theorem~\ref{t.product.typical.periodic}.
We conclude Section~\ref{JTPO_beta_section} with a question concerning a variant notion of Joint TPO in the setting of symbolic dynamics.
	Appendix~\ref{beta_proofs_section} is devoted to the proofs of the results stated in Section~\ref{betashiftsection}.

	\subsection{Notation}\label{sct_Notation}

We follow the convention that $\N \= \{1, \, 2, \, 3, \, \dots\}$ and $\N_0 \= \{0\} \cup \N$. For $x\in\R$, we define the floor function $\lfloor x\rfloor$ as the largest integer $\leq x$, and the
strict floor function $\lfloor x \rfloor'$ as the largest integer $<x$.
The cardinality of a set $A$ is denoted by $\card A$.

The collection of all maps from a set $X$ to a set $Y$ is denoted by $Y^X$. The constant zero function $\mathbbold{0} \: X \to \R$ maps each $x\in X$ to $0$.

Let $(X,d)$ be a metric space. For subsets $A,B\subseteq X$, we set $d(A,B) \= \inf \{d(x,y) : x\in A,\,y\in B\}$, and $d(A,x)=d(x,A) \= d(A,\{x\})$ for each $x\in X$. For each subset $Y\subseteq X$, we denote the diameter of $Y$ by $\diam(Y) \= \sup\{d(x,y) : x, \, y\in Y\}$,
for each $\myepsilon>0$, denote the $\myepsilon$-neighbourhood of $Y$ (in $X$) by
$\nbhd (Y,\myepsilon )\= \{x\in X:d (x,Y )<\myepsilon \}$,
and the closure of $\nbhd (Y,\myepsilon )$ by $\overline\nbhd(Y,\myepsilon)$.
For each $y\in X$ and each $\myepsilon>0$, write $\nbhd(y,\myepsilon)\=\nbhd(\{y\},\myepsilon)$.

For a finite nonempty set $F \subseteq X$, let $\Delta(F)$ denote its \emph{minimum interpoint distance}, i.e., $\Delta(F) \= \min \{ d(x,y) : x, \, y \in F, \, x\neq y\}$ if $\card F\ge 2$ and $\Delta(F) \= +\infty$ if $\card F=1$.

Let $\CCC(X)$ denote the space of continuous functions from $X$ to $\R$,
and 
$\PPP(X)$ the set of Borel probability measures on $X$. 
For $\phi \: X\to \R$, we write 
$\Hnorm{\infty}{\phi}=
\Hnorm{\infty, X}{\phi}\=\sup\{\abs{\phi(x)}:x\in X\}$.

Let $(X,d )$ be a compact metric space and $\alpha\in(0,1]$. A function $\phi\:X\to \R$ is called  
$\alpha$-H\"older if
\begin{equation*}
	\Hseminorm{\alpha,X}{\phi} \= \sup \{ \abs{\phi(x) - \phi(y)} / d(x,y)^\alpha : x, \, y \in X, \, x\neq y \}<+\infty.
\end{equation*}
Denote by $\Holder{\alpha} (X )$ the set of real-valued $\alpha$-H\"older functions $\phi$ on $X$, equipped
with the H\"older norm $\Hnorm{\alpha, X}{\cdot}$ given by
\begin{equation*}
	\Hnorm{\alpha, X}{\phi}\=\Hseminorm{\alpha,X}{\phi}+ \Hnorm{\infty, X}{\phi},
\end{equation*} 
which makes $\Holder{\alpha} (X )$ a Banach space. 
It is often convenient to write $\Hnorm{\alpha}{\cdot}$
instead of $\Hnorm{\alpha, X}{\cdot}$:
this will be done throughout Section~\ref{sec_JTPO_distance_expanding}, 
and is usually done in the case that $X=[0,1]$ (the exception being in Section~\ref{sec_proof_of_main_results}, where $X$ is chosen to be various different subsets of $[0,1]$).

For a Borel measurable map $T\: X \to X$ on a compact metric space $X$, let $\MMM(X,T)$ denote the set of $T$-invariant Borel probability measures on $X$, and
	define
    the \emph{ergodic supremum} of a bounded Borel measurable function $\psi \: X\to \R$ to be 
	\begin{equation}\label{e_ergodicmax}
		\mpe(T,\psi)\=\sup\limits_{\nu\in\MMM(X,T)}\int\! \psi \, \mathrm{d}\nu.
	\end{equation}
	Any measure $\mu \in \MMM(X,T)$ that 
attains the supremum in (\ref{e_ergodicmax}) is called a \emph{$(T,\psi)$-maximizing measure}, and
it will also be convenient to refer to $\mu$ as a
 \emph{$\psi$-maximizing measure} (for the map $T$),
 and as a \emph{$T$-maximizing measure} (for the function $\psi$).
The set of $(T,\psi)$-maximizing measures is denoted by
	\begin{equation}\label{e_setofmaximizngmeasures}
		\Mmax(T,  \psi) \=\biggl\{ \mu\in\MMM(X,T) : \int \! \psi \,\mathrm{d} \mu = \mpe(T,\psi) \biggr\}.
	\end{equation}

 The orbit of a point $x\in X$ is called a \emph{maximizing orbit} for $(T,\psi)$ (or \emph{$(T,\psi)$-maximizing}) if 
\begin{equation}\label{maximizing_orbit_eq}
   	\lim\limits_{n\to +\infty}
\frac{1}{n} \sum\limits_{i=0}^{n-1}\psi \bigl( T^i(x) \bigr)
=\mpe(T,\psi) .
\end{equation}

For a map $T\: X\to X$ and a real-valued function $\phi\: X\to \R$, define
\begin{equation*}
S_n^T \phi(x) \= \sum_{i=0}^{n-1} \phi \bigl(T^i(x) \bigr) \quad \text{ for } x\in X, n\in \N .
\end{equation*}
Note that by definition $S_0^T \phi \equiv0$. 
In the particular case where $T$ is some beta-transformation $T_\beta$, we will write $S_n \phi = S_n^{T_\beta} \phi$ whenever there is no possibility of confusion.

We write $I \= [0,1] $. In this article, we equip every subset of $I$ with the usual Euclidean metric, denoted by $d$.

For a nonempty set $\cA$ equipped with the discrete topology, 
$\cA^\N=\{ \{a_n\}_{n=1}^{+\infty}: a_n\in \cA\text{ for all }n\in\N\}$ will be equipped with the product topology.
For $t>1$,  the metric $d_t$ on $\cA^\N$ defined
by $d_t(\{a_n\}_{n=1}^{+\infty},\{b_n\}_{n=1}^{+\infty}) \= t^{-p}$, where $p$ is the smallest positive integer with $a_p \neq b_p$, and $d_t(\{a_n\}_{n=1}^{+\infty},\{b_n\}_{n=1}^{+\infty}) \= 0$ if $a_n=b_n$ for all $n\in \N$,
generates the product topology on $\cA^\N$.

Infinite sequences will be written as  $A= a_1 a_2 \ldots = \{a_n\}_{n\in\N}$, and finite sequences as $B = b_1 b_2 \dots b_k = \{b_n\}_{n=1}^{k}$. Denote $(b_1 b_2 \dots b_k)^{\infty} \= b_1 b_2 \dots b_k b_1 b_2 \dots b_k b_1 b_2 \dots$ and write $(b_1 b_2 \dots b_k)^m$ for the first $km$ terms of $(b_1 b_2 \dots b_k)^{\infty}$, for $m\in \N$.

If $\cA\subseteq\R$ is equipped with the order induced by $\R$, and 
$A, B\in \cA^\N$, write $A\prec B$ when $A$ has strictly
smaller lexicographic order than  $B$, i.e.,~$a_i=b_i$ for $1\le i\le n-1$, and $a_n<b_n$, for some $n\in \N$.
Write $A\preceq B$ to mean $A\prec B$ or $A=B$.

Define the (left) shift map 
\begin{equation*}
\sigma \: \cA^\N \to \cA^\N
\end{equation*}
by $\sigma  ( A  )\= \{ a_{n+1}\}_{n\in \N} $
for all
$A = \{ a_n\}_{n\in \N} \in \cA^\N$.

Let $X$ be a topological space. For a map $T\:X\to X$ and $x\in X$, denote the orbit of $x$ by
\begin{equation*}
	\cO^T (x )\= \{T^n (x ) : n\in \N_0\} .
\end{equation*}
In this article, we write $\cO_\beta(x)\=\cO^{T_\beta}(x)$ and $\cO^*_\beta(x)\=\cO^{U_\beta}(x)$ whenever there is no possibility of confusion. If there exists $n\in \N$ with $T^n (x )=x$, then  
$\cO (x )$ is called a periodic orbit and $x$ a periodic point. We denote the set of periodic points of $T$ by $\Per(T)$. There is a unique  $T$-invariant Borel probability measure $\mu_{\cO}$ supported on a periodic orbit $\cO$,
given by
\begin{equation*}
	\mu_{\cO} \= \frac{1}{\card \cO} \sum_{x\in \cO} \delta_x.
\end{equation*}
For a topological space $X$, a point $a \in X$, and a function $f \: X \to \R$, we write $\lim_{y \to a} f(y) = x^+$ if $\lim_{y \to a} f(y) = x$ and there exists a neighbourhood $U$ of $a$ such that $f|_{U \smallsetminus \{a\}}\geq x$. We define $\lim_{y \to a} f(y) = x^-$ similarly. We say that a sequence of real numbers $\{ x_n \}_{n\in \N}$ converges to a real number $x^+ $ (written as $\lim_{n\to +\infty} x_n= x^+$) if $x_n \geq x$ for each $n\in \N$ and $\lim_{n\to +\infty} x_n= x$. We define $\lim_{n\to +\infty} x_n= x^-$ similarly. Moreover, for a real number $a$, and a function $g \: \R \to \R$, we denote $\lim_{y \searrow a} g(y) \= \lim_{y \to a} g|_{(a, +\infty)}(y)$, that is, the right-hand limit of $g$ at $a$. We denote the left-hand limit $\lim_{y \nearrow a} g(y) \= \lim_{y \to a} g|_{(-\infty,a)}(y)$ similarly. More generally, if $\R$ is replaced by a topological well-ordered set $Y$, the above notions and notation are defined similarly.

Fix a constant $\alpha \in (0,1]$, a compact metric space $X$, and a measurable map $T\: X \to X$. We define subsets 
\begin{equation*}
	\sP^\alpha(T)   \quad \text{ and } \quad
	\Lock^\alpha(T) 
\end{equation*}
of the set $\Holder{\alpha}(X)$ as follows: $\sP^\alpha(T)$ is the set of those $\phi \in \Holder{\alpha}(X)$ with a $\phi$-maximizing measure supported on a periodic orbit of $T$. If a function $\phi \in \sP^\alpha(T)$ satisfies $\card \Mmax(T,  \phi) = 1$ and $\Mmax(T,  \phi) = \Mmax(T,  \psi)$ for all $\psi \in  \Holder{\alpha}(X)$ sufficiently close to $\phi$ in $\Holder{\alpha}(X)$, we say that $\phi$ has the \emph{(periodic) locking property}\footnote{The terminology follows \cite{Boc19, BZ15}, and is somewhat inspired by~\cite{Bou00, Je00}.} in $\Holder{\alpha}(X)$ (with respect to $T$). The set $\Lock^\alpha(T)$ is defined to consist of all $\phi \in \sP^\alpha(T)$ satisfying the (periodic) locking property in $\Holder{\alpha}(X)$.

\section{Joint TPO: Uniform hyperbolicity}\label{sec_JTPO_distance_expanding}

In this section, we establish joint typical periodic optimization in the context of uniformly hyperbolic systems (the space of Lipschitz open distance-expanding maps, and the space of $C^1$ Anosov diffeomorphisms).

\subsection{Expanding maps}

Recall the following notion (see e.g.~\cite[Chapter~4]{PU10}):
\begin{definition}[Distance-expanding map]\label{distance_expanding_map_defn}
	For $ (X,d )$ a compact metric space, $T\:X\to X$ is called a \emph{distance-expanding map} if there exist constants $\lambda>1$ and $\gamma>0$ such that if $x, \, y \in X$ with $d (x,y ) \in (0, \gamma)$, then
	$d (T (x ),T (y ) ) > \lambda d (x,y )$.
    
    We refer to $(\gamma, \lambda)$ as a pair of \emph{expanding constants} for the distance-expanding map $T$.
\end{definition}

We shall be interested in the class of distance-expanding maps that are, moreover, Lipschitz and open (i.e.,~map open sets to open sets):

	\begin{notation}
	Suppose $(X,d)$ is a compact 
    metric space. A map $T\:X\rightarrow X$ is \emph{Lipschitz (continuous)} if its \emph{Lipschitz constant}
    \begin{equation}\label{lipschitzconstant}
		\LIP(T) \= \sup_{x,\, y \in X, \, x\neq y} \frac{d(T(x),T(y))}{d(x,y)}
	\end{equation}
    is finite. The set of Lipschitz continuous open distance-expanding maps $T\:X\rightarrow X$ will be denoted by $\cE(X)$, and equipped with the metric\footnote{It is readily checked that $d_{\Lip}$ defines a complete metric.} 
   	\begin{equation}\label{e.Lip.metric}
     \begin{aligned}
		&d_{\Lip}(T_1,T_2)\\
        &\qquad \= \sup_{x\in X} d(T_1(x),T_2(x))+\sup_{x,y\in X,x\neq y} \frac{\abs{d(T_1(x),T_1(y))-d(T_2(x),T_2(y))}}{d(x,y)} 
     \end{aligned}
	\end{equation}
	for $T_1,\,T_2\in \cE(X)$. For each $T \in \cE(X)$ and each $\alpha\in (0,1]$, define the \emph{joint (periodic) locking set} $\fL^\alpha (X)$ by
	\begin{equation}
		\fL^\alpha (X)\=\big\{(T,\phi)\in \cE(X)\times \Holder{\alpha}(X) : \phi \in \Lock^\alpha(T)\big\} .
	\end{equation}
\end{notation}

At this level of generality, open distance-expanding maps
were investigated by Ruelle\footnote{Emanating from his theory of Smale spaces, Ruelle \cite[p.~149]{Ru78} notes that this theory of open distance-expanding maps is more general (and thus less rich) than the theory of smooth expanding maps given by Shub \cite{Sh69} and Hirsch \cite{Hirs70}. In particular, in this generality there is no classification of those $X$ admitting an open distance-expanding map.} \cite[\S 7.26]{Ru78}, 
and systematically studied by
Przytycki \& Urba\'nski \cite[Chapter~3]{PU10} (see also 
\cite[Chapter~4]{URM22}, \cite[Chapter~11]{VO16});
it is an open problem to determine which 
compact metric spaces admit open distance-expanding maps, though partial results abound
(see e.g.~\cite{CM88, Gr81, Hira90a, Hira90b, Hirs70, Nek14, Nek20, Sh70}).
Our consideration of the space $\cE(X)$, equipped with the complete metric $d_{\Lip}$, appears to be new, as is the exploitation of local connectedness (see Proposition~\ref{p.locallyconnected} and Lemma~\ref{l_perturbation_T_and_cO}).

\smallskip

We wish to
establish the following result, which in particular implies 
Theorem~\ref{expandingJTPO}:

\begin{thml}[Joint TPO for expanding maps] \label{t.JTPO.general}
	If $X$ is a compact locally connected metric space and $\alpha \in (0,1]$, then the joint periodic locking set $\fL^\alpha(X)$ is an open and dense subset of $\cE(X) \times \Holder{\alpha}(X)$.
\end{thml}

\subsection{Joint perturbation preparation: local connectedness, shadowing, and a Ma\~n\'e lemma}

To prove Theorem~\ref{t.JTPO.general}, we require a number of preparatory results.	
Firstly, we 
show that local connectedness of $X$ affords a degree of control over the surjectivity radius $\delta$, a result which will ultimately lead to the key Locally Connected Shadowing Lemma (Lemma~\ref{l_perturbation_T_and_cO}) on the existence of shadowing orbits for perturbed maps.

\begin{prop}\label{p.locallyconnected}
	Let $X$ be a compact locally connected metric space. Given $\gamma>0$, there exists $\delta \in (0,\gamma)$ such that for all $\lambda>1$, if $T\in \cE(X)$ has expanding constants $(\gamma, \lambda)$, then $B(T(x) , \delta) \subseteq T(B(x,  \delta/\lambda))$ for all $x\in X$.
\end{prop}

\begin{proof}
	As $X$ is compact and locally connected, there exists a finite open cover $\cU$ of $X$ consisting of connected open subsets, each with diameter smaller than $\gamma$. Let $\lambda>1$, and suppose $T\in \cE(X)$ has a pair of expanding constants $(\gamma,\lambda)$. For each $y\in X$ and $U\in \cU$ containing $x \= T(y)$, define $V \= T(B(y, \gamma) ) \cap U$. As $T$ is an open map, $V$ is an open subset of $X$, therefore also an open subset of $U$. Since $T$ is distance-expanding, if $z\in B(y,\gamma) \smallsetminus \oB (y,  \gamma/\lambda)$ then $d(x, T(z)) > \gamma$, so $T \bigl( B(y,\gamma) \smallsetminus \oB(y,  \gamma/\lambda) \bigr) \cap U= \emptyset$, therefore $V = T\bigl( \oB(y, \gamma/\lambda) \bigr) \cap U$. Since $T$ is continuous and $\oB(y, \gamma/\lambda)$ is compact, $T \bigl( \oB(y, \gamma/\lambda) \bigr)$ is also compact. As $X$ is compact and Hausdorff, $T \bigl( \oB(y, \gamma/\lambda) \bigr)$ is a closed subset of $X$, so $V$ is also a closed subset of $U$. As $U$ is connected and $V \neq \emptyset$, we have $V=U$. 
	
	Now let $\delta>0$ be a Lebesgue number for $\cU$; clearly $\delta <\gamma$, since the diameters of all members of $\cU$ are smaller than $\gamma$. For each $z\in X$, let $x \= T(z)$. For each $y\in B(x,\delta)$, there exists $U \in \cU$ such that $x, \, y \in U$. As $U \subseteq T(B(z,\gamma))$, there exists $w \in T^{-1}(y) \cap B(z,\gamma)$, and the distance-expanding property of $T$ means that $w\in B(z,\delta/\lambda) $. 
    That is, $B(T(z), \delta) \subseteq T(B(z, \delta/\lambda))$ for any $z\in X$, as required. 
\end{proof}

\begin{lemma}\label{l.dLip}
Let $X$ be a compact metric space, and
suppose $T_0\in \cE(X)$ has expanding constants $(\gamma, \lambda)$. If $T\in \cE(X)$ with $d_{\Lip}(T_0,T)< \lambda-1$, then $(\gamma, \lambda- d_{\Lip}(T_0,T))$ is a pair of expanding constants for $T$.
\end{lemma}

\begin{proof}
	For all $x, \, y\in X$ with $d(x,y)<\gamma$, 
    from the definition of $d_{\Lip}$ (cf.~(\ref{e.Lip.metric})) we see that
	\begin{equation*}
		\begin{aligned}
			d(T(x), T(y)) &\ge d(T_0(x), T_0(y)) - \abs{d(T_0(x), T_0(y)) - d(T(x), T(y))} \\
			&> \lambda d(x,y) - d_{\Lip}(T_0,T) d(x,y)
            =(\lambda-d_{\Lip}(T_0,T))d(x,y),
		\end{aligned}
	\end{equation*}
	and $\lambda - d_{\Lip}(T_0,T) >\lambda -(\lambda-1) =1$, so 
indeed $(\gamma, \lambda- d_{\Lip}(T_0,T))$ is a pair of expanding constants for $T$, as required.
\end{proof}

Following \cite{PU10}, for any $\alpha>0$ a sequence 
$\{x_n\}_{n\in \N}$ of points in $X$ will be called an 
\emph{$\alpha$-pseudo-orbit} if $d(x_{n+1}, T(x_n) ) \le \alpha$ for all $n\in \N$. 
We say that $\{x_n\}_{n=0}^{k-1}$ is a
\emph{periodic $\alpha$-pseudo-orbit} 
if $d(x_{n+1 \pmod k},T(x_n)) \le \alpha$ for all $0\le n\le k-1$.
Given an $\alpha$-pseudo-orbit $\{x_n\}_{n\in \N}$, and $\beta>0$, a $T$-orbit $\{y_n\}_{n\in \N}$ 
satisfying  $d(x_n,y_n)\le \beta$ for all $n\in \N$
will be called a 
\emph{$\beta$-shadowing orbit} of $\{x_n\}_{n\in \N}$; in this case we also say that $\{y_n\}_{n\in \N}$ 
\emph{$\beta$-shadows} $\{x_n\}_{n\in \N}$.

We require the following result regarding the existence of shadowing orbits.\footnote{Note that the key hypothesis that $B(T(x),\delta) \subseteq T(B(x, \delta/\lambda))$ in our Lemma~\ref{l.shadowing} is implicit rather than explicit in the statement of
\cite[Lemma~4.2.3]{PU10}, having been chosen earlier
(in \cite[Section~4.1]{PU10}).}

\begin{lemma}[{\cite[Lemma~4.2.3]{PU10}}]\label{l.shadowing}
	Let $X$ be a compact metric space, and suppose $T \in \cE(X)$ has a pair of expanding constants $(\delta, \lambda)$ satisfying $B(T(x),\delta) \subseteq T(B(x, \delta/\lambda))$ for all $x\in X$.  
    If $\beta\in (0,\delta)$ and $\tau\in (0, \min \{(\lambda-1)\beta , \, \delta\}]$, then every $\tau$-pseudo-orbit has a $\beta$-shadowing orbit.
\end{lemma}

The following shadowing result, exploiting local connectedness of $X$, is fundamental to our 
method of proof in Theorem~\ref{t.criticaltheorem}.

\begin{lemma}[Locally Connected Shadowing Lemma]\label{l_perturbation_T_and_cO}
	Suppose $X$ is a compact locally connected metric space, $T_0 \in \cE(X)$ has expanding constants $(\gamma, \lambda)$, and $\cO_0$ is a periodic orbit for $T_0$. There exists $D=D(T_0,\cO_0)>0$ 
    such that if $T \in \cE(X)$ with $d_{\Lip}(T_0,T)<D$, then the following hold:
		\begin{enumerate}[label=\rm{(\roman*)}]
		\smallskip
		\item $\LIP(T) \le 2\LIP(T_0)$.
		\smallskip
		
		\item $(\gamma, \lambda- d_{\Lip}(T_0,T))$ is a pair of expanding constants for $T$.
		\smallskip
		
		\item $\cO_0$ is a periodic $d_{\Lip}(T_0,T)$-pseudo-orbit for $T$. 
		\smallskip
		
		\item There exists a $T$-periodic orbit $\cO$ that 
        $\bigl(\frac{d_{\Lip}(T_0,T)}{\lambda-d_{\Lip}(T_0,T)-1}\bigr)$-shadows $\cO_0$, and whose minimum interpoint distance\footnote{Recall (cf.~Subsection~\ref{sct_Notation}) that $\Delta(F) \coloneqq \min \{ d(x,y) : x, \, y \in F, \, x\neq y\}$ if $\card F\ge 2$ and $\Delta(F) \coloneqq +\infty$ if $\card F=1$.}
        satisfies $\Delta(\cO)\geq \frac{1}{2}\Delta(\cO_0)$.  
	\end{enumerate}
    Moreover, if $\card \cO_0 >1$ we can choose 
    	\begin{equation}\label{e.def.D}
					D  \= \min \{ \delta,  \, \LIP(T_0), \,  (\lambda-1) \delta / (1+ \delta) , \,  (\lambda-1) \Delta(\cO_0) / (4+ \Delta(\cO_0)) \},
		\end{equation}
        and if $\card \cO_0 =1$ we can choose 
        \begin{equation}\label{e.def.D.singleton}
					D  \= \min \{ \delta,  \, \LIP(T_0), \,  (\lambda-1) \delta / (1+ \delta)   \},
		\end{equation}
    where $\delta\in (0,\gamma)$ is a surjectivity radius as in Proposition~\ref{p.locallyconnected}, depending only on $T_0$.
\end{lemma}

\begin{proof}
Suppose $T \in \cE(X)$ is such that $d_{\Lip}(T_0,T)<D$.
	
	(i) As $D \le \LIP(T_0)$ (cf.~(\ref{e.def.D}), (\ref{e.def.D.singleton})), by (\ref{e.Lip.metric}), for all $x, \, y\in X$ we have
	\begin{equation*}
		\begin{aligned}
			d(T(x), T(y)) &\le d(T_0(x), T_0(y)) + \abs{d(T_0(x), T_0(y))-d(T(x), T(y))} \\
			&\le (\LIP(T_0) +d_{\Lip}(T_0,T))  d(x,y) \le 2\LIP(T_0) d(x,y),
		\end{aligned}
	\end{equation*}
	and thus $\LIP(T) \le 2\LIP(T_0)$, as required.

  (ii) \& (iii)	Now $d_{\Lip}(T_0,T)<D  < \lambda-1$ (cf.~(\ref{e.def.D}), (\ref{e.def.D.singleton})), so applying Lemma~\ref{l.dLip} gives that $(\gamma, \lambda- d_{\Lip}(T_0,T))$ is a pair of expanding constants for $T$, as required in (ii). 
  Now  $d(T_0(x), T(x)) \le d_{\Lip}(T_0,T) $
  for all $x \in X$,
  so $\cO_0$ is a $d_{\Lip}(T_0,T)$-pseudo-orbit for $T$, as required in (iii).
  
  (iv) Define $\lambda' \= \lambda- d_{\Lip}(T_0,T)$, so that Proposition~\ref{p.locallyconnected} gives 
  \begin{equation}\label{e.openbound}
  	B(T(x), \delta) \subseteq T(B(x, \delta/\lambda'))
  \end{equation} 
  for all $x\in X$. Now $D \le \delta$ and $D \le \frac{(\lambda-1)\delta}{1+\delta}$ (cf.~(\ref{e.def.D}), (\ref{e.def.D.singleton})), so $d_{\Lip}(T_0,T)<D \le \delta$ and $d_{\Lip}(T_0,T)<D \le (\lambda-D-1) \delta < (\lambda' -1)\delta$. Thus by (\ref{e.openbound}), we can apply Lemma~\ref{l.shadowing} to $T$ and $\cO_0$, so as to obtain the required $T$-periodic orbit $\cO$ that $\frac{d_{\Lip}(T_0,T)}{\lambda'-1}$-shadows  $\cO_0$, with $\card \cO \le \card \cO_0$. If
  $\card \cO =1$ then the minimum interpoint distance $\Delta(\cO)$ is by definition equal to $+\infty$ (cf.~Subsection~\ref{sct_Notation}), so the inequality $\Delta(\cO)\geq \frac{1}{2}\Delta(\cO_0)$ is immediate.
  If $\card \cO >1$, then  
  \begin{equation}\label{shadowing_quality}
  \frac{d_{\Lip}(T_0,T)}{\lambda'-1} < \frac{D}{\lambda-D-1} \le \frac{1}{4} \Delta(\cO_0),
  \end{equation}
  using the fact that
  $D \le \frac{(\lambda-1) \Delta(\cO_0)}{ 4+\Delta(\cO_0) }$ (cf.~(\ref{e.def.D})).
    Now
  $\cO_0$ is $\frac{d_{\Lip}(T_0,T)}{\lambda'-1}$-shadowed by $\cO$, so (\ref{shadowing_quality}) 
  implies that it is $\frac{1}{4}\Delta(\cO_0)$-shadowed by $\cO$.
  Hence, for distinct points $x, \, y \in \cO$, there exist $x' , \, y' \in \cO_0$ with $d(x,x') \le (1/4) \Delta(\cO_0)$ and $d(y,y') \le (1/4) \Delta(\cO_0)$. Now $d(x',y') \ge \Delta(\cO_0)$ by definition, so $d(x,y) \ge d(x',y') - d(x,x') - d(y,y') \ge (1/2)\Delta(\cO_0) $, but $x, \, y$
  are arbitrary distinct members of $\cO$, therefore $\Delta(\cO) \ge (1/2) \Delta(\cO_0)$, as required.
\end{proof}

Versions of the following Theorem~\ref{l.mane}, often referred to as the
\emph{Ma\~n\'e Lemma}\footnote{The term \emph{Ma\~n\'e lemma} originated in \cite{Bou00}, in view of the resemblance to a result of Ma\~n\'e \cite[Theorem~B]{Man96} in the context of Lagrangian flows.}, are well known; 
in the generality of our setting (i.e.,~open distance-expanding Lipschitz maps on arbitrary compact metric spaces),
the semi-norm bound\footnote{The bound in (\ref{e.psi<0_subnormbound}) on the semi-norm of the sub-action $u$, by a fixed constant multiple of the semi-norm of $\phi$, will be important in our subsequent proof of Joint TPO, where both the map $T$ and function $\phi$ vary; in the proofs of Individual TPO, as in \cite{Boc19,Co16,HLMXZ25}, this additional control is not required.}
(\ref{e.psi<0_subnormbound}) appears
as \cite[Proposition~3.6]{LiSu26} (see also \cite[Theorem~3.1]{Bou11}, \cite[Theorem~1.3]{STY24}). It is well known that if $T\in \cE(X)$ has expanding constants $(\gamma, \lambda)$, then there exists a constant $\kappa(T)>0$ such that $B(T(x), \kappa(T)) \subseteq T( B(x, \gamma/2))$ for all $x\in X$ (see e.g.\ \cite[Lemma~4.1.2]{PU10}).

\begin{theorem}[\bf Ma\~n\'e Lemma]\label{l.mane}
	Let $X$ be a compact metric space. Suppose $T\in \cE(X)$ has expanding constants $(\gamma, \lambda)$, and $\alpha\in (0,1]$.
    The constant $L=L(T,\alpha)>0$ defined by
    \begin{equation}\label{Lmaxdefn}
		L \= \max \bigl\{  (\lambda^\alpha-1)^{-1}, \,   2  N \kappa(T)^{-\alpha}(\diam X)^\alpha   +  N  (\lambda^\alpha-1)^{-1}\bigr\}
	\end{equation}
    is
    such that
    for each $\phi\in \Holder{\alpha} (X)$, there exists $u \in \Holder{\alpha}(X)$ satisfying 
	\begin{equation}
		\psi \= \phi - \mpe(T, \phi) + u - u \circ T \le 0  \quad \text{ and }\quad \label{e.psi<0_subnormbound}  
		\Hseminorm{\alpha}{u} \le L \Hseminorm{\alpha}{\phi} 	,    
	\end{equation}
    where $N$ is the maximum cardinality of a $\min\{ (\lambda -1) \min\{ \gamma/2,\, \kappa(T)\},\, \kappa(T)  \}$-separated subset of $X$.
\end{theorem}

\subsection{Joint perturbation}

The following Theorem~\ref{t.criticaltheorem} is the key result that will allow us to prove 
the
openness of
     $\cE(X) \times \Holder{\alpha}(X)$
(Theorem~\ref{openjointperiodiclockingset}),
from which both
Theorem~\ref{t.JTPO.general}, and
Theorem~\ref{expandingJTPO}, will follow. 
The underlying \emph{joint perturbation} idea will be a motif throughout the article, re-occurring in different forms in the context of Anosov diffeomorphisms (see Theorems~\ref{jointperturbationuniformhyp}
and \ref{anosovJTPO_C1}), and beta-transformations
(see Theorem~\ref{l_8_main_lemma}).
A feature of the proof is that all constants in the perturbation are made completely explicit, an approach that also facilitates our proof of the Effective Joint TPO theorem (Theorem~\ref{t.JTPO.explicit}).

\begin{theorem}[Joint Perturbation: expanding maps]\label{t.criticaltheorem}
	Let $X$ be a compact locally connected metric space. Suppose $T_0 \in \cE(X)$ has expanding constants $(\gamma, \lambda)$, and $\alpha\in (0,1]$. Let $\cO_0$ be a $T_0$-periodic orbit. Let $D=D(T_0,\cO_0)>0$ be as in Lemma~\ref{l_perturbation_T_and_cO}, and assume
    that $D< \min\{ 1 , \, \lambda -1 \}$.	
	Then there exists $C>0$ such that for all $T \in\cE(X)$ with $d_{\Lip}(T_0,T)<D$, there exists a $T$-periodic orbit $\cO$ such that for all $\phi \in \Holder{\alpha}(X)$ with $\Mmax(T_0,\phi) = \{\mu_{\cO_0}\}$, the unique $T$-maximizing measure for the function $\phi - 2C \Hseminorm{\alpha}{\phi} d_{\Lip}(T_0,T)^{\alpha/2} d( \cdot, \cO)^\alpha$	is $\mu_{\cO}$. 
\end{theorem}

\begin{proof}
    Let $\delta\in (0,\gamma)$ be as in Proposition~\ref{p.locallyconnected}.  Let $L=L(T_0,\alpha)>0$ be as in the Ma\~n\'e Lemma (Theorem~\ref{l.mane}). Define constants
	\begin{align}
		p & \= \card \cO_0, \label{e.def.p} \\
		r & \= \min  \{  \Delta(\cO_0) / 4, \, \gamma \}, \label{e.def.r} \\
    	L_1 & \= 1+   ((\lambda-D)^\alpha -1 )^{-1} + 2L \, (2 \LIP(T_0))^\alpha, \label{e.def.X} \\
		L_2 & \= L + 1 + (\lambda-D-1)^{-\alpha}, \label{e.def.Y} \\
		C & \= \max  \{  1, \, L_2(1+p+L_1) ( 2 \LIP(T_0)/r )^\alpha \}. \label{e.def.C}
	\end{align}

    Fix $\phi \in \Holder{\alpha}(X)$ with $\Mmax(T_0,\phi) = \{\mu_{\cO_0}\}$. Then $\Hseminorm{\alpha}{\phi} \neq 0$.
	 By the Ma\~n\'e Lemma (Theorem~\ref{l.mane}) applied to the map $T_0$, there exist $u, \psi\in\Holder{\alpha}(X)$ satisfying (\ref{e.psi<0_subnormbound}), in other words,
    \begin{equation}\label{e.psi<0_subnormbound_reiterated}
    	\psi \= \overline{\phi} + u - u \circ T_0 \le 0 \quad \text{ and } \quad
        \Hseminorm{\alpha}{u} \le L \Hseminorm{\alpha}{\phi},
        \end{equation}
        where we recall that $\overline{\phi}$ denotes $ \phi - \mpe (T_0,\phi)$.

Now suppose $T \in \cE(X)$ is such that $d_{\Lip}(T_0,T)<D$, and
let $\cO$ be the periodic orbit from Lemma~\ref{l_perturbation_T_and_cO}~(iv).
Without loss of generality we assume $T\neq T_0$. Combining (\ref{e.def.r}) and Lemma~\ref{l_perturbation_T_and_cO}~(iv), we see that
	\begin{equation}\label{e.gapbound}
         r\leq  \Delta(\cO) /2 .        
	\end{equation}
	Now define $\psi' \= \overline{\phi} + u - u \circ T$ and 
	\begin{equation}\label{e.def.tau}
		\tau \= ( L + 1 ) \Hseminorm{ \alpha }{\phi} d_{\Lip}(T_0,T)^\alpha > 0,
	\end{equation}
    so that (\ref{e.psi<0_subnormbound_reiterated}) gives
	\begin{equation}\label{e.psi'<tau}
		\psi'(x) \le \psi(x) + \abs{u(T_0(x)) - u(T(x))} \le L \Hseminorm{ \alpha}{\phi} d_{\Lip}(T_0,T)^\alpha < \tau
        \quad\text{	for all } x\in X.
	\end{equation}
	Define functions
	\begin{align}
		\xi &\= \overline{\phi} - C \Hseminorm{\alpha}{\phi} d_{\Lip}(T_0,T)^{\alpha/2} d(\cdot, \cO)^\alpha \quad\text{ and} \label{e.def.xi} \\
		\xi' &\= \xi + u - u \circ T = \psi' - C \Hseminorm{\alpha}{\phi} d_{\Lip}(T_0,T)^{\alpha/2} d(\cdot, \cO)^\alpha, \label{e.def.xi'} 
	\end{align}
	where the second equality in (\ref{e.def.xi'}) follows from (\ref{e.def.xi}) and the definition of $\psi'$. 
    Define $\eta \= \int\! \xi' \,\mathrm{d}\mu_{\cO}$, so that (\ref{e.def.xi'}), (\ref{e.def.xi}),  Lemma~\ref{l_perturbation_T_and_cO}~(iv), 
    and the fact that $\int\! \overline{\phi} \,\mathrm{d}\mu_{\cO_0}=\mpe(T_0,\overline{\phi})=0$,
    give
	\begin{equation}\label{e.eta}
		\begin{aligned}
			\eta &= \int\! \xi' \,\mathrm{d}\mu_{\cO} = \int\! \xi \,\mathrm{d}\mu_{\cO} =\int\! \overline{\phi} \,\mathrm{d}\mu_{\cO} \\
			&\ge \int\! \overline{\phi} \,\mathrm{d}\mu_{\cO_0} - \Hseminorm{\alpha}{\phi} \biggl( \frac{d_{\Lip}(T_0,T)}{\lambda-d_{\Lip}(T_0,T)-1} \biggr)^\alpha \ge -\Hseminorm{\alpha}{\phi} \biggl( \frac{d_{\Lip}(T_0,T)}{\lambda-D-1} \biggr)^\alpha.
		\end{aligned}
	\end{equation} 
	Combining (\ref{e.def.tau}), 
    (\ref{e.eta}), and recalling that 
    $L_2 = L + 1 + (\lambda-D-1)^{-\alpha}$
    (cf.~(\ref{e.def.Y})), we obtain
	\begin{equation}\label{e.tau-eta}
		\tau-\eta \le L_2 \Hseminorm{\alpha}{\phi} d_{\Lip}(T_0,T)^\alpha.
	\end{equation}
	We wish to prove that $\mu_{\cO} \in \Mmax(T,\xi)$, which is equivalent to showing that $\mu_{\cO} \in \Mmax(T, \xi')$, and by \cite[Proposition~2.2]{Je19} it suffices to establish that 
	\begin{equation}\label{e.sufcon}
		\liminf_{n\to +\infty} \frac{1}{n} S_n^{T} \xi'(x) \le \eta \quad \text{ for all } x\in X. 
	\end{equation}
	By (\ref{e.eta}), (\ref{e.def.xi'}), and (\ref{e.psi'<tau}), we see that
    \begin{equation*}
        \eta
        = \int\! \xi' \,\mathrm{d}\mu_{\cO}
        = \int\! \psi' \,\mathrm{d}\mu_{\cO}
        < \tau. 
    \end{equation*}
    Combined with the fact that $\Hseminorm{\alpha}{\phi} \neq 0$ and $T\neq T_0$, this allows us to define
	\begin{equation}\label{e.def.rho}
		\rho \=  \bigl(   C \Hseminorm{\alpha}{\phi} d_{\Lip}(T_0,T)^{\alpha/2}  \big/ (  \tau-\eta )  \bigr)^{- 1/\alpha} > 0.
	\end{equation}
	By (\ref{e.psi'<tau}) and (\ref{e.def.xi'}), 
	\begin{equation}\label{e.xi'<eta}
		\xi'(x) \le \eta \quad \text{ if } x\notin B(\cO, \rho).
	\end{equation}
	
	Fixing $x\in X$, we will recursively construct two sequences $\{x_t\}_{t\in \N}$ in $X$ and $\{n_t\}_{t\in \N}$ in $\N$, with the properties that $x_{t+1} = T^{n_t} (x_t)$ and 
	\begin{equation}\label{e.recursiveaim}
		S_{n_t}^{T} \xi' (x_t) \le n_t \eta \quad \text{ for all } t\in \N.
	\end{equation}  
	
	\smallskip
	\emph{Base step.} Define $x_1 \= x$.
	
	\smallskip
	\emph{Recursive step.} Assume that for some $t\in \N$, $\{x_i\}_{i=1}^{t}$ and $\{n_i\}_{i=1}^{t-1}$ are defined. Consider the following three cases.
	
	\smallskip
	\emph{Case A.} Assume $x_t \in \cO$. Define $n_t \= p $ and $x_{t+1} \= T^p(x_t)$. Then (\ref{e.recursiveaim}) follows from the definition of $\eta$.
	
	\smallskip
	\emph{Case B.} Assume $x\notin B(\cO, \rho)$. Define $n_t \= 1$ and $x_{t+1} \= T(x_t)$, so that (\ref{e.recursiveaim})  follows from (\ref{e.xi'<eta}).
	
	\smallskip
	\emph{Case C.} Assume $x\in B(\cO, \rho) \smallsetminus \cO$. 
    Using (\ref{e.def.rho}), (\ref{e.tau-eta}),
    the fact that $C \ge L_2 D^{\alpha/2} ( 2 \LIP(T_0) / r )^\alpha$ (cf.~(\ref{e.def.C})), the fact that $d_{\Lip}(T_0,T)<D$,
     and Lemma~\ref{l_perturbation_T_and_cO}~(i), we obtain 
	\begin{equation}\label{e.rho<r}
		\rho \le  r / (2 \LIP(T_0)) \le  r / \LIP(T) .
	\end{equation}
	Let $y\in \cO$ be such that $d(x_t, \cO) = d(x_t, y)$, and define
	\begin{align}
		N &\= \min \bigl\{ i\in \N_0 : d\bigl(T^{i+1}(x_t), T^{i+1}(y)\bigr) \ge r \bigr\} , \label{e.def.N} \\
		m & \= \min\bigl\{ i \in \N_0 : d\bigl(T^i(x_t), T^i(y)\bigr) \ge \rho \bigr\}. \label{e.def.m}
	\end{align}
	The existence of $N$ follows from the fact that $r\le \gamma$ (cf.~(\ref{e.def.r})) and the existence of $m$ follows from $\rho<r$ (by (\ref{e.rho<r}) and $\LIP(T)>1$). By (\ref{e.def.r}), (\ref{e.gapbound}), and Lemma~\ref{l_perturbation_T_and_cO}~(ii), for $0 \le i \le N$, 
	\begin{equation}\label{e.expanding}
     \begin{aligned}
		d\bigl(T^i(x_t), \cO\bigr) 
        &= d\bigl(T^i(x_t), T^i(y)\bigr) \\
        &\le \frac{ d( T^{i+1}(x_t) , T^{i+1}(y) )}{\lambda-d_{\Lip}(T_0,T)} 
        < \frac{ d( T^{i+1}(x_t) , T^{i+1}(y) )}{\lambda-D} . 
     \end{aligned}
	\end{equation} 
	By (\ref{e.rho<r}), $m$ exists and satisfies
	\begin{equation}\label{e.1<m<N}
		1 \le m \le N.
	\end{equation}
	In this case, we define $n_t \= N+1$ and $x_{t+1} \= T^{N+1}(x_t)$.
	
	Next, we estimate $S_{n_t}^{T} \xi'(x_t)$ so as to deduce (\ref{e.recursiveaim}). Noting that $\xi'\le \psi'$, direct calculation gives
	\begin{equation}\label{e.4terms}
		S_{n_t}^{T} \xi'(x_t) \le S_m^{T} \psi' (y) + \Absbig{S_m^{T} \psi' (y)- S_m^{T} \psi' (x_t) } + S_{N-m}^{T} \xi' (T^m(x_t)) + \xi'\bigl(T^N(x_t)\bigr).
	\end{equation}
	We now estimate the four terms on the right-hand side of (\ref{e.4terms}). For the first term, write $m = qp+l$ for some $q \in \N_0$ and $0\le l \le p-1$, recalling that $p=\card \cO_0$ (cf.~(\ref{e.def.p})). Using (\ref{e.psi'<tau}), 
	\begin{equation}\label{e.ineq1}
		S_m^{T} \psi' (y) = q S_p^{T} \psi' (y) + S_l^{T} \psi' (y) \le qp \eta + l \tau = m \eta +l(\tau-\eta) \le m \eta + p(\tau-\eta).
	\end{equation}
	For the second term on the right-hand side of (\ref{e.4terms}), note that (\ref{e.1<m<N}), Lemma~\ref{l_perturbation_T_and_cO}~(i), and (\ref{e.def.m}) give
	\begin{equation*}
		d(x_t,y) < d(T^m(x_t), T^m(y)) \le \LIP(T) d\bigl(T^{m-1}(x_t), T^{m-1}(y)\bigr) < 2\LIP(T_0) \rho.
	\end{equation*}
	Thus, by (\ref{e.def.xi}), (\ref{e.expanding}), the semi-norm bound in (\ref{e.psi<0_subnormbound_reiterated}), and (\ref{e.def.X}), we estimate
		\begin{align} \label{e.ineq2}
			&\Absbig{S_m^{T} \psi' (y)- S_m^{T} \psi' (x_t) } \\
            &\qquad \le \Absbig{S_m^{T} \overline{\phi} (y)- S_m^{T} \overline{\phi} (x_t) } + \abs{u(x_t) - u(y)} + \abs{u(T^m(x_t)) - u(T^m(y))} \notag\\
			&\qquad \le \Hseminorm{\alpha}{\phi}  \sum_{i=0}^{m-1} d\bigl(T^i(x_t), T^i(y)\bigr)^\alpha + 2\Hseminorm{\alpha}{u} (2\rho \LIP(T_0))^\alpha \notag\\
			&\qquad \le \Hseminorm{\alpha}{\phi} \rho^\alpha \sum_{i=0}^{m-1} (\lambda-D)^{-i\alpha} + 2L \Hseminorm{\alpha}{\phi} (2\rho \LIP(T_0))^\alpha \notag\\
			&\qquad \le \Hseminorm{\alpha}{\phi} \rho^\alpha \bigl( 1 + ((\lambda-D)^\alpha -1)^{-1} + 2L (2 \LIP(T_0))^\alpha \bigr) \notag\\
			&\qquad = L_1\Hseminorm{\alpha}{\phi} \rho^\alpha . \notag
		\end{align}
	For the third term on the right-hand side of (\ref{e.4terms}), by (\ref{e.def.m}), (\ref{e.expanding}), and (\ref{e.xi'<eta}) we have
	\begin{equation}\label{e.ineq3}
		S_{N-m}^{T} \xi' (T^m(x_t)) \le (N-m)\eta,
	\end{equation}
	while for the fourth term, by (\ref{e.def.xi'}), (\ref{e.psi'<tau}), (\ref{e.def.N}), and Lemma~\ref{l_perturbation_T_and_cO}~(i), we have
	\begin{equation}\label{e.ineq4}
		\xi'\bigl(T^N(x_t)\bigr) \le \tau - C \Hseminorm{\alpha}{\phi} d_{\Lip}(T_0,T)^{\alpha/2} ( 2\LIP(T_0) /r )^{-\alpha}.
	\end{equation}
	Finally, note that $d_{\Lip}(T_0,T) < D < 1$, as this was a hypothesis. Combining (\ref{e.4terms}), (\ref{e.ineq1}), (\ref{e.ineq2}), (\ref{e.ineq3}), and (\ref{e.ineq4}), using $C = \max\{ 1, \, L_2(1+p+L_1) (2 \LIP(T_0) /r)^\alpha \}$ (cf.~(\ref{e.def.C})), (\ref{e.def.rho}), and (\ref{e.tau-eta}), and writing $L_3\= d_{\Lip}(T_0,T)$ for simplicity, we have
	\begin{equation*}
		\begin{aligned}
			S_{n_t}^{T} \xi' (x_t) - n_t \eta &\le (p+1)(\tau-\eta) +L_1 \Hseminorm{\alpha}{\phi} \rho^\alpha  -C \Hseminorm{\alpha}{\phi} L_3^{\alpha/2} (2 \LIP(T_0) / r)^{-\alpha} \\
			&\le (\tau-\eta) \bigl(1+p + L_1 L_3^{-\alpha/2}\bigr) - C \Hseminorm{\alpha}{\phi} L_3^{\alpha/2} (2 \LIP(T_0) / r)^{-\alpha} \\
			& \le (\tau-\eta) L_3^{-\alpha/2} (1+p+L_1) -C \Hseminorm{\alpha}{\phi} L_3^{\alpha/2} (2 \LIP(T_0) / r)^{-\alpha} \\
			& \le L_2 \Hseminorm{\alpha}{\phi} L_3^{\alpha/2} (1+p+L_1) - C \Hseminorm{\alpha}{\phi} L_3^{\alpha/2} (2\LIP(T_0)/r)^{-\alpha}\\
			& \le 0.
		\end{aligned}
	\end{equation*}
	So the required inequality (\ref{e.recursiveaim}) holds, and therefore the recursive step is complete.	
	
	\smallskip
	For each $t\in \N$, set $N_t \= \sum_{i=1}^{t} n_i$. By (\ref{e.recursiveaim}), 
	\begin{align*}
		\liminf_{n\to +\infty} \frac{1}{n} S_n^{T} \xi' (x) 
        &\le \liminf_{t\to +\infty} \frac{1}{N_t} S_{N_t}^{T} \xi' (x) \\
        &= \liminf_{t\to +\infty} \frac{1}{N_t} \sum_{i=1}^{t} S_{n_i}^{T} \xi' (x_t) 
        \le \frac{\eta}{N_t} \sum_{i=1}^{t} n_i = \eta.
	\end{align*}
	Thus, (\ref{e.sufcon}) holds, so $\mu_{\cO} \in \Mmax(T, \xi)$. Then by (\ref{e.def.xi}), $\mu_{\cO}$ is the unique $T$-maximizing measure for the function
	$
		\phi - 2C \Hseminorm{\alpha}{\phi} d_{\Lip}(T_0,T)^{\alpha/2} d(\cdot, \cO)^\alpha = \xi + \mpe(T,\phi) - C \Hseminorm{\alpha}{\phi} d_{\Lip}(T_0,T)^{\alpha/2} d(\cdot, \cO)^\alpha$,  
	and so the proof of the theorem is complete.
\end{proof}

The  Joint Perturbation Theorem
(Theorem~\ref{t.criticaltheorem}) can now be used to
prove Theorem~\ref{openjointperiodiclockingset},
asserting that $\fL^\alpha(X)$ is an \emph{open} subset of $\cE(X) \times \Holder{\alpha}(X)$.

\begin{proof}[Proof of 
Theorem~\ref{openjointperiodiclockingset}]
	Let $T_0 \in \cE(X)$, $\alpha \in (0,1]$, and $\phi \in \Lock^\alpha(T_0)$. 
    In this proof, we write $B(\phi, r) \= \bigl\{\psi \in \Holder{\alpha} (X) : \Hnorm{\alpha}{\phi- \psi} < r \bigr\}$ and $B_{\Lip}(T_0,r) \= \{T \in \cE(X) : d_{\Lip}(T_0, T) < r \}$ for $r \geq 0$.
    
    Let $\cO_0$ be the $T_0$-periodic orbit such that $\Mmax(T_0,\phi) = \{\mu_{\cO_0}\}$. Then there exists $\theta>0$ such that if $\psi\in \Holder{\alpha}(X) $ with $\Hnorm{\alpha}{\psi-\phi}<\theta$ then $\Mmax(T_0,\psi) = \{\mu_{\cO_0}\}$. Now $\Hseminorm{\alpha}{ \, \cdot \, } \le \Hnorm{\alpha}{\cdot}$, and $\Hseminorm{\alpha}{\, \cdot \,}$ is sub-additive, so we can assume without loss of generality that $\theta$ is small enough (i.e., $\theta \le 2^{-1} \Hseminorm{\alpha}{\phi}$) such that 
	\begin{equation}\label{e.neighbornormbound}
		\frac{1}{2} \Hseminorm{\alpha}{ \phi } \le \Hseminorm{\alpha}{ \psi } \le \frac{3}{2} \Hseminorm{\alpha}{ \phi } \quad \text{ for all } \psi\in B(\phi, \theta).
	\end{equation} 
	Applying Theorem~\ref{t.criticaltheorem} (to $T_0$ and $\cO_0$), let $C, \, D>0$ be as in that theorem.  Define 
	\begin{equation}\label{e.def.E}
		E \= \min \bigl\{ D, \, 1, \, \lambda - 1,\,  (  18C (1+ \diam (X)^\alpha) \Hseminorm{\alpha}{\phi} / \theta )^{ - 2/\alpha} \bigr\}.
	\end{equation}
    
	Now fix $T \in B_{\Lip}(T_0,E)$ and $\psi \in B(\phi, \theta/2)$. Now $d_{\Lip}(T_0,T) <E \le \min\{D, \, 1, \, \lambda - 1\}$, so let $\cO$ be the $T$-periodic orbit whose existence is guaranteed by Theorem~\ref{t.criticaltheorem}. 
    Set 
	\begin{equation}\label{e.def.psi'}
		\psi' \= \psi + 6C \Hseminorm{\alpha}{\psi} d_{\Lip}(T_0,T)^{\alpha/2} d(\cdot, \cO)^\alpha.
	\end{equation}
	Note that $\Hnorm{\alpha}{d(\cdot, \cO)} = \Hnorm{\infty}{d(\cdot, \cO)} + \Hseminorm{\alpha}{d(\cdot, \cO)} \le 1+ \diam(X)^\alpha$.
		
	Thus, by (\ref{e.def.psi'}), (\ref{e.neighbornormbound}), and (\ref{e.def.E}), we obtain
	\begin{equation*}
		\Hnorm{ \alpha }{\psi-\psi'} \le 6C (1+ \diam(X)^\alpha) \Hseminorm{\alpha}{\psi} E^{\alpha/2} \le 9C (1+ \diam(X)^\alpha) \Hseminorm{\alpha}{\phi} E^{\alpha/2} \le \theta/2,
	\end{equation*}
	and together with the fact that $\psi\in B(\phi, \theta/2)$, we deduce that $\psi'\in B(\phi, \theta)$. So both $\psi$ and $\psi'$ belong to $B(\phi,\theta)$, thus by (\ref{e.neighbornormbound}) we deduce that 
    \begin{equation}\label{e.neighbornormbound.deduction}
    6C \Hseminorm{\alpha}{\psi} 
    \ge 3C \Hseminorm{\alpha}{\phi} 
    \ge 2C \Hseminorm{\alpha}{\psi'}.
    \end{equation}
    Now Theorem~\ref{t.criticaltheorem} asserts that $\mu_{\cO}$ is the unique $T$-maximizing measure for the function 
	\begin{equation*}
		\psi' - 2C \Hseminorm{\alpha}{\psi'} d_{\Lip}(T_0,T)^{\alpha/2} d(\cdot, \cO)^\alpha,
	\end{equation*}
	so by (\ref{e.neighbornormbound.deduction}), together with the fact that $\mu_{\cO}$ is the unique $T$-maximizing measure for $-d(\cdot,\cO)^\alpha$,
    we see that $\mu_{\cO}$
    is also the unique $T$-maximizing measure for 
    $
    	\psi = \psi' - 6C \Hseminorm{\alpha}{\psi} d_{\Lip}(T_0,T)^{\alpha/2} d(\cdot, \cO)^\alpha$.
	Therefore we have established that the open neighbourhood 
    $B_{\Lip}(T_0,E) \times B(\phi, \theta/2)$
    of $(T_0,\phi)$ is contained in
    $\fL^\alpha(X)$. 
    So $\fL^\alpha(X)$ is an open
    subset of $\cE(X) \times \Holder{\alpha}(X)$, as required. 
\end{proof}	

The remaining ingredient needed to prove Theorem~\ref{t.JTPO.general} is
the following individual typical periodic optimization theorem due to
Contreras \cite[Theorem~A]{Co16} (the statement below is closer to the reformulation of Bochi \cite{Boc19}):

\begin{theorem}[Individual TPO for expanding maps \cite{Co16}]\label{co}
	If $X$ is a compact metric space and $T\in\cE(X)$,
	and $\alpha\in(0,1]$,
	then $\Lock^\alpha(T)$ is an open and dense subset of $\Holder{\alpha} (X )$.
\end{theorem}

\begin{rem}
	The definition of \emph{expanding} used in \cite{Co16} corresponds precisely to our definition of \emph{open distance-expanding and Lipschitz continuous}. 
\end{rem}

We are now able to prove Theorem~\ref{t.JTPO.general}:

\begin{proof}
[Proof of Theorem~\ref{t.JTPO.general}]

The joint periodic locking set
$\fL^\alpha(X)$ is open in
     $\cE(X) \times \Holder{\alpha}(X)$, by
 Theorem~\ref{openjointperiodiclockingset}.
If $W$ is any nonempty open set in $\cE(X) \times \Holder{\alpha}(X)$,
 there exist nonempty open sets $U \subseteq \cE(X)$ and $V \subseteq \Holder{\alpha}(X)$ such that
$U \times V \subseteq W$.

Let $T_0 \in U$.
Since $\Lock^\alpha(T_0)$ is dense in $\Holder{\alpha}(X)$, by Theorem~\ref{co},
the set $\Lock^\alpha(T_0)$ must intersect the nonempty open set $V$; 
so there exists $\phi_0 \in \Lock^\alpha(T_0) \cap V$.
We claim that $(T_0,\phi_0) \in W \cap \fL^\alpha(X)$. 
To see this, it suffices to note 
that $\phi_0 \in \Lock^\alpha(T_0)$, so $(T_0,\phi_0) \in \{T_0\} \times \Lock^\alpha(T_0)\subseteq \fL^\alpha(X)$. 
Now $W$ was arbitrary, so $\fL^\alpha(X)$ intersects every nonempty open subset of $\cE(X) \times \Holder{\alpha}(X)$,
in other words $\fL^\alpha(X)$ is indeed a dense subset.

So $\fL^\alpha(X)$ is both open and dense in
     $\cE(X) \times \Holder{\alpha}(X)$, 
as required.
\end{proof}

We can now prove the effective version of Joint TPO
(Theorem~\ref{t.JTPO.explicit}):

\begin{proof}[Proof of Theorem~\ref{t.JTPO.explicit}]
Since $(T_0, \phi)$
belongs to the joint periodic locking set $\fL^\alpha(X)$,
there exists a $T_0$-periodic orbit $\cO_0$ such that
$\Mmax(T_0,\phi)=\{\mu_{\cO_0}\}$. 
Let $p  \= \card \cO_0$ (cf.~(\ref{e.def.p})), 
        let $(\gamma, \lambda)$
       be expanding constants for $T_0$, and let
        $\delta\in (0,\gamma)$ be a surjectivity radius, as in Proposition~\ref{p.locallyconnected}.
        Define $D>0$
    as in Lemma~\ref{l_perturbation_T_and_cO}, by
    \begin{equation*}
					D  \= 
                    \begin{cases}
                    \min \bigl\{ \delta,  \, \LIP(T_0), \,  \frac{(\lambda-1) \delta}{1+ \delta} , \,  \frac{\lambda-1}{2} , \, \frac{(\lambda-1) \Delta(\cO_0)}{4+ \Delta(\cO_0)} \bigr\}&\text{ if $p>1$,}\cr
                    \min \bigl\{ \delta,  \, \LIP(T_0), \,  \frac{(\lambda-1) \delta}{1+ \delta} , \,  \frac{\lambda-1}{2}  \bigr\} &\text{ if $p=1$.}
                    \end{cases}
                    \end{equation*}
Let $N$ be the maximum cardinality of a $\min\{ (\lambda -1) \min\{ \gamma/2,\, \kappa(T_0)\},\, \kappa(T_0)  \}$-separated subset of $X$, 
        and define $L>0$ (cf.~(\ref{Lmaxdefn}))
          by
\begin{equation*}\label{Lmaxdefn.again}
		L \= \max \bigl\{  (\lambda^\alpha-1)^{-1}, \,   2  N \kappa(T_0)^{-\alpha}(\diam X)^\alpha   +  N  (\lambda^\alpha-1)^{-1}\bigr\}
	\end{equation*}
 Now set $r  \= \min  \{  \Delta(\cO_0) / 4, \, \gamma \}$
 (cf.~(\ref{e.def.r})), 
 $L_1  \= 1+   ((\lambda-D)^\alpha -1 )^{-1} + 2L \, (2 \LIP(T_0))^\alpha$
 (cf.~(\ref{e.def.X})), 
  $L_2  \= L + 1 + (\lambda-D-1)^{-\alpha}$
 (cf.~(\ref{e.def.Y})), and
$C  \= \max  \{  1, \, L_2(1+p+L_1) ( 2 \LIP(T_0)/r )^\alpha \}$ (cf.~\ref{e.def.C}).

To find $\theta>0$ such that
\begin{equation}\label{gperturbationtheta}
       \MMM_{\max}(T_0,\psi)=
        \{\mu_{\cO_0}\} \ \text{ for all }\
        \psi\in\Holder{\alpha}(X) \ 
        \text{ with }\
        \|\psi-\phi\|_{ \alpha } < \theta,
        \end{equation}
we let $F(\cO_0)$ denote the $(p-1)$-dimensional 
quotient vector space consisting of all real-valued functions defined on $\cO_0$, where functions are identified if they differ by a constant, and
    let $F_0(\cO_0)$ denote the $(p-1)$-dimensional  vector space consisting of all functions $\phi:\cO_0\to\R$ such that $\int\!  \phi\, \mathrm{d}\mu_{\cO_0}=0$, noting that
    the semi-norm $|\cdot|_\alpha$
    gives a norm on both $F(\cO_0)$ and $F_0(\cO_0)$.
    Let $c_0:F(\cO_0)\to F_0(\cO_0)$ be the invertible linear map defined by $c_0(\phi)=\phi\circ T-\phi$,
    and let $\Normbig{c_0^{-1}}$ be the operator norm of its inverse.
    By \cite[Proposition~5.1]{Bou08}, if
    $\theta_0\=\bigl( 1+ \Normbig{c_0^{-1}}(1+\LIP_\alpha(T_0))\bigr)^{-1}$, where
    $\LIP_\alpha(T_0)$
     denotes the Lipschitz constant (cf.~(\ref{lipschitzconstant})) of $T_0$ with respect to the metric $d^\alpha$ defined by $d^\alpha(x,y)\=d(x,y)^\alpha$,
      then 
      \begin{equation}\label{bouschprop5.1}
        \mu_{\cO_0} \in \MMM_{\max}(T_0,\psi)\ \text{ for all }
   \psi\in\Holder{\alpha}(X)\ \text{ with }\
   \|\psi+d(\cdot,\cO_0)^\alpha\|_\alpha\le \theta_0.
   \end{equation}
        Choosing $l>0$ such that $\mu_{\cO_0}$ is the unique maximizing measure for 
        $\phi + l d(\cdot,\cO_0)^\alpha$, we then claim that $\theta\=\theta_0 l$ satisfies (\ref{gperturbationtheta}).
To prove (\ref{gperturbationtheta}) with $\theta\=\theta_0 l$, note that if $\nu\in\MMM(X,T)\smallsetminus\{\mu_{\cO_0}\}$, since $\mu_{\cO_0}$ is the unique maximizing measure for $\phi + l d(\cdot,\cO_0)^\alpha$, then
        $\mpe(T,\phi)=\int\!  \phi\, \mathrm{d}\mu_{\cO_0}
        =\int\!  (\phi+l d(\cdot,\cO_0)^\alpha)\, \mathrm{d}\mu_{\cO_0}> \int\!  (\phi+l d(\cdot,\cO_0)^\alpha)\, \mathrm{d}\nu$,
        in other words,
        \begin{equation}\label{doubstar}
            \mpe(T,\phi)-\int\!  \phi\, \mathrm{d}\nu >
            l\int\!  d(\cdot,\cO_0)^\alpha\, \mathrm{d}\nu.
        \end{equation}
        If $\|\psi-\phi\|_{\alpha} < \theta_0 l$ then, writing
        $\xi\=\psi-\phi$, we have  
        $\|\xi/l\|_{\alpha} < \theta_0 $,
        so (\ref{bouschprop5.1}) implies that
        $\mu_{\cO_0}$ is a maximizing measure for
        $-d(\cdot,\cO_0)^\alpha + \xi/l$,
        so
        $\int\!  (-d(\cdot,\cO_0)^\alpha + \xi/l)\,\mathrm{d}\nu \le \int\! (-d(\cdot,\cO_0)^\alpha + \xi/l)\,\mathrm{d}\mu_{\cO_0}$, in other words,
        \begin{equation}\label{singstar}
            \int\!  \xi\, \mathrm{d}\nu - \int\!  \xi\, \mathrm{d}\mu_{\cO_0} \le l \int\!  d(\cdot,\cO_0)^\alpha\, \mathrm{d}\nu.
        \end{equation}
        Combining (\ref{doubstar}) and (\ref{singstar}) gives
      $\mpe(T,\phi)-\int\!  \phi\, \mathrm{d}\nu > \int\!  \xi\, \mathrm{d}\nu - \int\!  \xi\, \mathrm{d}\mu_{\cO_0}$,
      and since $\mpe(T,\phi)=\int\! \phi\,\mathrm{d}\mu_{\cO_0}$ then
      $\int\!  (\phi+\xi)\, \mathrm{d}\mu_{\cO_0}> \int\!  (\phi+\xi)\, \mathrm{d}\nu$.
      Since $\nu\in\MMM(X,T)\smallsetminus\{\mu_{\cO_0}\}$ was arbitrary, it follows that $\mu_{\cO_0}$ is the unique maximizing measure for
        $\phi+\xi=\psi$, so (\ref{gperturbationtheta}) is proved with $\theta\=\theta_0 l$.

With this choice of $\theta$, we finally
set $r\=\theta/2$, and
define $E>0$ (cf.~(\ref{e.def.E})) by
    \begin{equation*}\label{e.def.E.again}
		E \= \min \bigl\{ D, \, 1, \, \lambda - 1,\,  (  18C (1+ \diam (X)^\alpha) \Hseminorm{\alpha}{\phi} / \theta )^{ - 2/\alpha} \bigr\},
	\end{equation*}
    and then the argument used in the proof of Theorem~\ref{openjointperiodiclockingset}
    shows that
    the product of nonempty open balls 
    $B_{\Lip}(T_0,E) \times B(\phi, r)$
    of $(T_0,\phi)$ is contained in
    $\fL^\alpha(X)$, as required. 
    \end{proof}

As mentioned in Section~\ref{s:introduction}, a class
of expanding maps of particular interest consists of
those on compact Riemannian manifolds (see e.g.~the exposition in \cite[Chapter 6]{URM22}), as investigated initially by
 Shub \cite{Sh69, Sh70}, and later notably by Gromov
\cite{Gr81}. 
Our Joint TPO result for expanding maps has an immediate corollary in the setting of compact Riemannian manifolds:

\begin{theorem}[Joint TPO for expanding maps on manifolds] \label{expandingJTPOmanifolds}
Suppose $M$ is a compact Riemannian manifold, with distance function induced by the Riemannian metric, and $\alpha \in (0,1]$.
There is an open dense subset of pairs 
$(T,\phi)\in \cE(M)\times \Holder{\alpha}(M)$ 
with the periodic optimization property.
\end{theorem}

\begin{rem}\label{LDPremark}
As mentioned in Subsection~\ref{JTPO_intro_subsection},
the fact that Theorem~\ref{expandingJTPO} implies
\emph{joint typical uniqueness} of the maximizing measure
has an interpretation in terms of a large deviation
principle for zero-temperature limits of equilibrium states.
Specifically, if $\mu_{T,t\phi}$ denotes an equilibrium state for the map $T$ and potential $t\phi$, then for
an open dense subset of pairs $(T,\phi)\in \cE(X)\times \Holder{\alpha}(X)$, the family
$\{\mu_{T,t\phi}\}_{t\in(1,+\infty)}$
satisfies the large deviation principle as $t\to+\infty$,
in other words there exists a lower semi-continuous function $I \: X \rightarrow [0,+\infty]$ 
such that
$\liminf_{t\to+\infty}\frac{1}{t}\log\mu_{T,t\phi}(\cG)\geq-\inf_{x\in \cG}I(x)$
if $\cG\subseteq X$ is open,
		and $\limsup_{t\to+\infty}\frac{1}{t}\log\mu_{T,t\phi}(\cK)\leq-\inf_{x\in \cK}I(x)$
if $\cK\subseteq X$ is closed.
     This follows from results in \cite{LiSu26}
(cf.~\cite{Ana04, Wan19} for related phenomena in the 
zero-temperature limit of, respectively, Lagrangian dynamics and SLE). 
\end{rem}

\subsection{Anosov diffeomorphisms}\label{anosovsubsection}

Here we will prove
joint typical periodic optimization 
for the space of $C^1$ Anosov diffeomorphisms.\footnote{The treatment in this subsection is more streamlined and less detailed than the proofs of Joint TPO for expanding maps and beta-transformations, on the one hand so as to minimise repetition of similar arguments, on the other hand in view of the more comprehensive study 
\cite{HHJL26}
of Joint TPO for general hyperbolic systems.}
Recall that for a smooth compact  Riemannian manifold $M$,
if a diffeomorphism $f:M\to M$ has a hyperbolic structure on all of $M$ then it is said to be Anosov 
(see e.g.~\cite{KH95, Ni71, Robi95, Wen16}).
We require the following joint perturbation result: 

\begin{theorem}[Joint Perturbation: Anosov diffeomorphisms]\label{jointperturbationuniformhyp} 
Let $M$ be a smooth compact  Riemannian manifold, with  distance function $d$ induced by the Riemannian metric, and let
$\cA(M)$ be the space of $C^1$ Anosov diffeomorphisms on $M$, equipped with the $C^1$ topology.
Let $\alpha\in(0,1]$.
Let $f\in\cA(M)$, 
and suppose $\cO$ is an $f$-periodic orbit.
There exists a neighbourhood $U\subseteq \cA(M)$ of $f$,
and $C>0$, such that for all $g\in U$, 
and all
$\phi\in \Holder{\alpha}(M)$ with 
$\MMM_{\max}(f,\phi)=\{\mu_{\cO}\}$, there exists a topological conjugacy $h_g$ with $h_g \circ f = g \circ h_g$, 
and if $\cO_g\= h_g(\cO)$ then
$\mu_{\cO_g}$ is
 the unique $g$-maximizing measure for the function
$\phi-2C\Hseminorm{\alpha}{\phi} d_\infty(h_g,\id)^{\alpha/2} d(\cdot,\cO_g)^\alpha$,
where
$d_\infty(h_g,\id)\=\max_{x\in M} d(h_g(x),x)$.
\end{theorem}
\begin{proof} 
The proof of this result includes aspects that are similar in spirit to those for
the joint perturbation theorem for expanding maps
(Theorem~\ref{t.criticaltheorem})
and beta-transformations (Theorem~\ref{l_8_main_lemma}), and we reflect this by wherever possible adopting analogous notation.
Since the proofs of Theorems~\ref{t.criticaltheorem}
and~\ref{l_8_main_lemma} are given in full detail, and
in order to minimise repetition, we
therefore only indicate the general strategy of the proof here,
omitting calculations which closely resemble those given in the proofs of Theorems~\ref{t.criticaltheorem}
and~\ref{l_8_main_lemma}.

Using e.g.~\cite[Theorem~4.6, Lemmas~4.8, and~4.11]{Wen16} it can be shown that for all $f\in\cA(M)$, there exist $K,\delta>0$, $\lambda>1$, and a neighbourhood $U$ of $f$ in $\cA(M)$, such that if $g\in U$, $n\in\N$, and
$\max_{0\le i\le n} d\bigl(g^i(x),g^i(y)\bigr)<\delta$, then
\begin{equation}\label{uniformexponentialinstability}
d\bigl(g^i(x),g^i(y)\bigr) \le K \lambda^{-\min\{i,\,n-i\}}( d(x,y)+d(g^n(x),g^n(y) )\quad\text{ for all }0\le i\le n.
\end{equation} 
Moreover $\cA(M)$ has structural stability 
(see e.g.~\cite[Theorems~4.19 and~4.20]{Wen16}):
the above $U$ can be chosen such that if
$g\in U$
then
$\LIP(g) \le 2\LIP(f)$,
and there is a homeomorphism $h_g:M\to M$ with $ h_g\circ f = g\circ h_g$, and 
$d_\infty(h_g,\id)
<\min\{\Delta(\cO)/4,1\}$, which implies $\Delta(\cO_g) \ge \Delta(\cO)/2$.
For Anosov diffeomorphisms there is also a Ma\~n\'e lemma with semi-norm control
(cf.~\cite[Theorem~3.1 and Section~4]{Bou11}):
there exists $L>0$ (depending on $f$) such that given 
$\phi\in\Holder{\alpha}(M)$, there exists $u\in\Holder{\alpha}(M)$ with
$\Hseminorm{\alpha}{u} \le L \Hseminorm{\alpha}{\phi}$
and
$\psi\= \overline{\phi}+u-u\circ f \le 0$.
Defining $r\=\min\{\Delta(\cO)/(8\LIP(f)), \delta\}$, we have
$r\le \Delta(\cO)/(8\LIP(f)) \le \Delta(\cO_g)/(4\LIP(f))\le \Delta(\cO_g)/(2\LIP(g))$.

Defining $\psi_g\=\overline{\phi}+u-u\circ g$, we estimate $\psi_g\le \tau \=  \operatorname{O}(\Hseminorm{\alpha}{\phi} d_\infty(h_g,\id)^\alpha)$
(cf.~(\ref{e.psi'<tau})); note, however, that unlike in (\ref{e.psi'<tau}), here we estimate $\psi_g$ by using that
\begin{align*}
    d_\infty(f,g) 
    &\le d_\infty(f,h_g\circ f) + d_\infty(g,g\circ h_g) \\
    &\le (1+\LIP(g))d_\infty(h_g,\id) 
    \le (1+2\LIP(f)) d_\infty(h_g,\id),
\end{align*}
and therefore $\Hnorm{\infty}{u\circ f - u\circ g} \le L\Hseminorm{\alpha}{\phi} (1+2\LIP(f))^\alpha d_\infty(h_g,\id)^\alpha $.
Defining $\eta\=\int \! \psi\, \mathrm{d}\mu_{\cO_g}$ we see that 
$\eta = -\operatorname{O}(\Hseminorm{\alpha}{\phi} d_\infty(h_g,\id)^\alpha)$ (cf.~(\ref{e.eta})).
For $C>0$, define $\phi_C\=\phi-C\Hseminorm{\alpha}{\phi} d_\infty(h_g,\id)^{\alpha/2} d(\cdot,\cO_g)^\alpha$ and
$\psi_C\=\psi_g-C\Hseminorm{\alpha}{\phi} d_\infty(h_g,\id)^{\alpha/2} d(\cdot,\cO_g)^\alpha$, and note that $\eta=\int\! \psi_C\, \mathrm{d}\mu_{\cO_g}$.

We wish to show that $\mu_{\cO_g}$ is
$(g,\psi_C)$-maximizing, i.e.,~(cf.~\cite[Proposition~2.2]{Je19}) that for $x\in M$,
\begin{equation}\label{gpsiliminf}
\liminf_{n\to+\infty} \frac{1}{n} S_n^g\psi_C(x)\le\eta.
\end{equation}
Define $\rho\=\bigl(\frac{\tau-\eta}{C\Hseminorm{\alpha}{\phi} d_\infty(h_g,\id)^{\alpha/2}}\bigr)^{1/\alpha}$, to make $\psi_C(x)\le\eta$ if
$x\notin B(\cO_g,\rho)$.
For $x\in M$ we recursively define sequences $\{x_t\}_{t\in\N}$ and $\{n_t\}_{t\in\N}$,
such that $x_{t+1}=g^{n_t}(x_t)$ for all $t$.
Set $x_1\=x$, then as a recursive step assume, for $t\in\N$, that $\{x_i\}_{i=1}^t$ and $\{n_i\}_{i=1}^{t-1}$ are defined, and consider the following three
(exhaustive and mutually exclusive) cases.
As a first case, if $x_t\notin B(\cO_g,\rho)$ then let $n_t\=1$ and $x_{t+1}\= g(x_t)$.
As a second case (in the purely expanding context of Theorem~\ref{t.criticaltheorem}, this corresponds to the trivial case $x_t \in \cO_g$), suppose that $\cO^g(x_t)\subseteq B(\cO_g,r)$.
Let $y\in\cO_g$ be such that $d(x_t,\cO_g)=d(x_t,y)\le r$.
Since $r\le \Delta(\cO_g)/(2\LIP(g))$ then $d(g(x_t),g(y))\le \Delta(\cO_g)/2$, thus $d(g(x_t),\cO^g(y))=d(g(x_t),\cO_g)\le r$.
By induction it can then be shown that for all $n\in\N$, $d(g^n(x_t),g^n(y))=d(g^n(x_t),\cO_g)\le r$.
Now $r\le \delta$, so 
\begin{align*}
&    \liminf_{n\to+\infty} \frac{1}{n} S_n^g\psi_C(x_t) \\
&\qquad\le \liminf_{n\to+\infty}\frac{1}{n} S_n^g\psi_g(x_t) 
= \liminf_{n\to+\infty}\frac{1}{n}S_n^g \overline{\phi}(x_t) \\
&\qquad\le \lim_{n\to+\infty}\frac{1}{n}S_n^g\overline{\phi}(y) 
+\liminf_{n\to+\infty}\frac{1}{n}\Hseminorm{\alpha}{\phi} \sum_{i=0}^{n-1} d\bigl(g^i(x_t),g^i(y)\bigr)^\alpha \\
&\qquad\le \eta + \liminf_{n\to+\infty} \frac{2}{n}K\Hseminorm{\alpha}{\phi} \frac{\lambda^\alpha}{\lambda^\alpha-1}\bigl(d(x_t,y)+d\bigl(g^{n-1}(x_t),g^{n-1}(y)\bigr)\bigr)^\alpha,
\end{align*}
therefore $\liminf_{n\to+\infty} \frac{1}{n} S_n^g\psi_C(x_t)
\le \eta+  \liminf_{n\to+\infty}\frac{2K}{n} (2r)^\alpha \Hseminorm{\alpha}{\phi} \frac{\lambda^\alpha}{\lambda^\alpha-1} 
=\eta$, so (\ref{gpsiliminf}) follows. 

The third case is where $\cO(x_t)$ is not contained in $B(\cO_g,r)$, but $x_t\in B(\cO_g,\rho)$.
Let $y\in \cO_g$ with $d(x_t,y)=d(x_t,\cO_g)$.
Define 
$$N\=\min\bigl\{i\in\N_0:d\bigl(g^{i+1}(x_t),g^{i+1}(y)\bigr)\ge r \bigr\}$$
and
$$m\=\max\bigl\{i\in\N_0: i\le N, \, d\bigl(g^{i-1}(x_t),g^{i-1}(y)\bigr)< \rho\bigr\}$$
(cf.~the similar definitions (\ref{e.def.N}) and (\ref{e.def.m}) for expanding maps), where $r\le \Delta(\cO_g) /(2\LIP(g))$ implies $d \bigl(g^i(x_t), \cO_g \bigr) = d\bigl(g^i(x_t), g^i(y)\bigr)$ if $0\le i \le N$. We define
$n_t\=N+1$ and $x_{t+1}\=g^{N+1}(x_t)$.
Then $S_{n_t}^g \psi_C(x_t) \le 0$ can be obtained via
an argument analogous to the one used in the proof of Theorem~\ref{t.criticaltheorem},
by separately estimating four terms, and choosing $C$ sufficiently large. The only significant difference is that in the inequality (\ref{e.ineq2}), the term $\Absbig{S_m^T \psi'(x_t) - S_m^T \psi'(y)}$ is estimated via the distance-expanding property, whereas here we estimate $\Absbig{S_m^g \psi_g(x_t)-S_m^g \psi_g(y)}$ using (\ref{uniformexponentialinstability}). 
This proves (\ref{gpsiliminf}),
and completes the recursive step.

Having shown that $\mu_{\cO_g}$ is
$(g,\psi_C)$-maximizing,
it follows that $\mu_{\cO_g}$ is the \emph{unique} $g$-maximizing measure for 
$\phi-2C\Hseminorm{\alpha}{\phi} d_\infty(h_g,\id)^{\alpha/2} d(\cdot,\cO_g)^\alpha$,
as required.
\end{proof}

The above joint perturbation result allows us to deduce the following slightly stronger version of Theorem~\ref{anosovJTPO}:

\begin{thml}[Joint TPO for Anosov diffeomorphisms]
\label{anosovJTPO.moreprecise}
Let $M$ be a smooth compact Riemannian manifold,
with  distance function induced by the Riemannian metric,
and let $\cA(M)$ be the space of $C^1$ Anosov diffeomorphisms on $M$, equipped with the $C^1$ topology.
  For all $\alpha \in (0,1]$, the set
$\bigl\{(f,\phi)\in\cA(M)\times\Holder{\alpha}(M):\phi\in\Lock^\alpha(f)\bigr\}$ is open and dense in $\cA(M)\times \Holder{\alpha}(M)$.
\end{thml}
\begin{proof}
This follows from Theorem
\ref{jointperturbationuniformhyp}
by an argument analogous to the one used to prove Theorem~\ref{t.JTPO.general} from
Theorem~\ref{t.criticaltheorem}, and the fact that
every Anosov diffeomorphism has TPO,  
by \cite{HLMXZ25}.
\end{proof}

In fact Joint TPO can also be proved in the context of $C^1$ function spaces:
if $C^1(M)$ is the space of $C^1$ functions on $M$, equipped with its usual topology,
then there is an open dense subset of pairs 
$(T,\phi)\in \cA(M)\times C^1(M)$ 
with the periodic optimization property.
More specifically, if $\Lock^{C^1}(f)$
denotes the set of those $\phi\in C^1(M)$
whose $f$-maximizing measure is unique and periodic,
with $\Mmax(f,  \phi) =\Mmax(f,  \psi)$ for all $\psi \in  C^1(M)$ sufficiently close to $\phi$ in $C^1(M)$,
then we have:

\begin{theorem}[Joint TPO for Anosov diffeomorphisms, $C^1$ function space]\label{anosovJTPO_C1}
    Let $M$ be a smooth compact Riemannian manifold, with distance function induced by the Riemannian metric,
and let $\cA(M)$ be the space of $C^1$ Anosov diffeomorphisms on $M$, equipped with the $C^1$ topology.
The set
$\bigl\{(f,\phi)\in\cA(M)\times C^1(M):\phi\in\Lock^{C^1}(f)\bigr\}$ is open and dense in $\cA(M)\times C^1(M)$.
\end{theorem}
\begin{proof}
We will first
establish a $C^1$ joint perturbation result.
In other words, we will show that
for $f\in\cA(M)$, 
and $\cO$ an $f$-periodic orbit,
for all $g\in\cA(M)$ sufficiently close to $f$, 
setting $\cO_g \= h_g(\cO)$,
there exists a function $v_g \in C^1(M)$ such that (a) $\Mmax(g, v_g) = \{ \mu_{\cO_g} \}$,
(b) for all
$\phi\in C^1(M)$ with 
$\MMM_{\max}(f,\phi)=\{\mu_{\cO}\}$,
the periodic measure
$\mu_{\cO_g}$
is the unique
$(g,\phi + \Hseminorm{1}{\phi} v_g)$-maximizing measure, and (c) $\lim_{g\to f}\Hnorm{\infty}{\mathrm{D}v_g} =0$.

Let us fix $f\in\cA(M)$, and the $f$-periodic orbit $\cO$, and for $\alpha=1$ let the neighbourhood $U$, and constant $C>0$,
be as in Theorem~\ref{jointperturbationuniformhyp}.
By \cite[Proposition~2.5 and~(2.26)]{HLMXZ25}, 
choosing $u= \mathbbold{0}$ to be the function that is identically zero, we see that
    there exists $\delta>0$ such that for every $g\in U$, if $\xi\in\Holder{1}(M)$ with
    $\Hseminorm{1}{\xi} < 3$ and $\Hnorm{\infty}{\xi} < \delta$, then
    for  
    $d_{g,\xi} \= -d(\cdot, \cO_g) + \xi$,
     the unique $(g,d_{g,\xi})$-maximizing measure is $\mu_{\cO_g}$. 

     Now fix $g\in U$, and let $\phi\in C^1(M)$ with 
$\MMM_{\max}(f,\phi)=\{\mu_{\cO}\}$.
Since $\phi$ is $C^1$, and therefore Lipschitz,
the case $\alpha=1$ of
Theorem~\ref{jointperturbationuniformhyp}  gives that $\Mmax(g, \phi - 2C_g \Hseminorm{1}{\phi} d(\cdot, \cO_g)) = \bigl\{\mu_{\cO_g} \bigr\}$, where
$C_g \= C d_\infty(h_g, \id)^{1/2}$.
By \cite[Theorem~2.7]{HLMXZ25}, there exists $w\in C^1(M)$ with $\Hnorm{\infty}{\mathrm{D}w} < 3/2$ and $\Hnorm{\infty}{w+d(\cdot, \cO_g)} < (5/6)\delta$.
Define the $C^1$ function
$\psi \= \phi + 12  C_g \Hseminorm{1}{\phi} w$, and write
\begin{equation}\label{psi_3term}
\psi = ( \phi- 2C_g \Hseminorm{1}{\phi} d(\cdot, \cO_g) )  +
( 12C_g \Hseminorm{1}{\phi} w + 2C_g \Hseminorm{1}{\phi} d(\cdot, \cO_g) ).
\end{equation}
The above inequalities
can be used
to estimate 
\begin{align*}
        12C_g \Hseminorm{1}{\phi} \Hseminorm{1}{w+ d(\cdot, \cO_g)} &\le 12C_g \Hseminorm{1}{\phi} ( \Hseminorm{1}{w} + \Hseminorm{1}{d(\cdot, \cO_g)} ) < 30 \Hseminorm{1}{\phi} C_g \quad \text{ and }\\
        12C_g \Hseminorm{1}{\phi}\Hnorm{\infty}{w+ d(\cdot, \cO_g)} &< 10 C_g \Hseminorm{1}{\phi}\delta,
    \end{align*}
    so taking $\xi\=\frac{6}{5}(w+d(\cdot,\cO_g))$ gives
    that $\mu_{\cO_g}$ is the unique $g$-maximizing measure
    for the function $-d(\cdot,\cO_g)+\frac{6}{5}(w+d(\cdot,\cO_g))$,
    hence for its positive multiple
    $-10C_g \Hseminorm{1}{\phi} d(\cdot,\cO_g)+12C_g \Hseminorm{1}{\phi}(w+d(\cdot,\cO_g))=12 C_g \Hseminorm{1}{\phi} w + 2 C_g \Hseminorm{1}{\phi} d(\cdot, \cO_g)$,
    and hence also for the function $w$.
    So both 
    $\phi- 2C_g \Hseminorm{1}{\phi} d(\cdot, \cO_g)$
    and
$12C_g \Hseminorm{1}{\phi} w + 2C_g \Hseminorm{1}{\phi} d(\cdot, \cO_g)$
have $\mu_{\cO_g}$ as their unique $g$-maximizing measure,
therefore by (\ref{psi_3term}), 
$\mu_{\cO_g}$ is the unique $g$-maximizing measure
for $\psi$, and hence the desired joint perturbation result follows by taking $v_g \= 12C_g w$.

The above $C^1$ joint perturbation result can then be used to prove Joint TPO, by an argument analogous to the
one used in the proof of Theorem~\ref{t.JTPO.general}:
more precisely, the ideas used in the proof of Theorem~\ref{t.JTPO.general} can be adapted for the
present $C^1$ case if 
the term 
$
-Cd_{\Lip}(T_0, T)^{\alpha/2} d(\cdot,\cO)^\alpha
=
-Cd_{\Lip}(T_0, T)^{1/2} d(\cdot,\cO)
$ 
is replaced 
by functions $v_g$ satisfying
conditions (a), (b), and (c), for $g$ sufficiently close to $f$.
\end{proof}

\section{Beta-transformations and maximizing measures}\label{betashiftsection}

We now turn our attention to typical periodic optimization, and joint typical periodic optimization, for a specific one-parameter family of maps on the unit interval.
This family of \emph{beta-transformations} has been studied since
 the foundational papers of R\'enyi \cite{Re57} and Parry \cite{Par60}, motivated in particular by connections with aspects of number theory, in view of the link with
\emph{beta-expansions} of the form $\varepsilon_1/\beta + \varepsilon_2/\beta^2+\varepsilon_3/\beta^3+\cdots$
(see e.g.~\cite{AB07,Be86,CK04,DK02, DK03,FS92,Ka15,Sck80, Si03}). Beta-transformations have also been studied from the point of view of symbolic dynamics (see e.g.~\cite{AJ09,Bl89,IT74, LiSc05, Scj97,Si76}) 
and of ergodic theory
(see e.g.~\cite{Hof78,Sm73,Wal78}).

The classical nature of this subject means that
certain preparatory results in this section are either known, or resemble known results, though the literature is somewhat scattered; for ease of exposition, proofs are 
deferred until Appendix~\ref{beta_proofs_section}. 
In Subsection~\ref{betaintrosubsection} we recall the definitions of beta-transformations, beta-expansions, and beta-shifts, and the fundamental relations between these objects.
Certain monotonicity and approximation properties as a function of the parameter $\beta$
are considered in Subsection~\ref{subsec_approximation_properties},
along with notation and results concerning cylinder sets. 
In Subsection~\ref{sct_ext_maximizing_measures}, 
we develop a theory of ergodic optimization for discontinuous maps such as beta-transformations, and relations between various sets of 
invariant measures are established.

\subsection{Beta-transformations, beta-expansions, and beta-shifts}\label{betaintrosubsection}

We begin by recalling the definitions and basic properties of beta-transformations, as well as the related beta-expansions and beta-shifts.

\begin{definition}[Beta-transformations]\label{d_T_beta_and_U_beta}
	Given a real number $\beta> 1$, the \emph{beta-trans\-form\-ation} $T_{\beta}\:I \to I$ is defined by 
	\begin{equation}\label{e_def_T_beta}
		T_{\beta}  ( x  )\=\beta x- \lfloor \beta x  \rfloor , \quad x\in I  .
	\end{equation}
	Recall that $\lfloor x\rfloor'=\max\{n\in \Z: n<x\}$ for $x\in \R$. The \emph{upper beta-transformation} $U_\beta \: I \to I$ is defined by $U_\beta(0)\=0$ and
	\begin{equation}\label{e_def_U_beta}
		U_\beta (x )\=\beta x- \lfloor\beta x \rfloor'  , \quad x\in I\smallsetminus\{0\}   .
	\end{equation}
Note that Kalle and Steiner \cite[Definition~2.4]{KS12} refer to the upper beta-transformation as the left-continuous beta-transformation. 
\end{definition}

\begin{definition}[Beta-expansions]\label{d_beta_expansions}
	Given a real number $\beta> 1$,
	write 
	\begin{equation*}
		\cB\=\{ 0, \,1, \,\dots, \, \lfloor \beta \rfloor  \}.
	\end{equation*}
	Define the \emph{$\beta$-expansion} of $x\in I$ to be the sequence 
	\begin{equation*}
		\underline{\ve}  ( x,\beta  ) = \{ \ve_n ( x,\beta  )  \}_{n\in\N} 
	\in \cB^\N 
	\end{equation*}
	given by
	\begin{equation}\label{en}
		\ve_n ( x,\beta  )\=\bigl \lfloor \beta T_{\beta}^{n-1} ( x )  \bigr \rfloor \quad \text{ for all }  n\in \N ,
	\end{equation}		
	and define the \emph{upper $\beta$-expansion}\footnote{Blanchard \cite[p.~136]{Bl89} refers to the upper $\beta$-expansion
	$\underline{\ve}^*  ( x,\beta  )$
	as a kind of \emph{incorrect} $\beta$-expansion.}  of $x\in I$ to be the sequence 
	\begin{equation*}
		\underline{\ve}^*  ( x,\beta  ) = \{ \ve^*_n ( x,\beta  )  \}_{n\in \N}
	\in \cB^\N 
	\end{equation*} 
	given by 
	\begin{equation}\label{en*}
		\ve^*_n ( x,\beta  )\= \bigl \lfloor \beta U_{\beta}^{n-1} ( x )  \bigr \rfloor'\quad \text{ for all }   n\in \N.
	\end{equation}	
\end{definition}

\begin{rem}\label{r_after_def_beta_expansions}
	For $\beta>1$, the beta-transformation and upper beta-transfor\-mation are related by
	\begin{equation*}
		U_\beta(x) =  \limsup_{y\to x} T_\beta(y).
	\end{equation*}
	The set $D_\beta$ of points of discontinuity for $T_\beta$ is 
	\begin{equation}  \label{e_D_beta}
		D_\beta \= T_\beta^{-1}(0) \smallsetminus \{0\}
		= U_\beta^{-1}(1)
		=\{ j/\beta:j\in \Z \} \cap (0,1]  ,
	\end{equation}
	and this is precisely the set of points at which $T_\beta$ and $U_\beta$ differ,
	with $T_\beta(x) = 0$ and $U_\beta(x) = 1$ for all $x\in D_\beta$.
\end{rem}

\begin{lemma}\label{l_continuity_T_U_on_x}
	If $\beta>1$, $n\in\N$, $a\in[0,1)$, and $b\in (0,1]$, then
	\begin{enumerate}[label=\rm{(\roman*)}]
		\smallskip		
		\item $\lim_{x\searrow a}T^n_\beta(x)=T_\beta^n(a)^+$ and $\lim_{x\nearrow b}U^n_\beta(x)=U_\beta^n(b)^-$,
		\smallskip
		
		\item $\ve_n(\cdot,\beta)$ is right-continuous on $[0,1)$ and $\ve^*_n(\cdot,\beta)$ is left-continuous on $(0,1]$,
		\smallskip
		
		\item $T^n_\beta(0)=\ve_n(0,\beta)=U^n_\beta(0)=\ve^*_n(0,\beta)=0$,
		\smallskip
		
		\item $\lim_{x\nearrow b}T_\beta^n(x)= U_\beta^n(b)^{-}$ and $\lim_{x\nearrow b}\ve_n(x,\beta)= \ve^*_n(b,\beta)$.
	\end{enumerate}
\end{lemma}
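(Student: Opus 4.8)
The plan is to reduce all four parts to a single structural observation. The floor function $\lfloor\cdot\rfloor$ is right-continuous and locally constant off $\Z$, the strict floor $\lfloor\cdot\rfloor'$ is left-continuous and locally constant off $\Z$, and by \eqref{e_D_beta} the set $D_\beta$ of breakpoints of $T_\beta$ is \emph{finite} and is precisely the set where $T_\beta$ and $U_\beta$ differ (with $T_\beta=0$ and $U_\beta=1$ there). It follows that each iterate $T_\beta^n$ is \emph{affine with slope $\beta^n$ on a sufficiently short one-sided neighbourhood of any point}, and that on left neighbourhoods it agrees with the corresponding iterate of $U_\beta$; once this is made precise, all the one-sided limits in the lemma become transparent.

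Accordingly, I would first prove, by induction on $n$, two ``local affineness'' statements:
\begin{enumerate}[label=\rm{(\alph*)}]
\item \emph{(Right version.)} For all $\beta>1$, $n\in\N$, and $a\in[0,1)$ there is $\delta>0$ such that $T_\beta^k(x)=\beta^k(x-a)+T_\beta^k(a)$ for $x\in[a,a+\delta)$ and $0\le k\le n$, and $\lfloor\beta T_\beta^{k}(x)\rfloor=\lfloor\beta T_\beta^{k}(a)\rfloor$ for $x\in[a,a+\delta)$ and $0\le k\le n-1$.
\item \emph{(Left version.)} For all $\beta>1$, $n\in\N$, and $b\in(0,1]$ there is $\delta>0$ such that $U_\beta^k(x)=\beta^k(x-b)+U_\beta^k(b)$ for $x\in(b-\delta,b]$ and $0\le k\le n$, and $T_\beta^k(x)=U_\beta^k(x)$ for $x\in(b-\delta,b)$ and $0\le k\le n$.
\end{enumerate}
For (a), the case $k=1$ follows from right-continuity of $\lfloor\cdot\rfloor$, which gives $\lfloor\beta x\rfloor=\lfloor\beta a\rfloor$ on some $[a,a+\delta_1)$ and hence $T_\beta(x)=\beta(x-a)+T_\beta(a)$; the inductive step applies the hypothesis at the point $T_\beta(a)\in[0,1)$ (so there is room to its right) and intersects the finitely many resulting neighbourhoods. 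Statement (b) is proved symmetrically, with $\lfloor\cdot\rfloor'$ replacing $\lfloor\cdot\rfloor$; the two extra inputs are that $U_\beta$ maps $(0,1]$ into $(0,1]$ (in fact each $U_\beta^k(b)\in(0,1)$ unless a breakpoint is hit, so the special value $U_\beta(0)=0$ is never triggered), and that, since $T_\beta=U_\beta$ on $I\smallsetminus D_\beta$ with $D_\beta$ finite, one may shrink $\delta$ further so that $U_\beta^i(x)\notin D_\beta$ for $0\le i\le n-1$ and $x\in(b-\delta,b)$, forcing $T_\beta^k(x)=U_\beta^k(x)$ there.

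Granting (a) and (b), the four parts follow at once. \emph{Part (i):} (a) gives $\lim_{x\searrow a}T_\beta^n(x)=T_\beta^n(a)$ with $T_\beta^n(x)=\beta^n(x-a)+T_\beta^n(a)\ge T_\beta^n(a)$ for $x$ near $a$, i.e.\ the limit is $T_\beta^n(a)^+$; the first identity in (b), which also holds at $x=b$, gives $\lim_{x\nearrow b}U_\beta^n(x)=U_\beta^n(b)$ with values $\le U_\beta^n(b)$, i.e.\ $U_\beta^n(b)^-$. \emph{Part (ii):} by (a) the function $\ve_n(\cdot,\beta)=\lfloor\beta T_\beta^{n-1}(\cdot)\rfloor$ is constant on a right-neighbourhood of every point of $[0,1)$, hence right-continuous there; symmetrically, (b) makes $\ve^*_n(\cdot,\beta)=\lfloor\beta U_\beta^{n-1}(\cdot)\rfloor'$ left-continuous on $(0,1]$. \emph{Part (iii)} is immediate from $T_\beta(0)=U_\beta(0)=0$ and the definitions (with the convention that the $\beta$- and upper $\beta$-expansions of $0$ are $0^\infty$). \emph{Part (iv):} by (b), $T_\beta^n(x)=\beta^n(x-b)+U_\beta^n(b)$ on a punctured left-neighbourhood of $b$, so $\lim_{x\nearrow b}T_\beta^n(x)=U_\beta^n(b)$ with values $\le U_\beta^n(b)$, i.e.\ $U_\beta^n(b)^-$; and on that neighbourhood $\ve_n(x,\beta)=\lfloor\beta T_\beta^{n-1}(x)\rfloor=\lfloor\beta U_\beta^{n-1}(x)\rfloor$ with $\beta U_\beta^{n-1}(x)$ increasing to $\beta U_\beta^{n-1}(b)$ as $x\nearrow b$, so by the elementary identity $\lim_{t\nearrow c}\lfloor t\rfloor=\lfloor c\rfloor'$ (valid for every $c\in\R$) we get $\lim_{x\nearrow b}\ve_n(x,\beta)=\lfloor\beta U_\beta^{n-1}(b)\rfloor'=\ve^*_n(b,\beta)$.

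The only genuine work lies in the bookkeeping for (a) and (b): for fixed $a$ (resp.\ $b$) and $n$ one must produce a \emph{single} $\delta$ that simultaneously prevents every one of the relevant floor (resp.\ strict-floor) evaluations from jumping and, in the left case, keeps all intermediate iterates $U_\beta^i(x)$ outside $D_\beta$ — which is precisely where finiteness of $D_\beta$ is used. The remaining subtleties are minor: at $b\in D_\beta$ the maps $T_\beta$ and $U_\beta$ genuinely disagree at $b$ itself, so their agreement is claimed only on the punctured interval, and the endpoint case $b=1$ needs nothing special; both are handled automatically by working with one-sided neighbourhoods throughout.
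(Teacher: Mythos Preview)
Your proof is correct. Both you and the paper argue by induction on $n$, but the intermediate objects are different. The paper writes $T_\beta=g_1\circ f$ and $U_\beta=g_2\circ f$ with $f(u)=\beta u$, $g_1(u)=u-\lfloor u\rfloor$, $g_2(u)=u-\lfloor u\rfloor'$, records the three one-sided limit identities
\[
\lim_{x\nearrow u}g_1(x)=g_2(u)^-,\qquad \lim_{x\searrow u}g_1(x)=g_1(u)^+,\qquad \lim_{x\nearrow u}g_2(x)=g_2(u)^-,
\]
and pushes these through the composition $T_\beta^{k+1}=g_1\circ f\circ T_\beta^k$ (respectively $U_\beta^{k+1}=g_2\circ f\circ U_\beta^k$); part~(iv) then falls out of the first identity together with $\lim_{x\nearrow u}\lfloor x\rfloor=\lfloor u\rfloor'$. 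Your route instead upgrades the limit statements to an explicit local-affineness lemma (your (a) and (b)): each iterate is literally $x\mapsto \beta^k(x-a)+T_\beta^k(a)$ on a short one-sided interval, and on punctured left-neighbourhoods $T_\beta^k$ and $U_\beta^k$ coincide. This is a genuinely stronger intermediate conclusion --- it is essentially the content later reproved in Proposition~\ref{p_cylinders}~(ii) and~(vii) in cylinder language --- and it makes the four statements of the lemma one-line corollaries. The cost is the bookkeeping you flag (a single $\delta$ controlling finitely many floor jumps and keeping intermediate iterates off $D_\beta$), whereas the paper's argument avoids choosing any $\delta$ at all by working purely with limits.
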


\begin{lemma}\label{l_continuity_T_U_on_beta}
	If $\beta>1$, $n\in\N$, and $x\in I$, then 
	\begin{enumerate}[label=\rm{(\roman*)}]
		\smallskip		
		\item $\lim_{\gamma\searrow \beta}T^n_\gamma(x)=T_\beta^n(x)^+$ and $\lim_{\gamma\nearrow \beta}U^n_\beta(x)=U_\beta^n(x)^-$,
		\smallskip
		
		\item $\ve_n(x,\cdot)$ is right-continuous and $\ve^*_n(x,\cdot)$ is left-continuous,
		\smallskip
		
		\item $\lim_{\gamma\nearrow \beta}T_\gamma^n(x)= U_\beta^n(x)^{-}$ and $\lim_{\gamma\nearrow \beta}\ve_n(x,\gamma)= \ve^*_n(x,\beta)$.
	\end{enumerate}
\end{lemma}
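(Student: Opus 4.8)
The plan is to mirror, essentially line by line, the proof of Lemma~\ref{l_continuity_T_U_on_x}, with the one-sided $\beta$-continuity of the scaling map playing the role that one-sided $x$-continuity played there. As in that proof, set $g_1(u)\=u-\lfloor u\rfloor$ and $g_2(u)\=u-\lfloor u\rfloor'$, and for $\gamma>1$ write $f_\gamma(u)\=\gamma u$, so that $T_\gamma=g_1\circ f_\gamma$ on $I$ and $U_\gamma=g_2\circ f_\gamma$ on $I\smallsetminus\{0\}$. The one genuinely new ingredient is the monotonicity of scaling: for each fixed $u\geq0$ the map $\gamma\mapsto\gamma u$ is continuous and non-decreasing, strictly increasing when $u>0$. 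Hence, for real sequences with $y_\gamma\to y_0$, one has: if $y_\gamma\geq y_0\geq0$ then $\gamma y_\gamma\to\beta y_0$ with $\gamma y_\gamma\geq\beta y_0$ (taking $\gamma\searrow\beta$); and if $0\leq y_\gamma\leq y_0$ then $\gamma y_\gamma\to\beta y_0$ with $\gamma y_\gamma\leq\beta y_0$ (taking $\gamma\nearrow\beta$), the last inequality being strict as soon as $y_0>0$. Combining these with the one-sided limit identities~(\ref{e_g_1_a_2_f}), the right-continuity of $\lfloor\cdot\rfloor$, the left-continuity of $\lfloor\cdot\rfloor'$, and the relation $\lim_{v\nearrow v_0}\lfloor v\rfloor=\lfloor v_0\rfloor'$, will give all three assertions.

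First I would settle the degenerate case $x=0$: there $T_\gamma^n(0)=U_\gamma^n(0)=0$ and $\ve_n(0,\gamma)=0=\ve_n^*(0,\beta)$ for every $\gamma>1$ (cf.~Lemma~\ref{l_continuity_T_U_on_x}(iii)), so every limit in the statement is identically $0$. For $x>0$, note that $U_\beta^k(x)>0$ for all $k\in\N_0$, since $U_\beta(y)=\beta y-\lfloor\beta y\rfloor'>0$ whenever $y>0$; likewise $U_\gamma^k(x)>0$ for all $\gamma>1$. This positivity is exactly what makes the strict inequalities above available in the inductive step, so I assume $x>0$ from now on.

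Now one proves (i) and the first assertion of (iii) by induction on $n$, verbatim as in Lemma~\ref{l_continuity_T_U_on_x}(i),(iv). For (i): the base case is $\lim_{\gamma\searrow\beta}T_\gamma(x)=\lim_{\gamma\searrow\beta}g_1(\gamma x)=g_1(\beta x)^+=T_\beta(x)^+$ and $\lim_{\gamma\nearrow\beta}U_\gamma(x)=\lim_{\gamma\nearrow\beta}g_2(\gamma x)=g_2(\beta x)^-=U_\beta(x)^-$ by~(\ref{e_g_1_a_2_f}); for the step, setting $y_0\=T_\beta^n(x)$ and $y_\gamma\=T_\gamma^n(x)$, the inductive hypothesis gives $y_\gamma\to y_0$ with $y_\gamma\geq y_0\geq0$, hence $T_\gamma^{n+1}(x)=g_1(\gamma y_\gamma)\to g_1(\beta y_0)^+=T_\beta^{n+1}(x)^+$, and symmetrically for $U_\gamma$ using the left-hand limit of $g_2$. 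For the first assertion of (iii) the same induction works, but now invoking the cross relation $\lim_{v\nearrow v_0}g_1(v)=g_2(v_0)^-$ from~(\ref{e_g_1_a_2_f}): with $y_0\=U_\beta^n(x)>0$ and $y_\gamma\=T_\gamma^n(x)$ one has $y_\gamma\to y_0$ with $y_\gamma\leq y_0$, so $\gamma y_\gamma<\beta y_0$ strictly, giving $T_\gamma^{n+1}(x)=g_1(\gamma y_\gamma)\to g_2(\beta y_0)^-=U_\beta(y_0)^-=U_\beta^{n+1}(x)^-$. Finally, (ii) and the second assertion of (iii) follow by composing these with the floor functions, exactly as in Lemma~\ref{l_continuity_T_U_on_x}(ii),(iv): from $\ve_n(x,\gamma)=\lfloor\gamma T_\gamma^{n-1}(x)\rfloor$ and the right-continuity of $\lfloor\cdot\rfloor$ one gets that $\ve_n(x,\cdot)$ is right-continuous; from $\ve_n^*(x,\gamma)=\lfloor\gamma U_\gamma^{n-1}(x)\rfloor'$ and the left-continuity of $\lfloor\cdot\rfloor'$ one gets that $\ve_n^*(x,\cdot)$ is left-continuous; and from the first assertion of (iii) applied to $n-1$ together with $\lim_{v\nearrow v_0}\lfloor v\rfloor=\lfloor v_0\rfloor'$ one obtains $\lim_{\gamma\nearrow\beta}\ve_n(x,\gamma)=\lfloor\beta U_\beta^{n-1}(x)\rfloor'=\ve_n^*(x,\beta)$.

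I do not anticipate a serious obstruction: the argument is structurally identical to that of Lemma~\ref{l_continuity_T_U_on_x}, with the scaling map $f_\gamma$ occupying the inner slot of the composition in place of the (continuous, monotone) identity map. The only point that needs attention is the bookkeeping of the one-sided inequalities as they pass through the compositions, together with the observation used above that a product $\gamma y_\gamma$ can fail to be strictly smaller than $\beta y_0$ (for $\gamma<\beta$) only when $y_0=0$, i.e.\ only in the already-settled case $x=0$; this is precisely the mild technicality that already appears in the proof of Lemma~\ref{l_continuity_T_U_on_x}.
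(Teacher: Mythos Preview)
Your proof is correct and follows essentially the same route as the paper's: both treat $x=0$ separately, then argue by induction on $n$ using the one-sided limit identities for $g_1,g_2$ together with the monotonicity of the scaling $\gamma\mapsto\gamma u$. The only cosmetic difference is that the paper packages the scaling as $f(u)\=xu$ (viewing $\beta$ as the variable of $f$) rather than your $f_\gamma(u)\=\gamma u$, and your explicit remark that $U_\beta^k(x)>0$ guarantees the strict inequality needed for the left-hand limit in~(iii) is a point the paper leaves implicit.
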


\begin{definition}[Beta-shifts]\label{beta shifts}
	Given $\beta> 1$, 
	define $\pi_\beta \: I \to \cB^\N$ 
	by
\begin{equation*}
	\pi_\beta(x)\=\underline{\ve}(x,\beta)
	=\{ \ve_n ( x,\beta  ) \}_{n\in \N},
\end{equation*}
	and define $\pi^*_\beta \: I \to \cB^\N$ by
\begin{equation*}
	\pi^*_\beta(x)\=\underline{\ve}^*(x,\beta)
	=\{ \ve^*_n ( x,\beta  ) \}_{n\in \N}.
\end{equation*}
	Define the \emph{beta-shift} $\cS_\beta$ to be the closure in $\cB^\N$ of the image 
under $\pi_\beta$ of the half-open interval $[0,1)$, in other words,
\begin{equation}\label{e_original_def_S_beta}
    \cS_\beta\=\overline{\pi_\beta([0,1))},
\end{equation}
	 where $\cB^\N$ is equipped with the product topology. 
     Note that Lemma~\ref{l_continuity_T_U_on_beta}~(iii) implies that $\pi_\beta^*(I) \subseteq \cS_\beta$.
\end{definition}

\begin{definition}\label{d_X_beta_and_tilde_X_beta}
	Given $\beta>1$, let $X_\beta$ denote the closure in $\cB^\N$
	of the image $\pi_\beta(I)$.
	Define $h_\beta \: X_\beta \to I$ by
\begin{equation}\label{hbeta}
	h_\beta(\{z_i\}_{i\in \N})\=\sum_{i =1}^{+\infty}z_i\beta^{-i}.
\end{equation}
For each $x\in I$, define $i_x \: (1,+\infty) \to \N_0^\N$ and $i_x^* \: (1,+\infty) \to \N_0^\N$ by
\begin{equation}\label{ix beta}
	i_x(\beta) \= \pi_\beta(x) \quad \text{ and } \quad i_x^*(\beta) \= \pi_\beta^*(x).
\end{equation}
\end{definition}

The following lemma means that our definition of upper $\beta$-expansion is equivalent to the definition of incorrect $\beta$-expansion in \cite{IT74} and \cite{YT21}. 

\begin{lemma}\label{l_equivalent_def_pi_*}
	Fix $\beta>1$. Then $\pi_\beta(0)=\pi_\beta^*(0)=(0)^\infty$ and $\pi^*_\beta(a)=\lim_{x\nearrow a}\pi_\beta(x)$ for all $a\in(0,1]$.
\end{lemma}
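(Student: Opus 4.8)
The plan is to handle the two assertions in turn, deducing both from the continuity statements of Lemma~\ref{l_continuity_T_U_on_x} together with the elementary facts that convergence in $\cB^\N$ is coordinatewise and that each coordinate takes values in the finite set $\cB\subseteq\Z$.

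First I would dispose of $\pi_\beta(0)=\pi_\beta^*(0)=(0)^\infty$. Unwinding the definitions of $\pi_\beta$ and $\pi^*_\beta$ (Definition~\ref{beta shifts}, Definition~\ref{d_beta_expansions}), this is exactly the statement that $\ve_n(0,\beta)=\ve^*_n(0,\beta)=0$ for every $n\in\N$, which is recorded in Lemma~\ref{l_continuity_T_U_on_x}(iii). No one-sided care is needed here, since $0$ is a common fixed point of $T_\beta$ and $U_\beta$.

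Next, for $a\in(0,1]$, I would prove $\pi^*_\beta(a)=\lim_{x\nearrow a}\pi_\beta(x)$ by checking eventual agreement coordinate by coordinate. Fix $n\in\N$. Lemma~\ref{l_continuity_T_U_on_x}(iv) gives $\lim_{x\nearrow a}\ve_n(x,\beta)=\ve^*_n(a,\beta)$, and because $x\mapsto\ve_n(x,\beta)$ is $\Z$-valued while $\ve^*_n(a,\beta)\in\Z$, this numerical limit is in fact attained on a one-sided neighbourhood: there is $\delta_n>0$ with $\ve_n(x,\beta)=\ve^*_n(a,\beta)$ for all $x\in(a-\delta_n,a)$. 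I would then translate this into convergence in the metric $d_t$ (which induces the product topology on $\cB^\N$): given $\epsilon>0$, pick $N\in\N$ with $t^{-N}<\epsilon$ and set $\delta\=\min\{\delta_1,\dots,\delta_N\}>0$; for $x\in(a-\delta,a)$ the sequences $\pi_\beta(x)$ and $\pi^*_\beta(a)$ then agree in their first $N$ entries, so $d_t(\pi_\beta(x),\pi^*_\beta(a))\le t^{-(N+1)}<\epsilon$. Hence the left-hand limit exists and equals $\pi^*_\beta(a)$.

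The one step that warrants genuine attention — and the one I would flag as the crux — is the upgrade from the numerical limit in Lemma~\ref{l_continuity_T_U_on_x}(iv) to eventual coordinatewise equality near $a$: this uses in an essential way that $\cB$ is discrete (a real-valued limit of integer quantities that equals an integer is eventually attained) and that convergence in $\cB^\N$ constrains only finitely many coordinates at a time. Beyond this, no further machinery — in particular, none of the monotonicity or lexicographic arguments appearing elsewhere in this section — is required.
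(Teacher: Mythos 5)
Your proof is correct and follows essentially the same route as the paper: both parts are deduced from Lemma~\ref{l_continuity_T_U_on_x}~(iii) and (iv) respectively. The only difference is that the paper cites these two items and stops, whereas you explicitly supply the (necessary, if routine) upgrade from the coordinatewise numerical limits $\lim_{x\nearrow a}\ve_n(x,\beta)=\ve^*_n(a,\beta)$ to convergence in the product topology on $\cB^\N$, via discreteness of the alphabet; this is exactly the step the paper leaves implicit.
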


Proposition~\ref{p_relation_of_coding} below collects a number of basic properties of beta-trans\-formations and beta-expansions that will be 
required later; the majority of the results can be found in the existing literature (specifically,
in \cite{Bl89, IT74,Par60,Re57,YT21}), and for the remainder we provide proofs. 

\begin{prop}\label{p_relation_of_coding}
	If $\beta>1$, then the following hold:
	\begin{enumerate}[label=\rm{(\roman*)}]
		\smallskip		
		\item\label{p_relation_of_coding__i} We have
		\begin{equation*}
			\pi^*_\beta(1) = \begin{cases}
				(	z_1z_2\dots(z_n-1))^\infty & \text{ if } \pi_\beta(1)=z_1z_2\dots z_n(0)^\infty,z_n>0 , \\
				\pi_\beta(1)  & \text{ if $\pi_\beta(1)$ has infinitely many nonzero terms}. 
			\end{cases}
		\end{equation*}
		
		\item\label{p_relation_of_coding__ii} For each $x\in (0,1]$,
		\begin{equation*}
			\pi^*_\beta(x) = \begin{cases}
				z_1z_2\dots(z_n-1)\pi_\beta^*(1) & \text{ if } \pi_\beta(x)=z_1z_2\dots z_n (0)^\infty,z_n>0 , \\
				\pi_\beta(x)  & \text{ if $\pi_\beta(x)$ has infinitely many nonzero terms}. 
			\end{cases}
		\end{equation*}
		
		\item\label{p_relation_of_coding__iii} $\sigma\circ \pi_\beta=\pi_\beta \circ T_\beta$ and $\sigma\circ \pi^*_\beta=\pi^*_\beta \circ U_\beta$ on $I$.
		\smallskip
		
		\item\label{p_relation_of_coding__iv} $(h_\beta\circ \pi_\beta) (x) = x$ and $(h_\beta\circ \pi^*_\beta) (x) = x$ for each $x \in I$.
		\smallskip
		
		\item\label{p_relation_of_coding__v} $h_\beta \circ \sigma = T_\beta \circ h_\beta$ on $\pi_\beta(I)$ and $h_\beta \circ \sigma = U_\beta \circ h_\beta$ on $\pi^*_\beta(I)$.
		\smallskip
		
		\item\label{p_relation_of_coding__vi} $\pi_\beta$ and $\pi^*_\beta$ are strictly increasing, i.e.,~$x<y$ implies $\pi_\beta(x)\prec \pi_\beta(y)$ and $\pi^*_\beta(x)\prec \pi^*_\beta(y)$.
		\smallskip
		
		\item\label{p_relation_of_coding__vii} $\pi_\beta(x)\prec \pi_\beta^*(y)$ if $0\leq x<y\leq 1$.
		\smallskip
		
		\item\label{p_relation_of_coding__viii} $\bigl\{ \omega\in X_\beta: \pi^*_\beta(x)\prec \omega\prec \pi_\beta(x) \bigr\}=\emptyset$ for all $x\in I$.
		\smallskip
		
		\item\label{p_relation_of_coding__ix} $\pi_\beta$ is right-continuous on $[0,1)$ and $\pi^*_\beta$ is left-continuous on $(0,1]$.
		\smallskip
		
		\item\label{p_relation_of_coding__x} $h_\beta$ is a continuous surjection and is nondecreasing, i.e.,~$\omega \prec \omega'$ implies $h_\beta(\omega)\leq h_\beta (\omega')$.
		\smallskip
		
		\item\label{p_relation_of_coding__xi} The inverse image $h_\beta^{-1}(x)$ of $x\in (0,1]$ consists either of one point $\pi_\beta(x)$ or of two distinct points $\pi_\beta(x)$ and $\pi^*_\beta(x)$. The latter case occurs only when $T^n_\beta(x)=0$ for some $n\in\N$. Moreover, $h_\beta^{-1}(0)=\{(0)^\infty\}$.
		\smallskip
		
		\item\label{p_relation_of_coding__xii} The function $h_\beta \: (X_\beta,d_\beta)\to (I,d)$ is Lipschitz.
		\smallskip 
		
		\item\label{p_relation_of_coding__xiii} For each $x \in (0,1]$, the functions $i_x$ and $i_x^*$ are both strictly increasing functions. Moreover, $i_0(\beta)=i_0^*(\beta)=(0)^\infty$ for all $\beta>1$.
		\smallskip
		
		\item\label{p_relation_of_coding__xiv} For each $x\in I$, the function $i_x$ is right-continuous and the function $i_x^*$ is left-continuous.
	\end{enumerate}
\end{prop}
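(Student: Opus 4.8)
The plan is to establish the fourteen items in an order that respects their dependencies. Several are classical and could be cited from \cite{Pa60, Re57, IT74, Bl89, YT21}, but I would indicate the arguments since part of the aim of this section is a clean and complete treatment. I would begin with the purely algebraic items (iii), (iv), (v), (ix), (xiv). Item (iii) is immediate from (\ref{en}) and (\ref{en*}): $\ve_n(T_\beta(x),\beta) = \bigl\lfloor\beta T_\beta^{n-1}(T_\beta(x))\bigr\rfloor = \bigl\lfloor\beta T_\beta^{n}(x)\bigr\rfloor = \ve_{n+1}(x,\beta)$, and likewise for $U_\beta$ with $\lfloor\cdot\rfloor'$. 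For (iv) I would check by induction the telescoping identity $x - \sum_{i=1}^{n}\ve_i(x,\beta)\beta^{-i} = \beta^{-n}T_\beta^{n}(x)$ and let $n\to+\infty$, using $T_\beta^{n}(x)\in I$; the same works for $U_\beta$ and $\pi^*_\beta$. Item (v) then follows by composition, e.g.\ $h_\beta(\sigma(\pi_\beta(x))) = h_\beta(\pi_\beta(T_\beta(x))) = T_\beta(x) = T_\beta(h_\beta(\pi_\beta(x)))$ using (iii) and (iv) (and analogously on $\pi^*_\beta(I)$ with $U_\beta$). Finally (ix) and (xiv) are the product-topology restatements of Lemma~\ref{l_continuity_T_U_on_x}~(ii) and Lemma~\ref{l_continuity_T_U_on_beta}~(ii): an integer-sequence-valued map is one-sidedly continuous precisely when each coordinate is, and only finitely many coordinates are seen inside a basic neighbourhood.

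Next I would treat the monotonicity items (vi), (vii), (xiii) and the factor-map items (x), (xii). For (vi), given $0\le x<y$ with $\pi_\beta(x)\neq\pi_\beta(y)$, let $k$ be the first coordinate of disagreement; since $z\mapsto\beta z - m$ is increasing, an induction gives $T_\beta^{i}(x)\le T_\beta^{i}(y)$ for $i<k$, hence $\ve_k(x,\beta)\le\ve_k(y,\beta)$, and the inequality is strict, so $\pi_\beta(x)\prec\pi_\beta(y)$; the identical argument with $U_\beta$ and $\lfloor\cdot\rfloor'$ handles $\pi^*_\beta$ on $(0,1]$, while $\pi^*_\beta(0)=(0)^\infty\prec\pi^*_\beta(y)$ for $y>0$ follows from $h_\beta(\pi^*_\beta(y))=y>0$. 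Item (vii) follows by picking $z\in(x,y)$ and combining (vi) with $\pi^*_\beta(y)=\lim_{w\nearrow y}\pi_\beta(w)$ (Lemma~\ref{l_equivalent_def_pi_*}). For (xiii), with $1<\beta<\gamma$ and $x\in(0,1]$, the same coordinate induction (now using that $T_\beta^{i}(x)\le T_\gamma^{i}(x)$ as long as the digits agree) gives $i_x(\beta)\preceq i_x(\gamma)$, and the alternative $i_x(\beta)=i_x(\gamma)$ is impossible since then $x=\sum_i\ve_i(x,\beta)\beta^{-i}>\sum_i\ve_i(x,\gamma)\gamma^{-i}=x$ (some digit being positive as $x>0$); $i_0\equiv i_0^*\equiv(0)^\infty$ is Lemma~\ref{l_continuity_T_U_on_x}~(iii). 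For (x) and (xii): each partial sum $\omega\mapsto\sum_{i=1}^{N}z_i\beta^{-i}$ is continuous on $\cB^\N$ with tail bounded by $\lfloor\beta\rfloor\beta^{-N}(\beta-1)^{-1}$, so $h_\beta$ is continuous and $\abs{h_\beta(\omega)-h_\beta(\omega')}\le C\,d_\beta(\omega,\omega')$; surjectivity onto $I$ is (iv); and since $h_\beta(X_\beta)\subseteq I$ and $X_\beta$ is $\sigma$-invariant (by (iii)), if $\omega\prec\omega'$ in $X_\beta$ first disagree at position $p$ then $h_\beta(\omega')-h_\beta(\omega)=\beta^{-p}\bigl[(\omega'_p-\omega_p)+h_\beta(\sigma^p\omega')-h_\beta(\sigma^p\omega)\bigr]\ge\beta^{-p}(1+0-1)=0$.

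The remaining items (i), (ii), (viii), (xi) concern the expansions of the endpoint $1$ and the fine structure of $X_\beta$. Item (i) is the classical description of the quasi-greedy expansion of $1$: $\pi_\beta(1)$ is the greedy expansion $d(\beta)$, $\pi^*_\beta(1)=\lim_{x\nearrow 1}\pi_\beta(x)$ by Lemma~\ref{l_equivalent_def_pi_*}, and computing this left-hand limit yields the two stated cases (cf.\ \cite{Pa60, Re57}), with some care for edge cases such as $\beta\in\Z$. Item (ii) follows from (i) and (iii): if $\pi_\beta(x)=z_1\dots z_n(0)^\infty$ with $z_n>0$ then $T^{n-1}_\beta(x)$ is the point with lower expansion $z_n(0)^\infty$, whose upper expansion is given by (i), and prepending $z_1\dots z_{n-1}(z_n-1)$ produces $\pi^*_\beta(x)$; otherwise no point of $D_\beta$ is visited and $\pi^*_\beta=\pi_\beta$ there. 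For (viii), the claim is vacuous unless $\pi_\beta(x)=z_1\dots z_n(0)^\infty$ with $z_n>0$, in which case, if some $\omega\in X_\beta$ satisfied $\pi^*_\beta(x)\prec\omega\prec\pi_\beta(x)$, then matching coordinates against both bounds forces $\omega=z_1\dots z_{n-1}(z_n-1)\nu$ with $\nu=\sigma^n\omega\in X_\beta$ and $\nu\succ\pi^*_\beta(1)$; but $X_\beta=\cS_\beta\cup\{\pi_\beta(1)\}$, Parry's lexicographic characterisation gives $\sigma^k\eta\preceq\pi^*_\beta(1)$ for every $\eta\in\cS_\beta$ and all $k$, and $\pi_\beta(1)$ has $n$-th coordinate $z_n\neq z_n-1$, so no such $\nu$ exists. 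Finally (xi): $\pi_\beta(x)\in h^{-1}_\beta(x)$ always, and if $\omega\in h^{-1}_\beta(x)$ with $\omega\neq\pi_\beta(x)$ then (x) forces $\omega\prec\pi_\beta(x)$ as well as $\pi_\beta(y)\preceq\omega$ for every $y<x$, so $\pi^*_\beta(x)=\lim_{y\nearrow x}\pi_\beta(y)\preceq\omega\prec\pi_\beta(x)$, whereupon (viii) upgrades this to $\omega=\pi^*_\beta(x)$; by (ii) the two-point case occurs exactly when $\pi_\beta(x)$ is finite, equivalently $T^n_\beta(x)=0$ for some $n$; and $h^{-1}_\beta(0)=\{(0)^\infty\}$ since $h_\beta(\omega)=0$ with nonnegative coordinates forces $\omega=(0)^\infty$.

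The main obstacle is items (viii) and (xi), which are essentially equivalent to one another and, in contrast to everything else, depend on the \emph{global} combinatorics of $X_\beta$ rather than on pointwise behaviour of the coding maps; moreover the most natural ordering of their proofs is circular, since the textbook proof of (viii) tends to use the preimage structure of $h_\beta$. The clean route is to prove (viii) first using only Parry's admissibility description of $\cS_\beta$ together with (i), (ii), and the $\sigma$-invariance of $X_\beta$, and only then deduce (xi). A secondary nuisance is the careful handling of degenerate cases in (i) and (ii) — $\beta\in\Z$, $z_n=\lfloor\beta\rfloor$, $n=1$ — which is routine but necessary for correctness.
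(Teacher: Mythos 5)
Your proposal is correct in outline and proves, from first principles, essentially every item that the paper itself simply cites: the paper's own proof of Proposition~\ref{p_relation_of_coding} delegates (i), (ii), (viii), (ix) and the $\pi^*_\beta$-parts of (iv), (vi) to \cite[Lemma~1.2]{YT21}, delegates the $\pi_\beta$-parts of (iv), (vi) and items (x), (xi) to \cite[Proposition~3.2]{IT74}, and only directly argues (iii), (v), (vii), (xii), (xiii), (xiv). For the items the paper does prove, your arguments are essentially the same (telescoping for (iv), factoring through (iii)+(iv) for (v), the left-limit argument via Lemma~\ref{l_equivalent_def_pi_*} for (vii), explicit tail estimate for (xii), a monotonicity/coordinate argument for (xiii), and reading off Lemmas~\ref{l_continuity_T_U_on_x} and~\ref{l_continuity_T_U_on_beta} for (ix), (xiv)). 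The self-contained route is legitimate and arguably preferable given the paper's stated aim of treating these facts carefully; what the paper's choice buys is brevity at the cost of having the reader trust two external references whose notational conventions (for $X_\beta$ versus $\cS_\beta$, greedy versus quasi-greedy) don't quite match the paper's.

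There is, however, a genuine wobble in your sketch of (viii). You reduce a hypothetical $\omega\in X_\beta$ with $\pi^*_\beta(x)\prec\omega\prec\pi_\beta(x)$ to the form $\omega=z_1\dots z_{n-1}(z_n-1)\nu$ with $\nu=\sigma^n\omega\succ\pi^*_\beta(1)$ — that part is correct. The case $\omega\in\cS_\beta$ is then disposed of by the lexicographic characterisation. But your dismissal of the remaining case $\omega=\pi_\beta(1)$ via ``$\pi_\beta(1)$ has $n$-th coordinate $z_n\neq z_n-1$'' does not make sense for general $x$: there is no reason for the $n$-th digit of $\pi_\beta(1)$ to equal $z_n$, the $n$-th digit of $\pi_\beta(x)$, unless $x=1$. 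The clean repair is to invoke item (vii), which you prove independently earlier: $\sigma^n\pi_\beta(1)=\pi_\beta(T_\beta^n(1))$ by (iii), and $T_\beta^n(1)\in[0,1)$, so (vii) gives $\pi_\beta(T_\beta^n(1))\prec\pi_\beta^*(1)$, contradicting $\nu\succ\pi^*_\beta(1)$. (Be careful \emph{not} to appeal to Proposition~\ref{2.11}~(i) here, as the paper's proof of that result cites Proposition~\ref{p_relation_of_coding}~\ref{p_relation_of_coding__viii}; using (vii) avoids any circularity.) A secondary compression: in (ii), the phrase ``whose upper expansion is given by (i), and prepending $z_1\dots z_{n-1}(z_n-1)$'' needs unpacking — (i) gives $\pi^*_\beta(1)$, and one then needs the short left-limit computation showing $\pi^*_\beta(z_n/\beta)=(z_n-1)\pi^*_\beta(1)$, together with the observation that $T_\beta^i(x)\notin D_\beta$ for $i\le n-2$ so the first $n-1$ digits of $\pi^*_\beta(x)$ agree with those of $\pi_\beta(x)$ — before the prefix $z_1\dots z_{n-1}$ can be attached. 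Both gaps are local and fixable; the overall architecture is sound.
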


The following classification of values $\beta>1$, and the interpretation in terms of dynamical behaviour, will be required in our subsequent investigations.

\begin{definition}[Classification of $\beta >1$]\label{d_classification_beta}
	A real number $\beta>1$ is said to be
	\begin{enumerate}[label=\rm{(\roman*)}]
		\smallskip
		\item a \emph{simple beta-number} if $\underline{\ve} (1,\beta )$ has only finitely many nonzero terms;
		
		\smallskip
		\item a \emph{non-simple beta-number} if 
		$\underline{\ve} (1,\beta )$ is preperiodic  (i.e.,~there exists $n\in \N$ such that $\sigma^n (\underline{\ve} (1,\beta) )$ is periodic), but $\beta$ is not a simple beta-number;
		
		\smallskip
		\item \emph{non-preperiodic} if $\beta$ is not a beta-number (i.e.,~$\beta$ satisfies neither (i) nor (ii) above). 
	\end{enumerate}
\end{definition}
\begin{rem}\label{r_after_classification_of_beta}
	The terminology \emph{beta-number}, as well as \emph{simple beta-number}, was introduced by Parry \cite{Par60},  who proved (see \cite[Theorem~5]{Par60}) that the set of simple beta-numbers is dense in $(1,+\infty)$. Some authors refer to simple beta-numbers as \emph{Parry numbers} (see e.g.~\cite{Ka15}).
	It is readily seen that $\beta$ is a simple beta-number if and only if $1$ is a periodic point of $U_\beta$. 
\end{rem}

The following proposition summarises the relation between periodic points and invariant measures of $T_\beta$ and $U_\beta$.

\begin{prop}\label{p_relation_T_beta_and_wt_T_beta}
	If $\beta>1$, then the following hold: 
	\begin{enumerate}[label=\rm{(\roman*)}]
		\smallskip
		\item $T_\beta^{-1}(0)=\{0\} \cup D_\beta$, 
        where $\{0\} \cap D_\beta =\emptyset$. Moreover,
        $T_\beta^{-1}(1)=\emptyset$, $U_\beta^{-1}(0)=\{0\}$, and $U_\beta^{-1}(1)=D_\beta$.  The maps $T_\beta$ and $U_\beta$ coincide when restricted to $I \smallsetminus D_\beta$.
		
		\smallskip
		\item $\Per(T_\beta)\subseteq \Per(U_\beta)$. If $\cO_\beta^*$ is a periodic orbit for $U_\beta$, then $\cO_\beta^*$ is a periodic orbit for $T_\beta$ if and only if $1\notin \cO_\beta^*$.
		
		\smallskip
		\item $\MMM(I,T_\beta)\subseteq \MMM(I,U_\beta)$. If $\mu\in \MMM(I,U_\beta)$, then $\mu\in \MMM(I,T_\beta)$ if and only if $\mu(\{1\})=0$.
		
		\smallskip
		\item If $\beta$ is not a simple beta-number, then $\mathrm{Per}(T_\beta)=\mathrm{Per}(U_\beta)$ and $\MMM(I,T_\beta) \allowbreak =\MMM(I,U_\beta)$.  
		
		\smallskip
		\item If $\beta$ is a simple beta-number, then $\Per(U_\beta) = \Per(T_\beta) \cup \cO_\beta^*(1)$ and $\cM(I,U_\beta)$ is the convex hull of $\bigl\{\mu_{\cO_\beta^*(1)}\bigr\}\cup\cM(I,T_\beta)$. 
		
		\smallskip
		\item $T_\beta$ and $U_\beta$ are distance-expanding.
		Specifically, 
		if
		$x, \, y \in I$ with $\abs{x-y} < 1/(2\beta)$, then $\abs{T_\beta(x) - T_\beta(y)} \ge \beta \abs{x-y}$ and $\abs{U_\beta(x) - U_\beta(y)} \ge \beta \abs{x-y}$.    
	\end{enumerate}
\end{prop}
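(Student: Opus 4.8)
The plan is to establish the six assertions in the listed order, each leaning on its predecessors. Assertion~(i) is a direct unwinding of the definitions: $T_\beta(x)$ is the fractional part of $\beta x$, hence lies in $[0,1)$ — so $T_\beta^{-1}(1)=\emptyset$ — and vanishes exactly when $\beta x\in\Z$, giving $T_\beta^{-1}(0)=\{j/\beta:j\in\Z\}\cap I=\{0\}\cup D_\beta$ by (\ref{e_D_beta}); likewise $U_\beta(x)=\beta x-\lfloor\beta x\rfloor'>0$ on $(0,1]$ since the strict floor is $<\beta x$, $U_\beta(x)=1$ precisely when $\beta x\in\Z$, and $U_\beta(0)=0$; and $\lfloor\beta x\rfloor=\lfloor\beta x\rfloor'$ off $\beta^{-1}\Z$ yields the last sentence, as already noted in Remark~\ref{r_after_def_beta_expansions}. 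For~(ii): if $T_\beta^n(x)=x$, then either the orbit segment $x,T_\beta(x),\dots,T_\beta^{n-1}(x)$ misses $D_\beta$, in which case $U_\beta$ agrees with $T_\beta$ along it so $U_\beta^n(x)=x$, or it meets $D_\beta$, forcing $T_\beta^{k+1}(x)=0$ for some $k<n$ and hence $x=T_\beta^n(x)=0\in\Per(U_\beta)$. For the stated equivalence, $U_\beta^{-1}(1)=D_\beta$ from~(i) shows that a $U_\beta$-periodic orbit avoiding $1$ also avoids $D_\beta$ and therefore consists of $T_\beta$-periodic points; conversely $T_\beta^{-1}(1)=\emptyset$ shows $1\notin\Per(T_\beta)$, so a $U_\beta$-periodic orbit contained in $\Per(T_\beta)$ cannot contain $1$.

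The crux is (iii)--(iv). For~(iii), the key observation is that every $\mu\in\MMM(I,T_\beta)$ satisfies $\mu(D_\beta)=0$: since $T_\beta^{-1}(\{0\})=\{0\}\cup D_\beta$ is a disjoint union (by~(i)), invariance gives $\mu(\{0\})=\mu(\{0\})+\mu(D_\beta)$. Consequently $T_\beta=U_\beta$ holds $\mu$-almost everywhere, so $\mu$ is also $U_\beta$-invariant, while $\mu(\{1\})=\mu(T_\beta^{-1}(\{1\}))=0$; conversely, if $\mu\in\MMM(I,U_\beta)$ with $\mu(\{1\})=0$ then $\mu(D_\beta)=\mu(U_\beta^{-1}(\{1\}))=0$, so again $T_\beta=U_\beta$ $\mu$-a.e.\ and $\mu$ is $T_\beta$-invariant. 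For~(iv), $\beta$ non-simple means $1\notin\Per(U_\beta)$ by Remark~\ref{r_after_classification_of_beta}, so by~(ii) no $U_\beta$-periodic orbit contains $1$, which combined with $\Per(T_\beta)\subseteq\Per(U_\beta)$ gives $\Per(T_\beta)=\Per(U_\beta)$; and if $\mu\in\MMM(I,U_\beta)$ had $\mu(\{1\})>0$, the Poincar\'e recurrence theorem applied to the atom $\{1\}$ would force $U_\beta^n(1)=1$ for some $n\ge 1$, contradicting non-periodicity — hence $\mu(\{1\})=0$ and~(iii) gives $\mu\in\MMM(I,T_\beta)$.

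For~(v) (with $\beta$ simple, so $\cO_\beta'(1)$ is a $U_\beta$-periodic orbit of some period $p$), the periodic-point identity follows exactly as in~(iv), except that now a $U_\beta$-periodic orbit meeting $1$ must equal $\cO_\beta'(1)$. For the measure statement, one inclusion is immediate from~(iii), convexity of $\MMM(I,U_\beta)$, and $\mu_{\cO_\beta'(1)}\in\MMM(I,U_\beta)$. For the reverse, given $\mu\in\MMM(I,U_\beta)$ set $t=\mu(\cO_\beta'(1))$; since $U_\beta$ cyclically permutes $\cO_\beta'(1)$, invariance gives $\mu(\{y\})=\mu(U_\beta^{-1}(\{y\}))\ge\mu(\{y'\})$, where $y'$ is the unique point of $\cO_\beta'(1)$ mapping to $y$, and chaining these inequalities around the cycle forces $\mu(\{y\})=t/p$ for every $y\in\cO_\beta'(1)$, i.e.\ $\mu$ and $t\mu_{\cO_\beta'(1)}$ agree on $\cO_\beta'(1)$. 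Hence $\mu-t\mu_{\cO_\beta'(1)}$ is a nonnegative measure of total mass $1-t$; when $t<1$ its normalisation $\nu=(1-t)^{-1}(\mu-t\mu_{\cO_\beta'(1)})$ is a $U_\beta$-invariant probability measure with $\nu(\{1\})=0$, so $\nu\in\MMM(I,T_\beta)$ by~(iii) and $\mu=t\mu_{\cO_\beta'(1)}+(1-t)\nu$, while $t=1$ forces $\mu=\mu_{\cO_\beta'(1)}$. Finally~(vi): for $x<y$ in $I$ with $\beta(y-x)<1/2$, the integer $m:=\lfloor\beta y\rfloor-\lfloor\beta x\rfloor$ lies in $\{0,1\}$, and $T_\beta(y)-T_\beta(x)=\beta(y-x)-m$, so $\abs{T_\beta(x)-T_\beta(y)}=\beta\abs{x-y}$ when $m=0$ and $\abs{T_\beta(x)-T_\beta(y)}=1-\beta(y-x)>1/2>\beta\abs{x-y}$ when $m=1$; the same argument with $\lfloor\cdot\rfloor'$ in place of $\lfloor\cdot\rfloor$ treats $U_\beta$ on $(0,1]$, and the case $x=0$ is a direct check. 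Taking $\lambda=\beta$ and $\eta=1/(4\beta)$ then exhibits both maps as distance-expanding.

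I expect the only non-routine points to be the two observations that make the discontinuities harmless: that $T_\beta$-invariant measures give zero mass to the discontinuity set $D_\beta$ (so one may pass freely between $T_\beta$ and $U_\beta$), and the use of Poincar\'e recurrence in~(iv) to rule out an atom of a $U_\beta$-invariant measure at the critical point $1$ when $\beta$ is not simple. The decomposition bookkeeping in~(v) is the most calculation-heavy step, but it is entirely routine once~(iii) is in hand.
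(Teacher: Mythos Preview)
Your proof is correct and follows essentially the same approach as the paper's. The only cosmetic differences are that you invoke Poincar\'e recurrence explicitly in~(iv) where the paper simply says ``it is straightforward to check that $\mu(\{1\})=0$'', and in~(iii) you argue via ``$T_\beta=U_\beta$ $\mu$-a.e.'' whereas the paper checks $\mu(U_\beta^{-1}(Y))=\mu(Y)$ set by set; both routes are equivalent.
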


While the support of any $T$-invariant probability measure $\mu$ is such that $\supp \mu = T(\supp \mu)$ in the case where $T$ is continuous (see e.g.~\cite[p.~156]{Ak93}), the same is not true for the discontinuous maps $T_\beta$ and $U_\beta$; nevertheless we do have the following result.

\begin{lemma}\label{support_invariant_set}
	Suppose $\beta>1$ and $\mu \in \MMM(I,U_\beta)$. Then
	$U_\beta(\supp\mu)=\supp \mu$
	if $0\notin \supp \mu$, and $T_\beta(\supp\mu)=\supp \mu$
	if $1\notin \supp \mu$.
\end{lemma}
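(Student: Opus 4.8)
The statement has two symmetric halves; I would prove the first ($0 \notin \supp\mu \Rightarrow U_\beta(\supp\mu) = \supp\mu$) in detail and note the second follows by the analogous argument with $T_\beta$ in place of $U_\beta$ (using that $T_\beta$ agrees with $U_\beta$ away from $D_\beta$, and that $1 \notin \supp\mu$ forces $\mu(D_\beta) = \mu(U_\beta^{-1}(1)) = 0$, so $\mu$ is also $T_\beta$-invariant by Proposition~\ref{p_relation_T_beta_and_wt_T_beta}(iii)). The key point is that, away from the bad point $0$, the map $U_\beta$ is \emph{locally continuous}: by Proposition~\ref{p_relation_T_beta_and_wt_T_beta}(vi) and Remark~\ref{r_after_def_beta_expansions}, $U_\beta$ is locally a restriction of the expanding affine maps $x \mapsto \beta x - j$, hence continuous at every point of $(0,1] \smallsetminus D_\beta$, and even at points of $D_\beta$ it is left-continuous with a one-sided expanding inverse nearby. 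So the obstruction to the usual "support is invariant" argument is confined to $0$ and (for the inclusion $\supp\mu \subseteq U_\beta(\supp\mu)$) potentially to $D_\beta$.

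\textbf{Step 1: $U_\beta(\supp\mu) \subseteq \supp\mu$.} Let $x \in \supp\mu$, so $x \neq 0$. Pick a small open interval $J \ni x$ with $\overline{J} \subseteq (0,1]$ on which $U_\beta$ is continuous (this is possible since the only discontinuities of $U_\beta$ in $(0,1]$ are at $D_\beta$, and even there $U_\beta$ restricted to a suitable half-neighbourhood $(x - \delta, x]$ is continuous — one takes $J = (x-\delta,x]$ in that case, which still has $x$ in its closure-relative-interior and positive $\mu$-measure). For any open $V \ni U_\beta(x)$, continuity of $U_\beta|_J$ gives a relatively open $W \ni x$ in $J$ with $U_\beta(W) \subseteq V$; then $\mu(V) \geq \mu(U_\beta^{-1}(V)) \geq \mu(W) > 0$ since $x \in \supp\mu$. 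Hence $U_\beta(x) \in \supp\mu$.

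\textbf{Step 2: $\supp\mu \subseteq U_\beta(\supp\mu)$.} This is the harder inclusion. Fix $y \in \supp\mu$ and suppose $y \notin U_\beta(\supp\mu)$. Since $U_\beta$ is a finite-to-one open-affine map on each branch, $U_\beta^{-1}(y)$ is a finite set $\{x_1, \dots, x_k\}$; by assumption none of the $x_i$ lies in $\supp\mu$, so there are open neighbourhoods $W_i \ni x_i$ with $\mu(W_i) = 0$. Because each branch $x \mapsto \beta x - j$ is an open map with continuous inverse, $U_\beta\bigl(\bigcup_i W_i\bigr)$ contains a neighbourhood of $y$ \emph{except possibly missing} one-sided bits at images of endpoints; more carefully, I claim $U_\beta^{-1}\bigl(B(y,\rho)\bigr) \subseteq \bigcup_i W_i$ for $\rho$ small enough, because the branches of $U_\beta^{-1}$ are continuous and there are finitely many of them (here one uses that $y \neq$ the image of the discontinuity-creating endpoints can be arranged, or handles the exceptional case by the left/right-continuity structure recorded in Lemma~\ref{l_continuity_T_U_on_x}). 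Then $\mu(B(y,\rho)) \leq \mu(U_\beta^{-1}(B(y,\rho))) \leq \sum_i \mu(W_i) = 0$, contradicting $y \in \supp\mu$. Here invariance is used in the form $\mu(A) = \mu(U_\beta^{-1}(A))$ for Borel $A$.

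\textbf{Main obstacle.} The delicate part is Step~2, specifically controlling the preimage of a small ball near $y$ when $y$ happens to be $U_\beta(x)$ for some $x \in D_\beta$ (so $y = 1$, excluded for the $T_\beta$-statement but relevant for the $U_\beta$-statement only if $1 \in \supp\mu$) or when $y = 0$. For $y = 0$: $U_\beta^{-1}(0) = \{0\}$ by Proposition~\ref{p_relation_T_beta_and_wt_T_beta}(i), and $0 \in \supp\mu$ would force $\mu(\{0\}) > 0$ (as $\{0\}$ is the full preimage), making $\delta_0$ a point mass component — but $U_\beta(0) = 0 \in \supp\mu$, so this case is actually consistent and needs no contradiction; I just need to observe $0 = U_\beta(0)$ directly. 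For $y = 1$ (if $1 \in \supp\mu$): $U_\beta^{-1}(1) = D_\beta$, an infinite set, so the "finitely many branches" argument needs adjusting — but $D_\beta$ accumulates only at $0$, and near any $x \in D_\beta$ one uses the one-sided expansion (Lemma~\ref{l_continuity_T_U_on_x}(i), $\lim_{x \nearrow b} U_\beta^n(x) = U_\beta^n(b)^-$) to see that a left-neighbourhood of $1$ is covered by left-neighbourhoods of points of $D_\beta$. Packaging this cleanly — perhaps by lifting to the beta-shift via $h_\beta$ and $\pi^*_\beta$ (Proposition~\ref{p_relation_of_coding}), where $\sigma$ is genuinely continuous and the surjectivity $\sigma(\cS_\beta) = \cS_\beta$ is automatic — is likely the cleanest route, and is the step I would spend the most care on.
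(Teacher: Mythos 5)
Your approach --- proving the two inclusions $U_\beta(\supp\mu)\subseteq\supp\mu$ and $\supp\mu\subseteq U_\beta(\supp\mu)$ directly --- is different from the paper's, which instead shows that the restriction $U_\beta|_{\supp\mu}$ (resp.\ $T_\beta|_{\supp\mu}$) is \emph{continuous}, and then applies the standard result \cite[p.~156]{Ak93} that for a continuous self-map of a compact space every invariant measure has invariant support. The paper's continuity argument is short: writing $\cK=\supp\mu$ and $\delta_1=d(\cK,0)>0$, invariance gives $\mu\bigl((y,y+\delta_1/\beta)\bigr)\le\mu\bigl(U_\beta^{-1}(0,\delta_1)\bigr)=\mu\bigl((0,\delta_1)\bigr)=0$ for every $y\in D_\beta$, so $\cK\cap(y,y+\delta_1/\beta)=\emptyset$; consequently any two points of $\cK$ within distance $\delta_1/\beta$ of each other have no point of $D_\beta$ strictly between them, so $U_\beta|_\cK$ is Lipschitz and both inclusions follow at once from the cited topological-dynamics fact.

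Your write-up has a genuine gap exactly where this observation would be needed. In Step~1, when $x\in D_\beta\cap\supp\mu$, you take $J=(x-\delta,x]$ and assert that it ``still has ... positive $\mu$-measure.'' That does not follow merely from $x\in\supp\mu$: a priori the local mass could sit entirely in $[x,x+\epsilon)$. It \emph{does} follow, but only because $0\notin\supp\mu$ forces $\mu\bigl((x,x+\delta_1/\beta)\bigr)=0$ by the calculation above, and you have not supplied it. Step~2 you acknowledge is not closed, and the ``main obstacle'' discussion contains a factual slip: $D_\beta=\{j/\beta:j\in\Z\}\cap(0,1]$ (and hence $U_\beta^{-1}(1)$) is a \emph{finite} set --- it does not accumulate at $0$; the countable set you are likely thinking of is $\bigcup_n U_\beta^{-n}(1)=Z_\beta$. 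Once you know $D_\beta$ is finite and that $\supp\mu$ misses the one-sided intervals $(y,y+\delta_1/\beta)$, your two-inclusion argument can be completed, but at that point you have effectively reproduced the paper's continuity lemma; it is cleaner to establish continuity of the restriction once and invoke \cite[p.~156]{Ak93} for both inclusions simultaneously, as the paper does.
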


\subsection{Monotonicity and approximation properties in parameter space}\label{subsec_approximation_properties}

Here we recall some monotonicity and approximation properties for the one-parameter family of beta-shifts.

The following proposition characterises those sequences on the alphabet $\cB=\{ 0, \, 1,\, \dots, \, \lfloor \beta  \rfloor  \} $ that arise as the $\beta$-expansion of a real number $x\in [0,1]$.

\begin{prop}\label{2.11}
	If $\beta>1$, then the following hold: 
	\begin{enumerate}[label=\rm{(\roman*)}]
		\smallskip
		\item $\pi_\beta([0,1))=\bigl\{A \in \cB^\N: \sigma^n(A)\prec \pi^*_\beta(1)\text{ for all }n\in \N_0\bigr\}$.
		
		\smallskip
		\item $\cS_\beta$ 
		can also be expressed as
		\begin{equation}\label{e_def_S_beta}
			\cS_\beta
			= \bigl\{A \in \cB^\N: \sigma^n(A)\preceq \pi^*_\beta(1)\text{ for all }n\in \N_0\bigr\} .
		\end{equation}
	\end{enumerate}
\end{prop}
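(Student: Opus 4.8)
The plan is to prove both statements by a combination of the known description of admissible sequences (Parry's theorem, essentially already available via \ref{p_relation_of_coding__viii}) and the topological characterisation of $\cS_\beta$ as a closure. First I would handle (i). The inclusion $\pi_\beta([0,1)) \subseteq \{A : \sigma^n(A) \prec \pi^*_\beta(1) \text{ for all } n\in\N_0\}$ follows from $\sigma\circ\pi_\beta = \pi_\beta\circ T_\beta$ (statement~\ref{p_relation_of_coding__iii}) together with the lexicographic inequality $\pi_\beta(x) \prec \pi^*_\beta(1)$ valid for all $x \in [0,1)$, which is precisely~\ref{p_relation_of_coding__vii} with $y=1$; applying this to $T_\beta^n(x) \in [0,1)$ (note $T_\beta$ maps $[0,1)$ into $[0,1)$ since $T_\beta$ never takes the value $1$, by Proposition~\ref{p_relation_T_beta_and_wt_T_beta}~(i)) gives $\sigma^n(\pi_\beta(x)) = \pi_\beta(T_\beta^n(x)) \prec \pi^*_\beta(1)$. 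For the reverse inclusion, suppose $A = a_1 a_2\dots \in \cB^\N$ satisfies $\sigma^n(A) \prec \pi^*_\beta(1)$ for all $n\in\N_0$. Set $x \= h_\beta(A) = \sum_i a_i \beta^{-i}$; since $A \prec \pi^*_\beta(1) \preceq \pi_\beta(1)$ one checks $x < h_\beta(\pi_\beta(1)) = 1$, and $x\ge 0$, so $x\in[0,1)$. It remains to see $\pi_\beta(x) = A$. Here I would argue that $A$ cannot lie strictly between $\pi^*_\beta(x)$ and $\pi_\beta(x)$ by~\ref{p_relation_of_coding__viii} (once we know $A\in X_\beta$), and that $A \prec \pi^*_\beta(1) \preceq \pi_\beta(y)$ for $y$ slightly above... — more cleanly: use that $h_\beta(A) = x = h_\beta(\pi_\beta(x))$ and that by \ref{p_relation_of_coding__xi} the fibre $h_\beta^{-1}(x)$ is either $\{\pi_\beta(x)\}$ or $\{\pi_\beta(x),\pi^*_\beta(x)\}$, the latter only when some $T_\beta^n(x) = 0$; in that exceptional case $\pi^*_\beta(x)$ has the form (finite block)$(\text{tail of }\pi^*_\beta(1))$ by~\ref{p_relation_of_coding__ii}, so $\sigma^n(\pi^*_\beta(x)) = \pi^*_\beta(1)$ for some $n$, contradicting $\sigma^n(A)\prec\pi^*_\beta(1)$; hence $A = \pi_\beta(x)$. (I still need $A\in X_\beta$ to invoke the fibre statement; this I get by approximating $A$ from below by eventually-zero sequences, each of which is $\pi_\beta$ of a dyadic-type rational in $[0,1)$ — or directly: the set on the right of (i) is shift-invariant and one shows it equals $\pi_\beta([0,1))$ by the monotone bijection $h_\beta$ restricted appropriately.)

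For (ii), the plan is to show the right-hand set of~\eqref{e_def_S_beta}, call it $\cS'$, equals $\overline{\pi_\beta([0,1))}$. The inclusion $\cS_\beta = \overline{\pi_\beta([0,1))} \subseteq \cS'$ is immediate: $\cS'$ is closed in the product topology (each condition $\sigma^n(A)\preceq\pi^*_\beta(1)$ is closed, being a non-strict lexicographic inequality, and an intersection of closed sets is closed), and by part~(i) it contains $\pi_\beta([0,1))$, hence its closure. For the reverse inclusion $\cS' \subseteq \cS_\beta$, take $A\in\cS'$. If $\sigma^n(A) \prec \pi^*_\beta(1)$ for all $n$, then $A\in\pi_\beta([0,1))\subseteq\cS_\beta$ by~(i). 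Otherwise $\sigma^n(A) = \pi^*_\beta(1)$ for some $n\ge0$; in particular, taking the smallest such $n$, $A$ agrees with $\pi^*_\beta(1)$ from coordinate $n+1$ on, while still $\sigma^k(A)\preceq\pi^*_\beta(1)$ for $k<n$. I would then exhibit $A$ as a limit of points of $\pi_\beta([0,1))$ by truncating: consider $A^{(m)}$ obtained from $A$ by replacing the tail past position $m$ with the $\beta$-expansion of the point it codes, or more simply perturb the $(m)$-th symbol downward and append zeros; each $A^{(m)}$ then satisfies $\sigma^k(A^{(m)})\prec\pi^*_\beta(1)$ for all $k$ (strict, because the equality with $\pi^*_\beta(1)$ is broken), so $A^{(m)}\in\pi_\beta([0,1))$, and $A^{(m)}\to A$ in $d_t$. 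This uses Lemma~\ref{l_equivalent_def_pi_*} ($\pi^*_\beta(a) = \lim_{x\nearrow a}\pi_\beta(x)$) to justify that small downward perturbations remain admissible.

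The main obstacle I expect is the reverse inclusion in part~(i) — specifically, the careful identification $\pi_\beta(h_\beta(A)) = A$ for an abstractly given admissible $A$, which requires knowing $A\in X_\beta$ before the fibre-description~\ref{p_relation_of_coding__xi} and the non-betweenness~\ref{p_relation_of_coding__viii} can be applied, so one has to either build in a density/approximation argument or track carefully the exceptional case where $h_\beta$ is two-to-one (points whose orbit hits $0$) and rule it out using the strict inequality hypothesis via~\ref{p_relation_of_coding__ii}. The secondary delicate point is the perturbation argument in~(ii): one must check that decreasing a symbol and padding with zeros genuinely produces a sequence all of whose shifts are $\prec\pi^*_\beta(1)$, which is clear for the shifts landing in the zero tail but needs the admissibility of the unchanged prefix for the earlier shifts — this is where the non-strict inequality $\sigma^k(A)\preceq\pi^*_\beta(1)$ for $A\in\cS'$ is exactly what is needed. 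Both of these are essentially the content of Parry's classical characterisation, so I would cite \cite{Pa60} and \cite{IT74,YT21} for the admissibility facts and keep the new verification to the topological closure bookkeeping.
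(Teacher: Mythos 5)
The paper's own ``proof'' is a two-line citation: part~(i) is Parry's \cite[Theorem~3]{Pa60} combined with Proposition~\ref{p_relation_of_coding}~\ref{p_relation_of_coding__viii}, and part~(ii) is precisely \cite[Lemma~4.4]{IT74}. You instead attempt to derive both statements internally from the other lemmas of the paper, which is a genuinely different and more ambitious route.

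Your forward inclusion in~(i) and the reduction in~(ii) are sound. The inclusion $\pi_\beta([0,1))\subseteq\bigl\{A:\sigma^n(A)\prec\pi^*_\beta(1)\text{ for all }n\bigr\}$ does follow from Proposition~\ref{p_relation_of_coding}~\ref{p_relation_of_coding__iii} and~\ref{p_relation_of_coding__vii} together with $T_\beta([0,1))\subseteq[0,1)$. For~(ii), granted~(i), the right-hand set is closed and contains $\pi_\beta([0,1))$, hence contains $\cS_\beta$; and the downward perturbation of a nonzero digit to produce a strict-Parry sequence does work, because $\pi^*_\beta(1)$ has infinitely many nonzero digits (so such positions exist beyond any index) and $\sigma^k(A^{(m)})\prec\sigma^k(A)\preceq\pi^*_\beta(1)$ for shifts landing before the modified digit, while later shifts land in the zero tail, which is strictly below $\pi^*_\beta(1)$ since its first digit is at least~$1$.

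The reverse inclusion in~(i), however, has a real gap --- the one you yourself flag --- and it is not merely a presentational one. To invoke the fibre description of $h_\beta^{-1}(x)$ (Proposition~\ref{p_relation_of_coding}~\ref{p_relation_of_coding__xi}) you first need $A\in X_\beta$, and both of your proposed routes to that membership are circular. Approximating $A$ from below by eventually-zero sequences and asserting that each one lies in $\pi_\beta([0,1))$ is exactly the ``$\supseteq$'' direction of~(i) specialised to eventually-zero sequences; that special case is no easier and still requires the Parry-type induction verifying that the greedy algorithm applied to $\sum_{i\le m} a_i\beta^{-i}$ reproduces the digits $a_1,\dots,a_m,0,0,\dots$. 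Likewise, ``the monotone bijection $h_\beta$ restricted appropriately'' presupposes the very bijection between admissible sequences and points of $[0,1)$ that is at stake. There is a smaller flaw of the same character: you argue $h_\beta(A)<1$ from $A\prec\pi^*_\beta(1)\preceq\pi_\beta(1)$, but the monotonicity of $h_\beta$ in Proposition~\ref{p_relation_of_coding}~\ref{p_relation_of_coding__x} is asserted only on $X_\beta$ (and is only nondecreasing there, so strictness would not follow in any case), whereas for an arbitrary $A\in\cB^\N$ not yet known to lie in $X_\beta$ the series $\sum a_i\beta^{-i}$ need not even respect the lexicographic order. The content that closes the loop is precisely what \cite[Theorem~3]{Pa60} supplies. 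Without citing it, the argument would have to reproduce the original inductive digit-by-digit comparison of $A$ with the greedy expansion of $h_\beta(A)$, not deduce the claim from the auxiliary lemmas you list.
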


The representation (\ref{e_def_S_beta}) implies that the closed subset $\cS_\beta$ satisfies $\sigma(\cS_\beta)=\cS_\beta$;
	in other words it is a subshift of the full shift $\cB^\N$, so we may regard the shift map $\sigma$ as a self-map
	$\sigma \: \cS_\beta \to \cS_\beta$. 	

  \begin{rem}\label{r_after_S_beta}
	\begin{enumerate}[label=\rm{(\roman*)}]
		\smallskip
		\item Define 
		$\wt{X}_\beta\=\bigl\{A \in \cB^\N: \sigma^n(A)\preceq \pi_\beta(1)\text{ for all }n\in \N_0\bigr\}$.

         \smallskip
        \item If $\beta$ is not a simple beta-number, then $\pi_\beta(1)=\pi_\beta^*(1)$ (see Proposition~\ref{p_relation_of_coding}~\ref{p_relation_of_coding__i}), and hence $\cS_\beta=X_\beta=\wt{X}_\beta$ by (\ref{e_def_S_beta}), Definitions~\ref{beta shifts} and~\ref{d_X_beta_and_tilde_X_beta}. 

        \smallskip
		\item If $\beta$ is a simple beta-number, then $\pi_\beta^*(1)\prec \pi_\beta(1)$ (see Proposition~\ref{p_relation_of_coding}~\ref{p_relation_of_coding__i}), hence $X_\beta$ is the union of
		the beta-shift $\cS_\beta$ and the singleton set $\{\pi_\beta(1)\}$ 
		(which is disjoint from $\cS_\beta$),
		and 
		\begin{equation}\label{inclusions_proper}
			\cS_\beta\subseteq X_\beta\subseteq \wt{X}_\beta
		\end{equation} 
		by (\ref{e_def_S_beta}), Definitions~\ref{beta shifts} and~\ref{d_X_beta_and_tilde_X_beta}. 
		The inclusions in (\ref{inclusions_proper}) are proper:
		for example, when $\beta=2$, we have 
		$
			2(0)^\infty\in X_2\smallsetminus\cS_2$, $12(0)^\infty\in \wt{X}_2\smallsetminus X_2$.
        
        \smallskip
		\item If $\beta$ is a simple beta-number then
		$\sigma$ maps $\cS_\beta$ surjectively onto itself, and maps $\wt{X}_\beta$ surjectively onto itself,
		but $\sigma \: X_\beta\to X_\beta$ is not surjective.
		\smallskip
		
		\item A complement to Proposition~\ref{2.11}~(i) is that, for each $A\in\N_0^\N$, there exists $\beta>1$ with $A=\underline{\ve} (1,\beta )$ if and only if 
		$\sigma^n (A )\prec A \text{ for all } n\in\N;$
		and if such a number $\beta>1$ exists then it is unique (see \cite[Corollary~1]{Par60}). Consequently, each of the three classes in Definition~\ref{d_classification_beta} is readily seen to be nonempty.
		\smallskip
		
		\item Some authors define the beta-shift to be either $X_\beta$ or $\wt{X}_\beta$, instead of $\cS_\beta$.
		For example it is defined to be $X_\beta$ in \cite[p.~1696]{AB07}, \cite[Definition~2.2]{Scj97}, and \cite[p.~1438]{KQ22}, and defined to be $\wt{X}_\beta$ in \cite[p.~248]{Si76} and \cite[p.~179]{Wal82}.
	\end{enumerate}
\end{rem}

\begin{lemma}\label{l_apprioxiation_beta_shif_beta}
	If $\beta>1$, then the following hold: 
	\begin{enumerate}[label=\rm{(\roman*)}]
		\smallskip
		\item If $1<\beta'<\beta$, then $\cS_{\beta'} \subseteq \cS_\beta$.
		
		\smallskip
		\item 
		$\cS_\beta=\overline{\bigcup_{\gamma\in(1,\beta)}\cS_\gamma}.$
	\end{enumerate}		
\end{lemma}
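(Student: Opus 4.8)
The plan is to work throughout with the lexicographic description
$\cS_\beta=\bigl\{A\in\cB^\N:\sigma^n(A)\preceq\pi^*_\beta(1)\text{ for all }n\in\N_0\bigr\}$
from Proposition~\ref{2.11}~(ii), combined with the fact that the map $\beta\mapsto\pi^*_\beta(1)$ (that is, $i_1^*$) is strictly increasing and left-continuous, as recorded in Proposition~\ref{p_relation_of_coding}~\ref{p_relation_of_coding__xiii} and~\ref{p_relation_of_coding__xiv}. For~(i), observe that $1<\beta'<\beta$ forces $\cB_{\beta'}=\{0,\dots,\lfloor\beta'\rfloor\}\subseteq\{0,\dots,\lfloor\beta\rfloor\}=\cB_\beta$, so the lexicographic order on $\cB_{\beta'}^\N$ is the restriction of the one on $\cB_\beta^\N$; since $\pi^*_{\beta'}(1)\prec\pi^*_\beta(1)$, every $A\in\cS_{\beta'}$ satisfies $\sigma^n(A)\preceq\pi^*_{\beta'}(1)\preceq\pi^*_\beta(1)$ for all $n\in\N_0$, hence $A\in\cS_\beta$ by Proposition~\ref{2.11}~(ii).

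For~(ii), the inclusion $\overline{\bigcup_{\gamma\in(1,\beta)}\cS_\gamma}\subseteq\cS_\beta$ follows at once from~(i) and the fact that $\cS_\beta$ is closed (Definition~\ref{beta shifts}). The substance is the reverse inclusion. Given $A\in\cS_\beta$ and $k\in\N$, I would approximate $A$ by its truncation $A^{(k)}\= A_1A_2\cdots A_k(0)^\infty$, which satisfies $d_\beta(A,A^{(k)})\le\beta^{-(k+1)}$, and show that $A^{(k)}\in\cS_\gamma$ for every $\gamma\in(1,\beta)$ sufficiently close to $\beta$. Three things must be checked. First, every symbol of $A$, hence of $A^{(k)}$, is at most the first symbol $\lfloor\beta\rfloor'$ of $\pi^*_\beta(1)$, and this equals $\lfloor\gamma\rfloor'$ — in particular it is at most $\lfloor\gamma\rfloor$ — once $\gamma$ is close enough to $\beta$, so $A^{(k)}\in\cB_\gamma^\N$. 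Second, for $n\ge k$ one has $\sigma^n(A^{(k)})=(0)^\infty\prec\pi^*_\gamma(1)$. Third, for $0\le n<k$ one has $\sigma^n(A^{(k)})\preceq\sigma^n(A)\preceq\pi^*_\beta(1)$, and this inequality is in fact \emph{strict}: $\sigma^n(A^{(k)})$ is eventually $0$ whereas $\pi^*_\beta(1)$ is not (which follows from Proposition~\ref{p_relation_of_coding}~\ref{p_relation_of_coding__i}, noting that $\pi_\beta(1)$ begins with $\lfloor\beta\rfloor\ge1$, so $\pi^*_\beta(1)$ is either periodic with non-zero period or has infinitely many non-zero terms). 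Each such strict inequality $\sigma^n(A^{(k)})\prec\pi^*_\beta(1)$ is decided by some finite prefix of $\pi^*_\beta(1)$; since there are only finitely many values $n\in\{0,\dots,k-1\}$ in play, left-continuity of $i_1^*$ lets us choose $\gamma\in(1,\beta)$ so close to $\beta$ that $\pi^*_\gamma(1)$ agrees with $\pi^*_\beta(1)$ on a prefix long enough to witness all of them simultaneously. Then $\sigma^n(A^{(k)})\prec\pi^*_\gamma(1)$ for every $n\in\N_0$, so $A^{(k)}\in\cS_\gamma$ by Proposition~\ref{2.11}~(ii); letting $k\to+\infty$ gives $A\in\overline{\bigcup_{\gamma\in(1,\beta)}\cS_\gamma}$.

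The main obstacle is the third step above: one needs a \emph{single} $\gamma<\beta$ meeting the infinitely many constraints $\sigma^n(A^{(k)})\preceq\pi^*_\gamma(1)$, and the point is that all but finitely many of these reduce to the trivial $(0)^\infty\prec\pi^*_\gamma(1)$, while each of the finitely many remaining ones is a strict lexicographic inequality and hence stable under replacing $\beta$ by a sufficiently close $\gamma<\beta$, via left-continuity of $\beta\mapsto\pi^*_\beta(1)$. Upgrading $\preceq$ to $\prec$ there rests essentially on $\pi^*_\beta(1)$ never being eventually $0$, which is precisely a place where the distinction between $\pi_\beta$ and $\pi^*_\beta$ (equivalently, between $T_\beta$ and $U_\beta$) matters. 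The alphabet bookkeeping in the first step is routine but should not be skipped, since a priori $\lfloor\gamma\rfloor$ could be strictly smaller than $\lfloor\beta\rfloor$; it is absorbed into the same "$\gamma$ close to $\beta$" already needed in the third step.
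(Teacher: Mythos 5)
Your proof is correct, and it rests on the same pillars as the paper's: the lexicographic characterisation of $\cS_\beta$ from Proposition~\ref{2.11}~(ii), monotonicity and left-continuity of $\beta\mapsto\pi^*_\beta(1)$, and a truncation-of-sequences argument. The mechanism in part~(ii), however, is genuinely different in its key step. The paper truncates \emph{both} the sequence $B\in\cS_\beta$ (to $B_n=b_1\dots b_n(0)^\infty$) and the bound $\pi^*_\beta(1)$ (to $A_n=a_1\dots a_n(0)^\infty$), then exploits the elementary observation that $\sigma^k(B_n)\preceq A_n$ for \emph{every} $k\ge0$ simultaneously; since left-continuity supplies a single $\gamma_n<\beta$ with $A_n\preceq\pi^*_{\gamma_n}(1)$, the conclusion $B_n\in\cS_{\gamma_n}$ is immediate and needs no strictness anywhere. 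You instead truncate only $A\in\cS_\beta$, and for each of the finitely many shifts $n<k$ you upgrade $\sigma^n(A^{(k)})\preceq\pi^*_\beta(1)$ to a strict $\prec$ via the observation that $\pi^*_\beta(1)$ is never eventually $0$ (which you correctly justify from Proposition~\ref{p_relation_of_coding}~\ref{p_relation_of_coding__i} together with $\lfloor\beta\rfloor\ge1$, so the only potentially eventually-zero case $\pi^*_\beta(1)=(0)^\infty$ would force $\beta=1$); since each strict lexicographic inequality is decided by a finite prefix and there are only finitely many $n$ in play, left-continuity gives a single $\gamma$. Both routes are complete, and yours buys the following: you make the alphabet bookkeeping ($\cB_\gamma\supseteq$ the symbol set used) fully explicit, a point the paper leaves implicit (it is absorbed in the paper because $\sigma^k(B_n)\preceq\pi^*_{\gamma_n}(1)$ already forces the first-position bound $b_{k+1}\le\lfloor\gamma_n\rfloor'$). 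The paper's argument is a shade more economical because the intermediate truncation $A_n$ sidesteps the strictness upgrade altogether, but your use of the never-eventually-zero property of $\pi^*_\beta(1)$ is a clean, correct alternative that makes the role of the $T_\beta$/$U_\beta$ distinction visible.
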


\begin{rem}
	Lemma~\ref{l_apprioxiation_beta_shif_beta}
	is hinted at as part of \cite[Proposition~4.1]{IT74} (though in \cite{IT74} it is slightly mis-stated, and not proved,
	so for the convenience of the reader we include a proof in Appendix~\ref{beta_proofs_section}).
	We note that another part of \cite[Proposition~4.1]{IT74}  is 
	false: in general it is not the case that $\cS_\beta=\bigcap_{\gamma>\beta}\cS_\gamma$
	(for example if $\beta=2$ then $2(0)^\infty\in \cS_\gamma$ for all $\gamma>2$, but $2(0)^\infty\notin \cS_2$),
	however the intersection can be expressed as
	\begin{equation*}
		\wt{X}_\beta=\bigcap_{\gamma>\beta}\cS_\gamma.
	\end{equation*}
\end{rem}

\begin{definition}
	For $1<\gamma<\beta$, define
	\begin{equation} \label{Hbetagamma}
		H_{\beta}^{\gamma}\=h_\beta(\cS_{\gamma})=\bigg\{\sum_{i=1}^{+\infty}z_i\beta^{-i}: \{z_i\}_{i\in\N}\in \cS_{\gamma}\bigg\},
	\end{equation} 
	and if $\psi\in\Holder{\alpha}(I)$ then define the corresponding \emph{restricted ergodic supremum}
	\begin{equation}\label{e_def_beta_gamma}
     \begin{aligned}
		\mpe_{\beta, \, \gamma}(\psi)        
        \={}&\mpe\bigl(T_\beta|_{H^\gamma_\beta},\psi|_{H^\gamma_\beta}\bigr) \\
        ={}&\sup\biggl\{\int \! \psi \, \mathrm{d} \mu :\mu \in \mathcal{M}(I,T_\beta),\,\supp \mu \subseteq H_\beta^\gamma\biggr\}.
     \end{aligned}
	\end{equation}
\end{definition}

\begin{lemma}\label{H beta gamma}
	Suppose $\beta>1$. 
	If $\cK \subseteq I$ is a nonempty compact set with $1\notin \cK=T_\beta(\cK)$, then there exists $\beta' \in (1,\beta)$ such that $\cK \subseteq H_\beta^\gamma$ for each $\gamma \in (\beta', \beta)$. 
	
	Similarly,
	if  $\cK^*$ is a nonempty compact set with $1\notin \cK^*=U_\beta(\cK^*)$, then there exists $\beta' \in (1,\beta)$ such that $\cK^* \subseteq H_\beta^\gamma$ for each $\gamma \in (\beta' , \beta)$.
\end{lemma}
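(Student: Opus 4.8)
The plan is to reduce the $U_\beta$ statement to the $T_\beta$ statement, and to prove the $T_\beta$ statement by coding $\cK$ symbolically and invoking the approximation Lemma~\ref{l_apprioxiation_beta_shif_beta}. For the reduction: if $1\notin\cK'=U_\beta(\cK')$ then, since $U_\beta$ and $T_\beta$ agree on $I\smallsetminus D_\beta$ and $U_\beta^{-1}(1)=D_\beta$ (Proposition~\ref{p_relation_T_beta_and_wt_T_beta}~(i)), the condition $U_\beta(\cK')=\cK'$ forces $\cK'\cap D_\beta=\emptyset$ (a point of $D_\beta$ in $\cK'$ would map to $1\notin\cK'$ under $U_\beta$), so $T_\beta|_{\cK'}=U_\beta|_{\cK'}$ and hence $T_\beta(\cK')=\cK'$; the first part then applies verbatim.

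For the main part, let $\cK\subseteq I$ be non-empty compact with $1\notin\cK=T_\beta(\cK)$. I would first observe that $0\notin\cK$ as well: if $0\in\cK$, then because $T_\beta(\cK)=\cK$ and $T_\beta^{-1}(0)\cap\cK$ would have to be non-empty, one traces an element of $D_\beta\cup\{0\}$ in $\cK$, and more to the point one wants to know $\cK\cap D_\beta=\emptyset$ so that the coding $\pi_\beta$ restricted to $\cK$ behaves well — here one should use that $\pi_\beta$ is right-continuous (Proposition~\ref{p_relation_of_coding}~\ref{p_relation_of_coding__ix}) and that $h_\beta\circ\pi_\beta=\id$ with $h_\beta\circ\sigma=T_\beta\circ h_\beta$ on $\pi_\beta(I)$ (Proposition~\ref{p_relation_of_coding}~\ref{p_relation_of_coding__iv},~\ref{p_relation_of_coding__v}). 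Actually the cleanest route is: since $T_\beta(\cK)=\cK$ and $\cK$ is compact with $1\notin\cK$, Lemma~\ref{support_invariant_set}-type reasoning (or a direct argument) gives that $T_\beta|_\cK$ is continuous, hence $\pi_\beta(\cK)$ is a compact $\sigma$-invariant subset of $\cS_\beta$ (using $\sigma\circ\pi_\beta=\pi_\beta\circ T_\beta$, Proposition~\ref{p_relation_of_coding}~\ref{p_relation_of_coding__iii}), and $h_\beta(\pi_\beta(\cK))=\cK$. Write $\Lambda\=\pi_\beta(\cK)\subseteq\cS_\beta$; this is a compact $\sigma$-invariant set.

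Now the key point: I claim $\sup\{\sigma^n(A) : A\in\Lambda,\ n\in\N_0\}$ (in lexicographic order) is strictly smaller than $\pi_\beta^*(1)$, or at least that there is a "gap" allowing a smaller parameter. Since $1\notin\cK$, every $A\in\Lambda$ satisfies $h_\beta(A)\le 1-d(\cK,1)<1$, and by $\sigma$-invariance every shift $\sigma^n(A)$ also lies in $\Lambda$, hence also has $h_\beta$-value at most $1-d(\cK,1)$. Using that $h_\beta$ is non-decreasing (Proposition~\ref{p_relation_of_coding}~\ref{p_relation_of_coding__x}) together with $h_\beta^{-1}$ of an interval $[1-\delta,1]$ being a "tail" in lexicographic order, one extracts a single finite word $a_1\dots a_N$ and the fact that no $\sigma^n(A)$, $A\in\Lambda$, can begin with $a_1\dots a_N$ with the $N$-th coordinate too large; more precisely, one finds $N$ and concludes $\sigma^n(A)\prec a_1\dots a_{N-1}(a_N{+}1)(0)^\infty$ uniformly. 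Combining with Proposition~\ref{p_relation_of_coding}~\ref{p_relation_of_coding__xiv} (the map $\gamma\mapsto\pi_\gamma^*(1)$ is left-continuous, i.e. $\pi_\beta^*(1)=\lim_{\gamma\nearrow\beta}\pi_\gamma^*(1)$) one picks $\beta'\in(1,\beta)$ with $a_1\dots a_{N-1}(a_N{+}1)(0)^\infty\preceq\pi_{\gamma}^*(1)$ for all $\gamma\in(\beta',\beta)$; then for such $\gamma$, by the characterisation \eqref{e_def_S_beta} of $\cS_\gamma$, we get $\Lambda\subseteq\cS_\gamma$, whence $\cK=h_\beta(\Lambda)\subseteq h_\beta(\cS_\gamma)=H_\beta^\gamma$ by definition \eqref{Hbetagamma}. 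This gives the conclusion.

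\textbf{Main obstacle.} The delicate step is converting the metric condition "$\cK$ stays a definite distance from $1$" into a \emph{uniform symbolic} bound "$\sigma^n(A)\prec w$ for all $A\in\Lambda,n\in\N_0$" for some finite-tail word $w\prec\pi_\beta^*(1)$, because $h_\beta$ is only non-decreasing (not injective) and the preimage of a point near $1$ can be the single sequence $\pi_\beta(1)$ or split into $\pi_\beta(1),\pi_\beta^*(1)$. One must be careful that $\Lambda$ is genuinely inside $\cS_\beta$ (not merely $X_\beta$) — which holds because $\cK\subseteq h_\beta(\pi_\beta([0,1)))$ since $1\notin\cK$ and $\pi_\beta$ is right-continuous so $\pi_\beta(\cK)\subseteq\overline{\pi_\beta([0,1))}=\cS_\beta$ — and that the "gap" one extracts near $1$ translates (via monotonicity of $h_\beta$ and of $\pi_\beta$) into a lexicographic gap below $\pi_\beta^*(1)$ that is then preserved under the left-limit $\gamma\nearrow\beta$. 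Once this uniform symbolic bound is in hand, the remaining steps are immediate from \eqref{e_def_S_beta}, \eqref{Hbetagamma}, and Proposition~\ref{p_relation_of_coding}~\ref{p_relation_of_coding__xiv}.
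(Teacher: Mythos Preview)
Your proposal is correct and follows essentially the same architecture as the paper: reduce the $U_\beta$ statement to the $T_\beta$ statement via $\cK'\cap D_\beta=\emptyset$, code $\cK$ by $\pi_\beta$, use $\sigma$-invariance of $\pi_\beta(\cK)$, obtain a uniform lexicographic upper bound strictly below $\pi_\beta^*(1)$, and then invoke left-continuity of $\gamma\mapsto\pi_\gamma^*(1)$ (Proposition~\ref{p_relation_of_coding}~\ref{p_relation_of_coding__xiv}) together with the characterisation~\eqref{e_def_S_beta} to find $\beta'$.

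The one place where the paper is tidier is exactly your ``main obstacle''. You try to invert $h_\beta$ to convert the metric gap $d(\cK,1)>0$ into a symbolic gap, and worry (rightly) about $h_\beta$ being merely non-decreasing. The paper sidesteps this entirely: writing $m=\max\cK<1$, strict monotonicity of $\pi_\beta^*$ (Proposition~\ref{p_relation_of_coding}~\ref{p_relation_of_coding__vi}) gives $\pi_\beta^*(m)\prec\pi_\beta^*(1)$ directly; then~\ref{p_relation_of_coding__xiv} yields $\beta'$ with $\pi_\beta^*(m)\prec\pi_{\beta'}^*(1)$; finally the no-gap property~\ref{p_relation_of_coding__viii} (nothing in $X_\beta$ lies strictly between $\pi_\beta^*(x)$ and $\pi_\beta(x)$), applied with $\pi_{\beta'}^*(1)\in X_\beta$, upgrades this to $\pi_\beta(x)\preceq\pi_{\beta'}^*(1)$ for every $x\in\cK$. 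With this in hand, $\sigma$-invariance of $\pi_\beta(\cK)$ gives $\sigma^n(\pi_\beta(z))\preceq\pi_{\beta'}^*(1)$ for all $n$, and the conclusion follows. Note that your digressions on $0\in\cK$, continuity of $T_\beta|_\cK$, and compactness of $\Lambda=\pi_\beta(\cK)$ are unnecessary: only the set-theoretic identity $\sigma(\pi_\beta(\cK))=\pi_\beta(\cK)$ is used, and the maximum is taken in $I$ (where $\cK$ is compact), not in $X_\beta$.
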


Recall that  a homeomorphism $g \: Y_1 \to Y_2$ between metric spaces $(Y_1,d_1)$ and $(Y_2,d_2)$
is \defn{bi-Lipschitz} if there exists a constant $C \geq 1$ such that for all $u,\, v \in Y_1$,
$
	C^{-1} d_1( u, v ) \leq d_2 ( g(u), g(v) ) \leq C d_1 ( u, v ) $.

\begin{lemma}\label{l_bi_lipschitz_S_gamma_H_gamma}
	For $1<\gamma<\beta$, the map $\pi_\beta|_{H_\beta^\gamma}\:\bigl(H_\beta^\gamma,d\bigr)\to (\cS_\gamma,d_\beta)$ is bi-Lipschitz.
\end{lemma}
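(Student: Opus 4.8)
The plan is to recognise $\pi_\beta|_{H_\beta^\gamma}$ as the inverse of $h_\beta|_{\cS_\gamma}$ and then to verify the two bi-Lipschitz inequalities directly: one is immediate from the Lipschitz bound already recorded for $h_\beta$, and the other reduces to a short separation estimate.

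First I would check that $\pi_\beta$ maps $H_\beta^\gamma$ bijectively onto $\cS_\gamma$, with inverse $h_\beta|_{\cS_\gamma}$. Since $1<\gamma<\beta$, Proposition~\ref{p_relation_of_coding}~\ref{p_relation_of_coding__xiii} and~(\ref{ix beta}) give $\pi^*_\gamma(1)\prec\pi^*_\beta(1)$, so by shift-invariance of $\cS_\gamma$ and~(\ref{e_def_S_beta}) every $A\in\cS_\gamma$ satisfies $\sigma^n(A)\preceq\pi^*_\gamma(1)\prec\pi^*_\beta(1)$ for all $n\in\N_0$; by Proposition~\ref{2.11}~(i) this means $A\in\pi_\beta([0,1))$, hence $A=\pi_\beta(h_\beta(A))$ with $h_\beta(A)\in H_\beta^\gamma$ by Proposition~\ref{p_relation_of_coding}~\ref{p_relation_of_coding__iv} and~(\ref{Hbetagamma}). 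Conversely Proposition~\ref{p_relation_of_coding}~\ref{p_relation_of_coding__iv} also gives $h_\beta(\pi_\beta(x))=x$ for $x\in H_\beta^\gamma\subseteq I$. Thus $\pi_\beta|_{H_\beta^\gamma}\colon H_\beta^\gamma\to\cS_\gamma$ and $h_\beta|_{\cS_\gamma}\colon\cS_\gamma\to H_\beta^\gamma$ are mutually inverse bijections, and it remains only to produce a constant $C\ge1$ with
\[
C^{-1}d_\beta(A,B)\le\abs{h_\beta(A)-h_\beta(B)}\le C\,d_\beta(A,B)\qquad\text{for all }A,B\in\cS_\gamma .
\]

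The right-hand inequality is just the Lipschitz property of $h_\beta$ on $X_\beta\supseteq\cS_\gamma$ recorded in Proposition~\ref{p_relation_of_coding}~\ref{p_relation_of_coding__xii}, with constant $\beta^2/(\beta-1)$. For the left-hand inequality I would first observe that $\theta\=\max H_\beta^\gamma$ satisfies $\theta<1$: indeed $\pi^*_\gamma(1)$ is the $\preceq$-largest element of $\cS_\gamma$ (it lies in $\cS_\gamma$ since $\sigma^n(\pi^*_\gamma(1))=\pi^*_\gamma(U^n_\gamma(1))\preceq\pi^*_\gamma(1)$ for all $n$, by Proposition~\ref{p_relation_of_coding}~\ref{p_relation_of_coding__iii},~\ref{p_relation_of_coding__vi} and~(\ref{e_def_S_beta})), and $h_\beta$ is non-decreasing on $X_\beta$ (Proposition~\ref{p_relation_of_coding}~\ref{p_relation_of_coding__x}), so $\theta=h_\beta(\pi^*_\gamma(1))$; writing $\pi^*_\gamma(1)=\{z_i\}_{i\in\N}$ with $z_1\ge1$ and using $\beta>\gamma$ together with $h_\gamma(\pi^*_\gamma(1))=1$ (Proposition~\ref{p_relation_of_coding}~\ref{p_relation_of_coding__iv}), one gets $\theta=\sum_{i\ge1}z_i\beta^{-i}<\sum_{i\ge1}z_i\gamma^{-i}=1$. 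Now take distinct $A=\{a_i\},B=\{b_i\}$ in $\cS_\gamma$, let $p$ be the first index at which they differ, and assume without loss of generality $a_p<b_p$, so $b_p-a_p\ge1$. Since $\sigma^p(A)\in\cS_\gamma$ we have $\sum_{j\ge1}a_{p+j}\beta^{-j}=h_\beta(\sigma^p(A))\le\theta$, whence
\[
h_\beta(B)-h_\beta(A)=(b_p-a_p)\beta^{-p}+\sum_{i>p}(b_i-a_i)\beta^{-i}\ge\beta^{-p}-\theta\beta^{-p}=(1-\theta)\,d_\beta(A,B).
\]
Taking $C\=\max\bigl\{\beta^2/(\beta-1),\,(1-\theta)^{-1}\bigr\}$ then yields both inequalities, and hence the lemma.

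The only genuinely delicate point is the strict inequality $\theta<1$: this is exactly where the hypothesis $\gamma<\beta$ (rather than $\gamma\le\beta$) is used, and it is what keeps the separation constant $(1-\theta)^{-1}$ finite. Everything else is routine manipulation of the coding maps resting on Proposition~\ref{p_relation_of_coding}.
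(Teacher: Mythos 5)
Your proof is correct and takes essentially the same approach as the paper: the crucial ingredient in both is the separation estimate $\max H_\beta^\gamma < 1$ (your $\theta < 1$, equivalently the paper's $\delta = d(H_\beta^\gamma,1) > 0$), applied to the tail $\sigma^p(A)$ to obtain the lower Lipschitz bound. You establish $\theta < 1$ by a direct comparison of $h_\beta$ and $h_\gamma$ on $\pi^*_\gamma(1)$ rather than the paper's lexicographic argument, and you obtain the upper Lipschitz bound by citing Proposition~\ref{p_relation_of_coding}~\ref{p_relation_of_coding__xii} rather than by the paper's direct computation, but these are minor packaging differences; in fact you are also a bit more careful than the paper in spelling out that $\pi_\beta$ maps $H_\beta^\gamma$ bijectively onto $\cS_\gamma$.
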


\begin{lemma}\label{l_property_H_gamma}
	For each $\beta>1$ and each $\gamma \in (1,\beta)$, the set $H_\beta^{\gamma}$ is a closed subset of $I$ satisfying $T_\beta \bigl( H_\beta^\gamma \bigr)\subseteq H_\beta^\gamma$ and the restricted beta-transformation $T_\beta|_{H_\beta^{\gamma}} \: H_\beta^{\gamma} \to H_\beta^{\gamma}$ has the following properties:
	\begin{enumerate}[label=\rm{(\roman*)}]
		\smallskip
		\item $T_\beta|_{H_\beta^{\gamma}}$ is Lipschitz.
		
		\smallskip
		\item $T_\beta|_{H_\beta^{\gamma}}$ is distance-expanding.
		
		\smallskip
		\item If $\gamma$ is a simple beta-number, then $T_\beta|_{H_\beta^{\gamma}}$ is an open mapping.
	\end{enumerate}
\end{lemma}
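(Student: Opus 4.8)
The plan is to transfer all four assertions about $T_\beta$ on $H_\beta^\gamma$ from the corresponding (much more transparent) assertions about the shift $\sigma$ on the beta-shift $\cS_\gamma$, via the coding maps. I would first record two structural facts: (a) by Lemma~\ref{l_bi_lipschitz_S_gamma_H_gamma}, $\pi_\beta$ restricts to a bi-Lipschitz homeomorphism from $H_\beta^\gamma$ onto $\cS_\gamma$, with inverse $h_\beta|_{\cS_\gamma}$ (since $h_\beta\circ\pi_\beta=\id$ by Proposition~\ref{p_relation_of_coding}~\ref{p_relation_of_coding__iv}); and (b) $\cS_\gamma$ is a compact, $\sigma$-invariant subset of $\cB^\N$ contained in $X_\beta$ (using Lemma~\ref{l_apprioxiation_beta_shif_beta}~(i) and $\lfloor\gamma\rfloor\le\lfloor\beta\rfloor$). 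Closedness of $H_\beta^\gamma=h_\beta(\cS_\gamma)$ is then immediate, being the continuous image of a compact set (Proposition~\ref{p_relation_of_coding}~\ref{p_relation_of_coding__x}). For the forward-invariance $T_\beta(H_\beta^\gamma)\subseteq H_\beta^\gamma$: any $x\in H_\beta^\gamma$ equals $h_\beta(\pi_\beta(x))$ with $\pi_\beta(x)\in\cS_\gamma$ by (a), so $T_\beta(x)=h_\beta(\sigma(\pi_\beta(x)))$ by Proposition~\ref{p_relation_of_coding}~\ref{p_relation_of_coding__v}, and $\sigma(\pi_\beta(x))\in\sigma(\cS_\gamma)=\cS_\gamma$ places $T_\beta(x)$ in $H_\beta^\gamma$ (indeed this gives equality $T_\beta(H_\beta^\gamma)=H_\beta^\gamma$). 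In particular, by Proposition~\ref{p_relation_of_coding}~\ref{p_relation_of_coding__iii}, $\pi_\beta|_{H_\beta^\gamma}$ is a topological conjugacy from $T_\beta|_{H_\beta^\gamma}$ to $\sigma|_{\cS_\gamma}$.

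For statement~(i) I would factor $T_\beta|_{H_\beta^\gamma}=(h_\beta|_{\cS_\gamma})\circ(\sigma|_{\cS_\gamma})\circ(\pi_\beta|_{H_\beta^\gamma})$ and note each factor is Lipschitz: $\pi_\beta|_{H_\beta^\gamma}$ by Lemma~\ref{l_bi_lipschitz_S_gamma_H_gamma}, the shift $\sigma$ with Lipschitz constant $\beta$ with respect to $d_\beta$ directly from the definition of $d_\beta$, and $h_\beta$ on $(X_\beta,d_\beta)\supseteq(\cS_\gamma,d_\beta)$ by Proposition~\ref{p_relation_of_coding}~\ref{p_relation_of_coding__xii}. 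For statement~(ii) there is essentially nothing to do beyond the first paragraph: Proposition~\ref{p_relation_T_beta_and_wt_T_beta}~(vi) gives $d(T_\beta(x),T_\beta(y))\ge\beta\, d(x,y)$ whenever $x,y\in I$ with $d(x,y)<1/(2\beta)$, and since $H_\beta^\gamma\subseteq I$ and $T_\beta(H_\beta^\gamma)\subseteq H_\beta^\gamma$, this already exhibits $T_\beta|_{H_\beta^\gamma}$ as distance-expanding with $\lambda=\beta$ and $\eta=1/(4\beta)$.

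The real content is statement~(iii), where the plan is to show that $\cS_\gamma$ is a subshift of finite type when $\gamma$ is a simple beta-number, and then invoke that the shift on such a subshift is open. Since $\gamma$ is simple, $\pi_\gamma(1)=z_1\cdots z_m(0)^\infty$ with $z_m\ge 1$, so $\pi_\gamma^*(1)=w^\infty$ where $w=z_1\cdots z_{m-1}(z_m-1)$ has length $m$ (Proposition~\ref{p_relation_of_coding}~\ref{p_relation_of_coding__i}), and by~(\ref{e_def_S_beta}), $\cS_\gamma=\{A\in\{0,\dots,\lfloor\gamma\rfloor\}^\N:\sigma^n(A)\preceq w^\infty\text{ for all }n\ge 0\}$. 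I claim this is exactly the set of sequences containing no word from the finite family $\cF=\{w_1\cdots w_j c:0\le j\le m-1,\ w_{j+1}<c\le\lfloor\gamma\rfloor\}$ of words of length $\le m$: one inclusion is immediate, and for the converse, given $A$ with $\sigma^n(A)\succ w^\infty$ and first-disagreement index $j$ (so $A_{n+1}\cdots A_{n+j}$ agrees with $w^\infty$ and $A_{n+j+1}>(w^\infty)_{j+1}$), if $j\ge m$ then the $m$-periodicity of $w^\infty$ yields the same kind of violation at position $n+m$ with index $j-m$, and iterating drives the index below $m$, producing an occurrence of a word of $\cF$ in $A$. Thus $\cS_\gamma$ is a subshift of finite type; recoding it to an equivalent one-step subshift of finite type (a vertex shift), where $\sigma$ maps every nonempty cylinder onto a cylinder and is hence open, and using that openness of the shift is a conjugacy invariant, $\sigma|_{\cS_\gamma}$ is open, and therefore so is the conjugate map $T_\beta|_{H_\beta^\gamma}$. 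The main obstacle is precisely this finite-type claim — in particular the periodicity-based reduction bounding the length of a minimal forbidden word by $m$; everything else is routine transport of structure along the conjugacy $\pi_\beta|_{H_\beta^\gamma}$.
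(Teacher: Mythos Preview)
Your proof is correct and follows essentially the same route as the paper: transfer everything along the bi-Lipschitz conjugacy $\pi_\beta|_{H_\beta^\gamma}$ between $T_\beta|_{H_\beta^\gamma}$ and $\sigma|_{\cS_\gamma}$, get distance-expansion directly from Proposition~\ref{p_relation_T_beta_and_wt_T_beta}~(vi), and for (iii) use that $\cS_\gamma$ is a subshift of finite type with open shift map when $\gamma$ is simple. The only difference is that the paper dispatches (iii) by citing \cite[Proposition~4.1]{Bl89} and \cite[Theorem~3.2.12]{URM22} for these two facts, whereas you supply a self-contained argument (the periodicity reduction bounding the minimal forbidden-word length by $m$, then openness via recoding to a vertex shift); both are fine.
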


\medskip
We conclude this subsection with some notation and results concerning cylinder subsets of $I$.

\begin{definition}\label{d_n_cylinders}
	Fix $\beta>1$ and $n\in \N$. A length-$n$ prefix $(\myepsilon_1,   \myepsilon_2, \dots, \myepsilon_n)$ is said to be \emph{$\beta$-admissible} 
	if $\myepsilon_1  \dots  \myepsilon_n  (0)^\infty  \in \pi_\beta([0,1))$.
	
	For each $\beta$-admissible length-$n$ prefix, we define the corresponding \emph{$n$-cylinder} to be
	\begin{equation}\label{e_def_n_cylinders}
		I (\myepsilon_1, \myepsilon_2, \dots, \myepsilon_n)= \{x\in[0, 1) :\ve_i(x,\beta) =\myepsilon_i \text{ for all } 1 \leq i \leq n\},
	\end{equation}
	and if $T^n_\beta(I(\myepsilon_1,\myepsilon_2,\dots,\myepsilon_n))=[0, 1)$
	we say that the cylinder $I(\myepsilon_1,\myepsilon_2,\dots,\myepsilon_n)$ is \emph{full}.
	Let $W^n$ denote the set of all $n$-cylinders, and let $W^n_0$ denote the set of all full $n$-cylinders.  
	
	Note that the $n$-cylinder $I(\myepsilon_1,\myepsilon_2,\dots,\myepsilon_n)$ is a left-closed and right-open interval, with the left endpoint 
	\begin{equation}\label{left_endpoint}
		\frac{\myepsilon_{1}}{\beta}+\frac{\myepsilon_{2}}{\beta^2}+\dots+\frac{\myepsilon_n}{\beta^{n}}.
	\end{equation}
\end{definition}

\begin{prop}\label{p_cylinders}
	Fix $\beta>1$, $n\in\N$, and $I^n \= I(\myepsilon_1,\dots, \myepsilon_n)\in W^n$. 
	\begin{enumerate}[label=\rm{(\roman*)}]
		
		\smallskip
		\item $[0,1) = \bigcup_{J^n \in W^n} J^n$, and the $n$-cylinders $J^n$ in $W^n$ are pairwise disjoint.
		
		\smallskip
		\item If $m \in \{1,\,\dots,\,n\}$ and $x,\, y \in I^n$, then $T_{\beta}^m(y) - T_{\beta}^m(x) = \beta^m(y-x)$. Consequently, $T_{\beta}^m|_{I^n}$ is continuous and strictly increasing.
		
		\smallskip
		\item If $\myepsilon_n > 0$, then $I(\myepsilon_1, \dots, \myepsilon_{n-1},  b)\in W^n_0$ for $b \in\{0,\,\dots,\,\myepsilon_n-1\} $ with the right endpoint $(b+1)\beta^{-n}+\sum_{i=1}^{n-1}\myepsilon_i \beta^{-i}$.
		
		\smallskip
		\item If $I^n\in W^n_0$ and the right endpoint of $I^n$ is not $1$, then there exists a $T_\beta^n$-fixed point in $I^n$.
		
		\smallskip
		\item There exists $m \in \{0,\,1,\,\dots,\,n\}$ such that $T_{\beta}^n (I^n) = [0, U_\beta^m(1))$.
		
	\end{enumerate}
\end{prop}

\begin{lemma}\label{l_Distortion}
	Fix $\beta>1$. Suppose $\alpha\in(0,1]$ and $\phi\in \Holder{\alpha}(I)$. For all $n\in\N$, $I^n\in W^n$, and $x,\,y\in I^n$, we have 
	$
		\abs{S_{n}\phi(x)-S_{n}\phi(y)}
		\leq   \frac{\Hseminorm{\alpha}{\phi}}{\beta^\alpha-1} \Absbig{T_{\beta}^n(x)-T_{\beta}^n(y)}^{\alpha}$.
\end{lemma}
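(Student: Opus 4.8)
The plan is to bound the ergodic sum increment term by term using the Hölder regularity of $\phi$, and then to control each term by the exponential contraction of the inverse branches encoded in Proposition~\ref{p_cylinders}.

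First I would expand, using the definition (\ref{e_S_m_I^n}) and the triangle inequality,
\begin{equation*}
	\abs{S_{n,I^n}\phi(x)-S_{n,I^n}\phi(y)}
	\le \sum_{i=0}^{n-1}\Absbig{\phi\bigl(T^i_{\beta,I^n}(x)\bigr)-\phi\bigl(T^i_{\beta,I^n}(y)\bigr)}
	\le \Hseminorm{\alpha}{\phi}\sum_{i=0}^{n-1}\Absbig{T^i_{\beta,I^n}(x)-T^i_{\beta,I^n}(y)}^\alpha .
\end{equation*}
The key observation is that for each $i\in\{0,1,\dots,n-1\}$,
\begin{equation*}
	\Absbig{T^i_{\beta,I^n}(x)-T^i_{\beta,I^n}(y)} = \beta^{-(n-i)}\Absbig{T^n_{\beta,I^n}(x)-T^n_{\beta,I^n}(y)} .
\end{equation*}
For $i=0$ this is exactly Proposition~\ref{p_cylinders}~(ii) with $m=n$. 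For $1\le i\le n-1$, set $I^{n-i}\=I(\epsilon_{i+1},\dots,\epsilon_n)$; by Proposition~\ref{p_cylinders}~(i) together with the continuity of $T^i_{\beta,I^n}$ from Proposition~\ref{p_cylinders}~(ii), the points $T^i_{\beta,I^n}(x)$ and $T^i_{\beta,I^n}(y)$ lie in $\overline{I}^{n-i}$; then Proposition~\ref{p_cylinders}~(vi) gives $T^n_{\beta,I^n}=T^{n-i}_{\beta,I^{n-i}}\circ T^i_{\beta,I^n}$ on $\overline{I}^{n}$, and applying Proposition~\ref{p_cylinders}~(ii) to the tail map $T^{n-i}_{\beta,I^{n-i}}$ on $\overline{I}^{n-i}$ yields the displayed identity.

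Substituting this into the first display and writing $M\=\Absbig{T^n_{\beta,I^n}(x)-T^n_{\beta,I^n}(y)}$, the right-hand side becomes $\Hseminorm{\alpha}{\phi}\,M^\alpha\sum_{i=0}^{n-1}\beta^{-\alpha(n-i)}=\Hseminorm{\alpha}{\phi}\,M^\alpha\sum_{j=1}^{n}\beta^{-\alpha j}$, and since $\beta^\alpha>1$ the geometric sum satisfies $\sum_{j=1}^{n}\beta^{-\alpha j}<\beta^{-\alpha}/(1-\beta^{-\alpha})=1/(\beta^\alpha-1)$, which gives precisely the asserted inequality. I do not anticipate any genuine obstacle here: the only point requiring care is the index bookkeeping in the second display, and in particular verifying that $T^i_{\beta,I^n}$ maps $\overline{I}^{n}$ into $\overline{I}^{n-i}$ so that Proposition~\ref{p_cylinders}~(ii) may legitimately be applied to the tail map $T^{n-i}_{\beta,I^{n-i}}$.
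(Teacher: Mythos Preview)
Your proposal is correct and follows essentially the same approach as the paper: triangle inequality plus H\"older regularity, then the identity $\Absbig{T^i_{\beta,I^n}(x)-T^i_{\beta,I^n}(y)}=\beta^{-(n-i)}\Absbig{T^n_{\beta,I^n}(x)-T^n_{\beta,I^n}(y)}$, then a geometric sum. Your justification of the key identity via Proposition~\ref{p_cylinders}~(i),~(ii),~(vi) is slightly more circuitous than needed---the paper simply applies Proposition~\ref{p_cylinders}~(ii) directly (with $m=i$ and $m=n$, both for $x,y\in\overline{I}^n$) to get $T^i_{\beta,I^n}(y)-T^i_{\beta,I^n}(x)=\beta^i(y-x)$ and $T^n_{\beta,I^n}(y)-T^n_{\beta,I^n}(x)=\beta^n(y-x)$, then divides.
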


\subsection{Maximizing measures}\label{sct_ext_maximizing_measures}

        Here we introduce the notion of \emph{limit-maxi\-mizing measure}, which will be useful for a dynamical system, such as $T_\beta$, whose set of invariant measures is not necessarily weak$^*$ compact. For $\beta>1$ and $\phi\in C(I)$, we first show that the existence of a maximizing measure for $(I,U_\beta,\phi)$ is equivalent to the existence of a maximizing measure for $(X_\beta,\sigma, \phi\circ h_\beta) .$ We then prove that a measure is limit-maximizing for $(I,T_\beta,\phi)$ if and only if it is maximizing for $(I,U_\beta,\phi)$.

\begin{definition}\label{d_limit_max_measure}
	Let $T \: X\to X$ be a Borel measurable map on a compact metric space $X$. For a Borel measurable function $\psi \: X\to \R$, a probability measure $\mu$ is called a \emph{$(T,\psi)$-limit-maximizing measure}, or simply a \emph{$\psi$-limit-maximizing measure}, if it is a weak$^*$ accumulation point of $\MMM(X,T)$ and $\int \!\psi \,\mathrm{d}\mu=\mpe(T,\psi)$. We denote the set of $(T,\psi)$-limit-maximizing measures by $\MMM_{\max}^*(T,\psi)$. 
\end{definition}

 Clearly, $\MMM_{\max}(T,\psi)\subseteq \MMM_{\max}^*(T,\psi)$. 
 For $\beta>1$, let us write 
\begin{equation}\label{e_def_Z_beta}
	Z_\beta \= \bigl\{ x\in I:\pi_\beta(x)\neq \pi_\beta^*(x) \bigr\}.
\end{equation}

The following lemma collects together some basic properties of $Z_\beta$.

\begin{lemma}\label{l_properties_z_beta}
	If $\beta>1$, then the following hold:
		\begin{enumerate}[label=\rm{(\roman*)}]
		\smallskip
		\item $Z_\beta=\bigl(\bigcup_{n\in\N}T_\beta^{-n}(0)\bigr)\smallsetminus\{0\}=\bigcup_{n\in\N}U_\beta^{-n}(1)$ and in particular $D_\beta\subseteq Z_\beta$.
		\smallskip
		
		\item $h_\beta^{-1}(W)=\pi_\beta^*(W)\cup\pi_\beta(W\cap Z_\beta)$ for each $W\subseteq I$ and $\pi_\beta(Z_\beta)\cap \pi_\beta^*(I)=\emptyset$.
		\smallskip
		
		\item If $x\in I\smallsetminus Z_\beta$, then $T_\beta^n(x)=U_\beta^n(x)$ for all $n\in \N$.
		\smallskip
		
		\item $\pi_\beta$ and $\pi_\beta^*$ are continuous on $I\smallsetminus Z_\beta$.
		\smallskip
		
		\item $\mu(\pi_\beta(Z_\beta))=0$
		for all $\mu\in \MMM(X_\beta,\sigma)$.
	\end{enumerate}
\end{lemma}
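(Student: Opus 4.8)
The plan is to prove the five statements in order, with the dichotomy for $\pi_\beta^*$ in Proposition~\ref{p_relation_of_coding}~\ref{p_relation_of_coding__ii} as the basic tool and with the real work in part~(v). For part~(i), I would first record the characterisation that $x\in Z_\beta$ if and only if $x\ne 0$ and $\pi_\beta(x)$ has only finitely many nonzero terms: by Proposition~\ref{p_relation_of_coding}~\ref{p_relation_of_coding__ii} (and Lemma~\ref{l_equivalent_def_pi_*} for $x=0$), $\pi_\beta(x)\ne\pi_\beta^*(x)$ exactly when $\pi_\beta(x)=z_1\dots z_n(0)^\infty$ with $z_n>0$, since then $\pi_\beta^*(x)=z_1\dots(z_n-1)\pi_\beta^*(1)\prec\pi_\beta(x)$. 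Next I would observe that ``$\pi_\beta(x)$ is eventually $0$'' is equivalent to ``$T_\beta^m(x)=0$ for some $m\in\N_0$'': one direction is immediate from (\ref{en}), and conversely, if $\ve_n(x,\beta)=0$ for all $n>m$ then $T_\beta$ acts on $\{T_\beta^k(x):k\geq m\}$ as multiplication by $\beta$, so boundedness forces $T_\beta^m(x)=0$. This gives $Z_\beta=\bigl(\bigcup_{n\in\N}T_\beta^{-n}(0)\bigr)\smallsetminus\{0\}$. For the identity with $\bigcup_{n\in\N}U_\beta^{-n}(1)$, given $x\in Z_\beta$ take $m\geq 1$ minimal with $T_\beta^m(x)=0$; minimality forces $T_\beta^j(x)\notin D_\beta$ for $0\leq j\leq m-2$, so by induction $U_\beta^j(x)=T_\beta^j(x)$ for $0\leq j\leq m-1$ (using $T_\beta=U_\beta$ off $D_\beta$, Proposition~\ref{p_relation_T_beta_and_wt_T_beta}~(i)), and since $T_\beta^{m-1}(x)\in T_\beta^{-1}(0)\smallsetminus\{0\}=D_\beta=U_\beta^{-1}(1)$ we get $x\in U_\beta^{-m}(1)$; the reverse inclusion is symmetric. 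Finally $D_\beta=U_\beta^{-1}(1)\subseteq Z_\beta$.

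For part~(ii), I would combine Proposition~\ref{p_relation_of_coding}~\ref{p_relation_of_coding__iv} and~\ref{p_relation_of_coding__xi} with part~(i) and the characterisation above: for $x\in Z_\beta$ one has $h_\beta^{-1}(x)=\{\pi_\beta(x),\pi_\beta^*(x)\}$, while for $x\in I\smallsetminus Z_\beta$ one has $h_\beta^{-1}(x)=\{\pi_\beta(x)\}=\{\pi_\beta^*(x)\}$; taking the union over $x\in W$ yields $h_\beta^{-1}(W)=\pi_\beta^*(W)\cup\pi_\beta(W\cap Z_\beta)$. For the disjointness claim, if $\pi_\beta(x)=\pi_\beta^*(y)$ then applying $h_\beta$ (Proposition~\ref{p_relation_of_coding}~\ref{p_relation_of_coding__iv}) gives $x=y$, hence $\pi_\beta(x)=\pi_\beta^*(x)$ and $x\notin Z_\beta$. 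For part~(iii), part~(i) shows that $x\notin Z_\beta$ means $U_\beta^n(x)\ne 1$ for every $n\in\N$, and an induction on $n$, using that $T_\beta$ and $U_\beta$ agree off $D_\beta=U_\beta^{-1}(1)$, shows $T_\beta^n(x)=U_\beta^n(x)$ for all $n\in\N$.

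For part~(iv), continuity of $\pi_\beta$ at $a\in I\smallsetminus Z_\beta$ is immediate: $\pi_\beta$ is right-continuous on $[0,1)$ (Proposition~\ref{p_relation_of_coding}~\ref{p_relation_of_coding__ix}) and $\lim_{x\nearrow a}\pi_\beta(x)=\pi_\beta^*(a)=\pi_\beta(a)$ by Lemma~\ref{l_equivalent_def_pi_*} and $a\notin Z_\beta$ (at $a=1$ only the left limit is needed, at $a=0$ only the right limit). Since $\pi_\beta^*$ is left-continuous on $(0,1]$ (Proposition~\ref{p_relation_of_coding}~\ref{p_relation_of_coding__ix}), it remains to show right-continuity of $\pi_\beta^*$ at $a$. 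For $x>a$ one has $\pi_\beta^*(a)\preceq\pi_\beta^*(x)\preceq\pi_\beta(x)$ by monotonicity (Proposition~\ref{p_relation_of_coding}~\ref{p_relation_of_coding__vi}) and the dichotomy recalled in part~(i), while $\pi_\beta(x)\to\pi_\beta(a)=\pi_\beta^*(a)$ as $x\searrow a$; I would then invoke the elementary fact that a sequence in $\cB^\N$ lexicographically trapped between a fixed sequence $A$ and a sequence converging to $A$ must itself converge to $A$ (if it disagreed with $A$ at some coordinate $k$ it would strictly exceed $A_k$ there, hence eventually strictly exceed the converging upper bound, a contradiction), which gives $\pi_\beta^*(x)\to\pi_\beta^*(a)$.

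Part~(v) is the main obstacle. By part~(i), $\pi_\beta(Z_\beta)\subseteq F$, where $F\=\bigl\{A\in X_\beta:\sigma^k(A)=(0)^\infty\text{ for some }k\in\N_0,\ A\ne(0)^\infty\bigr\}$. I would decompose $F=\bigsqcup_{n\in\N}F_n$, where $F_n\=\{A\in X_\beta:a_n\ne 0\text{ and }a_j=0\text{ for all }j>n\}$ consists of those $A$ whose last nonzero coordinate is the $n$-th; these are pairwise disjoint Borel sets. The crucial point is that $F_n=\sigma^{-(n-1)}(F_1)$ (there is no restriction on the coordinates preceding the last nonzero one), so $\sigma$-invariance gives $\mu(F_n)=\mu(F_1)$ for all $n\in\N$. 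Since $\sum_{n\in\N}\mu(F_n)=\mu(F)\leq 1<+\infty$ is a convergent series with all terms equal, each term vanishes, whence $\mu(\pi_\beta(Z_\beta))\leq\mu(F)=0$. Note that $(0)^\infty$, which may carry positive $\mu$-mass, is harmlessly excluded from $F$ because $0\notin Z_\beta$. The step requiring care is precisely the bookkeeping that $F_n=\sigma^{-(n-1)}(F_1)$ and that the $F_n$ are disjoint; once this is in place the finiteness-of-a-series argument closes the proof.
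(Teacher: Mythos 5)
Your proof is correct and follows essentially the same strategy as the paper's: part~(i) characterises $Z_\beta$ symbolically as the set of nonzero points with eventually-zero $\beta$-expansion, and part~(v) reduces to measuring the corresponding set of codes via $\sigma$-invariance. The local differences are worth noting but minor. In part~(i), where the paper passes between ``$\pi_\beta(x)$ eventually $0$'' and ``$T_\beta^m(x)=0$'' via the semiconjugacy $T_\beta^n = h_\beta\circ\sigma^n\circ\pi_\beta$ (Proposition~\ref{p_relation_of_coding}~\ref{p_relation_of_coding__iii},~\ref{p_relation_of_coding__v}), you instead argue that $\ve_n(x,\beta)=0$ for $n>m$ forces $T_\beta^{m+k}(x)=\beta^k T_\beta^m(x)$ and then invoke boundedness of $I$ — equally valid and a touch more elementary, though the paper's route handles the $U_\beta^{-n}(1)$ version by the same token rather than requiring the separate minimal-$m$ argument you use. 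In part~(v), the paper shows directly that $\mu\bigl(\sigma^{-n}((0)^\infty)\smallsetminus\{(0)^\infty\}\bigr)=0$ for each $n$ (invariance gives $\mu(\sigma^{-n}((0)^\infty))=\mu(\{(0)^\infty\})$, and $(0)^\infty$ lies in the preimage), whereas you carve out the disjoint decomposition $\{F_n\}_{n\in\N}$ by the index of the last nonzero coordinate, note $F_n=\sigma^{-(n-1)}(F_1)$, and conclude from finiteness of the total mass; both rest on the same invariance and are equally short, so the choice is stylistic. Your part~(iv) is actually more explicit than the paper's ``similarly'' for $\pi_\beta^*$, since you spell out the lexicographic squeeze argument in $(\cB^\N,\preceq)$ that justifies passing the limit through the order inequalities.
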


Now we consider the relation between $\cM ( I, U_\beta )$ and $\cM (X_\beta, \sigma)$.

\begin{notation}
Fix $\beta>1$. Define 
\begin{equation*}
	G_\beta \: \cM ( I, U_\beta ) \to \cM(X_\beta,\sigma)
\end{equation*}
to be the pushforward of $\pi_\beta^*$,
in other words,
\begin{equation}\label{e_def_G}
	G_\beta (\mu) (Y) \= \mu \bigl(\bigl(\pi_\beta^*\bigr)^{-1} (Y)\bigr)
\end{equation}
for each $\mu \in \cM ( I, U_\beta )$ and each Borel measurable subset $Y \subseteq X_\beta$. By Proposition~\ref{p_relation_of_coding}~\ref{p_relation_of_coding__iii}, it is straightforward to check that $G_\beta$ is well-defined. Define 
\begin{equation*}
	H_\beta \:\cM(X_\beta,\sigma)  \to \cP (I)
\end{equation*}
by
\begin{equation}\label{e_def_H}
	H_\beta (\nu) (W) \= \nu \bigl({h_\beta}^{-1} (W)\bigr)
\end{equation}
for each $\nu \in \cM(X_\beta,\sigma)$ and each Borel measurable subset $W \subseteq I$.
\end{notation}

\begin{prop}\label{p_coding_mpe_relation}
	If $\beta>1$ and $\phi \in C(I)$, then the following hold:
	\begin{enumerate}[label=\rm{(\roman*)}]
		\smallskip
		\item $H_\beta$ is a homeomorphism from $\MMM(X_\beta,\sigma)$ to $\MMM(I,U_\beta)$ with respect to the weak$^*$ topology, with $G_\beta^{-1}=H_\beta$.
		
		\smallskip
		\item $\MMM(I,U_\beta)$ is compact in the weak$^*$ topology.
		
		\smallskip
		\item $\mpe(U_\beta,\phi)=\mpe(\sigma|_{X_\beta},\phi\circ h_\beta)$ and $\MMM_{\max}( U_\beta, \phi)=H_\beta(\MMM_{\max}(\sigma|_{X_\beta},\phi\circ h_\beta))\neq \emptyset$.

        \smallskip
        \item If $\cO$ is an $(X_\beta,\sigma)$-periodic orbit, then $h_\beta(\cO)$ is an $(I,U_\beta)$-periodic orbit, with $\card \cO = \card h_\beta(\cO)$.
	\end{enumerate}
\end{prop}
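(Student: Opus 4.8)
The plan is to obtain (i) and (ii) simultaneously from three ingredients — $H_\beta$ is the weak$^*$-continuous pushforward along the \emph{continuous} map $h_\beta$, the set-maps $H_\beta$ and $G_\beta$ are mutually inverse, and $\MMM(X_\beta,\sigma)$ is weak$^*$ compact — and then to read off (iii) from a change-of-variables identity.

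First I would settle the two well-definedness claims. That $G_\beta(\mu)\in\MMM(X_\beta,\sigma)$ is the easy direction: $\pi_\beta^*(I)\subseteq\cS_\beta\subseteq X_\beta$ (Lemma~\ref{l_equivalent_def_pi_*} and Definition~\ref{beta shifts}), so $G_\beta(\mu)=(\pi_\beta^*)_*\mu$ is a Borel probability measure on $X_\beta$, and its $\sigma$-invariance follows from $\sigma\circ\pi_\beta^*=\pi_\beta^*\circ U_\beta$ (Proposition~\ref{p_relation_of_coding}~\ref{p_relation_of_coding__iii}), as already observed in the text. The delicate direction is $H_\beta(\nu)=(h_\beta)_*\nu\in\MMM(I,U_\beta)$: using $\sigma$-invariance of $\nu$, this reduces to knowing that $U_\beta\circ h_\beta=h_\beta\circ\sigma$ holds $\nu$-almost everywhere. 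This identity is \emph{not} valid everywhere on $X_\beta$ — writing $z_1$ for the first symbol of $\omega\in X_\beta$ we have $h_\beta(\sigma(\omega))=\beta h_\beta(\omega)-z_1$, while for $\omega\in\pi_\beta(Z_\beta)$ the symbol $z_1$ overshoots $\lfloor\beta h_\beta(\omega)\rfloor'$ and the identity breaks — but it does hold on $X_\beta\smallsetminus\pi_\beta(Z_\beta)$, since by Proposition~\ref{p_relation_of_coding}~\ref{p_relation_of_coding__xi} every $\omega$ there equals $\pi_\beta^*(h_\beta(\omega))$, whence Proposition~\ref{p_relation_of_coding}~\ref{p_relation_of_coding__v} applies. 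The rescue is Lemma~\ref{l_properties_z_beta}\,(v), which says $\nu(\pi_\beta(Z_\beta))=0$ for every $\nu\in\MMM(X_\beta,\sigma)$; so the identity holds $\nu$-a.e.\ and $H_\beta(\nu)\in\MMM(I,U_\beta)$.

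Next I would verify that $H_\beta$ and $G_\beta$ are mutually inverse. The relation $h_\beta\circ\pi_\beta^*=\id_I$ (Proposition~\ref{p_relation_of_coding}~\ref{p_relation_of_coding__iv}) gives $H_\beta\circ G_\beta=\id$ on $\MMM(I,U_\beta)$ with no measure-theoretic caveats, while $\pi_\beta^*\circ h_\beta=\id$ on $X_\beta\smallsetminus\pi_\beta(Z_\beta)$ (same case analysis as above) combined once more with Lemma~\ref{l_properties_z_beta}\,(v) gives $G_\beta\circ H_\beta=\id$ on $\MMM(X_\beta,\sigma)$. Since $h_\beta$ is continuous (Proposition~\ref{p_relation_of_coding}~\ref{p_relation_of_coding__x},~\ref{p_relation_of_coding__xii}), $H_\beta$ is weak$^*$ continuous; and $\MMM(X_\beta,\sigma)$ is weak$^*$ compact and non-empty because $\sigma$ is a continuous self-map of the compact metric space $X_\beta$ (the inclusion $\sigma(X_\beta)\subseteq X_\beta$ following from $X_\beta=\overline{\pi_\beta(I)}$ and $\sigma\circ\pi_\beta=\pi_\beta\circ T_\beta$). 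Hence $\MMM(I,U_\beta)=H_\beta(\MMM(X_\beta,\sigma))$ is a continuous image of a weak$^*$ compact set, so is weak$^*$ compact — this is (ii) — and $H_\beta$ is then a continuous bijection between compact Hausdorff spaces, hence a homeomorphism with continuous inverse $G_\beta$, which is (i). (Continuity of $G_\beta$ thus comes for free, which is useful since the pushforward along the merely left-continuous $\pi_\beta^*$ is not obviously continuous.) Finally, for (iii), the change-of-variables formula gives $\int\!\phi\,\mathrm{d}H_\beta(\nu)=\int\!(\phi\circ h_\beta)\,\mathrm{d}\nu$; since $\mu\mapsto G_\beta(\mu)$ bijects $\MMM(I,U_\beta)$ onto $\MMM(X_\beta,\sigma)$, taking suprema yields $\mpe(U_\beta,\phi)=\mpe(\sigma|_{X_\beta},\phi\circ h_\beta)$ and $\Mmax(U_\beta,\phi)=H_\beta(\Mmax(\sigma|_{X_\beta},\phi\circ h_\beta))$, and for $\phi\in C(I)$ the continuity of $\phi\circ h_\beta$ together with compactness of $\MMM(X_\beta,\sigma)$ makes both maximizing sets non-empty. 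The main obstacle throughout is the bookkeeping around $Z_\beta$: the semiconjugacies linking $(X_\beta,\sigma)$ and $(I,U_\beta)$, and the inversion formulas for $H_\beta$, only hold off $\pi_\beta(Z_\beta)$, so each transfer of invariance or of optimality must discard that set via Lemma~\ref{l_properties_z_beta}\,(v).
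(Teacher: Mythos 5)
Your proof is correct and follows essentially the same route as the paper: both hinge on Lemma~\ref{l_properties_z_beta}\,(v) to discard $\pi_\beta(Z_\beta)$ (the exceptional set where the semiconjugacy $U_\beta\circ h_\beta=h_\beta\circ\sigma$ fails), verify that $H_\beta$ and $G_\beta$ are mutually inverse via $h_\beta\circ\pi_\beta^*=\id$ and the a.e.\ identity $\pi_\beta^*\circ h_\beta=\id$, and conclude via compactness of $\MMM(X_\beta,\sigma)$ plus Hausdorffness that $H_\beta$ is a homeomorphism. The only cosmetic difference is that the paper explicitly routes through the identification $\MMM\bigl(\pi_\beta^*(I),\sigma\bigr)\cong\MMM(X_\beta,\sigma)$, whereas you work directly with the a.e.\ cancellation; the mathematical content is the same.
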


\begin{prop}\label{mpe=}
	If $\beta>1$ and $\phi\in C(I)$, then the following hold:
	\begin{enumerate}[label=\rm{(\roman*)}]
		\smallskip
		\item $\MMM(I,U_\beta)$ is equal to the weak$^*$ closure of $\MMM(I, T_\beta)$.
		
		\smallskip
		\item $\mpe(T_\beta, \phi) = \mpe (U_\beta, \phi)$.
		
		\smallskip
		\item $\MMM^*_{\max}(T_\beta,\phi)=\MMM_{\max}(U_\beta,\phi)$.
	\end{enumerate}
\end{prop}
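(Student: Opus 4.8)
The plan is to prove statement~(i) and then deduce (ii) and (iii) by soft arguments.

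\emph{Statement (i).} All closures are taken in the weak$^*$ topology. The inclusion $\overline{\MMM(I,T_\beta)}\subseteq\MMM(I,U_\beta)$ is immediate from Proposition~\ref{p_relation_T_beta_and_wt_T_beta}(iii) together with the weak$^*$ compactness of $\MMM(I,U_\beta)$ established in Proposition~\ref{p_coding_mpe_relation}(ii). For the reverse inclusion I would split into two cases. If $\beta$ is not a simple beta-number, then $\MMM(I,U_\beta)=\MMM(I,T_\beta)$ by Proposition~\ref{p_relation_T_beta_and_wt_T_beta}(iv) and there is nothing to prove. If $\beta$ is a simple beta-number, then Proposition~\ref{p_relation_T_beta_and_wt_T_beta}(v) identifies $\MMM(I,U_\beta)$ as the convex hull of $\{\mu_{\cO'_\beta(1)}\}\cup\MMM(I,T_\beta)$; since $\overline{\MMM(I,T_\beta)}$ is convex and closed, it then suffices to exhibit the critical orbit measure $\mu_{\cO'_\beta(1)}$ as a weak$^*$ limit of $T_\beta$-invariant measures.

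For this I would construct explicit $T_\beta$-periodic orbits shadowing the critical orbit. Write $\pi^*_\beta(1)=(a_1\cdots a_p)^\infty$, where $p$ is the period of $1$ under $U_\beta$, and note $a_1=\lfloor\beta\rfloor'\ge1$. For each $k\in\N$, the initial block of length $(k-1)p+1$ of $\pi^*_\beta(1)$ is $\beta$-admissible (a quick consequence of Proposition~\ref{2.11}(i) and the fact that $\pi^*_\beta(1)$ has infinitely many nonzero terms) and ends in the positive digit $a_1$, so Proposition~\ref{p_cylinders}(iii) produces the full cylinder $I^{(k)}\=I\bigl((a_1\cdots a_p)^{k-1}(a_1-1)\bigr)$. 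Its left endpoint, computed from (\ref{left_endpoint}), equals $1-(\beta-a_1+1)\beta^{-((k-1)p+1)}$, which is strictly less than $1-\beta^{-((k-1)p+1)}$ because $\beta>\lfloor\beta\rfloor'$; hence the (unique, affine) $T^{(k-1)p+1}_{\beta,I^{(k)}}$-fixed point supplied by Proposition~\ref{p_cylinders}(iv) lies in $I^{(k)}$ itself, so it is a genuine $T_\beta$-periodic point $x_k$ with $\pi_\beta(x_k)=\bigl((a_1\cdots a_p)^{k-1}(a_1-1)\bigr)^\infty$. Since this code agrees with $\pi^*_\beta(1)$ in its first $(k-1)p$ coordinates, the Lipschitz estimate for $h_\beta$ from Proposition~\ref{p_relation_of_coding}(xii), applied to the shifts of the two codes, gives $\bigl|T_\beta^i(x_k)-U_\beta^{i\bmod p}(1)\bigr|\le C\beta^{-((k-1)p-i)}$ for $0\le i<(k-1)p$, with $C$ independent of $k$. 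Averaging this bound over $i\in\{0,\dots,(k-1)p\}$, using uniform continuity of a fixed continuous test function and $\tfrac{k-1}{(k-1)p+1}\to\tfrac1p$, yields $\mu_{\cO_\beta(x_k)}\to\mu_{\cO'_\beta(1)}$ in the weak$^*$ topology; as $x_k$ is $T_\beta$-periodic, $\mu_{\cO_\beta(x_k)}\in\MMM(I,T_\beta)$, which completes (i). (Only $a_1>0$ is used, so any vanishing later digits $a_j$ cause no difficulty.) I expect this step to be the main obstacle: it amounts to approximating a measure charging the point $1$—which no $T_\beta$-invariant measure can do—by genuinely $T_\beta$-invariant ones, and it is the only place where the discontinuity of $T_\beta$ forces real work.

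\emph{Statements (ii) and (iii).} Part~(ii) is then immediate: $\mpe(T_\beta,\phi)\le\mpe(U_\beta,\phi)$ since $\MMM(I,T_\beta)\subseteq\MMM(I,U_\beta)$, while the reverse inequality holds because $\mu\mapsto\int\!\phi\,\mathrm{d}\mu$ is weak$^*$ continuous, so its supremum over $\MMM(I,U_\beta)=\overline{\MMM(I,T_\beta)}$ equals its supremum over $\MMM(I,T_\beta)$. For (iii), I would first note that $\MMM(I,T_\beta)$ has no isolated points in the weak$^*$ topology: given $\mu\in\MMM(I,T_\beta)$ and any $\nu\in\MMM(I,T_\beta)\smallsetminus\{\mu\}$ (which exists since $T_\beta$ has infinitely many periodic orbits), the measures $(1-t)\mu+t\nu$ lie in $\MMM(I,T_\beta)\smallsetminus\{\mu\}$ and tend to $\mu$ as $t\to0^+$. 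Consequently the set of weak$^*$ accumulation points of $\MMM(I,T_\beta)$ coincides with $\overline{\MMM(I,T_\beta)}$, which by (i) equals $\MMM(I,U_\beta)$; intersecting with $\{\mu:\int\!\phi\,\mathrm{d}\mu=\mpe(T_\beta,\phi)\}$ and invoking (ii) gives $\MMM^*_{\max}(T_\beta,\phi)=\MMM_{\max}(U_\beta,\phi)$.
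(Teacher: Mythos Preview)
Your proof is correct, and the overall structure (reduce to the simple beta-number case via Proposition~\ref{p_relation_T_beta_and_wt_T_beta}(iv)--(v), then show $\mu_{\cO'_\beta(1)}\in\overline{\MMM(I,T_\beta)}$) is the same as the paper's. The key step, however, is handled differently. The paper does not construct explicit shadowing orbits; instead it invokes Sigmund's theorem that periodic measures are weak$^*$ dense in $\MMM(X_\beta,\sigma)$, picks a sequence of $\sigma$-periodic orbits $\cO_n\neq\cO^\sigma(\pi^*_\beta(1))$ whose measures converge to $\mu_{\cO^\sigma(\pi^*_\beta(1))}$, and pushes forward via the homeomorphism $H_\beta$ of Proposition~\ref{p_coding_mpe_relation}(i), checking that the resulting measures lie in $\MMM(I,T_\beta)$ because $1\notin h_\beta(\cO_n)$. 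Your route is more elementary and self-contained---no appeal to \cite{Si76}---at the price of the explicit cylinder/fixed-point computation; the paper's route is shorter but imports a nontrivial density result. Your admissibility claim (``a quick consequence of Proposition~\ref{2.11}(i)'') is correct but terser than the rest of your argument: the clean way to see it is that any truncation of $\pi^*_\beta(1)$ followed by zeros satisfies $\sigma^n(\cdot)\prec\pi^*_\beta(1)$ for all $n$, since either the shift still agrees with $\sigma^n(\pi^*_\beta(1))\preceq\pi^*_\beta(1)$ up to the first difference, or the trailing zeros are compared against a tail of $\pi^*_\beta(1)$ containing $a_1>0$. For (iii), your observation that $\MMM(I,T_\beta)$ has no isolated points is a detail the paper suppresses (it just says ``(ii) and (iii) follow immediately from (i)''), and it is indeed needed to identify the set of accumulation points with the closure.
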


	\section{A Ma\~n\'e cohomology lemma for beta-transformations}\label{Sec_mane_lemma}

The purpose of this section is to prove a version of the \emph{Ma\~n\'e cohomology
lemma} (Theorem~\ref{mane}) in the context of beta-transformations,
and derive a new \emph{revelation theorem} (Theorem~\ref{l_subordination}), an important consequence regarding the support of a maximizing measure.
To establish this result, there are several differences and difficulties compared to the Ma\~n\'e lemma for open expanding maps (Theorem~\ref{l.mane}), notably the lack of continuity and openness of beta-transformations, so the method of proof here will be rather different (see Remark~\ref{Manetechnical} for further details).
A key tool is the introduction of an operator analogous to the one used by Bousch \cite{Bou00}, and showing (Proposition~\ref{p_calibrated_sub-action_exists}) that it has a fixed point (function) with certain regularity properties (following \cite{GLT09}, this fixed point can be referred to as a \emph{calibrated sub-action}).

For a Borel measurable map $T \: I \to I$, and a bounded Borel measurable function $\psi \: I\to\R$, to study the $(T,\psi)$-maximizing measures it is convenient, whenever possible, to consider a cohomologous function $\tpsi$ satisfying $\tpsi\leq \mpe(T,\psi)$.
We recall the following (cf.~\cite[p.~2601]{Je19}):

\begin{definition}
	Suppose $T\: I\to I$ is Borel measurable, and $\psi\: I\to\R$ is bounded and Borel measurable.
If $\psi\le Q(T,\psi)$ and $\psi^{-1}(Q(T,\psi))$ contains $\supp \mu$ for some $\mu\in\MMM(I,T)$, then $\psi$ is said to be \emph{revealed}.
If $Q(T,\psi)=0$ then $\psi$ is said to be \emph{normalised}; in particular, 
    a normalised function $\psi$ is  revealed if and only if $\psi\le 0$ and $\psi^{-1}(0)$ contains $\supp \mu$ for some $\mu\in\MMM(I,T)$.
\end{definition}

\begin{lemma}\label{normalised_cohomologous}
	Suppose $T\: I\to I$ is Borel measurable, $\phi \: I\to\R$ is bounded and Borel measurable, and 
	$\Mmax(T,  \phi)\neq\emptyset$.
	Denote $\overline{\phi}=\phi - Q(T,\phi)$, and suppose $\tphi=\overline{\phi}+u-u\circ T$
	for some bounded Borel measurable function $u \: I\to\R$.
	Then the following hold:
	\begin{enumerate}[label=\rm{(\roman*)}]
		\smallskip
		\item	$Q\bigl(T,\tphi\bigr)=Q\bigl(T,\overline{\phi}\bigr)=0$.
		\smallskip
		
		\item	$\Mmax(T,  \phi)=\Mmax\bigl(T,  \overline{\phi}\bigr)=\Mmax\bigl(T,  \tphi\bigr)$.
		\smallskip
		
		\item	If $\tphi\le0$ and if $x\in I$ is such that $\mathcal{O}^T(x)\subseteq \tphi^{-1}(0)$,
		then $\mathcal{O}^T(x)$ is a $(T,\phi)$-maximizing orbit.
	\end{enumerate}	
\end{lemma}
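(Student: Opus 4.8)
The plan is to reduce all three parts to two elementary facts: first, that for every $\mu\in\MMM(I,T)$ the coboundary $u-u\circ T$ integrates to $0$ against $\mu$; and second, that the Birkhoff sums of a coboundary telescope. For part~(i), I would begin by noting that the hypothesis $\Mmax(T,\phi)\neq\emptyset$ in particular forces $\MMM(I,T)\neq\emptyset$, so that $\mpe(T,\phi)=\sup_{\mu\in\MMM(I,T)}\int\!\phi\,\mathrm{d}\mu$ is a genuine finite real number. Since $u$ is bounded and Borel measurable it is integrable with respect to every Borel probability measure, and $T$-invariance gives $\int\! u\circ T\,\mathrm{d}\mu=\int\! u\,\mathrm{d}\mu$, hence $\int\!(u-u\circ T)\,\mathrm{d}\mu=0$, for every $\mu\in\MMM(I,T)$. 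Therefore $\int\!\tphi\,\mathrm{d}\mu=\int\!\overline{\phi}\,\mathrm{d}\mu=\int\!\phi\,\mathrm{d}\mu-\mpe(T,\phi)$ for each such $\mu$, and taking the supremum over $\mu\in\MMM(I,T)$ yields $\mpe(T,\tphi)=\mpe(T,\overline{\phi})=\mpe(T,\phi)-\mpe(T,\phi)=0$.

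Part~(ii) is then immediate from the identities established in part~(i): for each $\mu\in\MMM(I,T)$ the three numbers $\int\!\phi\,\mathrm{d}\mu$, $\int\!\overline{\phi}\,\mathrm{d}\mu$, and $\int\!\tphi\,\mathrm{d}\mu$ differ only by the additive constant $\mpe(T,\phi)$, so $\mu$ attains one of the three suprema defining $\mpe(T,\phi)$, $\mpe(T,\overline{\phi})$, $\mpe(T,\tphi)$ if and only if it attains all three, whence $\Mmax(T,\phi)=\Mmax(T,\overline{\phi})=\Mmax(T,\tphi)$. For part~(iii), given $x\in I$ with $\tphi\le 0$ and $\cO^T(x)\subseteq\tphi^{-1}(0)$, I would write out the telescoping identity
\[
	S_n^T\tphi(x)=\sum_{i=0}^{n-1}\bigl(\overline{\phi}\bigl(T^i(x)\bigr)+u\bigl(T^i(x)\bigr)-u\bigl(T^{i+1}(x)\bigr)\bigr)=S_n^T\overline{\phi}(x)+u(x)-u\bigl(T^n(x)\bigr)
\]
valid for all $n\in\N$. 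The left-hand side vanishes because every iterate $T^i(x)$ lies in $\tphi^{-1}(0)$, so $S_n^T\overline{\phi}(x)=u\bigl(T^n(x)\bigr)-u(x)$, which has absolute value at most $2\Hnorm{\infty}{u}$ since $u$ is bounded. Dividing by $n$ and letting $n\to+\infty$ gives $\tfrac1n S_n^T\overline{\phi}(x)\to 0$, and since $\overline{\phi}=\phi-\mpe(T,\phi)$ this is precisely $\tfrac1n S_n^T\phi(x)\to\mpe(T,\phi)$, i.e.\ $\cO^T(x)$ is a $(T,\phi)$-maximizing orbit (note that this argument does not require $x$ to be periodic).

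I do not expect any genuine obstacle here: the proof is routine bookkeeping. The only two points that merit a moment's care are the observation that $\Mmax(T,\phi)\neq\emptyset$ guarantees $\MMM(I,T)\neq\emptyset$ --- so that the suprema in part~(i) are taken over a nonempty set and $\mpe(T,\phi)$ is finite --- and the repeated use of the boundedness of $u$, which is what makes $u-u\circ T$ integrable in parts~(i)--(ii) and makes $u\bigl(T^n(x)\bigr)-u(x)$ bounded uniformly in $n$ in part~(iii).
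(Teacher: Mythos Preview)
Your proof is correct and follows essentially the same approach as the paper's: both use that $\int(u-u\circ T)\,\mathrm{d}\mu=0$ for invariant $\mu$ to establish (i) and (ii), and both use the telescoping identity $S_n^T\tphi(x)=S_n^T\overline{\phi}(x)+u(x)-u(T^n(x))$ together with the boundedness of $u$ for (iii). Your write-up is slightly more explicit (noting that $\Mmax(T,\phi)\neq\emptyset$ ensures $\MMM(I,T)\neq\emptyset$ and that $\mpe(T,\phi)$ is finite), but the argument is the same.
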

\begin{proof}
	(i) and (ii) follow from (\ref{e_ergodicmax}), (\ref{e_setofmaximizngmeasures}), and the fact that
	$
		\int \!\wt\phi\,\mathrm{d}\mu
        =\int \! \bigl(\overline{\phi}+u-u\circ T \bigr)\,\mathrm{d}\mu
        =\int \!\overline{\phi}\,\mathrm{d}\mu$ for all $\mu\in \MMM(I,T)$.
	
	If $\tphi\le0$ and $\mathcal{O}^T(x)\subseteq \tphi^{-1}(0)$, then $0=\frac{1}{n}S_n^T\wt\phi(x)=\frac{1}{n}S_n^T\overline{\phi}(x)+\frac{1}{n}(u(x)-u(T^n(x)))$
	for all $n\in \N$, and (iii) follows from the fact that $u$ is bounded.
\end{proof}

The following operator\footnote{Although nonlinear, the operator $\cL_\psi$ is \emph{tropical linear} (see e.g.~\cite{LiSu26} for further development of this tropical functional analysis viewpoint; see also \cite{BLL13}).}
$\cL_\psi$ is an analogue of the one used by Bousch in \cite{Bou00}.

\begin{definition}
Let $\psi \: [0,1) \to\R$ be bounded and Borel measurable.
For $\beta>1$,
define 
$\cL_\psi\:\R^{[0,1)}\to \R^{[0,1)}$
by
\begin{equation}\label{e_Def_BouschOp}
	\mathcal{L}_\psi(u)(x) \=
		\max \bigl\{(u+\psi)(y) : y\in T_\beta^{-1}(x) \smallsetminus \{1\} \bigr\}, \quad x\in [0,1).
\end{equation}
\end{definition}

Note that $\cL_\psi$ is well defined since $T_\beta  ([0,1)) = [0,1)$ (cf.~(\ref{e_def_T_beta})), and if $u \: [0,1) \to\R$ is bounded then so is $\cL_\psi (u)$. For a function $u\: I \to \R$ and a bounded Borel measurable function $\psi\: I \to \R$, we define $\cL_\psi(u) \= \cL_{\psi|_{[0,1)}} \bigl( u|_{[0,1)} \bigr)$.

\begin{lemma}\label{Property of L}
	If $\beta>1$ and $\psi \: I\to\R$ is bounded and Borel measurable, 
    and
	$\overline{\psi} \= \psi - \mpe (T_\beta, \psi)$,
    then the following hold:
	\begin{enumerate}[label=\rm{(\roman*)}]
		\smallskip
		\item If $x\in [0,1)$, $n\in\N$, and $u\: I \to \R$ is bounded, then	
		\begin{equation*}
		\mathcal{L}^n_{\overline{\psi}}(u)(x)+n\mpe(T_\beta,\psi)
		=\mathcal{L}^n_\psi(u)(x)
		=\max\bigl\{u(y)+S_{n}\psi(y) : y\in T_{\beta}^{-n}(x) \smallsetminus \{1\} \bigr\}.
		\end{equation*}

		\item $\mathcal{L}_\psi(\sup_{v\in\mathcal{A}} v)=\sup_{v\in\mathcal{A}}\mathcal{L}_\psi(v)$ for any collection $\mathcal{A}$ of bounded real-valued functions on $[0,1)$.
		\smallskip
		
		\item 
		If $\{u_n\}_{n\in \mathbb{N}}$ is a pointwise convergent sequence of bounded real-valued functions on $[0,1)$,
then $\lim_{n\to +\infty}\mathcal{L}_\psi(u_n)=\mathcal{L}_\psi(\lim_{n\to +\infty}u_n)$, 
 where $\lim_{n\to +\infty}$ denotes pointwise limit.
	\end{enumerate}		
\end{lemma}

\begin{proof}
	\smallskip
	(i) The first equality in (i) is immediate from (\ref{e_Def_BouschOp}) and 
    the fact that $\overline{\psi} = \psi - \mpe (T_\beta, \psi)$.
    As $T_\beta([0,1)) = [0,1)$, then
    \begin{equation}\label{e_Def_BouschOp*}
     \begin{aligned}
        &\max \bigl\{ u(y) + S_n\psi(y) : y\in T_\beta^{-n} (x) \smallsetminus \{1\} \bigr\} \\
        &\qquad= \max \bigl\{ u(y) + S_n\psi(y) : y\in \bigl(T_{\beta}|_{[0,1)} \bigr)^{-n}(x) \bigr\}
     \end{aligned}
    \end{equation}
    for all $n\in \N$ and $x\in [0,1)$.
	Then the second equality is easily proved by (\ref{e_Def_BouschOp}), (\ref{e_Def_BouschOp*}), and induction (cf.~e.g.~\cite{JMU06, JMU07}).
	
	(ii) follows readily from the fact that $\psi+\sup_{v\in\mathcal{A}} v = \sup_{v\in A}(\psi+v)$,
	and that for each $x\in [0,1)$,
	\begin{equation*}
	\max\limits_{y= T^{-1}_{\beta}(x) \smallsetminus \{1\}} \sup_{v\in\mathcal{A}}  (\psi+v )(y)
	=
	\sup_{v\in\mathcal{A}} \max\limits_{y= T^{-1}_{\beta}(x) \smallsetminus \{1\}}   (\psi+v )(y) .
	\end{equation*}

	(iii) Define $v \: [0,1) \to \R$ by $v(x)\=\lim_{n\to+\infty} u_n(x)$ for all $x\in [0,1)$.
	Fix arbitrary $x\in [0,1)$ and $\myepsilon > 0$. Then
	there exists $N=N(x,\myepsilon) \in \N$ such that if $n\geq N$ 
	then $\abs{ u_n (y) - v (y) } < \myepsilon$
	for each of the finitely many pre-images $y\in T_{\beta}^{-1}(x)$.
	
	Fix $n \geq N$. Let $y_1, \, y_2 \in T_{\beta}^{-1}(x) \smallsetminus \{1\}$ satisfy
	$\RR_\psi (u_n) (x) = (\psi+ u_n )(y_1)$ and $\RR_\psi (v) (x) = (\psi+ v )(y_2)$, so that
	\begin{equation*}
		\begin{aligned}
			 \bigl(\RR_\psi ( u_n )  - \RR_\psi (v)  \bigr)  (x)
			&\leq (\psi+ u_n) (y_1) - (\psi + v) (y_1)
			=     u_n (y_1) - v(y_1) < \myepsilon \text{ and }\\
			 \bigl(\RR_\psi ( u_n ) - \RR_\psi (v)  \bigr) (x)
			&\geq (\psi  + u_n) (y_2) - (\psi  + v) (y_2)  
			=     u_n (y_2) - v(y_2) >  - \myepsilon   .
		\end{aligned}
	\end{equation*}
	Then (iii) follows. 
\end{proof}

\begin{notation}
	For $\beta>1$ and $\alpha\in(0,1]$, we write
	\begin{equation}\label{kalphabetadefn}
	K_{\alpha,\beta}\= \frac{1}{\beta^\alpha -1} .
	\end{equation}
\end{notation}

\begin{lemma} \label{l_Bousch_Op_preserve_space}
	Suppose $\beta>1$, $\alpha\in (0,1]$, $\phi\in \Holder{\alpha}(I)$, and $n\in\N$. Then
	
	\begin{equation}\label{e_Bousch_Op_Holder_seminorm_bound_x<y}
		\cL_\phi^n(u)(x)-\cL_\phi^n(u)(y) 
		\geq -K_{\alpha,\beta} ( \Hseminorm{\alpha}{ \phi}+\Hseminorm{\alpha}{u} )\abs{x-y}^{\alpha}
	\end{equation}
	for all $u\in \Holder{\alpha}(I)$, and all $x,y\in [0,1)$ with $x \leq y$. 
	
	If, moreover, for all $1\le i\le n$ the interval $(x,y]$ does not contain $U_\beta^i(1)$, 
	then
	\begin{equation}\label{e_Bousch_Op_Holder_seminorm_bound}
		\Absbig{ \cL_\phi^n(u)(x)-\cL_\phi^n(u)(y)  }
		\leq K_{\alpha,\beta} ( \Hseminorm{\alpha}{ \phi}+\Hseminorm{\alpha}{u} )\abs{x-y}^{\alpha} .
	\end{equation}
\end{lemma}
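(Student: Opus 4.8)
The plan is to prove both inequalities by induction on $n$, using the representation of $\cL_\phi^n(u)$ supplied by Lemma~\ref{Property of L}~(i), namely $\cL_\phi^n(u)(x)=\max\bigl\{u(y)+S_{n,I^n}\phi(y):y=T_{\beta,I^n}^{-n}(x),\,I^n\in W^n\bigr\}$, together with the fact (Proposition~\ref{p_cylinders}~(ii)) that each branch $T_{\beta,I^n}^n$ has exact expansion constant $\beta^n$, so that $T_{\beta,I^n}^{-n}$ is $\beta^{-n}$-Lipschitz, and the distortion estimate of Lemma~\ref{l_Distortion}. First I would treat the case $n=1$ directly. For $(\ref{e_Bousch_Op_Holder_seminorm_bound_x<y})$ with $n=1$: for $x<y$ pick the optimal cylinder $I^1$ realising the maximum for $\cL_\phi(u)(x)$, with preimage $y_x=T_{\beta,I^1}^{-1}(x)$; unless $x$ is the right endpoint value issue arises, the same cylinder contributes a preimage $y_y=T_{\beta,I^1}^{-1}(y)$ to $\cL_\phi(u)(y)$ (this is where one uses that moving up from $x$ to $y$ either stays in the image of $I^1$, or the relevant endpoint coincides with some $U_\beta^i(1)$). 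Then $\cL_\phi(u)(x)-\cL_\phi(u)(y)\ge (u+\phi)(y_x)-(u+\phi)(y_y)\ge -(\Hseminorm{\alpha}{\phi}+\Hseminorm{\alpha}{u})|y_x-y_y|^\alpha\ge -(\Hseminorm{\alpha}{\phi}+\Hseminorm{\alpha}{u})(\beta^{-1}|x-y|)^\alpha$, which is even stronger than claimed (the constant $K_{\alpha,\beta}=1/(\beta^\alpha-1)$ leaves room because $\beta^{-\alpha}\le K_{\alpha,\beta}$). The subtlety is that the cylinder realising the max at $x$ may fail to contain a preimage of $y$ precisely when $y$ exceeds the right endpoint value $U_\beta^m(1)$ of that branch (Proposition~\ref{p_cylinders}~(v)); one then argues that in that borderline situation one can instead compare against a preimage under a neighbouring cylinder, or observe that the one-sided nature of the bound $(\ref{e_Bousch_Op_Holder_seminorm_bound_x<y})$ makes it automatic — if no cylinder covering $[x,y]$ exists then $y\ge$ some $U_\beta^i(1)$, but since $(\ref{e_Bousch_Op_Holder_seminorm_bound_x<y})$ is a lower bound and we are free to compare $\cL_\phi(u)(y)$ with any valid preimage, we choose the preimage of $y$ in the cylinder adjacent from the right, whose left endpoint's $\cL_\phi(u)$-value is controlled.

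For $(\ref{e_Bousch_Op_Holder_seminorm_bound})$ with $n=1$, under the hypothesis that $[x,y)$ contains no $U_\beta^i(1)$ (only $i=1$ matters here), every cylinder image is an interval of the form $[0,U_\beta^{m}(1)]$ and the interval $[x,y]$ lies inside the image of each branch that is active at $x$ or at $y$; thus the maximising cylinders at $x$ and at $y$ can be paired in both directions, yielding the two-sided Lipschitz-in-$x^\alpha$ bound with constant $\beta^{-\alpha}\le K_{\alpha,\beta}$. Then I would run the induction: assume both inequalities for $n-1$. Using $\cL_\phi^n(u)=\cL_\phi^{n-1}(\cL_\phi(u))$ is awkward because $\cL_\phi(u)$ need not be $\alpha$-H\"older globally; instead I would use $\cL_\phi^n(u)=\cL_\phi(\cL_\phi^{n-1}(u))$ and the $n=1$ argument, but feeding in the bound on the H\"older seminorm of $\cL_\phi^{n-1}(u)$ away from the points $U_\beta^i(1)$, $1\le i\le n-1$. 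Alternatively, and more cleanly, I would argue directly from Lemma~\ref{Property of L}~(i): for $x<y$ with $[x,y)$ avoiding $U_\beta^i(1)$ for $1\le i\le n$, the optimal $n$-cylinder $I^n$ at $x$ has $T_{\beta,I^n}^n(\overline{I^n})=[0,U_\beta^m(1)]$ for some $m$, and the hypothesis guarantees $[x,y]\subseteq[0,U_\beta^m(1)]$, so $T_{\beta,I^n}^{-n}(y)$ is defined; writing $x'=T_{\beta,I^n}^{-n}(x)$, $y'=T_{\beta,I^n}^{-n}(y)$ we get $|x'-y'|=\beta^{-n}|x-y|$ and, by Lemma~\ref{l_Distortion} plus H\"older continuity of $u$ and $\phi$,
\begin{equation*}
\bigl|u(x')+S_{n,I^n}\phi(x')-u(y')-S_{n,I^n}\phi(y')\bigr|\le \Hseminorm{\alpha}{u}|x'-y'|^\alpha+K_{\alpha,\beta}\Hseminorm{\alpha}{\phi}|x-y|^\alpha\le K_{\alpha,\beta}(\Hseminorm{\alpha}{u}+\Hseminorm{\alpha}{\phi})|x-y|^\alpha,
\end{equation*}
using $\beta^{-n\alpha}\le 1\le K_{\alpha,\beta}(\beta^\alpha-1)$; swapping the roles of $x$ and $y$ gives the reverse inequality, hence $(\ref{e_Bousch_Op_Holder_seminorm_bound})$. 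For $(\ref{e_Bousch_Op_Holder_seminorm_bound_x<y})$ one only needs the lower direction, which holds even without the avoidance hypothesis by the same pairing (choosing for $\cL_\phi^n(u)(y)$ any admissible preimage — e.g. the one in the cylinder containing the right-adjacent point when the image of $I^n$ falls short of $y$).

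The main obstacle I anticipate is the bookkeeping around the right endpoints of cylinders, i.e.\ the points $U_\beta^i(1)$: it is exactly when $[x,y]$ straddles such a point that a branch active at $x$ may not reach $y$, and one must either invoke the avoidance hypothesis (for $(\ref{e_Bousch_Op_Holder_seminorm_bound})$) or exploit the one-sidedness together with a comparison to an adjacent full-or-partial cylinder (for $(\ref{e_Bousch_Op_Holder_seminorm_bound_x<y})$). Handling this cleanly requires Proposition~\ref{p_cylinders}~(v), (viii), (ix) to describe precisely which preimages exist, and a small case analysis. Everything else is a routine combination of the exact-expansion property and the geometric-series distortion bound of Lemma~\ref{l_Distortion}; the constant $K_{\alpha,\beta}=1/(\beta^\alpha-1)$ is chosen precisely so that $\beta^{-\alpha}\le K_{\alpha,\beta}$ and the $n=1$ step already closes with slack, so the induction does not accumulate error.
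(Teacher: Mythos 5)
Your $n=1$ argument for \eqref{e_Bousch_Op_Holder_seminorm_bound_x<y} contains a sign error, and it traces to picking the optimal cylinder at the wrong endpoint. You take $I^1$ to realise the maximum defining $\cL_\phi(u)(x)$, so that $\cL_\phi(u)(x)=(u+\phi)(y_x)$ exactly, and then use that cylinder's preimage $y_y$ of $y$ to control $\cL_\phi(u)(y)$. But $y_y$ is only \emph{one} preimage of $y$, so it gives $\cL_\phi(u)(y)\ge(u+\phi)(y_y)$, hence
$\cL_\phi(u)(x)-\cL_\phi(u)(y)\le(u+\phi)(y_x)-(u+\phi)(y_y)$, an \emph{upper} bound on the difference — the wrong direction for a lower bound like \eqref{e_Bousch_Op_Holder_seminorm_bound_x<y}. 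You wrote $\ge$ where the logic actually gives $\le$. The same reversal recurs in your later ``cleaner'' paragraph, where you propose ``choosing for $\cL_\phi^n(u)(y)$ any admissible preimage'': an arbitrary preimage only lower-bounds $\cL_\phi^n(u)(y)$, which again upper-bounds the difference. This is not a harmless slip: the whole case analysis you then struggle with — ``what if the branch at $x$ does not reach $y$'' — is an artifact of this wrong choice.

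The fix, and the paper's actual argument, is to pick the optimal $n$-cylinder $I^n$ for the \emph{larger} point $y$: then by Lemma~\ref{Property of L}~(i), $\cL_\phi^n(u)(y)=u(y')+S_{n,I^n}\phi(y')$ exactly with $y'=T_{\beta,I^n}^{-n}(y)$, while $\cL_\phi^n(u)(x)\ge u(x')+S_{n,I^n}\phi(x')$, giving the desired $\ge$. Crucially, this choice also dissolves the existence worry: by Proposition~\ref{p_cylinders}~(v), each branch image is of the form $[0,U_\beta^m(1)]$, an interval anchored at $0$, so $y$ in the image and $x<y$ forces $x$ in the image, and $x'=T_{\beta,I^n}^{-n}(x)$ always exists. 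No avoidance hypothesis, no neighbouring-cylinder comparison, and no induction on $n$ is needed; Lemma~\ref{Property of L}~(i) plus Lemma~\ref{l_Distortion} handles all $n$ at once. Your treatment of \eqref{e_Bousch_Op_Holder_seminorm_bound} is essentially the paper's: there you correctly take the optimal cylinder at $x$ (which gives the $\le$ direction), and the avoidance hypothesis plus Proposition~\ref{p_cylinders}~(v) ensures the same branch reaches $y$; the $\ge$ direction then comes for free from \eqref{e_Bousch_Op_Holder_seminorm_bound_x<y}.
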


\begin{proof}
	Suppose $u\in \Holder{\alpha}(I)$ and $x,\,y\in [0,1)$ with $x\leq y$. Without loss of generality, we assume $x<y$. 
	By Lemma~\ref{Property of L} (i), there exists $y'=T^{-n}_{\beta}(y) \smallsetminus \{1\}$
	such that
	\begin{equation}\label{starequality}  
		\mathcal{L}_{\phi}^n(u)(y)=u(y')+S_{n}\phi(y').
	\end{equation}
	By Proposition~\ref{p_cylinders}~(i), there exists $I^n\in W^n$ containing $y'$. Since $x<y$, Proposition~\ref{p_cylinders}~(v) guarantees that $x\in T^{n}_{\beta}(I^n)$ as well.
	Consider $x'\in T^{-n}_{\beta}(x) \cap I^n$, then $x'\neq 1$, so
	we have
	\begin{equation}\label{2starinequality}  
		\mathcal{L}_{\phi}^n(u)(x)\ge u(x')+S_{n}\phi(x').
	\end{equation}
	
	Combining (\ref{starequality}) and (\ref{2starinequality}) gives
	\begin{equation}\label{3starcombined}
		\mathcal{L}_{\phi}^n(u)(x)-\mathcal{L}_{\phi}^n(u)(y)\geq S_{n}\phi(x')+u(x')-S_{n}\phi(y')-u(y') .
	\end{equation}
	Since $x',y'\in I^n$,  Lemma~\ref{l_Distortion}
	gives
	\begin{equation}\label{4star}
		S_{n}\phi(x^\prime)-S_{n}\phi(y^\prime)\geq -K_{\alpha,\beta}  \Hseminorm{\alpha}{\phi} \abs{x-y}^{\alpha}.
	\end{equation}
	Now $u\in \Holder{\alpha}(I)$, so $u(x')-u(y') \ge - \Hseminorm{\alpha}{u} \abs{x'-y'}^\alpha$
	and $\abs{x'-y'}=\beta^{-n}\abs{x-y}$ by Proposition~\ref{p_cylinders}~(ii),
	so $u(x')-u(y') \ge - \Hseminorm{\alpha}{u} \beta^{-n\alpha}\abs{x-y}^\alpha$. 
	But $\beta^{-n\alpha}< 	K_{\alpha,\beta}$, so
	\begin{equation}\label{5star}
		u(x')-u(y') \ge -K_{\alpha,\beta} \Hseminorm{\alpha}{u} \abs{x-y}^\alpha .
	\end{equation}
	Combining (\ref{3starcombined}), (\ref{4star}), and (\ref{5star}) gives the required inequality
	(\ref{e_Bousch_Op_Holder_seminorm_bound_x<y}).
	
	A similar argument can be used to establish the bound (\ref{e_Bousch_Op_Holder_seminorm_bound}).
	Specifically, suppose that $x,\,y\in [0,1)$, $n\in\N$ are such that $(x,y] \cap\bigl\{U_\beta(1),\,\dots,\,U_\beta^n(1)\bigr\}=\emptyset$, so that, by Proposition~\ref{p_cylinders}~(v), if $I^n\in W^n$ then
	$x\in T^{n}_{\beta}(I^n)$ if and only if $y\in T^{n}_{\beta}(I^n)$.
	
	By Proposition~\ref{p_cylinders}~(i) and Lemma~\ref{Property of L}~(i) there exists some $I^n\in W^n$ and $x''\in I^n$ with $x''\in T^{-n}_{\beta}(x) \smallsetminus \{1\}$, 
	such that  
	$\mathcal{L}_{\phi}^n(u)(x)=S_{n}\phi(x'')+u(x'')$.
	
	Let $y'' \in T^{-n}_{\beta}(y) \cap I^n$. An argument analogous to the one above, using Lemma~\ref{l_Distortion} and Lemma~\ref{Property of L}~(i), then gives
	\begin{equation*}
		\begin{aligned}
		\mathcal{L}_{\phi}^n(u)(x)-\mathcal{L}_{\phi}^n(u)(y)
		&\leq S_{n}\phi(x'')+u(x'')-S_{n}\phi(y'')-u(y'')\\
		&\leq(\beta^\alpha-1)^{-1}\Hseminorm{\alpha}{ \phi}\abs{x-y}^\alpha+\Hseminorm{\alpha}{u} \abs{x''-y''}^\alpha\\
		&\leq(\beta^\alpha-1)^{-1}\Hseminorm{\alpha}{\phi}\abs{x-y}^\alpha+\Hseminorm{\alpha}{u}\beta^{-n\alpha}\abs{x-y}^\alpha\\
		&\leq K_{\alpha,\beta} (\Hseminorm{\alpha}{\phi}+\Hseminorm{\alpha}{u})\abs{x-y}^\alpha,
	\end{aligned}
	\end{equation*}
	and (\ref{e_Bousch_Op_Holder_seminorm_bound}) follows.
\end{proof}

Of particular interest 
will be the choice $u=\mathbbold{0}$, the function that is identically zero on $I$,
and evaluation of (\ref{e_Bousch_Op_Holder_seminorm_bound_x<y}) at the left endpoint of $I$, which we record as follows:

\begin{cor}\label{u=0corollary}
	Suppose $\beta>1$ and $\alpha\in (0,1]$. If $\phi\in \Holder{\alpha}(I)$, $n\in\N$, and $y\in [0,1)$, then 
	\begin{equation*}
		\cL_\phi^n(\mathbbold{0})(0) \geq \cL_\phi^n(\mathbbold{0})(y) 
		-K_{\alpha,\beta}  \Hseminorm{\alpha}{ \phi} .\label{e_Bousch_Op_Holder_seminorm_bound_x<y+corollary}
	\end{equation*}
\end{cor}

We are now able to find a fixed point  
$u_\phi$ of the 
operator $\cL_{\overline\phi}$:

\begin{prop}\label{p_calibrated_sub-action_exists}
	Suppose $\beta>1$ and $\alpha\in (0,1]$. If $\phi\in \Holder{\alpha}(I)$ 
	then the function $u_\phi \: [0,1) \to\R$ given by
	\begin{equation}  \label{e_calibrated_sub-action_exists}
		u_\phi(x)\=\limsup\limits_{n\to +\infty}\mathcal{L}_{\overline{\phi}}^n(\mathbbold{0})(x),\quad x\in [0,1),
	\end{equation}
    where $\overline{\phi} \= \phi - \mpe (T_\beta, \phi)$,
	satisfies the following properties:
	\begin{enumerate}[label=\rm{(\roman*)}]
		\smallskip
		\item $u_\phi$ is Borel measurable and $\abs{u_{\phi}(x)}\leq (2+3 K_{\alpha,\beta}) \Hseminorm{\alpha}{\phi}$ for each $x\in [0,1)$.
		\smallskip
		
		\item If $a\in(0,1]$ then
		$\lim_{x\nearrow a}u_\phi(x)$ exists,
		and provided $a\neq 1$, satisfies $\lim_{x\nearrow a}u_\phi(x)\geq u_\phi(a)$.
		If $a\in[0,1)$ then
		$\lim_{x\searrow a}u_\phi(x)$ exists,
		and satisfies $u_\phi(a)\geq \lim_{x\searrow a}u_\phi(x)$.
		In particular, if $a\in(0,1)$ then
		\begin{equation}\label{liminequality}
			\lim_{x\nearrow a}u_\phi(x)\geq u_\phi(a)\geq \lim_{x\searrow a}u_\phi(x).
		\end{equation}
		
		\item $\abs{u_{\phi}(x)-u_{\phi}(y)}  \le K_{\alpha,\beta} \Hseminorm{\alpha}{\phi}  \abs{x-y}^\alpha$ if $0\leq x<y< 1$ satisfy $(x,y]\cap \cO_\beta^*(1)=\emptyset$.
		\smallskip
		
		\item $\mathcal{L}_{\overline{\phi}}(u_\phi)=u_\phi$.
	\end{enumerate}	
\end{prop}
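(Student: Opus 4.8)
The plan is to prove parts (i)--(iv) in that order, since (ii) and (iii) need both the uniform bound and the Borel measurability furnished by (i), and (iv) needs (i) to guarantee boundedness of an auxiliary sequence. Throughout, write $\overline{\phi}=\phi-\mpe(T_\beta,\phi)$, so $\mpe(T_\beta,\overline{\phi})=0$ and $\Hseminorm{\alpha}{\overline{\phi}}=\Hseminorm{\alpha}{\phi}$. Measurability in (i) is immediate: by Lemma~\ref{Property of L}(i) and Proposition~\ref{p_cylinders}(ii), each $\cL_{\overline{\phi}}^n(\mathbbold{0})$ is a pointwise maximum of finitely many functions, each continuous on a closed subinterval of $I$, hence Borel (indeed upper semicontinuous); therefore $u_\phi=\limsup_n\cL_{\overline{\phi}}^n(\mathbbold{0})$ is Borel.

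For the uniform bound in (i) it suffices to bound $\lvert\cL_{\overline{\phi}}^n(\mathbbold{0})(x)\rvert\le 3K_{\alpha,\beta}\Hseminorm{\alpha}{\phi}$ uniformly in $n$ and $x$, the bound on $u_\phi$ then following by passing to the $\limsup$. By Corollary~\ref{u=0corollary} (applied to $\overline{\phi}$) this reduces to an upper bound for $\cL_{\overline{\phi}}^n(\mathbbold{0})(0)$ and a lower bound for $\cL_{\overline{\phi}}^n(\mathbbold{0})(1)$, and by Lemma~\ref{Property of L}(ii),(iii) both of these are maxima of Birkhoff sums $S_{n,I^n}\overline{\phi}$ over endpoints of $n$-cylinders. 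For the upper bound, when the cylinder $I^n$ (or, after truncation, one of its full sub-cylinders, available via Proposition~\ref{p_cylinders}(iii),(iv)) is full, the distortion estimate Lemma~\ref{l_Distortion} compares the Birkhoff sum at the endpoint with its value at a $T_{\beta,I^n}^n$-fixed point in $\overline{I}^n$; that point carries a $T_\beta$- or $U_\beta$-invariant probability measure (Proposition~\ref{p_cylinders}(vii)), so its $\overline{\phi}$-average is $\le\mpe(T_\beta,\overline{\phi})=0$, giving control by $K_{\alpha,\beta}\Hseminorm{\alpha}{\phi}$. Non-full cylinders, whose symbolic names end with a prefix of $\pi_\beta^*(1)$ by Proposition~\ref{p_cylinders}(v), are handled by splitting the endpoint's orbit into a ``full'' part treated as above and a part that Lipschitz-shadows the critical orbit $\cO_\beta'(1)$ (via $h_\beta$ being Lipschitz, Proposition~\ref{p_relation_of_coding}~\ref{p_relation_of_coding__xii}), with the geometrically summable shadowing error absorbed into a further $K_{\alpha,\beta}\Hseminorm{\alpha}{\phi}$ term. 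For the lower bound one invokes the existence of a $(U_\beta,\overline{\phi})$-maximizing measure (Proposition~\ref{p_coding_mpe_relation}(iii)) together with weak$^*$ density of periodic measures (Proposition~\ref{mpe=} and its proof): for infinitely many $n$ one constructs an $n$-th $U_\beta$-preimage of $1$ whose orbit shadows a near-maximizing orbit for all but a bounded number of steps, with the final transition to $1$ (respecting $\beta$-admissibility, Proposition~\ref{2.11}) costing only $O(\Hseminorm{\alpha}{\phi})$. I expect this endpoint analysis --- in particular the control of the interaction with the critical orbit, both for non-full cylinders in the upper bound and for steering a near-maximizing orbit onto the point $1$ in the lower bound --- to be the main obstacle.

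Given (i), parts (ii) and (iii) follow from Lemma~\ref{l_Bousch_Op_preserve_space} with $u=\mathbbold{0}$. Since $\cL_{\overline{\phi}}^n(\mathbbold{0})$ and $\cL_{\phi}^n(\mathbbold{0})$ differ by the constant $n\mpe(T_\beta,\phi)$ (Lemma~\ref{Property of L}(i)), which cancels in differences, inequality (\ref{e_Bousch_Op_Holder_seminorm_bound_x<y}) gives, for all $n$ and all $0\le x<y\le 1$, that $\cL_{\overline{\phi}}^n(\mathbbold{0})(y)-\cL_{\overline{\phi}}^n(\mathbbold{0})(x)\le K_{\alpha,\beta}\Hseminorm{\alpha}{\phi}(y-x)^\alpha$; passing to $\limsup_n$ yields the one-sided estimate
\begin{equation*}
u_\phi(y)-u_\phi(x)\le K_{\alpha,\beta}\Hseminorm{\alpha}{\phi}\,(y-x)^\alpha\qquad(0\le x<y\le 1).
\end{equation*}
From this single inequality: taking $x\to a^-$ in $u_\phi(x)\ge u_\phi(a)-K_{\alpha,\beta}\Hseminorm{\alpha}{\phi}(a-x)^\alpha$ and $x\to a^+$ in $u_\phi(x)\le u_\phi(a)+K_{\alpha,\beta}\Hseminorm{\alpha}{\phi}(x-a)^\alpha$ gives the inequalities in (\ref{liminequality}), once the one-sided limits are known to exist. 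Existence of $\lim_{x\nearrow a}u_\phi(x)$: choosing $z_k\nearrow a$ with $u_\phi(z_k)\to\liminf_{x\to a^-}u_\phi(x)=:\ell$, the inequality forces $\sup_{z_k<x<a}u_\phi(x)\le u_\phi(z_k)+K_{\alpha,\beta}\Hseminorm{\alpha}{\phi}(a-z_k)^\alpha\to\ell$, so $\limsup_{x\to a^-}u_\phi(x)\le\ell$ and the left limit exists; existence of $\lim_{x\searrow a}u_\phi(x)$ follows by the analogous squeeze, picking $z_k\searrow a$ along $\limsup_{x\to a^+}$ and using $\inf_{a<x<z_k}u_\phi(x)\ge u_\phi(z_k)-K_{\alpha,\beta}\Hseminorm{\alpha}{\phi}(z_k-a)^\alpha$. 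For (iii), if $[x,y)\cap\cO_\beta'(1)=\emptyset$ then $[x,y)$ contains no $U_\beta^i(1)$ with $1\le i\le n$ for any $n$, so (\ref{e_Bousch_Op_Holder_seminorm_bound}) holds for every $n$; the same cancellation and $\limsup_n$ then give $\lvert u_\phi(x)-u_\phi(y)\rvert\le K_{\alpha,\beta}\Hseminorm{\alpha}{\phi}\lvert x-y\rvert^\alpha$.

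Finally, for (iv) set $v_m\=\sup_{k\ge m}\cL_{\overline{\phi}}^k(\mathbbold{0})$ (a pointwise supremum), which by (i) is a uniformly bounded, Borel, pointwise decreasing sequence with $\lim_m v_m=u_\phi$. By Lemma~\ref{Property of L}(iv), $\cL_{\overline{\phi}}(v_m)=\sup_{k\ge m}\cL_{\overline{\phi}}^{k+1}(\mathbbold{0})=v_{m+1}$, and since the $v_m$ are uniformly bounded, Lemma~\ref{Property of L}(v) gives $\cL_{\overline{\phi}}(u_\phi)=\cL_{\overline{\phi}}(\lim_m v_m)=\lim_m\cL_{\overline{\phi}}(v_m)=\lim_m v_{m+1}=u_\phi$, which is (iv). Beyond (i), the only inputs here are the commutation of $\cL_{\overline{\phi}}$ with suprema and with pointwise limits of bounded sequences, both already recorded in Lemma~\ref{Property of L}.
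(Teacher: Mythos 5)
Parts (ii), (iii), and (iv) of your proposal are correct and follow essentially the same route as the paper: (ii) via the one-sided bound $u_\phi(y)-u_\phi(x)\le K_{\alpha,\beta}\Hseminorm{\alpha}{\phi}(y-x)^\alpha$ obtained by passing to $\limsup$ in (\ref{e_Bousch_Op_Holder_seminorm_bound_x<y}) with $u=\mathbbold{0}$, then a squeeze; (iv) via $q_n=\sup_{m\ge n}p_m$, Lemma~\ref{Property of L}(iv), and Lemma~\ref{Property of L}(v). Your argument for (iii) is in fact a bit cleaner than the paper's: you note that $\abs{a_n-b_n}\le C$ for all $n$ already forces $\abs{\limsup a_n-\limsup b_n}\le C$, whereas the paper goes through the auxiliary functions $p_N$ and $q_N$ and an $\epsilon$-selection; both are valid.

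The genuine gap is in (i), and both your upper and lower bound sketches misidentify the mechanism. For the upper bound, there is no ``shadowing of the critical orbit'' at all. After reducing to $p_n(0)$ by Corollary~\ref{u=0corollary}, one notes that a point $y_n\in T_\beta^{-n}(0)\smallsetminus\{1\}$ achieving the max has expansion $a_1\cdots a_k\,0^\infty$ with $a_k>0$ (or $y_n=0$). Crucially, $y_n$ is the right endpoint of the \emph{full} $k$-cylinder $I(a_1,\dots,a_k-1)$ (Proposition~\ref{p_cylinders}(iii)), so the distortion estimate against a $T_{\beta,I^k}^k$-periodic point in that cylinder bounds $S_k\overline{\phi}(y_n)$, while the remaining $n-k$ steps of the orbit sit at the fixed point $0$ and contribute $(n-k)\overline{\phi}(0)\le 0$. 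Non-fullness of the original $n$-cylinder and Proposition~\ref{p_cylinders}(v) never enter, because the orbit terminates at $0$, not near $\cO_\beta'(1)$; invoking the critical orbit here is a misreading of the structure of $T_\beta^{-n}(0)$.

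For the lower bound, your plan to ``steer a near-maximizing orbit onto $1$'' leaves the actual construction open (as you acknowledge), and I do not see how to carry it out so as to recover the sharp constant $3K_{\alpha,\beta}$: the cost of the final transition would a priori involve $\Hseminorm{\alpha}{\phi}$ without the factor $K_{\alpha,\beta}=(\beta^\alpha-1)^{-1}$, which can be arbitrarily small. The paper's idea is different and self-referential: having already bounded $p_n(0)=S_n\overline{\phi}(y_n)$ in terms of the good preimage $y_n$, one observes that $y_n/\beta^{n-k}$ is the right endpoint of the full $n$-cylinder $J^n=I(0,\dots,0,a_1,\dots,a_k-1)$, hence an element of $U_\beta^{-n}(1)$, and its $U_\beta$-orbit first spends $n-k$ steps in the geometrically shrinking sequence $y_n/\beta^{n-k},\dots,y_n/\beta$ (each compared to $0$ with geometrically summable H\"older error, total $\le K_{\alpha,\beta}\Hseminorm{\alpha}{\phi}$), and then coincides with the orbit of $y_n$. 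This directly gives $p_n(1)\ge p_n(0)-K_{\alpha,\beta}\Hseminorm{\alpha}{\phi}$, and combining with $p_n(0)\ge -K_{\alpha,\beta}\Hseminorm{\alpha}{\phi}$ (from the existence of a $(U_\beta,\phi)$-maximizing measure and Corollary~\ref{u=0corollary}) and a final application of Corollary~\ref{u=0corollary} yields $p_n(x)\ge -3K_{\alpha,\beta}\Hseminorm{\alpha}{\phi}$. The key missing idea in your sketch is this re-use of $y_n$ by prepending a block of zeros, which bypasses any need to locate a good preimage of $1$ near a maximizing orbit.
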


\begin{proof}
	For each $n\in\N$ and each $x\in [0,1)$, we write 
	\begin{equation}\label{pndefn}
		p_n(x)\=\mathcal{L}_{\overline{\phi}}^n(\mathbbold{0})(x)
        \quad \text{ and } \quad
        q_n(x)\=\sup_{m\ge n} p_m(x) .
	\end{equation}
	Note that, 
	for each $x\in [0,1)$, the sequence $\{q_n(x)\}_{n\in\N}$ is nonincreasing	and 
	\begin{equation*}
		u_\phi(x)=\lim\limits_{n\to +\infty}q_n(x)=\limsup\limits_{n\to +\infty}p_n(x).
	\end{equation*}
	
	(i) Fix $n\in \N$. By (\ref{e_Bousch_Op_Holder_seminorm_bound}), $p_n$ is continuous at all points except for $U_\beta(1)$, $\dots$, $U_\beta^{n}(1)$, and hence Borel measurable. Combining this with (\ref{e_calibrated_sub-action_exists}), $u_\phi$ is Borel measurable. 
	By Lemma~\ref{Property of L}~(i), there exists 
	$y_n \in T^{-n}_\beta(0) \smallsetminus \{1\}$ such that 
	\begin{equation}\label{pnzeroyn}
		p_n(0)= \mathcal{L}_{\overline{\phi}}^n(\mathbbold{0})(0)
		=S_{n}\overline{\phi}(y_n).
	\end{equation}
	By Proposition~\ref{p_cylinders}~(i), there exists $I^n = I(a_1,   a_2,  \dots,  a_n)$ containing $y_n$. By Proposition~\ref{p_cylinders}~(ii) and~(v), $y_n$ is the left-endpoint of $I^n$, then (\ref{left_endpoint}) gives
	\begin{equation}\label{yndefn}
		y_n=\frac{a_1}{\beta}+\dots+\frac{a_n}{\beta^n}.
	\end{equation}
	Define 
	\begin{equation}\label{kdefn}
		k\=\min\{i\in \N_0: a_j=0\text{ for all }i+1\le j\le n\} .
	\end{equation} 
	
	\emph{Case 1.} If $k=0$, we get that $y_n=0$ and since $0$ is a fixed point of $T_\beta$, 
	\begin{equation}\label{e_caes_k=0_1}
		p_n(0)=n\overline{\phi}(0)\leq 0 .
	\end{equation}	
	
	\emph{Case 2.} If $k>0$, $y_n (\neq 1)$ is the right endpoint of a $k$-full cylinder $I^k_0 \= I(a_1,\dots,a_k-1)$ by
	Proposition~\ref{p_cylinders}~(iii) and hence by Proposition~\ref{p_cylinders}~(iv) 
	there is a $T_{\beta}^k$-fixed point $z_n$ in $I^k_0$. Thus,
	\begin{equation}\label{znaveragenonpositive}
		S_k \overline{\phi}(z_n)\le k\mpe \bigl(T_\beta,\overline{\phi}\bigr)= 0 .
	\end{equation}
	Since $0$ is a fixed point of $T_\beta$,
	$\overline{\phi}(0)\le Q\bigl(T_\beta,\overline{\phi}\bigr)=0$, and hence combining this with the fact that $T^k_\beta(y_n)=0$ (see (\ref{yndefn}) and~(\ref{kdefn})), we obtain
	\begin{equation}\label{snsk}
		S_n\overline{\phi}(y_n)
		=S_k\overline{\phi}(y_n)+S_{n-k}\overline{\phi}\bigl(T^k_\beta(y_n)\bigr)
		=S_k\overline{\phi}(y_n)+(n-k)\overline{\phi}(0)\leq S_k\overline{\phi}(y_n) .
	\end{equation}
	If $k\ge 2$, define the $(k-1)$-cylinder $I^{k-1} \= I(a_1,  a_2,  \dots,  a_{k-1})$. As $I^n \subseteq I^{k-1}$ and $I^k_0 \subseteq I^{k-1}$ by (\ref{e_def_n_cylinders}), we have $y_n, \, z_n \in I^{k-1}$, then Lemma~\ref{l_Distortion} gives
	\begin{equation}\label{e_sumbound}
		\begin{aligned}
			&S_{k}\overline{\phi}(y_n)-S_{k}\overline{\phi}(z_n) \\
            &\qquad= S_{k-1}\overline{\phi}(y_n)-S_{k-1}\overline{\phi}(z_n) + \overline{\phi} \bigl(T_\beta^{k-1}(y_n)\bigr) - \overline{\phi}\bigl( T_\beta^{k-1}(z_n)\bigr)  \\
			&\qquad\le (1+K_{\alpha,\beta}) \Hseminorm{\alpha}{\phi} \Absbig{T_\beta^{k-1}(y_n) - T_\beta^{k-1}(z_n)}^\alpha \le (1+K_{\alpha,\beta}) \Hseminorm{\alpha}{\phi}.
		\end{aligned}
	\end{equation}
	Moreover, (\ref{e_sumbound}) also holds if $k=1$.
	
	Combining (\ref{pnzeroyn}), (\ref{snsk}), (\ref{e_sumbound}), and (\ref{znaveragenonpositive}) gives
	\begin{equation}\label{pn0upperbound}
		p_n(0)=S_n\overline{\phi}(y_n) \le S_k\overline{\phi}(y_n)
		= S_{k}\overline{\phi}(y_n)-S_{k}\overline{\phi}(z_n)+S_{k}\overline{\phi}(z_n)
		\leq (1+K_{\alpha,\beta})\Hseminorm{\alpha}{\phi}.
	\end{equation}
	Combining Corollary~\ref{u=0corollary}, (\ref{e_caes_k=0_1}), and (\ref{pn0upperbound}) gives
	\begin{equation} \label{pnupperbound}
	p_n(x) 	\le p_n(0) +  K_{\alpha,\beta} \Hseminorm{\alpha}{\phi} \le (1+2 K_{\alpha,\beta})\Hseminorm{\alpha}{\phi}
	\quad\text{ for all }x\in [0,1), n\in\N,
	\end{equation}
	so from (\ref{e_calibrated_sub-action_exists}) we deduce
	the upper bound
	\begin{equation}\label{uphiupperbound}
		u_\phi (x) \leq (1+2  K_{\alpha,\beta})\Hseminorm{\alpha}{\phi} \quad\text{ for all }x\in [0,1).
	\end{equation}

	We now seek to derive a lower bound on $u_\phi$, via a
	lower bound on $p_n(x)$. Fix $n\in \N$. First we would like to show there exists $w_n \in [0,1)$ satisfying
    \begin{equation}\label{xnnonnegative}
		S_{n}\overline{\phi}(w_n)\ge -\Hseminorm{\alpha}{\phi}.
	\end{equation}
    If $\Hseminorm{\alpha}{\phi} = 0$, in other words $\phi$ is 
    constant, (\ref{xnnonnegative}) holds for each $w_n\in [0,1)$. Now assume $\Hseminorm{\alpha}{\phi}>0$.
	Note that $Q\bigl(T_\beta,\overline{\phi}\bigr)=0$, so $Q\bigl(T_\beta, S_n\overline{\phi}\bigr)=0$. There exists $\mu \in \cM( I,T_\beta )$ with $\int\! \overline{\phi} \,\mathrm{d}\mu > -\Hseminorm{\alpha}{\phi}$. By Proposition~\ref{p_relation_T_beta_and_wt_T_beta}~(iii), $\mu(\{1\})=0$,
	so $\sup \bigl\{ S_n \overline{\phi}(x) : x\in [0,1) \bigr\} > -\Hseminorm{\alpha}{\phi}$. Consequently, there exists $w_n\in [0,1)$ satisfying (\ref{xnnonnegative}).
    
    Now choose $w_n\in [0,1)$ to satisfy (\ref{xnnonnegative}).
	If we define $y\=T_{\beta}^n(w_n)$, Lemma~\ref{Property of L}~(i) and (\ref{xnnonnegative}) give
	\begin{equation}\label{lphibarnineq}
		\mathcal{L}^n_{\overline{\phi}}(\mathbbold{0})(y)
		=\max\bigl\{S_{n}\overline{\phi}(w) : w= T_{\beta}^{-n}(y) \smallsetminus \{1\}\bigr\}
		\ge
		S_{n}\overline{\phi}(w_n)\ge -\Hseminorm{\alpha}{\phi} .
	\end{equation}
	Combining (\ref{pndefn}), Corollary~\ref{u=0corollary},
	and (\ref{lphibarnineq}) gives
	\begin{equation} \label{lphibarzerozero}
		p_n(0) = \cL_{\overline{\phi}}^n(\mathbbold{0})(0) \geq \cL_{\overline{\phi}}^n(\mathbbold{0})(y) 
		-K_{\alpha,\beta}  \Hseminorm{\alpha}{ \phi} \ge -(1 +K_{\alpha,\beta} ) \Hseminorm{\alpha}{ \phi} .
	\end{equation}

	Let $y_n$ and $k$ be as in (\ref{yndefn}) and (\ref{kdefn}). Fix $x\in [0,1)$. When $k=0$, we get $y_n=0$ and $p_n(0)=n\overline{\phi}(0)$. Notice that $0, \, x/\beta^n\in I(0,\dots,0) \in W^n$ and $x/\beta^n \in T_\beta^{-n}(x) \smallsetminus \{1\}$, using Lemma~\ref{l_Distortion} and (\ref{lphibarzerozero}) gives
	\begin{equation}  \label{pnk0lowerbound}
		p_n(x)
		\geq S_n\overline{\phi} (x/\beta^n )
		= S_n\overline{\phi} (x/\beta^n ) - S_n\overline{\phi}(0) + p_n(0)
		\geq-(1+2 K_{\alpha,\beta})\Hseminorm{\alpha}{\phi}.
	\end{equation}

	When $k>0$,	by Proposition~\ref{p_cylinders}~(iii) and (\ref{yndefn}),
	we have the full $n$-cylinder  
	\begin{equation*}
		J^n  \= I(0,\dots,0,a_1,a_2,\dots, a_k-1)\in W^n_0 ,
	\end{equation*}
	with the right endpoint equal to $y_n \big/ \beta^{n-k}$.
	Since $J^n$ is full, then there exists $z_n \in T_\beta^{-n}(x) \cap J^n$. Noting that $z_n \neq 1$, then by (\ref{pndefn}) and Lemma~\ref{Property of L}~(i),
	\begin{equation}\label{pnonegreaterthan}
		p_n(x)=  \cL_{\overline{\phi}}^n(\mathbbold{0})(x)
		=\max\bigl\{S_{n}\overline{\phi}(z) : z\in T_{\beta}^{-n}(x) \smallsetminus \{1\}\bigr\}
		\ge 
		S_{n}\overline{\phi}(z_n ) .
	\end{equation}
	If $n\ge 2$, denote $J^{n-1} \= I(0,  \dots,  0,  a_1,  a_2,  \dots,  a_{k-1}) \in W^{n-1}$, in particular, $J^{n-1} = I(0,  \dots,  0) \in W^{n-1}$ if $k=1$. By (\ref{yndefn}), we have $y_n \big/ \beta^{n-k} \in J^{n-1}$, and since $J^n\subseteq J^{n-1}$, we have $z_n\in J^{n-1}$. Applying Lemma~\ref{l_Distortion}, we have
	\begin{equation}\label{e_sumbound2}
		\begin{aligned}
			&S_n \overline{\phi} (z_n) - S_n \overline{\phi} \bigl( y_n\big/\beta^{n-k} \bigr) \\
			&\qquad=  S_{n-1} \overline{\phi} (z_n) - S_{n-1} \overline{\phi} \bigl( y_n\big/\beta^{n-k} \bigr)  
			+ \overline{\phi} \bigl( T_\beta^{n-1}(z_n) \bigr) - \overline{\phi} \bigl( T_\beta^{n-1} \bigl( y_n\big/\beta^{n-k} \bigr)\bigr)\\
			&\qquad\ge -(1+K_{\alpha,\beta}) \Hseminorm{\alpha}{\phi} \Absbig{ T_\beta^{n-1}(z_n)- T_\beta^{n-1} \bigl( y_n\big/\beta^{n-k} \bigr) }^\alpha \ge -(1+K_{\alpha,\beta}) \Hseminorm{\alpha}{\phi}.
		\end{aligned}
	\end{equation} 
	Moreover, (\ref{e_sumbound2}) also holds if $n=1$.
	
	Now by (\ref{e_def_T_beta}),
	\begin{equation}\label{sumsrewrite}
		S_{n}\overline{\phi}\Bigl( \frac{y_n}{\beta^{n-k}}\Bigr)
        =S_k\overline{\phi}(y_n)+S_{n-k}\overline{\phi}\Bigl(\frac{y_n} {\beta^{n-k}}\Bigr)
		=S_k\overline{\phi}(y_n)+\sum_{i=1}^{n-k}\overline{\phi}\Bigl(\frac{y_n}{\beta^{i}}\Bigr) .
	\end{equation}
	Note that $\overline{\phi}\bigl(y_n\big/\beta^i\bigr)-\overline{\phi}(0) \ge -\Hseminorm{\alpha}{\phi} \bigl(y_n\big/\beta^{i}\bigr)^\alpha$
	for $1\le i\le n-k$, so
	\begin{equation}\label{nminusk}
		\sum_{i=1}^{n-k}\overline{\phi}\bigl(y_n\big/\beta^{i}\bigr) 
        \ge  (n-k)\overline{\phi}(0)-\Hseminorm{\alpha}{\phi}\sum_{i=1}^{n-k}\bigl(y_n\big/\beta^{i}\bigr)^\alpha
        \ge  (n-k)\overline{\phi}(0)- K_{\alpha,\beta}\Hseminorm{\alpha}{\phi} .
	\end{equation}
	Combining (\ref{pnonegreaterthan}), (\ref{e_sumbound2}), (\ref{sumsrewrite}), and~(\ref{nminusk}) gives
	\begin{equation}\label{combiningthreegives}
		p_n(x)\ge (n-k)\overline{\phi}(0)-(1+2 K_{\alpha,\beta})\Hseminorm{\alpha}{\phi} + S_k\overline{\phi}(y_n) .
	\end{equation}
	However, (\ref{pnzeroyn}) and (\ref{snsk})
	together give
	\begin{equation}\label{pnzerorewrite}
		(n-k)\overline{\phi}(0) + S_k\overline{\phi}(y_n) = p_n(0) .
	\end{equation}
	So combining (\ref{combiningthreegives}), (\ref{pnzerorewrite}), and (\ref{lphibarzerozero}) gives
	\begin{equation}\label{pnonepnzero}
		p_n(x) \ge p_n(0) - (1+2K_{\alpha,\beta})\Hseminorm{\alpha}{\phi} \ge  -(2+3 K_{\alpha,\beta})\Hseminorm{\alpha}{\phi} .
	\end{equation}
	Therefore, (\ref{pnk0lowerbound}), (\ref{pnonepnzero}), (\ref{e_calibrated_sub-action_exists}), and (\ref{pnzeroyn}) give
	\begin{equation}\label{uphilowerbound}
		u_\phi(x) \ge -(2+3   K_{\alpha,\beta})\Hseminorm{\alpha}{\phi} .
	\end{equation}
	The bounds (\ref{uphiupperbound}) and (\ref{uphilowerbound}) together give the 
	required inequality 
	$\abs{u_{\phi}(x)}\leq (2+3 K_{\alpha,\beta}) \Hseminorm{\alpha}{\phi}$,
	so (i) is proved.

	\smallskip
	(\romannumeral2) If $0\le x<y<1$ then taking $u=\mathbbold{0}$
	in Lemma~\ref{l_Bousch_Op_preserve_space} and using Lemma~\ref{Property of L}~(i) give
	$
		\cL_{\overline{\phi}}^n(\mathbbold{0})(x) \geq \cL_{\overline{\phi}}^n(\mathbbold{0})(y) 
		-K_{\alpha,\beta}  \Hseminorm{\alpha}{ \phi} \abs{x-y}^{\alpha}$,
	and taking the limit supremum, together with (\ref{e_calibrated_sub-action_exists}), gives 
	\begin{equation}\label{e_Holder_u_phi_half}
		u_\phi(x)\geq u_\phi(y)-K_{\alpha,\beta}\Hseminorm{\alpha}{\phi}\abs{x-y}^\alpha.
	\end{equation}
	If $a\in[0,1)$ then in particular (\ref{e_Holder_u_phi_half}) holds for all $a<x<y<1$, so 
	taking $\liminf_{x\searrow a}$ gives
	\begin{equation}\label{e_Holder_u_phi_half_liminf}
		\liminf_{x\searrow a}	u_\phi(x)\geq u_\phi(y)-K_{\alpha,\beta}\Hseminorm{\alpha}{\phi}\abs{a-y}^\alpha,
	\end{equation}
	and taking  $\limsup_{y\searrow a}$ in (\ref{e_Holder_u_phi_half_liminf}) gives
	\begin{equation}\label{e_Holder_u_phi_half_liminf_limsup}
		\liminf_{x\searrow a}	u_\phi(x)\geq \limsup_{y\searrow a} u_\phi(y),
	\end{equation}
	so $\lim_{x\searrow a} u_\phi(x)$ exists, as required.
	Now setting $x=a$ in (\ref{e_Holder_u_phi_half}), and taking $\lim_{y\searrow a} u_\phi(y)$, gives that
	\begin{equation}\label{liminequality1}
		u_\phi(a)\ge \lim_{y\searrow a} u_\phi(y),
	\end{equation}
	as required.
	
	If $a\in(0,1]$,
	an analogous argument shows that 
	$\lim_{x\nearrow a}u_\phi(x)$ exists,
	and if moreover $a\neq 1$ then 
	\begin{equation}\label{liminequality2}
		\lim_{x\nearrow a}u_\phi(x)\geq u_\phi(a).
	\end{equation}
	If $a\in(0,1)$, the required inequality (\ref{liminequality}) is immediate from (\ref{liminequality1}) and (\ref{liminequality2}).

	\smallskip    
	(\romannumeral3) Now suppose $0\le x<y<1$ with $(x,y]\cap\cO_\beta^*(1)=\emptyset$.
	For each $\myepsilon>0$, by (\ref{pndefn}) and (\ref{e_calibrated_sub-action_exists}) there exists $N\in \mathbb{N}$ such that 
	$\abs{p_N(x)-u_\phi(x)}<\myepsilon$ and
	$q_N(y)-u_{\phi}(y)<\myepsilon$, so
	\begin{equation}\label{uphixydifference1}	
			u_\phi(x)-u_\phi(y) < p_N(x)-q_N(y)+2\myepsilon
			\leq p_N(x)-p_N(y)+2\myepsilon
			\leq K_{\alpha,\beta}\Hseminorm{\alpha}{\phi}\abs{x-y}^\alpha+2\myepsilon,
	\end{equation}
	where the final inequality uses (\ref{e_Bousch_Op_Holder_seminorm_bound}).
	Similarly, there exists $M\in \mathbb{N}$ such that
	$\abs{p_M(y)-u_\phi(y)}<\myepsilon$ and
	$q_M(x)-u_{\phi}(x)<\myepsilon$, and an analogous calculation gives
	\begin{equation}\label{uphixydifference2}
		u_\phi(x)-u_\phi(y) \ge  -K_{\alpha,\beta}\Hseminorm{\alpha}{\phi} \abs{x-y}^\alpha-2\myepsilon.
	\end{equation}
	Since $\myepsilon>0$ was arbitrary,  (\romannumeral3) follows from (\ref{uphixydifference1}) and  (\ref{uphixydifference2}).
	
	\smallskip
	(\romannumeral4) If $x\in [0,1)$, then by Lemma~\ref{Property of L}~(ii), (iii), (\ref{pndefn}), and the boundedness of $p_n(x)$ and $q_n(x)$ (cf.~(\ref{pnupperbound}) and (\ref{pnonepnzero})),
\begin{align*}
		\mathcal{L}_{\overline{\phi}}(u_\phi)(x)
		&=\mathcal{L}_{\overline{\phi}}\bigl(\lim\limits_{n\to +\infty}q_n\bigr)(x)
		=\lim\limits_{n\to+\infty}\mathcal{L}_{\overline{\phi}} \bigl( \sup_{m\ge n} \mathcal{L}_{\overline{\phi}}^m(\mathbbold{0}) \bigr) (x)\\
		&=\lim\limits_{n\to+\infty}  \sup_{m\ge n} \mathcal{L}_{\overline{\phi}}^{m+1}(\mathbbold{0})(x) 
		=\lim\limits_{n\to+\infty}q_{n+1}(x)
		=u_{\phi}(x).\qedhere
\end{align*}
\end{proof}

 The following construction of the regularisations of the function $u_\phi$ is key to our revelation theorem (Theorem~\ref{l_subordination}).  

\begin{definition}
Fix $\beta>1$ and $\alpha\in (0,1]$. For each $\phi\in\Holder{\alpha}(I)$, let $u_\phi$ be the function defined in (\ref{e_calibrated_sub-action_exists}).
Since $U_\beta$ is left-continuous and upper semi-continuous on $(0,1]$, we define a \emph{sub-action} for $(U_\beta,\phi)$ by
\begin{equation}\label{e_def_u_phi_-}
	u_{\beta, \phi}^-(x) \=\begin{cases}
		u_\phi(0) &\text{ if }x=0,\\
		\lim_{y\nearrow x} u_\phi(y) &\text{ if }x \in (0,1].
	\end{cases}
\end{equation}
Since $T_\beta$ is right-continuous and lower semi-continuous on $[0,1)$, we define a sub-action for $(T_\beta,\phi)$ by
\begin{equation}\label{e_def_u_phi_+}
	u_{\beta, \phi}^+(x) \=\begin{cases}
		\lim_{y \searrow x} u_\phi(y) &\text{ if }x\in[0,1),\\
		\lim_{y\searrow T_\beta(1)} u_\phi(y)-\overline{\phi}(1)&\text{ if }x=1.
	\end{cases}
\end{equation}
We define the \emph{left-continuous revealed version} $\tphi^-$ and the \emph{right-continuous revealed version} $\tphi^+$ by
\begin{align}
	\tphi^-&\=\overline{\phi} + u_{\beta, \, \phi}^- -  u_{\beta, \, \phi}^- \circ U_\beta,\label{e_def_tphi^-} \\
	\tphi^+&\=\overline{\phi} + u_{\beta, \, \phi}^+  - u_{\beta, \, \phi}^+  \circ T_\beta.\label{e_def_tphi^+}
\end{align}
\end{definition}

We are now able to prove a Ma\~n\'e 
lemma\footnote{Note that the statement of the Ma\~n\'e cohomology lemma for beta-transformations (Theorem~\ref{mane}) takes a rather different form from the analogous result for open expanding maps (Theorem~\ref{l.mane}): in Theorem~\ref{mane} the continuity properties are emphasised, as these are particularly delicate, whereas the cohomology properties are implicit in the definitions (\ref{e_def_tphi^-}), and (\ref{e_def_tphi^+}).}
for beta-transformations, involving the above sub-actions and revealed versions.

\begin{theorem}[Ma\~n\'e lemma for beta-transformations]\label{mane}
	If $\beta>1$, $\alpha \in (0,1]$, and $\phi \in \Holder{\alpha}(I)$, then the following hold:
	\begin{enumerate}[label=\rm{(\roman*)}]
		\smallskip
		\item $u_{\beta, \, \phi}^-$ is bounded, left-continuous, and upper semi-continuous, on $(0,1]$, and $u_{\beta, \, \phi}^+$ is bounded, right-continuous, and lower semi-continuous, on $[0,1)$.
		\smallskip
		
		\item $\tphi^- \le 0$ and $\tphi^+ \le 0$ on $I$. The function $\tphi^-$ is left-continuous on $(0,1]$, and $\tphi^+$ is right-continuous on $[0,1)$.
		\smallskip
		
		\item If the closed interval $[x,y] \subseteq I$ 
		is disjoint from the orbit $\cO_\beta^*(1)$, then
		\begin{align}
			\Absbig{u_{\beta, \, \phi}^- (x) - u_{\beta, \, \phi}^- (y)} &\le K_{\alpha,\beta} \Hseminorm{\alpha}{\phi}  \abs{x-y}^\alpha\quad\text{ and} \\
			\Absbig{u_{\beta, \, \phi}^+ (x) - u_{\beta, \, \phi}^+ (y)}
			&\le K_{\alpha,\beta} \Hseminorm{\alpha}{\phi}  \abs{x-y}^\alpha.
		\end{align}
	\end{enumerate}
\end{theorem}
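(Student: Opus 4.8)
I would prove the three parts in order, using Proposition~\ref{p_calibrated_sub-action_exists} as the sole input. The only facts about $u_\phi$ that I need are: it is bounded by $3K_{\alpha,\beta}\Hseminorm{\alpha}{\phi}$; it has one-sided limits everywhere, with $\lim_{t\nearrow a}u_\phi(t)\ge u_\phi(a)\ge\lim_{t\searrow a}u_\phi(t)$; it is $\alpha$-H\"older with constant $K_{\alpha,\beta}\Hseminorm{\alpha}{\phi}$ on intervals avoiding $\cO_\beta'(1)$; and it satisfies the fixed-point identity $\cL_{\overline{\phi}}(u_\phi)=u_\phi$. Part~(i) is bookkeeping about one-sided limits; part~(ii), the Ma\~n\'e inequalities $\tphi^-\le 0$ and $\tphi^+\le 0$, is the substance; part~(iii) transfers the local H\"older regularity of $u_\phi$ to the sub-actions.

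For~(i): boundedness of $u_{\beta,\phi}^-$ and $u_{\beta,\phi}^+$ is inherited from that of $u_\phi$. Left-continuity of $u_{\beta,\phi}^-$ on $(0,1]$ is the standard fact that, if a function has a left limit at each point, the function assigning to each point its left limit is left-continuous; combining this with the observation that for $a_n\searrow a$ one has $\lim_n u_{\beta,\phi}^-(a_n)=\lim_{t\searrow a}u_\phi(t)\le u_\phi(a)\le u_{\beta,\phi}^-(a)$ (the middle inequality from Proposition~\ref{p_calibrated_sub-action_exists}(ii)) gives upper semi-continuity; the statements for $u_{\beta,\phi}^+$ are symmetric. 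For~(ii), I would first record that $u_\phi(U_\beta(y))\ge u_\phi(y)+\overline{\phi}(y)$ for every $y\in I$ and $u_\phi(T_\beta(y))\ge u_\phi(y)+\overline{\phi}(y)$ for every $y\in[0,1)$ — the former from $\cL_{\overline{\phi}}(u_\phi)=u_\phi$ together with $\overline{\phi}(0)\le\mpe(T_\beta,\overline{\phi})=0$, the latter additionally using $T_\beta=U_\beta$ off $D_\beta$, the $x=0$ clause of $\cL_{\overline{\phi}}$, and $y\ne 1$. Then, for $x\in(0,1]$, take $x_n\nearrow x$ in $(0,x)$, so $u_\phi(x_n)\to u_{\beta,\phi}^-(x)$; by Lemma~\ref{l_continuity_T_U_on_x}(i), $U_\beta(x_n)\le U_\beta(x)$ for large $n$ with $U_\beta(x_n)\to U_\beta(x)\in(0,1]$, and splitting the sequence according to whether $U_\beta(x_n)$ equals $U_\beta(x)$ (handled by Proposition~\ref{p_calibrated_sub-action_exists}(ii)) or is strictly smaller (handled by the existence of the left limit of $u_\phi$ at $U_\beta(x)$), passing to the limit in $u_\phi(U_\beta(x_n))\ge u_\phi(x_n)+\overline{\phi}(x_n)$ gives $u_{\beta,\phi}^-(U_\beta(x))\ge u_{\beta,\phi}^-(x)+\overline{\phi}(x)$, i.e.\ $\tphi^-(x)\le 0$; while $\tphi^-(0)=\overline{\phi}(0)\le 0$ outright. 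The inequality $\tphi^+\le 0$ is proved by the mirror-image argument with $T_\beta$ and right limits, the definition $u_{\beta,\phi}^+(1)=\lim_{y\searrow T_\beta(1)}u_\phi(y)-\overline{\phi}(1)$ being rigged so that $\tphi^+(1)=0$. One-sided continuity of $\tphi^-$ (resp.\ $\tphi^+$) follows since $\overline{\phi}$ is continuous, $U_\beta$ (resp.\ $T_\beta$) is left- (resp.\ right-) continuous and maps $(0,1]$ into $(0,1]$ (resp.\ $[0,1)$ into $[0,1)$), and $u_{\beta,\phi}^-$ (resp.\ $u_{\beta,\phi}^+$) is left- (resp.\ right-) continuous, so the composition $u_{\beta,\phi}^-\circ U_\beta$ (resp.\ $u_{\beta,\phi}^+\circ T_\beta$) is one-sidedly continuous.

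For~(iii), fix $[x,y]\subseteq I$ disjoint from $\cO_\beta'(1)$. Every subinterval $[a,b)\subseteq[x,y]$ avoids $\cO_\beta'(1)$, so Proposition~\ref{p_calibrated_sub-action_exists}(iii) shows $u_\phi|_{[x,y]}$ is $\alpha$-H\"older with constant $K_{\alpha,\beta}\Hseminorm{\alpha}{\phi}$, in particular continuous; hence $u_{\beta,\phi}^-(a)=u_\phi(a)$ for $a\in(x,y]$ and $u_{\beta,\phi}^+(a)=u_\phi(a)$ for $a\in[x,y)$. The claimed bounds reduce to $u_{\beta,\phi}^-(x)=u_\phi(x)$ and $u_{\beta,\phi}^+(y)=u_\phi(y)$, i.e.\ to left-continuity of $u_\phi$ at $x$ and right-continuity at $y$; I would prove the general statement that the discontinuity set of $u_\phi$ is contained in $\cO_\beta'(1)$. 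For $z\in(0,1)\smallsetminus\cO_\beta'(1)$, each $U_\beta$-preimage of $z$ again lies in $(0,1)\smallsetminus\cO_\beta'(1)$, and $z$ lies in the interior of the image $T_{\beta,I^1}^1(\overline{I}^1)$ of each relevant $1$-cylinder, so near $z$ the identity $\cL_{\overline{\phi}}(u_\phi)=u_\phi$ reads $u_\phi(t)=\max_i(u_\phi+\overline{\phi})(\ell_i(t))$ for finitely many linear increasing local inverse branches $\ell_i$ with $\ell_i(z)=w_i\in U_\beta^{-1}(z)$; taking one-sided limits gives $u_\phi(z^\pm)=\max_i(u_\phi(w_i^\pm)+\overline{\phi}(w_i))$, so the oscillation $\omega(z)=u_\phi(z^-)-u_\phi(z^+)$ satisfies $\omega(z)\le\max_i\omega(w_i)$. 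Since $\omega$ vanishes off $\overline{\cO_\beta'(1)}$ (by Proposition~\ref{p_calibrated_sub-action_exists}(iii)) and the discontinuity set of $u_\phi$ is countable, iterating this inequality backward along preimages shows that if $\omega$ were somewhere positive off $\cO_\beta'(1)$ there would be a $U_\beta$-periodic orbit contained in $\overline{\cO_\beta'(1)}\smallsetminus\cO_\beta'(1)$ on which $\omega$ is positive and constant, which one rules out; therefore $\omega\equiv 0$ off $\cO_\beta'(1)$, so (with $u_{\beta,\phi}^-(0)=u_\phi(0)$ by definition) $u_{\beta,\phi}^-=u_\phi=u_{\beta,\phi}^+$ on $[x,y]$, and the H\"older bound follows.

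The main obstacle is this last step of~(iii): controlling the discontinuities of $u_\phi$ so that they do not spill onto accumulation points of the critical orbit — equivalently, excluding $U_\beta$-periodic orbits inside $\overline{\cO_\beta'(1)}$ along which $u_\phi$ oscillates — since only then do the regularised sub-actions $u_{\beta,\phi}^\pm$ coincide with $u_\phi$ on the interval in question and thereby inherit its H\"older modulus. The Ma\~n\'e inequality~(ii) is the conceptual heart, but with the pointwise-limit construction of $u_\phi$ already in hand it comes down to carefully transporting the fixed-point inequality through one-sided limits, with constant attention to the one-sided continuity of $U_\beta$, $T_\beta$ and of $u_\phi$ at the critical orbit, and to the endpoint behaviour encoded in the special definition of $u_{\beta,\phi}^+$ at $1$.
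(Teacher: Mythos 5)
Parts (i) and (ii) of your proof track the paper's: (i) is essentially a restatement of Proposition~\ref{p_calibrated_sub-action_exists}~(ii), and (ii) passes the fixed-point inequality $\overline{\phi}(y)+u_\phi(y)\le u_\phi(U_\beta(y))$ through the appropriate one-sided limits. (Your case split on whether $U_\beta(x_n)=U_\beta(x)$ is harmless but unnecessary, since $U_\beta$ is locally strictly increasing.) Your bookkeeping around the special value $u_{\beta,\phi}^+(1)$ and the one-sided continuity of $\tphi^\pm$ is correct.

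Part (iii) is where the interesting content lies. The paper asserts that (iii) "follows immediately" from the definitions and Proposition~\ref{p_calibrated_sub-action_exists}~(iii), and you rightly probe this: $u_{\beta,\phi}^-(x)$ is the left limit of $u_\phi$ at $x$, and Proposition~\ref{p_calibrated_sub-action_exists}~(iii) gives no control over $u_\phi$ on $[s,x)$ when $\cO_\beta'(1)$ accumulates at $x$ from the left, which can happen for non-preperiodic $\beta$ even when $x\notin\cO_\beta'(1)$. (For beta-numbers $\cO_\beta'(1)$ is finite, hence closed, and the issue vanishes; replacing $\cO_\beta'(1)$ by $\overline{\cO_\beta'(1)}$ in the hypothesis would also make the deduction genuinely immediate.) Your strategy --- show $u_\phi$ is continuous off $\cO_\beta'(1)$ via the oscillation inequality $\omega(z)\le\max_i\omega(w_i)$ over local preimages --- is a reasonable repair, but its crucial steps are asserted rather than proved. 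You claim that, because "the discontinuity set of $u_\phi$ is countable," iterating the oscillation inequality backward along preimages must eventually repeat and produce a periodic orbit. Countability is not enough: an injective backward orbit can lie in a countable set. What you actually need is that $\{z : \omega(z)\ge\epsilon\}$ is \emph{finite} for each $\epsilon>0$; this does follow from the one-sided bound~(\ref{e_Holder_u_phi_half}) together with the boundedness of $u_\phi$ via a telescoping argument, but you neither state nor prove it. And the final clause --- the periodic orbit in $\overline{\cO_\beta'(1)}\smallsetminus\cO_\beta'(1)$ "which one rules out" --- is left entirely unjustified. For emergent $\beta$, minimality of $\overline{\cO_\beta'(1)}$ (Lemma~\ref{l_minimal_emergent}) does rule out a proper periodic sub-orbit; but for non-emergent, non-preperiodic $\beta$ you have given no argument excluding a $U_\beta$-periodic orbit inside $\overline{\cO_\beta'(1)}\smallsetminus\cO_\beta'(1)$ carrying a constant positive jump of $u_\phi$, and the argument does not close in that case.
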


\begin{proof}
	(i) follows immediately from (\ref{e_def_u_phi_-}), (\ref{e_def_u_phi_+}), and Proposition~\ref{p_calibrated_sub-action_exists}~(i) and~(ii).
	
	\smallskip
	(ii) By (\ref{e_Def_BouschOp}) and Proposition~\ref{p_calibrated_sub-action_exists}~(iv), 
	\begin{equation}\label{e_u_phi_x_neq_0}
		u_{\phi}(x)=\max\bigl\{\overline{\phi}(y)+u_{\phi}(y):y\in  T_\beta^{-1}(x) \smallsetminus \{1\} \bigr\} \quad \text{ for }x\in [0,1).
	\end{equation}	
	So for all $x\in [0,1)$, we have 
	\begin{equation}\label{e_ophi+u_phi_less_than}
		\tphi(x) \= \overline{\phi}(x)+u_{\phi}(x)- u_{\phi}(T_\beta(x)) \le 0.
	\end{equation}
	Combining this inequality with (\ref{e_def_tphi^+}), (\ref{e_def_u_phi_+}), and Lemma~\ref{l_continuity_T_U_on_x}~(i), for all $x\in [0,1)$, we obtain
	\begin{equation}\label{e_tphi_leq_0_x_neq_0}
		\tphi^+(x)=\overline{\phi}(x)+u_{\beta,\phi}^+(x)-u_{\beta,\phi}^+(T_\beta(x)) = \lim_{y\searrow x} \tphi(y)\leq 0 .
	\end{equation}
	By (\ref{e_def_tphi^+}) and (\ref{e_def_u_phi_+}),
	\begin{equation}\label{e_tphi+_1_eq_0}
		\tphi^+(1)=\overline{\phi}(1)+u_{\beta,\phi}^+(1)-u_{\beta,\phi}^+(T_\beta(1))=0 .
	\end{equation} 
	Combining (\ref{e_tphi_leq_0_x_neq_0}) and (\ref{e_tphi+_1_eq_0}) gives $\tphi^+ \le 0$ on $I$ and the right-continuity of $\tphi^+$ on $[0,1)$.  Combining (\ref{e_ophi+u_phi_less_than}), (\ref{e_def_tphi^-}), (\ref{e_def_u_phi_-}), and Lemma~\ref{l_continuity_T_U_on_x}~(iv), for all $x\in (0,1]$, we obtain
	\begin{equation}\label{e_tphi-le0}
		\tphi^-(x) = \overline{\phi} (x)  + u_{\beta,\phi}^-(x) - u_{\beta,\phi}^-(U_\beta(x))= \lim_{y\nearrow x}\tphi(y) \le 0.
	\end{equation}
	Since $U_\beta(0)=0$ (see (\ref{e_def_U_beta})), by (\ref{e_def_tphi^-}), 
	\begin{equation}\label{e_tphi_0_lessthan_0}
		\tphi^- (0)=\overline{\phi}(0)\leq 0  .
	\end{equation}
	Combining (\ref{e_tphi_0_lessthan_0}) and (\ref{e_tphi-le0}) gives $\tphi^-\le 0$ on $I$ and the left-continuity of $\tphi^-$ on $(0,1]$.
	
	\smallskip
	(iii) This follows immediately from (\ref{e_def_u_phi_-}), (\ref{e_def_u_phi_+}), and Proposition~\ref{p_calibrated_sub-action_exists}~(iii).
\end{proof}

\begin{rem}\label{post_mane_holder_remark}
\begin{enumerate}[label=\rm{(\roman*)}, leftmargin=*]
		\smallskip
		\item
If $\beta$ is a beta-number, then $\cO_\beta^*(1)$ is a finite set, and by Theorem~\ref{mane}~(iii), the functions $u_{\beta, \, \phi}^-$ and $u_{\beta, \, \phi}^+$ are piecewise $\alpha$-H\"older.
\item If $\beta$ is a \emph{simple} beta-number then an improved version of Theorem~\ref{mane} holds, stemming from the fact that the corresponding beta-shift is of finite type: it can be shown that there is a \emph{continuous} revealed version
(more precisely, if $\phi$ belongs to $\Holder{\alpha}(I)$
then there exists a revealed version in $\Holder{\alpha}(I)$).
Moreover, if $\beta$ is a \emph{non-simple} beta-number, then there exist $\phi\in\Holder{\alpha}(I)$ such that
there is no sub-action for $\phi$ in $\Holder{\alpha}(I)$.
 We omit the proofs of these results, as they are not required for establishing subsequent theorems.
\end{enumerate}
\end{rem}

\begin{rem}\label{Manetechnical}
    The approach to proving the Ma\~n\'e lemma for beta-transfor\-mations (Theorem~\ref{mane}) is quite different from the one used in \cite[Proposition~3.6]{LiSu26} to prove the analogous result for open distance-expanding maps (Theorem~\ref{l.mane}). Specifically, the transitivity of beta-transformations means that we are here able to construct a \emph{calibrated} sub-action, 
    something that is in general not possible for nontransitive systems. On the other hand, due to the lack of openness of the map, the function $u_\phi$ defined in (\ref{e_calibrated_sub-action_exists}) can only be shown to be H\"older continuous away from the critical orbit, rather than on the whole of $I$; however a special property of beta-transformations (namely that
    if $x<y$ then any inverse branch for $y$ is also an inverse branch for $x$), led to the inequality (\ref{e_Bousch_Op_Holder_seminorm_bound_x<y}), and ultimately to the proof that $u_\phi$ has both one-sided limits existing everywhere. Note that the lack of openness also means that there is no shadowing lemma analogue of Lemma~\ref{l.shadowing}, 
    therefore to prove that $-\infty < u_\phi < +\infty$ we were unable to pattern the approach on that used to prove Theorem~\ref{l.mane}, but instead employed a more technical argument depending on various special characteristics of beta-transformations.
\end{rem}

The importance of Theorem~\ref{mane} is in allowing us to establish a form of \emph{revelation theorem} (cf.~\cite[Section~5]{Je19}):
the following Theorem~\ref{l_subordination} localises the support of a maximizing measure as lying in the union of the zero sets of the revealed versions $\tphi^-$ and $\tphi^+$, and reveals that individual points in the support of such a measure have their full orbit, under either $U_\beta$ or $T_\beta$, contained in either
$\bigl(\tphi^+\bigr)^{-1} (0)$ 
or $\bigl(\tphi^-\bigr)^{-1} (0)$ respectively.
This latter fact will be 
exploited in our proof of Theorem~\ref{t_TPO_thm_non_emergent}, more specifically in establishing the key Lemma~\ref{l_Q_gamma_equal_Q_non_emergent}.

\begin{theorem}[Revelation theorem] \label{l_subordination} 
	If $\beta>1$, $\alpha \in (0,1]$, $\phi\in \Holder{\alpha}(I)$, and $\mu\in \Mmax(U_\beta,\phi)$, then the following hold:
	\begin{enumerate}[label=\rm{(\roman*)}]
		\smallskip
		\item If $x\in \supp \mu$, then either $\tphi^- \equiv 0$ on $\cO_\beta^*(x)$, or $\tphi^+ \equiv 0$ on $\cO_\beta(x)$. 
		
		\smallskip
		\item If $1\in \supp \mu$ then $\tphi^- \equiv 0$ on $\cO_\beta^*(1)$.
	\end{enumerate}
    In particular, $\supp \mu \subseteq \bigl(\tphi^+\bigr)^{-1} (0)  \cup  \bigl(\tphi^-\bigr)^{-1} (0)$.
\end{theorem}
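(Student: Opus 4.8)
The plan is to deduce Theorem~\ref{l_subordination} from the Ma\~n\'e lemma (Theorem~\ref{mane}) together with the characterisation of maximizing measures for $U_\beta$ in terms of maximizing measures for $\sigma$ on $X_\beta$ (Proposition~\ref{p_coding_mpe_relation}), following the standard revelation-theorem argument but adapted to the two one-sided revealed versions. First I would recall from Lemma~\ref{normalised_cohomologous} that since $\tphi^-=\overline{\phi}+u^-_{\beta,\phi}-u^-_{\beta,\phi}\circ U_\beta$ with $u^-_{\beta,\phi}$ bounded (Theorem~\ref{mane}~(i)), we have $\int\!\tphi^-\,\mathrm{d}\mu=\int\!\overline{\phi}\,\mathrm{d}\mu=0$ for every $\mu\in\Mmax(U_\beta,\phi)$; similarly, since $\mu\in\MMM(I,U_\beta)$ need not be $T_\beta$-invariant only when $\mu(\{1\})>0$ (Proposition~\ref{p_relation_T_beta_and_wt_T_beta}~(iii)), I would first treat the case $\mu(\{1\})=0$, where $\mu\in\MMM(I,T_\beta)$ and hence $\int\!\tphi^+\,\mathrm{d}\mu=\int\!\overline{\phi}\,\mathrm{d}\mu=0$ as well. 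Combining $\tphi^-\le0$ with $\int\!\tphi^-\,\mathrm{d}\mu=0$ gives $\tphi^-=0$ $\mu$-a.e., and likewise $\tphi^+=0$ $\mu$-a.e.\ when $\mu(\{1\})=0$.

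The next step is to upgrade these almost-everywhere statements to statements about orbits of points in $\supp\mu$. For part~(ii), if $1\in\supp\mu$ then $\mu(\{1\})>0$ by $U_\beta$-invariance (since $U_\beta^{-1}(1)=D_\beta$ and one checks $1$ receives positive mass; alternatively $1$ is then a periodic point of $U_\beta$ and $\cO'_\beta(1)\subseteq\supp\mu$), so $\cO'_\beta(1)$ is a finite $U_\beta$-invariant set contained in $\supp\mu$, each point of positive $\mu$-measure; since $\tphi^-=0$ $\mu$-a.e.\ and $\tphi^-\le0$ everywhere, $\tphi^-\equiv0$ on $\cO'_\beta(1)$. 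For part~(i), fix $x\in\supp\mu$. If $\cO'_\beta(x)$ avoids $1$, then $\cO'_\beta(x)=\cO_\beta(x)$ (Proposition~\ref{p_relation_T_beta_and_wt_T_beta}~(i), since no iterate hits $1$, the maps agree along the orbit). I would use left-continuity of $u^-_{\beta,\phi}$ and of $\tphi^-$ on $(0,1]$ (Theorem~\ref{mane}~(i),(ii)) together with the fact that $x\in\supp\mu$: for a left-continuity argument one needs points of $\supp\mu$ approaching $x$ from the left, which is where the symbolic model is cleaner. The cleanest route is to pull back via $\pi^*_\beta$: by Proposition~\ref{p_coding_mpe_relation}~(iii), $\nu\=G_\beta(\mu)\in\Mmax(\sigma|_{X_\beta},\phi\circ h_\beta)$, and $h_\beta\circ\pi^*_\beta=\id$, $h_\beta\circ\sigma=U_\beta\circ h_\beta$ on $\pi^*_\beta(I)$; the function $\tphi^-\circ h_\beta$ is a revealed version (up to coboundary) of $\overline\phi\circ h_\beta$ on the subshift $X_\beta$, where $h_\beta$ and hence $\tphi^-\circ h_\beta$ are continuous, so the usual revelation argument for continuous systems (support of a maximizing measure is contained in the zero set of a revealed version, and is forward-invariant) applies verbatim on $X_\beta$. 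Transporting back through $h_\beta$ and using $\supp\mu=h_\beta(\supp\nu)$ then yields that for each $x\in\supp\mu$ the full $U_\beta$-orbit (equivalently $T_\beta$-orbit, if it avoids $1$) lies in $\bigl(\tphi^-\bigr)^{-1}(0)$; in the remaining subcase one uses $\tphi^+$ and right-continuity.

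Actually the most transparent organisation is: work entirely on $X_\beta$ with $\sigma$ continuous and $\Phi\=\overline\phi\circ h_\beta$; let $U\=u^-_{\beta,\phi}\circ h_\beta$ (bounded, and one checks it is a continuous sub-action using left-continuity of $u^-_{\beta,\phi}$ and the structure of $h_\beta$ on $X_\beta$), so $\tPhi\=\Phi+U-U\circ\sigma\le0$ with a maximizing measure in its zero set; then $\supp\nu\subseteq\tPhi^{-1}(0)$ and $\sigma(\supp\nu)=\supp\nu$ (continuity), giving $\cO^\sigma(z)\subseteq\tPhi^{-1}(0)$ for all $z\in\supp\nu$. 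Applying $h_\beta$ and noting $\tPhi=\tphi^-\circ h_\beta$ on $\pi^*_\beta(I)$ (which carries all of $\nu$, by Lemma~\ref{l_properties_z_beta}~(v)), we get $\cO'_\beta(x)\subseteq\bigl(\tphi^-\bigr)^{-1}(0)$ for $x\in\supp\mu$, which already proves (i) (with the left-continuous alternative always holding) and (ii). The stated disjunction between $\tphi^-$ on $\cO'_\beta(x)$ and $\tphi^+$ on $\cO_\beta(x)$ is then just a matter of: when $1\notin\cO'_\beta(x)$ the two orbits coincide and one can equally phrase it with $\tphi^+$ using right-continuity, and when $1\in\cO'_\beta(x)$ case (ii) applies. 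The final inclusion $\supp\mu\subseteq\bigl(\tphi^+\bigr)^{-1}(0)\cup\bigl(\tphi^-\bigr)^{-1}(0)$ is then immediate from (i).

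The main obstacle I anticipate is verifying carefully that $u^-_{\beta,\phi}\circ h_\beta$ (or the appropriate $h_\beta$-pullback) is genuinely a \emph{continuous} sub-action on $X_\beta$ with $\tphi^-\circ h_\beta\le0$ agreeing with the relevant coboundary expression, given that $h_\beta$ is only Lipschitz and two-to-one over $Z_\beta$ while $u_\phi$ itself is merely Borel with one-sided limits; the left-continuity of $u^-_{\beta,\phi}$ on $(0,1]$ (Theorem~\ref{mane}~(i)) and the monotonicity of $h_\beta$ (Proposition~\ref{p_relation_of_coding}~\ref{p_relation_of_coding__x}) should make this work — the point $h_\beta$ maps $\sigma$-orbits to $U_\beta$-orbits and, on the shift, left limits in $I$ correspond to lexicographic approach — but getting the one-sided-limit bookkeeping exactly right, especially at the finitely or countably many points of the critical orbit where $\tphi^-$ and $\tphi^+$ genuinely differ and where $\tphi^-(1)$ need not vanish while $\tphi^+(1)=0$, will be the delicate part. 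I would isolate this as a short lemma before the main argument.
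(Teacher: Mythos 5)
Your ``most transparent organisation'' (the third paragraph, working entirely on $X_\beta$) rests on a lemma that is false: the pullback $u^-_{\beta,\phi}\circ h_\beta$ is \emph{not} continuous on $X_\beta$ in general. The function $u_\phi$ can have genuine one-sided jumps along $\cO'_\beta(1)$ (Proposition~\ref{p_calibrated_sub-action_exists}~(ii),(iii) allows this, and the explicit example in the remark following Theorem~\ref{l_subordination} exhibits $u^-_{\beta,\phi}(x)\neq u^+_{\beta,\phi}(x)$ at $x=U_\beta(1)$), whereas $h_\beta$ is two-to-one only over $Z_\beta$, the set of preimages of $1$ under $U_\beta$ (Lemma~\ref{l_properties_z_beta}~(i)). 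For a non-simple beta-number these two sets are disjoint: if $a=U_\beta^k(1)$ and $1$ is not $U_\beta$-periodic, then $U_\beta^n(a)\neq 1$ for all $n$, so $a\notin Z_\beta$ and $h_\beta^{-1}(a)$ is a single point. Thus $h_\beta$ does not ``resolve'' the discontinuity at $a$, and $u^-_{\beta,\phi}\circ h_\beta$ remains discontinuous at $\pi_\beta(a)\in X_\beta$. The ``usual revelation argument for continuous systems'' therefore does not apply verbatim, and the intended short lemma cannot be proved as stated.

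A related defect is your conclusion that ``the left-continuous alternative always hold[s]'' — i.e., that $\tphi^-\equiv 0$ on $\cO'_\beta(x)$ for every $x\in\supp\mu$. This is not what the theorem asserts (nor what is true): the disjunction in part~(i) is essential and reflects a genuine dichotomy in the behaviour of $\mu$ near $x$. The paper's proof (which never leaves $I$) exploits precisely this: for $x\in\supp\mu$, either $\mu((x-\epsilon,x])>0$ for all $\epsilon>0$, or $\mu((x,x+\epsilon))>0$ for all $\epsilon>0$. In the first case one propagates one-sided positive mass forward along $\cO'_\beta(x)$ using Lemma~\ref{l_continuity_T_U_on_x}~(i) and invariance of $\mu$, then contradicts $\int\tphi^-\,\mathrm{d}\mu=0$ via left-continuity of $\tphi^-$; in the second, one argues symmetrically with $\tphi^+$ and right-continuity along $\cO_\beta(x)$ (noting $x\neq 1$ so $1\notin\cO_\beta(x)$). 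Your Route A does begin to gesture at this structure (``one needs points of $\supp\mu$ approaching $x$ from the left''), and the coboundary-integral reduction via Lemma~\ref{normalised_cohomologous} is the right first step, but you then abandon the dichotomy in favour of the symbolic reformulation, which is exactly where the argument breaks.
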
	

\begin{proof}
	(i) 
	Since
	$\tphi^- =\overline{\phi} + u_{\beta, \, \phi}^- -  u_{\beta, \, \phi}^- \circ U_\beta$ (see~(\ref{e_def_tphi^-}))
	where $ u_{\beta, \, \phi}^-$ is bounded and Borel measurable (see Proposition~\ref{p_calibrated_sub-action_exists}~(i) and (\ref{e_def_u_phi_-})), and
	$\overline{\phi}=\phi-Q(T_\beta,\phi)= \phi-Q(U_\beta,\phi)$ (cf.~Proposition~\ref{mpe=}~(ii)) is normalised, it follows from Lemma~\ref{normalised_cohomologous}~(i) and (ii) that $Q\bigl(U_\beta,\tphi^-\bigr)=Q\bigl(U_\beta,\overline{\phi}\bigr)=0$
	and $\Mmax\bigl(U_\beta,\phi\bigr) =  \Mmax\bigl(U_\beta,\overline{\phi}\bigr) =  \Mmax(U_\beta,\tphi^-)$.
	So
	\begin{equation}\label{tphi_zero_integral}
		\int\! \tphi^- \, \mathrm{d}\mu = 0.
	\end{equation}
	
	Suppose $x\in \supp \mu$, so that $\mu ((x-\myepsilon,x+\myepsilon) )>0$ 
	for all $\myepsilon>0$.
    Thus, at least one of the following two cases will occur: either $\mu ((x-\myepsilon, x]) > 0$ for all $\myepsilon>0$, or $\mu ((x,x+\myepsilon)) > 0$ for all $\myepsilon>0$.
	
	Firstly, let us suppose that 
	\begin{equation}\label{leftopenrightclosedpositivemeasurex}
		\mu ((x-\myepsilon, x]) > 0  \text{ for all }\myepsilon>0,
	\end{equation}
	and in this case we aim to show that
	\begin{equation}\label{ubetaorbit}
		\tphi^- \equiv 0 \text{ on }\cO_\beta^*(x).
	\end{equation}
	
    If $x= 0$, then (\ref{leftopenrightclosedpositivemeasurex}) implies that $\mu(\{0\}) > 0$, 
	and since $\tphi^- \le 0$, it follows that 
	$
		0 = \int \! \tphi^- \,\mathrm{d}\mu \le \mu(\{0\}) \tphi^-(0)$,
	so that $\tphi^-(0)=0$,
	in other words, $\tphi^- \equiv 0$ on $\{0\}=\cO_\beta^*(0)$, so (\ref{ubetaorbit}) holds.
	
	If $x \neq 0$,
	we first claim that 
	\begin{equation}\label{leftopenrightclosedpositivemeasurey}
		\mu ((y-\myepsilon, y]) > 0 \text{ for all }y \in \cO_\beta^*(x) \text{ and } \myepsilon>0.
	\end{equation}

	To prove (\ref{leftopenrightclosedpositivemeasurey}), note that
	$0 \notin \cO_\beta^*(x)$ since $x \neq 0$ (see Proposition~\ref{p_relation_T_beta_and_wt_T_beta}~(i)).
	Assume that $y\in\cO_\beta^*(x)$, so that $y=U_\beta^k(x)$ for some $k\in \N$. By Lemma~\ref{l_continuity_T_U_on_x}~(i), for all $\myepsilon>0$ there exists $\delta>0$ such that $U_\beta^k((x-\delta,x])\subseteq (y-\myepsilon,y]$. But $\mu\in \MMM(I,U_\beta)$, so
	(\ref{leftopenrightclosedpositivemeasurex}) implies
	that
	$\mu ((y-\myepsilon, y]) = \mu \bigl(U_\beta^{-k}(y-\myepsilon, y]\bigr)\ge \mu((x-\delta,x])>0$.
	So 
	(\ref{leftopenrightclosedpositivemeasurey}) holds.
	
	Now  $\tphi^-$ is  left-continuous by Theorem~\ref{mane}~(ii), so
	if $y\in \cO_\beta^*(x)$ were such that $\tphi^-(y)<0$, then there would exist $\rho,\myepsilon >0$ with $\tphi^-|_{(y-\myepsilon, y ]} < -\rho$, and 
	(\ref{leftopenrightclosedpositivemeasurey}) would imply that
	\begin{equation*}
		\int \! \tphi^- \,\mathrm{d}\mu \le \int_{(y-\myepsilon, y ]} \! \tphi^- \,\mathrm{d}\mu < -\rho \cdot 
		\mu((y-\myepsilon, y ]) < 0,
	\end{equation*}
	which contradicts (\ref{tphi_zero_integral}).
	So $\tphi^-(y)$ cannot be strictly negative, but on the other hand $\tphi^-\le 0$ by Theorem~\ref{mane}~(ii),
	thus $\tphi^-(y) = 0$. Since $y$ was an arbitrary point in $\cO_\beta^*(x)$, (\ref{ubetaorbit}) holds.

	Secondly, let us suppose that
	\begin{equation}\label{openpositivemeasurex}
		\mu ((x,x+\myepsilon)) > 0  \text{ for all }\myepsilon>0,
	\end{equation}
	and in this case we aim to show that
	$\tphi^+ \equiv 0 \text{ on }\cO_\beta (x)$.
	Note that (\ref{openpositivemeasurex}) gives $x \neq 1$, and consequently $1\notin \cO_\beta(x)$ by Proposition~\ref{p_relation_T_beta_and_wt_T_beta}~(i).
	By arguments analogous to the ones above,
	Lemma~\ref{l_continuity_T_U_on_x}~(i) first implies that 
	$\mu ([y,y+\myepsilon)) > 0$ for all $y\in\cO_\beta(x)$ and $\myepsilon>0$,
	and then the right-continuity of $\tphi^+$ (see Theorem~\ref{mane}~(ii)) implies
	that
	$\tphi^+(y)=0$ for each $y \in \cO_\beta(x)$, as required. 
	
	\smallskip
	(ii)	If $1\in\supp\mu$, then (\ref{leftopenrightclosedpositivemeasurex}) holds for $x=1$, and the argument in (i) above gives (\ref{ubetaorbit}), in other words, $\tphi^- \equiv 0$ on $\cO_\beta^*(1)$, as required.
\end{proof}

\begin{rem}\label{maximizing_set_remark}
A consequence of Theorem~\ref{mane}, and a counterpoint to Theorem~\ref{l_subordination}, is that $\phi$-maximizing measures can be characterised in terms of the support of their pushforward under $\pi_\beta^*$: specifically, for all $\beta>1$, $\alpha\in(0,1]$, and $\phi\in \Holder{\alpha}(I)$, there exists a closed subset $K\subseteq X_\beta$
such that a $U_\beta$-invariant measure $\mu$ belongs to $\Mmax(U_\beta,\phi)$ if and only if 
$\supp G_\beta(\mu) \subseteq K$.
To see this, recall from Proposition~\ref{p_coding_mpe_relation} that
$\mu\in\Mmax(U_\beta,\phi)$ if and only if $G_\beta(\mu)\in \Mmax\bigl(\sigma|_{X_\beta},\phi\circ h_\beta\bigr)$, and then the continuity of $\sigma|_{X_\beta}$ and $\phi\circ h_\beta$ means that the existence of such a $K$
(a so-called \emph{maximizing set} in the terminology of Morris \cite{Mo07}) is guaranteed
(using \cite[Theorem~1, Proposition~1]{Mo07})
 if 
\begin{equation}\label{e_subordination_sufficient_condition}
		\sup\limits_{n\in\N}\sup\limits_{A\in X_\beta} S_n^\sigma\bigl(\overline{\phi}\circ h_\beta\bigr)(A)<+\infty.
\end{equation}
The bound (\ref{e_subordination_sufficient_condition}) can be proved using 
$\tphi^-$ and $\tphi^+$,
since the sub-actions $u_{\beta,\phi}^-$, $u_{\beta,\phi}^+$ are bounded, 
together with the fact that $X_\beta=\pi_\beta(I)\cup\pi_\beta^*(I)$ implied by (\ref{e_def_Z_beta}) and Lemma~\ref{l_properties_z_beta}~(ii).
More precisely, if $A\in\pi_\beta^*(I)$, with $A=\pi_\beta^*(x)$, then
parts~\ref{p_relation_of_coding__iv} and~\ref{p_relation_of_coding__v} of
 Proposition~\ref{p_relation_of_coding}, together with (\ref{e_def_tphi^-}), give
\begin{equation*}
S_n^\sigma\bigl(\overline{\phi}\circ h_\beta\bigr)(A)
=S_n^{U_\beta}\overline{\phi}(x) 
=S_n^{U_\beta}\tphi^-(x)+u_\phi^-\bigl(U_\beta^n(x)\bigr)-u_\phi^-(x) \quad \text{ for all }n\in\N,
\end{equation*}
and using that $\tphi^- \le 0$ (by Theorem~\ref{mane}), together with the bound on the modulus of $u_\phi$ from Proposition~\ref{p_calibrated_sub-action_exists} (i) and the definition of $u_\phi^-$, yields
\begin{equation}\label{6K_bound}
S_n^\sigma\bigl(\overline{\phi}\circ h_\beta\bigr)(A) \le 2(3K_{\alpha,\beta}+2)\Hseminorm{\alpha}{\phi} \quad \text{ for all }n\in\N.
\end{equation}
If $A\in\pi_\beta(I)$ then a similar argument, using $\tphi^+$, gives
\begin{equation}\label{6K+2_bound}
S_n^\sigma\bigl(\overline{\phi}\circ h_\beta\bigr)(A) \le 2(3K_{\alpha,\beta}+3)\Hseminorm{\alpha}{\phi} \quad \text{ for all }n\in\N
\end{equation}
(the right-hand side of (\ref{6K+2_bound}) differing from that of (\ref{6K_bound}) due to the way $u_\phi^+$ is defined at the point 1), and (\ref{6K_bound}), (\ref{6K+2_bound}) together imply (\ref{e_subordination_sufficient_condition}).
\end{rem}

\begin{rem}
In view of Remark~\ref{maximizing_set_remark}, which characterises maximizing measures in terms of some support being contained in a maximizing set, a natural question is whether
the condition
that
$\supp \mu \subseteq \bigl(\tphi^+\bigr)^{-1} (0) \cup \bigl(\tphi^-\bigr)^{-1} (0)$,
which by Theorem~\ref{l_subordination}
is necessary for a $U_\beta$-invariant measure to be $\phi$-maximizing,
 is actually \emph{sufficient}.
The answer is no, as we explain below; note that this contrasts with the situation for 
open distance-expanding maps, where the
Ma\~n\'e lemma (Theorem \ref{l.mane})
guarantees that the sub-action (and hence the revealed version) is continuous. 

Let $\beta\approx 2.48119$ be the largest root of the cubic polynomial $\zeta^3-2\zeta^2-2\zeta+2$.
This is a non-simple beta-number, with $\pi_\beta(1)=2(10)^\infty$.

Define the fixed point
$
z\=h_\beta ((1)^\infty)=\frac{1}{\beta-1} \approx 0.675,
$
and the two period-2 points
$
x\= h_\beta ((10)^\infty)=\frac{\beta}{\beta^2-1}\approx 0.481,
$
$
y\= h_\beta((01)^\infty)=\frac{1}{\beta^2-1}\approx 0.194.
$
We will exhibit Lipschitz functions $\phi$ such that the periodic measure $\mu\=\frac{1}{2}(\delta_x+\delta_y)$
is not $(U_\beta,\phi)$-maximizing, yet its support $\{x,\,y\}$ is contained in
$\bigl(\tphi^+\bigr)^{-1} (0) \cup \bigl(\tphi^-\bigr)^{-1} (0)$.

Defining $\tau(s)\=(s+1)/\beta$,
let $x_1\= h_\beta(1(10)^\infty)=\tau(x)=\frac{\beta^2+\beta-1}{\beta^3-\beta}\approx 0.597$, and
define sequences
\begin{align*}
t_i &\= h_\beta \bigl((1)^{i}2(10)^\infty \bigr)
= h_\beta\bigl((1)^{i}\pi_\beta(1)\bigr)
=\tau^i(1)
=\beta^{-i}\biggl( 2 + \sum_{j=1}^{i-1} \beta^j\biggr) , \quad i\in\N, \\
y_i &\= h_\beta \bigl((1)^{i-1}0(01)^\infty \bigr)=\tau^{i-1}(y/\beta) = \frac{\beta^{i+1}+\beta^i-\beta^2-\beta+1}{\beta^i(\beta^2-1)}, \quad i\in\N.
\end{align*}
Note in particular that $\{t_i\}_{i\in \N}$ is a backwards orbit of $1$ (under $U_\beta$, but not $T_\beta$) converging to $z$, that  $\{y_i\}_{i\in \N}$ is a backwards orbit of $y$ (under both $T_\beta$ and $U_\beta$), also converging to $z$, 
 that $T_\beta^{-1}(x)=U_\beta^{-1}(x)=\{y,\,x_1,\,1\}$,
and that
the relative ordering of the various points is
\begin{align*}
0 < y_1<y<y_2<x<y_3<x_1<&y_4<y_5<y_6<\cdots \\
&< z <\cdots < t_3 < t_2 < t_1 < 1.
\end{align*}
For $\alpha\in(0,1]$, let $\phi\in \Holder{\alpha}(I)$ be nonpositive, with $\phi(y)=-1$ and $\phi(x_1)=-2$, and
identically zero on the points $x$, $1$, and all points in the two sequences $\{t_i\}_{i\in \N}$ and  $\{y_i\}_{i\in \N}$. 
(For concreteness, $\phi$ might be chosen to be the function that is identically zero on the intervals $[0,y_1]$, $[y_2,y_3]$, and $[y_4,1]$, and
affine on each of $[y_1,y]$, $[y,y_2]$, $[y_3,x_1]$, $[x_1,y_4]$, with $\phi(y)=-1$ and $\phi(x_1)=-2$, though our analysis does not assume this). 
Note that $\phi$ is normalised, 
 i.e.,~$Q(U_\beta,\phi)=0$,
since it attains its maximum value $0$ at the fixed point $z$.
On the other hand, $\int\! \phi\, \mathrm{d}\mu = -1/2 < Q(U_\beta,\phi)$, so $\mu\notin\Mmax(U_\beta,\phi)$.

For this value of $\beta$ (and indeed whenever $\beta$ is a beta-number),
it can be shown that the sub-actions $u_{\beta,\phi}^-$, $u_{\beta,\phi}^+$,
defined in (\ref{e_def_u_phi_-}), (\ref{e_def_u_phi_+}), satisfy
 $u_{\beta,\phi}^-(r) = \limsup_{n\to+\infty} \max \bigl\{S_n^{U_\beta}\phi(s):s\in U_\beta^{-n}(r) \bigr\}$ for all $r\in(0,1]$, and
$u_{\beta,\phi}^+(r) = \limsup_{n\to+\infty} \max \bigl\{S_n^{T_\beta}\phi(s):s\in T_\beta^{-n}(r) \bigr\}$
for all $r\in[0,1)$.
Now for each $n\in\N$, the point $y_n\in  U_\beta^{-n}(y) \cap T_\beta^{-n}(y)$, and
$S_n^{U_\beta}\phi(y_n)=0=S_n^{T_\beta}\phi(y_n)$, thus
$u_{\beta,\phi}^-(y)=0= u_{\beta,\phi}^+(y)$.
For each $n\ge 2$, the point $t_{n-1}\in U_\beta^{-n}(x)$,
and $S_n^{U_\beta}\phi(t_{n-1})=0$, thus $u_{\beta,\phi}^+(x)=0$.
Since the point $1$ does not have any pre-images under $T_\beta$, the points
$t_{n-1}$ do not belong to $T_\beta^{-n}(x)$.
Apart from 1, the other two pre-images of $x$ are $y$ and $x_1$, and the inequality $\phi(y)>\phi(x_1)$,
together with the fact that $\phi\equiv 0$ on the backwards orbit $\{y_n\}_{n\in \N}$ of $y$, means that
$\max \bigl\{S_n^{T_\beta}\phi(s):s\in T_\beta^{-n}(x) \bigr\}=\phi(y)=-1$ for all $n\in\N$, and hence
$u_{\beta,\phi}^+(x)=-1$.

Finally, we can evaluate the left-continuous and right-continuous revealed versions at, respectively, the points $x$ and $y$,
to find (cf.~(\ref{e_def_tphi^-}) and (\ref{e_def_tphi^+})) that
$
\tphi^-(x) =\overline{\phi}(x) + u_{\beta, \phi}^-(x) -  u_{\beta, \phi}^- ( U_\beta(x))
= \phi(x) + u_{\beta,  \phi}^-(x) -  u_{\beta, \phi}^- ( y) = 0+0+0=0$ and 
$\tphi^+(y) =\overline{\phi}(y) + u_{\beta, \phi}^+(y)  - u_{\beta,  \phi}^+ (T_\beta(y))
= \phi(y) + u_{\beta,  \phi}^+(y)  - u_{\beta, \phi}^+(x) = -1 +0+1 =0$,
so $\supp \mu =\{x,\,y\}\subseteq \bigl(\tphi^+\bigr)^{-1} (0) \cup \bigl(\tphi^-\bigr)^{-1} (0)$,
as claimed.
\end{rem}

\section{Individual TPO: beta-transformations}\label{sec_proof_of_main_results}

The primary purpose of this section is to prove, in the context of beta-transformations, the individual typical periodic optimization theorems stated in Section~\ref{s:introduction} (more precisely, Theorems~\ref{typical_beta}, \ref{almost_every_beta},
\ref{t_TPO_thm_beta_number},
and~\ref{t_TPO_thm_non_emergent}). Our proofs will exploit the properties of two fundamental subsets of $\Holder{\alpha}(I)$, 
the \emph{critical set} $\Crit^\alpha(\beta)$, and the \emph{regular set} $\RLS^\alpha(\beta)$. 
In Subsection~\ref{emergent_section} we introduce the notion of an \emph{emergent} parameter $\beta$, establish several alternative characterisations, and show that the set of emergent parameters constitutes a small subset of $(1,+\infty)$.
Some characterisations of $\Crit^\alpha(\beta)$,
consisting of those functions for which the critical orbit is maximizing,
are given in Subsection~\ref{subsec_E_alpha_beta}. In Subsection~\ref{subsec_U_alpha_beta}, we first define
the regular set $\RLS^\alpha(\beta)\subseteq \Holder{\alpha}(I)$, which consists of those $\phi\in \Holder{\alpha}(I)$ satisfying $\phi|_{H_\beta^\gamma}\in \Lock^\alpha\bigl(T_\beta|_{H_\beta^\gamma}\bigr)$ for all simple beta-numbers $\gamma\in (1,\beta)$, and then show that it is a dense subset of $\Holder{\alpha}(I)$ (Theorem~\ref{p_density_local_locking}). In Subsection~\ref{subsec_proof_of_emergent} we focus on emergent parameters $\beta$; in the absence of an Individual TPO theorem, we establish a structural theorem (Theorem~\ref{t_TPO_thm_emergent}) identifying the critical set $\Crit^\alpha(\beta)$ as the source of any possible failure of typical periodic optimization.  
Lastly, in Subsection~\ref{subsec:proofofTPOtheorems}, 
we prove the various Individual TPO theorems 
(Theorems~\ref{typical_beta}, \ref{almost_every_beta},
\ref{t_TPO_thm_beta_number},
and~\ref{t_TPO_thm_non_emergent}).

\subsection{Emergent parameters}\label{emergent_section}

\begin{definition}\label{d_emergent}
	A parameter $\beta>1$ will be called \emph{emergent} if 
	\begin{equation*}\label{emergentequation}
		\overline{\cO^\sigma(\pi_\beta^*(1))}\cap \cS_\gamma=\emptyset \quad \text{ for all }\gamma\in(1,\beta) .
	\end{equation*}
	The set of emergent parameters will be denoted by
    $\emergent$, and called the \emph{emergent set}.
\end{definition}

\begin{rem}\label{emergent_terminology}
	The intuition behind the terminology \emph{emergent}, as described in Section~\ref{s:introduction},
	and in view of Lemma~\ref{l_apprioxiation_beta_shif_beta}, is that
	the particular symbolic dynamics of the upper beta-expansion $\pi_\beta^*(1)$ has newly \emph{emerged} at the value $\beta$, in the sense that it does not resemble any that has been witnessed for $\gamma\in(1,\beta)$. 
\end{rem}

Recalling from (\ref{e_def_Z_beta}) that $Z_\beta=\bigl\{x\in I:\pi_\beta(x)\neq \pi_\beta^*(x)\bigr\}$, the following proposition gives two alternative characterisations of emergent parameters.

\begin{prop}\label{cE}
	For $\beta>1$, the following are equivalent:
		\begin{enumerate}[label=\rm{(\roman*)}]	
		\smallskip
		\item $\beta$ is emergent.
		
		\smallskip
		\item $\overline{\cO_\beta^*(1)} \cap H_\beta^\gamma \subseteq Z_\beta$ for each $\gamma \in (1,\beta)$.
		
		\smallskip
		\item $ \bigl(\overline{\cO^\sigma(\pi_\beta^*(1))},\sigma \bigr)$ is minimal.		
	\end{enumerate}
\end{prop}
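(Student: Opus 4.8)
The plan is to prove the three-way equivalence by a cycle of implications, say (i)$\Rightarrow$(iii)$\Rightarrow$(ii)$\Rightarrow$(i), using the dictionary between the symbolic and interval pictures supplied by $h_\beta$ and $\pi_\beta^*$. Throughout write $\Orb\=\overline{\cO^\sigma(\pi_\beta^*(1))}\subseteq X_\beta$, so that, by Proposition~\ref{p_relation_of_coding}~\ref{p_relation_of_coding__v} and continuity of $h_\beta$, $h_\beta(\Orb)=\overline{\cO_\beta'(1)}$ (since $h_\beta$ is a continuous surjection onto its image, maps $\cO^\sigma(\pi_\beta^*(1))$ onto $\cO_\beta'(1)$ by Proposition~\ref{p_relation_of_coding}~\ref{p_relation_of_coding__v}, and commutes appropriately with the shift). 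Note also that $\sigma(\Orb)=\Orb$, since $\Orb$ is the orbit closure of a point.

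First I would prove (i)$\Rightarrow$(iii). Suppose $\Orb$ is \emph{not} minimal, so there is a proper nonempty closed $\sigma$-invariant subset $E\subsetneq \Orb$. Pick $A\in E$; then $\overline{\cO^\sigma(A)}\subseteq E\subsetneq\Orb$ is a proper subshift of $\Orb$. The key observation is that $\overline{\cO^\sigma(A)}$, being a proper closed subset of the orbit closure of $\pi_\beta^*(1)$, omits a neighbourhood of $\pi_\beta^*(1)$ in $X_\beta$; concretely there is $k\in\N$ such that no $B\in\overline{\cO^\sigma(A)}$ agrees with $\pi_\beta^*(1)=a_1a_2\dots$ in its first $k$ symbols, which forces $\sup_{n\in\N_0}\sigma^n(B)\prec\pi_\beta^*(1)$ for all $B\in\overline{\cO^\sigma(A)}$ in the sense that the first $k$ symbols are lexicographically below $a_1\dots a_k$ (one has to be a little careful, but the point is that the supremum of the tails stays bounded strictly away from $\pi_\beta^*(1)$ by a fixed finite prefix). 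Using Lemma~\ref{l_apprioxiation_beta_shif_beta}~(ii) and Proposition~\ref{p_relation_of_coding}~\ref{p_relation_of_coding__xiv} (i.e.\ $\pi_\gamma^*(1)\nearrow\pi_\beta^*(1)$ as $\gamma\nearrow\beta$), choose $\gamma\in(1,\beta)$ with $a_1\dots a_k\,(0)^\infty\preceq\pi_\gamma^*(1)$; then every $B\in\overline{\cO^\sigma(A)}$ satisfies $\sigma^n(B)\preceq\pi_\gamma^*(1)$ for all $n$, so by (\ref{e_def_S_beta}) we get $\overline{\cO^\sigma(A)}\subseteq\cS_\gamma$, whence $\Orb\cap\cS_\gamma\supseteq\overline{\cO^\sigma(A)}\neq\emptyset$, contradicting emergence. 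Hence (i) implies $\Orb$ is minimal.

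Next, (iii)$\Rightarrow$(ii). Assume $\Orb$ minimal and fix $\gamma\in(1,\beta)$. Let $x\in\overline{\cO_\beta'(1)}\cap H_\beta^\gamma$; I want $x\in Z_\beta$. Since $x\in H_\beta^\gamma=h_\beta(\cS_\gamma)$, write $x=h_\beta(C)$ with $C\in\cS_\gamma$; since $x\in\overline{\cO_\beta'(1)}=h_\beta(\Orb)$, also $x=h_\beta(C')$ for some $C'\in\Orb$. If $x\notin Z_\beta$ then by Proposition~\ref{p_relation_of_coding}~\ref{p_relation_of_coding__xi} the fibre $h_\beta^{-1}(x)$ is a singleton (for $x\neq 0$; the case $x=0$ is immediate since $0\notin\overline{\cO_\beta'(1)}$ would need checking, but if $0\in\overline{\cO_\beta'(1)}$ then $0\in Z_\beta$ is false, so one argues $0\notin\overline{\cO_\beta'(1)}$ when $\beta$ is such that $1$ is not eventually $0$ — actually, when $0\in\overline{\cO_\beta'(1)}$, minimality of $\Orb$ forces $\Orb=\{(0)^\infty\}$, impossible since $\pi_\beta^*(1)\neq(0)^\infty$ for $\beta>1$, so this case does not arise), hence $C=C'\in\cS_\gamma\cap\Orb$. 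Thus $\Orb\cap\cS_\gamma\neq\emptyset$. But $\cS_\gamma$ is a nonempty closed $\sigma$-invariant set, so $\Orb\cap\cS_\gamma$ is a nonempty closed $\sigma$-invariant subset of $\Orb$; by minimality $\Orb\subseteq\cS_\gamma$, so $\pi_\beta^*(1)\in\cS_\gamma$, forcing $\pi_\gamma^*(1)\succeq\pi_\beta^*(1)$, contradicting $\pi_\gamma^*(1)\prec\pi_\beta^*(1)$ (Proposition~\ref{p_relation_of_coding}~\ref{p_relation_of_coding__xiii}). So $x\in Z_\beta$, proving (ii).

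Finally, (ii)$\Rightarrow$(i). Suppose (i) fails, i.e.\ there is $\gamma\in(1,\beta)$ with $\Orb\cap\cS_\gamma\neq\emptyset$; pick $A\in\Orb\cap\cS_\gamma$ and set $x\=h_\beta(A)$. Then $x\in h_\beta(\Orb)=\overline{\cO_\beta'(1)}$ and $x\in h_\beta(\cS_\gamma)=H_\beta^\gamma$, so $x\in\overline{\cO_\beta'(1)}\cap H_\beta^\gamma$. By (ii) we must have $x\in Z_\beta$; I claim this is impossible. If $x\in Z_\beta$ then by Lemma~\ref{l_properties_z_beta}~(i), $x\in U_\beta^{-m}(1)$ for some $m\in\N$, so $\pi_\beta^*(x)$ has the form $z_1\dots z_m\,\pi_\beta^*(1)$ with $U_\beta^m(x)=1$; since $h_\beta^{-1}(x)=\{\pi_\beta(x),\pi_\beta^*(x)\}$ and $A\in\cS_\gamma$ while $\pi_\beta(x)=z_1\dots z_{j}(z_{j+1})(0)^\infty$-type finite-support sequence (by Proposition~\ref{p_relation_of_coding}~\ref{p_relation_of_coding__ii}) — so $A$, having infinitely many nonzero terms or being this finite one, must equal $\pi_\beta^*(x)$ (the sequence in $\cS_\gamma$ cannot be the ``$2(0)^\infty$-type'' tail as in Remark~\ref{r_after_S_beta}~(iii)). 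Then $\sigma^m(A)=\pi_\beta^*(1)$, so $\pi_\beta^*(1)\in\sigma^m(\cS_\gamma)\subseteq\cS_\gamma$, again contradicting $\pi_\gamma^*(1)\prec\pi_\beta^*(1)$. (One may need the variant argument via $A$ being the finite-support sequence $\pi_\beta(x)$, but then $\sigma^n(A)=(0)^\infty$ for large $n$, which lies in $\cS_\gamma$ harmlessly — so instead conclude $A=\pi_\beta^*(x)$ unless $x\notin Z_\beta$ already.) Hence (ii) fails, completing the cycle.

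The main obstacle I expect is the careful bookkeeping in the step (iii)$\Rightarrow$(ii) and (ii)$\Rightarrow$(i) around the two-point fibres of $h_\beta$ and the distinction between $\cS_\beta$, $X_\beta$, $\wt X_\beta$: one must make sure that the symbolic representative one extracts genuinely lies in $\cS_\gamma$ \emph{and} in $\Orb$, which requires knowing that the ``incorrect'' finite-support expansions $\pi_\beta(x)$ for $x\in Z_\beta$ are not the relevant ones, i.e.\ that membership in $\Orb\cap\cS_\gamma$ is witnessed by the upper expansion $\pi_\beta^*$. The cleanest route is probably to phrase everything on the symbolic side using $\pi_\beta^*$ as the canonical section (as in the proof of Proposition~\ref{p_coding_mpe_relation}~(i)), only descending to $I$ to interpret condition (ii), and to invoke minimality to upgrade $\Orb\cap\cS_\gamma\neq\emptyset$ to $\Orb\subseteq\cS_\gamma$, which is the true heart of the argument. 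The lexicographic estimate in (i)$\Rightarrow$(iii) — that a proper subshift of $\Orb$ omits a fixed-length prefix of $\pi_\beta^*(1)$ — also deserves a careful statement, but it is standard for orbit closures once one recalls that $\pi_\beta^*(1)$ itself has all its shifts $\preceq\pi_\beta^*(1)$ (Proposition~\ref{2.11}~(ii)).
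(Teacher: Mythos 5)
Your decomposition --- a cycle (i)$\Rightarrow$(iii)$\Rightarrow$(ii)$\Rightarrow$(i) --- differs from the paper's, which establishes (i)$\Leftrightarrow$(ii) and (i)$\Leftrightarrow$(iii) independently. The underlying tools (left-continuity of $\gamma\mapsto\pi_\gamma^*(1)$, singleton $h_\beta$-fibres over $I\smallsetminus Z_\beta$, forward-invariance of $\cS_\gamma$ and $\Orb$) are the same, and your legs (i)$\Rightarrow$(iii) and (iii)$\Rightarrow$(ii) are correct. The (iii)$\Rightarrow$(ii) leg is a nice reorganization: you first show $\Orb\cap\cS_\gamma\neq\emptyset$ via the singleton fibre, then use minimality to upgrade this to $\Orb\subseteq\cS_\gamma$, which forces $\pi_\beta^*(1)\preceq\pi_\gamma^*(1)$, a contradiction; the paper's direct proof of (i)$\Rightarrow$(ii) does not need minimality at all.

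There is, however, a genuine gap in your (ii)$\Rightarrow$(i), precisely in the case your final parenthetical tries to wave away. You pick $A\in\Orb\cap\cS_\gamma$, set $x\=h_\beta(A)$, conclude $x\in Z_\beta$ from (ii), and then attempt to force $A=\pi_\beta^*(x)$ so that $\sigma^m(A)=\pi_\beta^*(1)\in\cS_\gamma$, a contradiction. But there is no reason $A$ cannot be the finite-support expansion $\pi_\beta(x)$: indeed $\pi_\beta(Z_\beta)$ can meet $\Orb$, and since $\pi_\beta|_{H_\beta^\gamma}\:H_\beta^\gamma\to\cS_\gamma$ is a bijection (Lemma~\ref{l_bi_lipschitz_S_gamma_H_gamma}) the finite-support member of the fibre $h_\beta^{-1}(x)$ lies in $\cS_\gamma$. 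In that case $\sigma^m(A)\neq\pi_\beta^*(1)$ --- rather $\sigma^n(A)=(0)^\infty$ for $n$ large --- and your phrase ``which lies in $\cS_\gamma$ harmlessly'' abandons the case without deriving anything. The case is not harmless; it is exactly where (ii) does the work. Since $\Orb$ and $\cS_\gamma$ are closed and forward-$\sigma$-invariant, $(0)^\infty\in\Orb\cap\cS_\gamma$, hence $0=h_\beta\bigl((0)^\infty\bigr)\in\overline{\cO_\beta'(1)}\cap H_\beta^\gamma$; but $0\notin Z_\beta$ by Lemma~\ref{l_properties_z_beta}~(i), contradicting (ii). This is exactly how the paper closes the corresponding implication: it takes the full set $\cK_\gamma\=\Orb\cap\cS_\gamma$, observes that $h_\beta(\cK_\gamma)$ is nonempty, closed, and $T_\beta$-forward-invariant, and notes that if $h_\beta(\cK_\gamma)\subseteq Z_\beta$ then iterating $T_\beta$ (using $Z_\beta=\bigl(\bigcup_{n\in\N}T_\beta^{-n}(0)\bigr)\smallsetminus\{0\}$) would force $0\in h_\beta(\cK_\gamma)\subseteq Z_\beta$, impossible. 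Replace your single-point analysis by this invariance argument and the cycle closes.
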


\begin{proof}	
	(i) implies (ii): Assume that $\beta$ is emergent. Let us suppose, for a contradiction, that the result is false, i.e., that there exists $\gamma\in (1,\beta)$ and $x\in I$ satisfying
	\begin{equation*}
		x\in \bigl(\overline{\cO_\beta^*(1)} \cap H_\beta^\gamma\bigr) \smallsetminus Z_\beta.
	\end{equation*}
	By Proposition~\ref{p_relation_of_coding}~\ref{p_relation_of_coding__iii}, we have $\cO^{\sigma}\bigl(\pi_\beta^*(1)\bigr)=\pi_\beta^*\bigl(\cO_\beta^*(1)\bigr)$. Since the map $\pi_\beta^*$ is continuous at $x$ (see Lemma~\ref{l_properties_z_beta}~(iv)), we have $\pi^*_\beta(x)\in \overline{\cO^\sigma(\pi_\beta^*(1))}$. Since $\pi_\beta|_{H_\beta^\gamma}$ is a homeomorphism (see Lemma~\ref{l_bi_lipschitz_S_gamma_H_gamma}) and $x\in H_\beta^\gamma$, we have $\pi_\beta(x)\in \cS_\gamma$. Hence, since $x\notin Z_\beta$ we have $\pi_\beta(x)=\pi_\beta^*(x)\in \overline{\cO^\sigma(\pi_\beta^*(1))}\cap\cS_\gamma$, which contradicts (see  Definition~\ref{d_emergent}) the fact that $\beta$ is emergent.
	\smallskip
	
	(ii) implies (i): Assume that $\beta$ is non-emergent. Then there exists $\gamma\in (1,\beta)$ such that $\cK_\gamma\= \cS_\gamma\cap \overline{\cO^\sigma(\pi_\beta^*(1))}$ is a nonempty closed subset of $\cS_\gamma$ with $\sigma(\cK_\gamma)\subseteq \cK_\gamma$, by Definition~\ref{d_emergent}. Then by Lemma~\ref{l_bi_lipschitz_S_gamma_H_gamma} and Proposition~\ref{p_relation_of_coding}~\ref{p_relation_of_coding__v}, $h_\beta(\cK_\gamma)$ is a nonempty closed subset of $H_\beta^\gamma$ with $T_\beta(h_\beta(\cK_\gamma))\subseteq h_\beta(\cK_\gamma)$. Since $h_\beta$ is continuous and $\cO_\beta^*(1)=h_\beta \bigl(\cO^\sigma \bigl(\pi_\beta^*(1) \bigr) \bigr)$ (see Proposition~\ref{p_relation_of_coding}~\ref{p_relation_of_coding__x},~\ref{p_relation_of_coding__iii}, and~\ref{p_relation_of_coding__iv}), we obtain $h_\beta(\cK_\gamma)\subseteq \overline{\cO_\beta^*(1)}$. Hence $h_\beta(\cK_\gamma)\subseteq \overline{\cO_\beta^*(1)} \cap H_\beta^\gamma$.
	
	If $h_\beta(\cK_\gamma)\subseteq Z_\beta$, then from the fact that $T_\beta(h_\beta(\cK_\gamma))\subseteq h_\beta(\cK_\gamma)$, and Lemma~\ref{l_properties_z_beta}~(i), we have both that
	$0\in h_\beta(\cK_\gamma)$ and $0\notin Z_\beta$, which is a contradiction. 	
	If on the other hand $h_\beta(\cK_\gamma)\nsubseteq Z_\beta$ then there exists $x\in h_\beta(\cK_\gamma)\smallsetminus Z_\beta$. In both cases, $\bigl(\overline{\cO_\beta^*(1)} \cap H_\beta^\gamma\bigr) \smallsetminus Z_\beta$ is nonempty.
	
	(i) implies (iii): Assume that $\beta$ is emergent. Fix an arbitrary $A\in \overline{\cO^\sigma(\pi_\beta^*(1))}$. If $\pi_\beta^*(1)\in \overline{\cO^\sigma(A)}$, then $\overline{\cO^\sigma(\pi_\beta^*(1))}=\overline{\cO^\sigma(A)}$. So to show that $\bigl(\overline{\cO^\sigma(\pi_\beta^*(1))},\sigma \bigr)$ is minimal, it suffices to verify that $\pi_\beta^*(1)\in\overline{\cO^\sigma(A)}$.
	If on the contrary $\pi_\beta^*(1)\notin \overline{\cO^\sigma(A)}$, then $\overline{\cO^\sigma(A)}$ is a closed subset of $\cS_\beta$ disjoint from $\pi_\beta^*(1)$, which is the maximal element in $\cS_\beta$ (see (\ref{e_original_def_S_beta}) and (\ref{e_def_S_beta})). Hence, from the fact that $\lim_{\gamma\nearrow\beta}\pi_\gamma^*(1)=\pi_\beta^*(1) $ (see Proposition~\ref{p_relation_of_coding}~\ref{p_relation_of_coding__xiv} and (\ref{ix beta})), there exists $\gamma\in (1,\beta)$ with $\sigma^n(A)\preceq \pi_\gamma^*(1)$ for all $n\in \N_0$. Hence by (\ref{e_def_S_beta}), $A\in\overline{\cO^\sigma(\pi_\beta^*(1))}\cap \cS_\gamma$, which contradicts the assumption that $\beta$ is emergent.
	
	(iii) implies (i): Assume that $\beta$ is non-emergent. By Definition~\ref{d_emergent} there exists $\gamma\in (1,\beta)$ and some $B\in \overline{\cO^\sigma(\pi_\beta^*(1))}\cap \cS_\gamma$. Hence the closure of $\cO^\sigma (B)$ is contained in $\cS_\gamma$, and therefore $\bigl(\overline{\cO^\sigma(\pi_\beta^*(1))},\sigma \bigr)$ is not minimal.  
\end{proof}	

The following straightforward consequence of Proposition~\ref{cE} will be useful in the sequel,
in particular for the proofs of the Individual TPO and Joint TPO theorems.

\begin{cor}\label{c_relation_emergent&betanumbers}
    Simple beta-numbers are emergent, and non-simple beta-numbers are non-emergent.
\end{cor}

\begin{proof}
    Let $\beta>1$ be a simple beta-number. By Definition~\ref{d_classification_beta}~(i) and Proposition~\ref{p_relation_of_coding}~(i), $\pi_\beta^*(1)$ is a periodic sequence, so $\overline{ \cO^\sigma ( \pi_\beta^*(1) )  } = \cO^\sigma ( \pi_\beta^*(1) ) $, and $ \bigl( \overline{ \cO^\sigma ( \pi_\beta^*(1) )  } , \sigma \bigr)$ is minimal, therefore Proposition~\ref{cE} implies that $\beta$ is emergent.

    Now let $\beta>1$ be a non-simple beta-number. By Definition~\ref{d_classification_beta}~(ii) and Proposition~\ref{p_relation_of_coding}~(i), $\pi_\beta^*(1)$ is a preperiodic but nonperiodic sequence, so $\overline{ \cO^\sigma ( \pi_\beta^*(1) )  } = \cO^\sigma ( \pi_\beta^*(1) ) $, and $ \bigl( \overline{ \cO^\sigma ( \pi_\beta^*(1) )  } , \sigma \bigr)$ is not minimal, therefore Proposition~\ref{cE} implies that $\beta$ is non-emergent.
\end{proof}

\begin{ex}
	As a specific example of an emergent parameter that is not a simple beta-number, let $F_0=0$, $F_1 = 01$, and $F_{n+2} = F_{n+1}F_{n}$, $n\in \N_0$. The \emph{Fibonacci word} $F$ is then defined (see e.g.~\cite{Py02}) as 
    the sequence 
	whose prefixes are the $F_n$'s,
    and is a Sturmian word (see e.g.~\cite{Py02})
	of parameter $(3-\sqrt{5})/2$. There exists $\beta\in(1,2)$ such that $\pi_\beta(1)=1F$, i.e.,~the concatenation of $1$ and $F$ (cf.~e.g.~\cite[p.~404]{CK04}). Clearly, $\beta$ is not a simple beta-number. Since $\bigl( \overline{\cO^\sigma(1F)},\sigma \bigr)$ is minimal (cf.~\cite[pp.~397--398]{CK04}), $\beta$ is emergent. 
\end{ex}

\begin{lemma}\label{l_minimal_emergent}
	If $\beta>1$ is emergent then the restriction $U_\beta|_{\overline{\cO_\beta^*(1)}}$ is continuous, and
	the dynamical system $\bigl(\overline{\cO_\beta^*(1)},U_\beta\bigr)$ is minimal.
\end{lemma}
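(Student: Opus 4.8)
The plan is to prove the two assertions in order: first the continuity of $U_\beta|_{\overline{\cO_\beta'(1)}}$, and then minimality of $\bigl(\overline{\cO_\beta'(1)},U_\beta\bigr)$.

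\emph{Step 1: continuity of the restriction.}
The only possible point of discontinuity of $U_\beta$ on $I$ lies in $D_\beta=U_\beta^{-1}(1)$ (see Remark~\ref{r_after_def_beta_expansions} and Proposition~\ref{p_relation_T_beta_and_wt_T_beta}~(i)). So it suffices to show that $\overline{\cO_\beta'(1)}$ meets $D_\beta$ in at most the point $1$ itself, and that $U_\beta$ restricted to $\overline{\cO_\beta'(1)}$ is (left-)continuous even there. Here I would use the emergence hypothesis via Proposition~\ref{cE}: since $\beta$ is emergent, $\cO_\beta'(1)$ is contained in $\overline{\cO_\beta'(1)}$, and every point of $\overline{\cO_\beta'(1)}\smallsetminus\{1\}$ is either a point where $\pi_\beta=\pi_\beta^*$ (so by Lemma~\ref{l_properties_z_beta}~(iv) $U_\beta$ is continuous there) or, if it lies in $Z_\beta$, it eventually maps to $1$; but by minimality of $\bigl(\overline{\cO^\sigma(\pi_\beta^*(1))},\sigma\bigr)$ (Proposition~\ref{cE}~(iii)) the only way a point of $\overline{\cO_\beta'(1)}$ can be in the backward orbit of $1$ under $U_\beta$ is to be a shift-image of $\pi_\beta^*(1)$ itself. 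The key symbolic fact I would exploit is that on $\overline{\cO^\sigma(\pi_\beta^*(1))}$, which is a subshift, the coding map $h_\beta$ is a homeomorphism onto $\overline{\cO_\beta'(1)}$ (one-to-one because, by minimality, no point of this closed set has a finite coding ending in $(0)^\infty$ other than possibly $\pi_\beta(1)$ itself, which is excluded since $\pi_\beta^*(1)$ is used), and then $U_\beta|_{\overline{\cO_\beta'(1)}}=h_\beta\circ\sigma\circ h_\beta^{-1}$ is a composition of continuous maps (using Proposition~\ref{p_relation_of_coding}~\ref{p_relation_of_coding__v} and \ref{p_relation_of_coding__xii}).

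\emph{Step 2: minimality.}
Transport the problem to the symbolic side. By Proposition~\ref{p_relation_of_coding}~\ref{p_relation_of_coding__iii} and~\ref{p_relation_of_coding__v}, $h_\beta$ conjugates $\bigl(\overline{\cO^\sigma(\pi_\beta^*(1))},\sigma\bigr)$ to $\bigl(\overline{\cO_\beta'(1)},U_\beta\bigr)$ (once Step 1 establishes that $h_\beta$ is a conjugacy, i.e.\ a bijection intertwining the dynamics; surjectivity is immediate from $\cO_\beta'(1)=h_\beta(\cO^\sigma(\pi_\beta^*(1)))$ and continuity of $h_\beta$, and injectivity is the point flagged above). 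Since $\beta$ is emergent, Proposition~\ref{cE}~(iii) gives that $\bigl(\overline{\cO^\sigma(\pi_\beta^*(1))},\sigma\bigr)$ is minimal, and minimality is preserved under topological conjugacy, hence $\bigl(\overline{\cO_\beta'(1)},U_\beta\bigr)$ is minimal.

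\emph{Main obstacle.}
The delicate point is \emph{injectivity of $h_\beta$ on $\overline{\cO^\sigma(\pi_\beta^*(1))}$}, equivalently showing that this closed shift-invariant set does not contain both codings $\pi_\beta(x)$ and $\pi_\beta^*(x)$ of any single point $x$. Two distinct codings of the same $x$ occur exactly when $x\in Z_\beta$, i.e.\ when some shift of one coding is $(0)^\infty$ (Lemma~\ref{l_properties_z_beta}~(i),~(ii)). If $(0)^\infty\in\overline{\cO^\sigma(\pi_\beta^*(1))}$ then by minimality $\overline{\cO^\sigma(\pi_\beta^*(1))}=\{(0)^\infty\}$, forcing $\pi_\beta^*(1)=(0)^\infty$, which is absurd. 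So $(0)^\infty\notin\overline{\cO^\sigma(\pi_\beta^*(1))}$, hence $\overline{\cO^\sigma(\pi_\beta^*(1))}\cap\pi_\beta(Z_\beta)=\emptyset$ by (\ref{e_5_pibeta_Zbeta}); combined with $\pi_\beta(Z_\beta)\cap\pi_\beta^*(I)=\emptyset$ (Lemma~\ref{l_properties_z_beta}~(ii)) and the fact that $\overline{\cO^\sigma(\pi_\beta^*(1))}\subseteq\pi_\beta^*(I)$ (since it is the closure of a subset of $\pi_\beta^*(I)$ inside $\pi_\beta^*(I)$, which is closed — or more directly from $X_\beta=\pi_\beta^*(I)\cup\pi_\beta(Z_\beta)$), one concludes $h_\beta$ is injective on $\overline{\cO^\sigma(\pi_\beta^*(1))}$. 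I expect the bookkeeping around whether $\pi_\beta^*(I)$ is closed, and the careful handling of the endpoint $1$ when $\beta$ is a simple beta-number (where $\pi_\beta(1)\ne\pi_\beta^*(1)$), to be where the argument needs the most care.
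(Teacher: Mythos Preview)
Your symbolic-conjugacy approach is correct and genuinely different from the paper's. The paper proves continuity directly: from emergence it deduces $0\notin\overline{\cO_\beta'(1)}$ (via Proposition~\ref{cE}~(ii), since $0\in H_\beta^\gamma\smallsetminus Z_\beta$ for every $\gamma$), then uses the positive distance $\delta=d(0,\overline{\cO_\beta'(1)})$ to show that $\overline{\cO_\beta'(1)}$ avoids the right-neighbourhoods $(y,y+\delta/\beta)$ of every $y\in D_\beta$, which forces $U_\beta|_{\overline{\cO_\beta'(1)}}$ to be locally linear. For minimality the paper argues directly too: given a nonempty closed $U_\beta$-invariant $Y\subseteq\overline{\cO_\beta'(1)}$ with $1\notin Y$, Lemma~\ref{H beta gamma} places $Y\subseteq H_\beta^\gamma$ for some $\gamma<\beta$, and then Proposition~\ref{cE}~(ii) forces $Y\subseteq Z_\beta$, so every point of $Y$ eventually hits $1$, a contradiction.

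Your route---show $h_\beta$ restricts to a homeomorphism $\overline{\cO^\sigma(\pi_\beta^*(1))}\to\overline{\cO_\beta'(1)}$ intertwining $\sigma$ and $U_\beta$, then import both continuity and minimality from the symbolic side via Proposition~\ref{cE}~(iii)---is more unified and arguably more conceptual. The injectivity argument you give (ruling out $(0)^\infty$ by minimality, hence $\overline{\cO^\sigma(\pi_\beta^*(1))}\cap\pi_\beta(Z_\beta)=\emptyset$ via~(\ref{e_5_pibeta_Zbeta}), then using $X_\beta=\pi_\beta^*(I)\cup\pi_\beta(Z_\beta)$ to land in $\pi_\beta^*(I)$ where Proposition~\ref{p_relation_of_coding}~\ref{p_relation_of_coding__v} applies) is sound. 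One caveat: your parenthetical that $\pi_\beta^*(I)$ is closed is not established anywhere, so rely on the decomposition $X_\beta=\pi_\beta^*(I)\cup\pi_\beta(Z_\beta)$ instead, as you yourself suggest. Also, the opening paragraph of your Step~1 (about $\overline{\cO_\beta'(1)}$ meeting $D_\beta$ only at $1$) is not a self-contained continuity argument---it does not rule out right-sided accumulation at points of $D_\beta$---so you should drop it and go straight to the conjugacy, which does all the work.
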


\begin{proof}
	Since $\beta$ is emergent, then $0\notin \overline{\cO_\beta^*(1)}$ by Proposition~\ref{cE}. Write $\delta\=d\bigl(0,\overline{\cO_\beta^*(1)}\bigr)$. By (\ref{e_def_U_beta}) and (\ref{e_D_beta}), $\overline{\cO_\beta^*(1)}\cap(y,y+\delta/\beta)=\emptyset$ for each $y\in D_\beta$. So for each pair of $x,\, y\in \overline{\cO_\beta^*(1)}$ with $\abs{x-y}<\delta/\beta$, we have $(x,y)\cap D_\beta=\emptyset$ and hence $U_\beta(x)-U_\beta(y)=\beta(x-y)$. The first part follows.
	
	Let $Y\subseteq \overline{\cO_\beta^*(1)}$ be an arbitrary nonempty closed subset of $\overline{\cO_\beta^*(1)}$ with $U_\beta(Y)=Y$. If $1\in Y$, then $Y=\overline{\cO_\beta^*(1)}$. So to show that $\bigl(\overline{\cO_\beta^*(1)},U_\beta\bigr)$ is minimal, it suffices to show that $1$ must belong to $Y$.
	If on the contrary $1\notin Y$, then by Lemma~\ref{H beta gamma} there exists $\gamma\in (1,\beta)$ such that $Y\subseteq H_\beta^\gamma$. 
	But $\beta$ is emergent, so $Y\subseteq  \overline{\cO_\beta^*(1)} \cap H_\beta^\gamma \subseteq Z_\beta$ by Proposition~\ref{cE}. By Lemma~\ref{l_properties_z_beta}~(i), for each $y\in Y$, there exists $n\in \N$ with $U_\beta^n(y)=1$, which contradicts the assumption that $1\notin Y$. 
	The lemma follows.
\end{proof}

By the following result, emergent parameters constitute a small subset of $(1,+\infty)$.

\begin{cor}\label{emergent_small_set}
The emergent set $\emergent$ has zero Lebesgue measure,
    and is a meagre subset of $(1,+\infty)$.
\end{cor}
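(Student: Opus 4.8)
The plan is to reduce the corollary, via the dynamical characterisation of emergence already established, to a statement about a single point of the system — the point $1$ — and then to prove the two assertions by independent arguments: the zero–measure part by invoking the classical fact that the orbit of $1$ is typically equidistributed, and the meagreness by an explicit perturbation of the $\beta$-expansion of $1$. First I would split $\cE=\cE_{\mathrm s}\sqcup\cE_{\mathrm{np}}$, where $\cE_{\mathrm s}$ is the set of simple beta-numbers (all emergent by Remark~\ref{emergent_terminology}) and $\cE_{\mathrm{np}}=\cE\smallsetminus\cE_{\mathrm s}$. Since every simple beta-number is algebraic and $\beta\mapsto\underline{\ve}(1,\beta)$ is injective (Proposition~\ref{p_relation_of_coding}~\ref{p_relation_of_coding__xiii}), $\cE_{\mathrm s}$ is countable, hence Lebesgue-null and meagre. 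For $\beta\in\cE_{\mathrm{np}}$: by Remark~\ref{emergent_terminology}, $\beta$ is non-preperiodic, so $\underline{\ve}(1,\beta)$ has infinitely many nonzero terms, whence $T_\beta^n(1)\neq0$ for all $n\in\N$ and $\beta\notin\Z$; consequently $T_\beta$ and $U_\beta$ agree along the orbit of $1$, so $\cO_\beta(1)=\cO_\beta'(1)$. By Lemma~\ref{l_minimal_emergent}, $\overline{\cO_\beta'(1)}$ is a minimal subsystem of $(I,U_\beta)$ not containing $0$, hence a proper closed subset of $I$, so there is a nonempty open interval $J$ with rational endpoints and $\overline J\subseteq(0,1)$ disjoint from $\overline{\cO_\beta'(1)}$, and hence from $\cO_\beta(1)$. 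Therefore $\cE_{\mathrm{np}}\subseteq\bigcup_J B_J$, the union over the countably many such $J$, where $B_J\=\{\beta>1:\cO_\beta(1)\cap J=\emptyset\}$. It thus suffices to show each $B_J$ is Lebesgue-null and nowhere dense.

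For the measure statement, I would use a theorem of Schmeling \cite{Sc97}: for Lebesgue-a.e.\ $\beta>1$ the orbit $\{T_\beta^n(1)\}_{n\in\N_0}$ is equidistributed with respect to the (fully supported) Parry measure of $T_\beta$, hence dense in $I$, hence meets every such $J$. So each $B_J$ is Lebesgue-null, and together with $\cE_{\mathrm s}$ being null this yields that $\cE$ has zero Lebesgue measure.

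For the category statement, fix $J=(p,q)$ with $0<p<q<1$. Since $\beta\mapsto T_\beta^n(1)$ is right-continuous (Lemma~\ref{l_continuity_T_U_on_beta}~(i)) and $J$ is open, $V\=\bigcup_{n\in\N}\operatorname{int}\{\beta:T_\beta^n(1)\in J\}$ is open with $B_J\subseteq(1,+\infty)\smallsetminus V$, so it is enough to prove $V$ is dense. Given $\beta_0>1$ and $\epsilon>0$, choose (Parry's density of simple beta-numbers, Remark~\ref{r_after_classification_of_beta}) a simple beta-number $\gamma$ with $\lvert\gamma-\beta_0\rvert<\epsilon/2$, write $\underline{\ve}(1,\gamma)=a_1\cdots a_m(0)^\infty$ with $a_m\geq1$, fix $t\in\bigl(p+\tfrac{q-p}{3},q-\tfrac{q-p}{3}\bigr)$, and set $D\=\pi_\gamma(t)$, so that $h_\gamma(D)=t$ (Proposition~\ref{p_relation_of_coding}~\ref{p_relation_of_coding__iv}) and $\sigma^k(D)\prec\pi^*_\gamma(1)$ for all $k\in\N_0$ (Proposition~\ref{2.11}~(i)). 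For $L\geq m$ put
\begin{equation*}
	A\=a_1\cdots a_m\,(0)^L\,D.
\end{equation*}
Using $\sigma^j(\underline{\ve}(1,\gamma))\prec\underline{\ve}(1,\gamma)$ for $j\geq1$ (the inserted block $(0)^L$ absorbing the comparisons for $1\leq j\leq m$), together with $\sigma^k(D)\prec\pi^*_\gamma(1)\prec\underline{\ve}(1,\gamma)\preceq A$ for the shifts landing beyond position $m+L$, one checks $\sigma^j(A)\prec A$ for every $j\in\N$; hence $A=\underline{\ve}(1,\beta)$ for a unique $\beta>1$ (Remark~\ref{r_after_S_beta}~(v)), with $\beta>\gamma$ since $A\succ\underline{\ve}(1,\gamma)$. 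Comparing $1=\sum_k\ve_k(1,\beta)\beta^{-k}=\sum_k\ve_k(1,\gamma)\gamma^{-k}$, using that these expansions agree on the first $m+L$ coordinates (and that $\ve_1=a_1=\lfloor\gamma\rfloor\geq1$), yields $0<\beta-\gamma\leq C_\gamma\gamma^{-(m+L)}$ for a constant $C_\gamma$ independent of $D$; hence $\lvert\beta-\beta_0\rvert<\epsilon$ for $L$ large, and moreover $\lvert h_\beta(D)-h_\gamma(D)\rvert\leq\lfloor\gamma\rfloor(\gamma-1)^{-2}(\beta-\gamma)<\tfrac{q-p}{3}$ for $L$ large, uniformly in $t$. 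Then $T_\beta^{m+L}(1)=h_\beta(\sigma^{m+L}(A))=h_\beta(D)\in J$, so $\beta\in\{\beta':T_{\beta'}^{m+L}(1)\in J\}$, which by right-continuity and openness of $J$ contains an interval $[\beta,\beta+\delta)$, hence meets $V$ within $\epsilon$ of $\beta_0$. Thus $V$ is dense, $B_J$ is nowhere dense, $\bigcup_J B_J$ is meagre, and with $\cE_{\mathrm s}$ meagre we conclude $\cE$ is meagre.

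The step I expect to be the main obstacle is the construction in the third paragraph: producing, arbitrarily near any $\beta_0$, a parameter whose critical orbit provably enters the prescribed window $J$. This hinges on two slightly delicate points — verifying that the candidate sequence $A$ is an admissible kneading sequence (that it lexicographically dominates all of its shifts, which forces careful use both of the inserted run of zeros and of the membership $D\in\cS_\gamma$), and establishing the uniform estimates showing that $\beta$ stays close to $\gamma$ and $h_\beta(D)$ stays close to $h_\gamma(D)=t$ as $L$ grows. The Lebesgue-null half, by contrast, is immediate once the reduction is in place, resting only on the external equidistribution result.
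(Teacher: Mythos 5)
Your proof is correct, but it takes a genuinely different route from the one in the paper, and the comparison is instructive. The paper's argument is a single short step: by Proposition~\ref{cE}, emergence means $\bigl(\overline{\cO^\sigma(\pi_\beta^*(1))},\sigma\bigr)$ is minimal, hence the fixed point $(0)^\infty$ lies outside this closure, and this is exactly the Blanchard--Schmeling criterion for the beta-shift $(\cS_\beta,\sigma)$ to have the specification property; thus $\cE$ is contained in the set $\mathrm{Spec}$ of specified parameters, which is meagre by \cite[Theorem~B]{Sc97} and Lebesgue-null by \cite[Theorem~E]{Sc97}. Your argument instead passes through the observation that, for non-simple emergent $\beta$, the critical orbit permanently avoids some rational window $J\subseteq(0,1)$, reducing the claim to showing each $B_J=\{\beta:\cO_\beta(1)\cap J=\emptyset\}$ is null and nowhere dense; you then invoke a \emph{different} result of Schmeling (a.e.\ normality of the critical orbit w.r.t.\ the Parry measure, giving a.e.\ density) for the measure half, and give a hands-on Baire construction for the category half, splicing $(0)^L D$ into a simple kneading sequence to steer $T_\beta^{m+L}(1)$ into $J$. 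Both routes lean on \cite{Sc97}, but on disjoint theorems. Yours is more self-contained on the category side (only Parry's density of simple beta-numbers and the monotonicity/continuity machinery of Section~3 enter), and it makes transparent \emph{why} emergent parameters are rare: an emergent critical orbit cannot be dense. The cost is a case split on simple beta-numbers (which the paper's specification argument absorbs for free, since subshifts of finite type are specified) and the admissibility verification for the spliced sequence $A$, which you carry out correctly but which is the most delicate step of the argument, exactly as you anticipated.
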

\begin{proof}
Suppose $\beta\in\emergent$. 
By Proposition~\ref{cE}, $\bigl(\overline{\cO^\sigma(\pi_\beta^*(1))},\sigma \bigr)$ is minimal, so the fixed point $(0)^\infty$ does not belong to $\overline{\cO^\sigma(\pi_\beta^*(1))}$, and thus the beta-shift $(\cS_\beta,\sigma)$ is a specified system
(see \cite[Proposition~4.5]{Bl89}, \cite[Proposition~3.5]{Scj97}).
So if $\rm{Spec}$ denotes the set of those $\beta>1$ such that $(\cS_\beta,\sigma)$ is  specified, then $\emergent\subseteq \rm{Spec}$. 
But $\rm{Spec}$ is a meagre subset of $(1,+\infty)$ by \cite[Theorem~B]{Scj97}, and has zero Lebesgue measure by \cite[Theorem~E]{Scj97}, 
therefore $\emergent$ also has these properties.
\end{proof}

\subsection{The critical set $\Crit{\texorpdfstring{^\alpha(\beta)}{(β)}}$}\label{subsec_E_alpha_beta}

\begin{definition}
	For $\beta>1$ and $\alpha\in(0,1]$, define the \emph{critical set} $\Crit^\alpha(\beta)$ by
	\begin{equation}\label{e_alpha_defn_eqn}
		\Crit^\alpha(\beta)\=\bigl\{\phi\in \Holder{\alpha}(I): \cO_\beta^*(1) \text{ is a maximizing orbit for }(U_\beta,\phi)\bigr\}.
	\end{equation}
\end{definition}

This naturally prompts an investigation into those invariant measures that are generated by the orbit of the point 1.
It is convenient to first define the following notions in $X_\beta$:

\begin{definition}\label{qgdefn}
	For $\beta>1$, define the \emph{empirical measure} $\mu_n$
	on $X_\beta$
	by
	\begin{equation*}
	\mu_n \= \frac{1}{n}\sum_{i=0}^{n-1} \delta_{\sigma^i(\pi_\beta^*(1))} .
	\end{equation*}
	A measure $\mu\in\MMM(X_\beta,\sigma)$ is said to be \emph{quasi-generated} by $\cO^{\sigma}\bigl(\pi_\beta^*(1)\bigr)$
	if it is an accumulation point of the sequence $\{\mu_n\}_{n\in\N}$.
	Let $\QG(\beta)$ denote the set of measures that are quasi-generated by $\cO^{\sigma}(\pi_\beta^*(1))$
	and let $\CQG(\beta)$ denote the convex hull of $\QG(\beta)$.
\end{definition}

Clearly $\QG(\beta) \subseteq \MMM\bigl(\overline{\cO^{\sigma}(\pi_\beta^*(1))},\sigma\bigr)$,
hence  $\CQG(\beta) \subseteq \MMM\bigl(\overline{\cO^{\sigma}(\pi_\beta^*(1))},\sigma\bigr)$.
The set $\QG(\beta)$ is known to be weak$^*$ closed (see \cite[Proposition~3.8]{DGS76} and \cite[p.~98]{Gl03}), hence  
$\CQG(\beta)$ is weak$^*$ closed.

The above notions lead to an alternative expression for the critical set $\Crit^\alpha(\beta)$:

\begin{lemma}\label{e_alpha_alternative_defn_lemma}
	Suppose $\beta>1$ and $\alpha\in(0,1]$. Then
	\begin{equation}\label{e_alpha_alternative_defn_eqn}
		\Crit^\alpha(\beta)=\bigl\{\phi\in \Holder{\alpha}(I):
		H_\beta(\CQG(\beta))
		\subseteq
		\Mmax(U_\beta,\phi)
		\bigr\}.
	\end{equation}
    Moreover, $\Crit^\alpha(\beta)$ is a closed subset of $\Holder{\alpha}(I)$.
\end{lemma}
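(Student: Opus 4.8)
\textbf{Proof plan for Lemma~\ref{e_alpha_alternative_defn_lemma}.}

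The plan is to establish the set equality first, and then derive closedness as a consequence. For the set equality, recall from Proposition~\ref{p_coding_mpe_relation}~(iii) that $H_\beta$ is a weak$^*$ homeomorphism from $\MMM(X_\beta,\sigma)$ to $\MMM(I,U_\beta)$ intertwining the ergodic averages, so it suffices to work on the symbolic side: I want to show that $\cO_\beta'(1)$ is a maximizing orbit for $(U_\beta,\phi)$ if and only if $\CQG(\beta)\subseteq\Mmax(\sigma|_{X_\beta},\phi\circ h_\beta)$. First I would observe that, via Proposition~\ref{p_relation_of_coding}~\ref{p_relation_of_coding__iv} and~\ref{p_relation_of_coding__v}, the Birkhoff sums satisfy $\frac1n S_n^{U_\beta}\phi(1) = \frac1n S_n^\sigma(\phi\circ h_\beta)(\pi_\beta^*(1)) = \int (\phi\circ h_\beta)\,\mathrm{d}\mu_n$ where $\mu_n$ is the empirical measure of Definition~\ref{qgdefn}. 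Hence the orbit of $1$ is maximizing for $(U_\beta,\phi)$ exactly when $\lim_{n\to+\infty}\int(\phi\circ h_\beta)\,\mathrm{d}\mu_n = \mpe(\sigma|_{X_\beta},\phi\circ h_\beta) = \mpe(U_\beta,\phi)$ (using Proposition~\ref{p_coding_mpe_relation}~(iii) for the last equality).

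The core of the argument is then a standard fact about quasi-generated measures: since $X_\beta$ is compact and $\phi\circ h_\beta$ is continuous, the sequence $\left\{\int(\phi\circ h_\beta)\,\mathrm{d}\mu_n\right\}_{n\in\N}$ is a bounded real sequence whose set of accumulation points is precisely $\left\{\int(\phi\circ h_\beta)\,\mathrm{d}\nu : \nu\in\QG(\beta)\right\}$ (any accumulation point of the $\mu_n$ in the weak$^*$ topology gives an accumulation point of the integrals, and conversely by passing to a weak$^*$-convergent subsequence using compactness of $\MMM(X_\beta,\sigma)$). Therefore $\lim_{n}\int(\phi\circ h_\beta)\,\mathrm{d}\mu_n$ exists and equals $c$ iff $\int(\phi\circ h_\beta)\,\mathrm{d}\nu = c$ for every $\nu\in\QG(\beta)$, which by linearity of the integral is equivalent to $\int(\phi\circ h_\beta)\,\mathrm{d}\nu = c$ for every $\nu\in\CQG(\beta)$. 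Combining with the previous paragraph, $\cO_\beta'(1)$ is maximizing iff $\int(\phi\circ h_\beta)\,\mathrm{d}\nu = \mpe(\sigma|_{X_\beta},\phi\circ h_\beta)$ for all $\nu\in\CQG(\beta)$; since $\CQG(\beta)\subseteq\MMM(X_\beta,\sigma)$, the reverse inequality $\int(\phi\circ h_\beta)\,\mathrm{d}\nu\le\mpe$ is automatic, so this says exactly $\CQG(\beta)\subseteq\Mmax(\sigma|_{X_\beta},\phi\circ h_\beta)$. Pushing forward by $H_\beta$ and invoking $\Mmax(U_\beta,\phi)=H_\beta(\Mmax(\sigma|_{X_\beta},\phi\circ h_\beta))$ from Proposition~\ref{p_coding_mpe_relation}~(iii) yields~(\ref{e_alpha_alternative_defn_eqn}).

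For closedness, I would use the characterisation just proved. Suppose $\phi_k\to\phi$ in $\Holder{\alpha}(I)$ with each $\phi_k\in\Crit^\alpha(\beta)$; then $\phi_k\to\phi$ uniformly, so $\mpe(U_\beta,\phi_k)\to\mpe(U_\beta,\phi)$ (the maximum ergodic average is $1$-Lipschitz in the uniform norm) and $\int\phi_k\,\mathrm{d}\lambda\to\int\phi\,\mathrm{d}\lambda$ uniformly over $\lambda\in\PPP(I)$. Fix $\nu\in H_\beta(\CQG(\beta))$; then $\int\phi_k\,\mathrm{d}\nu = \mpe(U_\beta,\phi_k)$ for all $k$, and letting $k\to+\infty$ gives $\int\phi\,\mathrm{d}\nu=\mpe(U_\beta,\phi)$, i.e.\ $\nu\in\Mmax(U_\beta,\phi)$. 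Since $\nu$ was arbitrary, $H_\beta(\CQG(\beta))\subseteq\Mmax(U_\beta,\phi)$, so $\phi\in\Crit^\alpha(\beta)$ by~(\ref{e_alpha_alternative_defn_eqn}). I expect the main obstacle to be purely bookkeeping rather than conceptual: one must be careful that the set of accumulation points of $\{\int(\phi\circ h_\beta)\,\mathrm{d}\mu_n\}$ really coincides with the integrals against $\QG(\beta)$ (this uses sequential compactness of $\MMM(X_\beta,\sigma)$ in the metrisable weak$^*$ topology, which holds since $X_\beta$ is a compact metric space), and that one may freely pass between $\QG(\beta)$ and its convex hull $\CQG(\beta)$ when testing a single continuous function — both routine, but worth spelling out.
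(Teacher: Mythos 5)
Your proof is correct and follows essentially the same route as the paper's: translate to the symbolic side via Propositions~\ref{p_relation_of_coding} and~\ref{p_coding_mpe_relation}, identify accumulation points of the empirical-measure integrals with integrals against $\QG(\beta)$ using weak$^*$ compactness, pass to $\CQG(\beta)$ by linearity, and deduce closedness from continuity of $\phi\mapsto Q(U_\beta,\phi)$. The only packaging difference is that you derive the inequality $\limsup_n\frac1n S_n^\sigma(\phi\circ h_\beta)(\pi_\beta^*(1))\le Q$ directly from $\QG(\beta)\subseteq\MMM(X_\beta,\sigma)$, where the paper instead cites Theorem~2.2 of~\cite{Je19}; also note the homeomorphism claim you attribute to Proposition~\ref{p_coding_mpe_relation}~(iii) is actually part~(i).
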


\begin{proof}
	We first verify (\ref{e_alpha_alternative_defn_eqn}). By (\ref{e_alpha_defn_eqn}), Proposition~\ref{p_relation_of_coding}~\ref{p_relation_of_coding__v}, and Proposition~\ref{p_coding_mpe_relation}~(iii), 
	\begin{equation}\label{in_E_alpha_iff}
		\phi\in \Crit^\alpha(\beta) 
        \quad \text{ if and only if } \quad
        \cO^{\sigma} \bigl(\pi_\beta^*(1) \bigr) \text{ is $(\sigma|_{X_\beta},\phi\circ h_\beta)$-maximizing}.
	\end{equation}	
	
	Assume that $\phi\in \Crit^\alpha(\beta)$. Then (\ref{in_E_alpha_iff}) implies that
	\begin{equation*}
		\lim_{n\to+\infty}\frac{1}{n}S_n^\sigma(\phi\circ h_\beta)\bigl(\pi_\beta^*(1)\bigr)=Q(\sigma|_{X_\beta},\phi\circ h_\beta),
	\end{equation*}
	and therefore $\QG(\beta)\subseteq \Mmax(\sigma|_{X_\beta},\phi\circ h_\beta)$. Hence
	$\CQG(\beta)\subseteq \Mmax(\sigma|_{X_\beta},\phi\circ h_\beta)$. By Proposition~\ref{p_coding_mpe_relation}~(i) and (iii), it follows that $H_\beta(\CQG(\beta))
	\subseteq
	\Mmax(U_\beta,\phi)$.
	
	Conversely, assume that $\phi\notin \Crit^\alpha(\beta)$. By \cite[Proposition~2.2]{Je19}, \begin{equation*}
		\limsup_{n\to+\infty}\frac{1}{n}S_n^\sigma(\phi\circ h_\beta)\bigl(\pi_\beta^*(1)\bigr)\leq Q(\sigma|_{X_\beta},\phi\circ h_\beta),
	\end{equation*}
	but the fact that $\phi\notin \Crit^\alpha(\beta)$ implies that there exists a subsequence $n_k\nearrow+\infty$ such that  
	\begin{equation*}
		\lim_{k\to+\infty}\frac{1}{n_k}S_{n_k}^\sigma(\phi\circ h_\beta)\bigl(\pi_\beta^*(1)\bigr)
		< Q(\sigma|_{X_\beta},\phi\circ h_\beta).
	\end{equation*}
	Hence any accumulation point of the sequence $\{\mu_{n_k}\}_{k\in\N}$, which belongs to $\QG(\beta)$
	by Definition~\ref{qgdefn}, cannot belong to $\Mmax(\sigma|_{X_\beta},\phi\circ h_\beta)$.  By Proposition~\ref{p_coding_mpe_relation}~(i) and (iii), it follows that $H_\beta(\CQG(\beta))
	$ is not contained in $
	\Mmax(U_\beta,\phi)$.

    	We next prove that $\Crit^\alpha(\beta)$ is a closed subset of $\Holder{\alpha}(I)$. If $\phi_n\in \Crit^\alpha(\beta)$, then
	\begin{equation}\label{phi_n_Q}
		\int \! \phi_n\, \mathrm{d}\mu = Q(U_\beta,\phi_n)
	\end{equation}
	for all $\mu\in H_\beta(\CQG(\beta))$,
	by (\ref{e_alpha_alternative_defn_eqn}).
	If $\phi_n\to\phi$ in $(\Holder{\alpha}(I),\Hnorm{\alpha,I}{\cdot})$,
	then 
    $\int \!\phi_n \, \mathrm{d}\mu \to \int\! \phi\, \mathrm{d}\mu$
	for all $\mu\in H_\beta(\CQG(\beta))$.
	But $Q(U_\beta, \cdot)$ is (1-Lipschitz) continuous,
	so $Q(U_\beta,\phi_n)\to Q(U_\beta,\phi)$,
	and therefore (\ref{phi_n_Q}) implies that $\int \!\phi\, d\mu=Q(U_\beta,\phi)$
	for all $\mu\in H_\beta(\CQG(\beta))$.
	So $H_\beta(\CQG(\beta)) \subseteq
	\Mmax(U_\beta,\phi)$. Hence $\phi\in \Crit^\alpha(\beta)$ by
	(\ref{e_alpha_alternative_defn_eqn}), and therefore $\Crit^\alpha(\beta)$ is closed.
\end{proof}

If $\beta>1$ is emergent, the following lemma gives another characterisation of $\Crit^\alpha(\beta)$.

\begin{lemma}\label{emergent_e_alpha_lemma}
	Assume that $\beta>1$ is emergent, $\alpha\in(0,1]$, and $\phi\in\Holder{\alpha}(I)$. Then the following are equivalent:
	\begin{enumerate}[label=\rm{(\roman*)}]	
		\smallskip
		\item $\phi\in \Crit^\alpha(\beta)$.
		
		\smallskip
		\item $H_\beta(\CQG(\beta)) \subseteq	\Mmax(U_\beta,\phi)$.
		
		\smallskip
		\item $1\in \supp\mu$ for some $\mu\in \Mmax(U_\beta,\phi)$.		
	\end{enumerate}
\end{lemma}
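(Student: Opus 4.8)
The plan is to establish the cycle of implications $\text{(i)}\Rightarrow\text{(ii)}\Rightarrow\text{(iii)}\Rightarrow\text{(i)}$, noting that the equivalence of (i) and (ii) is already contained in Lemma~\ref{e_alpha_alternative_defn_lemma} (which holds for all $\beta>1$), so the only genuinely new content is the equivalence of (iii) with the other two statements, and this is where emergence is used. First I would dispose of $\text{(ii)}\Rightarrow\text{(iii)}$: by Proposition~\ref{p_coding_mpe_relation}~(iii) the set $\MMM_{\max}(\sigma|_{X_\beta},\phi\circ h_\beta)$ is nonempty, hence so is $\QG(\beta)$ (any accumulation point of $\{\mu_n\}$ exists by weak$^*$ compactness of $\MMM(X_\beta,\sigma)$), so $H_\beta(\CQG(\beta))$ is a nonempty subset of $\Mmax(U_\beta,\phi)$. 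Pick $\nu\in\QG(\beta)$; then $H_\beta(\nu)\in\Mmax(U_\beta,\phi)$, and I must check that $1\in\supp H_\beta(\nu)$. Since $\nu$ is quasi-generated by $\cO^\sigma(\pi_\beta^*(1))$ and (by Proposition~\ref{cE}) $\bigl(\overline{\cO^\sigma(\pi_\beta^*(1))},\sigma\bigr)$ is minimal, every measure in $\QG(\beta)$ has support equal to all of $\overline{\cO^\sigma(\pi_\beta^*(1))}$ (a standard fact: an invariant measure supported on a minimal set has full support in it; alternatively $\supp\nu$ is a nonempty closed invariant subset, hence equals the whole minimal set). In particular $\pi_\beta^*(1)\in\supp\nu$. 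Then, using $H_\beta(\nu)(W)=\nu(\pi_\beta^*(W))$ from (\ref{e_property_H}) together with the left-continuity and strict monotonicity of $\pi_\beta^*$ (Proposition~\ref{p_relation_of_coding}~\ref{p_relation_of_coding__ix},~\ref{p_relation_of_coding__vi}), one transfers positivity of $\nu$-mass near $\pi_\beta^*(1)$ to positivity of $H_\beta(\nu)$-mass near $1$, giving $1\in\supp H_\beta(\nu)$.

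Next, $\text{(iii)}\Rightarrow\text{(i)}$: suppose $\mu\in\Mmax(U_\beta,\phi)$ with $1\in\supp\mu$. By Theorem~\ref{l_subordination}~(ii), $\tphi^-\equiv 0$ on $\cO_\beta'(1)$. Since $\tphi^-=\overline{\phi}+u_{\beta,\phi}^- - u_{\beta,\phi}^-\circ U_\beta$ with $u_{\beta,\phi}^-$ bounded (Theorem~\ref{mane}~(i) and Proposition~\ref{p_calibrated_sub-action_exists}~(i)), telescoping the Birkhoff sums of $\tphi^-$ along the orbit of $1$ gives, for every $n\in\N$,
\begin{equation*}
0=\frac1n S_n^{U_\beta}\tphi^-(1)=\frac1n S_n^{U_\beta}\overline{\phi}(1)+\frac1n\bigl(u_{\beta,\phi}^-(1)-u_{\beta,\phi}^-(U_\beta^n(1))\bigr),
\end{equation*}
and letting $n\to\infty$ forces $\frac1n S_n^{U_\beta}\overline{\phi}(1)\to 0$, i.e.\ $\frac1n S_n^{U_\beta}\phi(1)\to Q(U_\beta,\phi)$. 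By Lemma~\ref{normalised_cohomologous}~(iii) (or directly) this says that $\cO_\beta'(1)$ is a maximizing orbit for $(U_\beta,\phi)$, which is exactly $\phi\in\Crit^\alpha(\beta)$ by (\ref{e_alpha_defn_eqn}).

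The implication $\text{(i)}\Rightarrow\text{(ii)}$ is immediate from Lemma~\ref{e_alpha_alternative_defn_lemma}. I expect the main obstacle to be the transfer-of-support argument in $\text{(ii)}\Rightarrow\text{(iii)}$: one must be careful that $H_\beta$ pushes forward along $\pi_\beta^*$ (not $h_\beta$ in the naive direction), and that the point $\pi_\beta^*(1)$ genuinely lies in $\supp\nu$ — this is where minimality of $\overline{\cO^\sigma(\pi_\beta^*(1))}$ (Proposition~\ref{cE}, equivalently Lemma~\ref{l_minimal_emergent}) is essential, since for non-emergent $\beta$ a quasi-generated measure could be supported on a proper subsystem missing $\pi_\beta^*(1)$, and then $1\notin\supp H_\beta(\nu)$ even though (ii) might hold. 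A minor technical point to handle cleanly is that near $x=1$ the relevant neighbourhoods in $I$ are of the form $(1-\epsilon,1]$, and one uses that $\pi_\beta^*$ is left-continuous at $1$ and strictly increasing, so that $\pi_\beta^*\bigl((1-\epsilon,1]\bigr)$ contains a relative neighbourhood of $\pi_\beta^*(1)$ in $\pi_\beta^*(I)$, whence $H_\beta(\nu)\bigl((1-\epsilon,1]\bigr)=\nu\bigl(\pi_\beta^*((1-\epsilon,1])\bigr)>0$.
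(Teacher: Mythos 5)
Your proof is correct, and for the non-trivial direction it takes a genuinely different route from the paper. The paper proves $\text{(i)}\Leftrightarrow\text{(iii)}$ directly: for $\text{(i)}\Rightarrow\text{(iii)}$ it takes a weak$^*$-accumulation point $\mu$ of the empirical measures $\frac1n\sum_{i=0}^{n-1}\delta_{U_\beta^i(1)}$ on $I$ itself and invokes Lemma~\ref{l_minimal_emergent} — which simultaneously supplies continuity of $U_\beta$ on $\overline{\cO_\beta'(1)}$ (so that $\mu$ is invariant) and minimality of $\bigl(\overline{\cO_\beta'(1)},U_\beta\bigr)$ (so that $\supp\mu=\overline{\cO_\beta'(1)}\ni1$). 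You instead prove $\text{(ii)}\Rightarrow\text{(iii)}$ on the shift side: take $\nu\in\QG(\beta)\subseteq\CQG(\beta)$, use Proposition~\ref{cE}~(iii) (minimality of the subshift) to get $\supp\nu=\overline{\cO^\sigma(\pi_\beta^*(1))}\ni\pi_\beta^*(1)$, and then transfer to $1\in\supp H_\beta(\nu)$. This avoids Lemma~\ref{l_minimal_emergent} altogether (you don't need continuity of $U_\beta$ on the closure, since invariance of $\nu$ is automatic on $X_\beta$), at the cost of the transfer-of-support step. That step, which you sketch, does go through: for any $\epsilon>0$ the set $A_\epsilon=\{B\in X_\beta:B\succ\pi_\beta^*(1-\epsilon)\}$ is open in $X_\beta$ and contains $\pi_\beta^*(1)$ (by strict monotonicity of $\pi_\beta^*$), $A_\epsilon\cap\pi_\beta^*(I)=\pi_\beta^*((1-\epsilon,1])$, and $\nu(\pi_\beta(Z_\beta))=0$ by Lemma~\ref{l_properties_z_beta}~(v), so $H_\beta(\nu)((1-\epsilon,1])=\nu(\pi_\beta^*((1-\epsilon,1]))=\nu(A_\epsilon)>0$. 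Your $\text{(iii)}\Rightarrow\text{(i)}$ direction (Theorem~\ref{l_subordination}~(ii) plus telescoping via Lemma~\ref{normalised_cohomologous}~(iii)) matches the paper's exactly.
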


\begin{proof}
	By Lemma~\ref{e_alpha_alternative_defn_lemma}, (i) is equivalent to (ii), so it 
	suffices to prove that (i) is equivalent to (iii). 
	For this, first assume that $\phi\in \Crit^\alpha(\beta)$,
	so that $\cO_\beta^*(1)$ is $(U_\beta,\phi)$-maximizing.
	Suppose $\mu$ is any accumulation point of the sequence
	$\bigl\{\frac{1}{n}\sum_{i=0}^{n-1} \delta_{U_\beta^i(1)}\bigr\}_{n\in\N}$.
	Note that $\overline{\cO_\beta^*(1)}$ is compact, $U_\beta\bigl(\overline{\cO_\beta^*(1)}\bigr)=\overline{\cO_\beta^*(1)}$, and $U_\beta|_{\overline{\cO_\beta^*(1)}}$ is continuous (see Lemma~\ref{l_minimal_emergent}).
    We obtain that $\supp \mu\subseteq \overline{\cO_\beta^*(1)}$ and $\mu\in \MMM\bigl(\overline{\cO_\beta^*(1)},U_\beta|_{\overline{\cO_\beta^*(1)}}\bigr)$ (see \cite[Theorem~6.9]{Wal82}). Hence $\mu$ is also $U_\beta$-invariant as a measure on $I$.
    In addition, $\mu\in\Mmax(U_\beta,\phi)$
	since $\cO_\beta^*(1)$ is $(U_\beta,\phi)$-maximizing.
	But $\beta$ is emergent, so $\bigl(\overline{\cO_\beta^*(1)},U_\beta\bigr)$ is minimal by Lemma~\ref{l_minimal_emergent}.  
 Note that $\mu$ can be seen as a measure on $\overline{\cO_\beta^*(1)}$ and it follows from \cite[p.~156]{Ak93} that $U_\beta(\supp \mu)=\supp \mu$. Hence $\supp \mu$ must equal $\overline{\cO_\beta^*(1)}$, and in particular $1\in \supp \mu$.
	
	Conversely, if we assume that $1\in \supp\mu$ for some $\mu\in \Mmax(U_\beta,\phi)$, then (\ref{e_def_tphi^-}), Theorem~\ref{mane}, Theorem~\ref{l_subordination}~(ii), and
	Lemma~\ref{normalised_cohomologous}~(iii)
	together imply that $\cO^*_\beta(1)$ is a maximizing orbit for $(U_\beta,\phi)$,
	and hence that $\phi\in \Crit^\alpha(\beta)$.
\end{proof}

\begin{lemma}\label{l_case_two_one_optimal}
	Suppose $\beta>1$, $\alpha \in (0,1]$,
	and $\phi\in \Holder{\alpha}(I)$.
	If $1\notin \supp\mu$ for some
	$\mu\in\Mmax(U_\beta,\phi)$,
	then there exists $\beta'\in(1,\beta)$ such that
	$\mpe_{\beta,\gamma}(\phi) = \mpe ( U_\beta , \phi )$
	for all $\gamma\in(\beta',\beta)$.
\end{lemma}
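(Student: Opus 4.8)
The hypothesis gives a maximizing measure $\mu\in\Mmax(U_\beta,\phi)$ with $1\notin\supp\mu$. Write $\cK\=\supp\mu$. Since $1\notin\cK$, Lemma~\ref{support_invariant_set} yields $U_\beta(\cK)=\cK$, and $\cK$ is a non-empty compact subset of $I$. Applying the second part of Lemma~\ref{H beta gamma} to $\cK'\=\cK$, there exists $\beta'\in(1,\beta)$ such that $\cK\subseteq H_\beta^\gamma$ for every $\gamma\in(\beta',\beta)$. Fix such a $\gamma$. The plan is to show that this $\beta'$ works, i.e.\ that $\mpe_{\beta,\gamma}(\phi)=\mpe(U_\beta,\phi)$ for all $\gamma\in(\beta',\beta)$.

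One inequality is immediate: by definition (\ref{e_def_beta_gamma}), $\mpe_{\beta,\gamma}(\phi)$ is a supremum of $\int\!\phi\,\mathrm d\nu$ over a subset of $\MMM(I,T_\beta)\subseteq\MMM(I,U_\beta)$, so $\mpe_{\beta,\gamma}(\phi)\le\mpe(T_\beta,\phi)=\mpe(U_\beta,\phi)$, the last equality being Proposition~\ref{mpe=}~(ii). For the reverse inequality I would use the measure $\mu$ itself: since $\supp\mu=\cK\subseteq H_\beta^\gamma$ and $1\notin\cK$, Proposition~\ref{p_relation_T_beta_and_wt_T_beta}~(iii) gives $\mu\in\MMM(I,T_\beta)$ (as $\mu(\{1\})=0$), and then $\mu$ is admissible in the supremum defining $\mpe_{\beta,\gamma}(\phi)$ in (\ref{e_def_beta_gamma}). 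Hence $\mpe_{\beta,\gamma}(\phi)\ge\int\!\phi\,\mathrm d\mu=\mpe(U_\beta,\phi)$, where the final equality holds because $\mu$ is $(U_\beta,\phi)$-maximizing. Combining the two inequalities gives $\mpe_{\beta,\gamma}(\phi)=\mpe(U_\beta,\phi)$, as desired.

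\textbf{Main obstacle.} The argument is essentially a bookkeeping assembly of results already proved, so there is no serious obstacle; the one point requiring care is the passage $\supp\mu\subseteq H_\beta^\gamma$ for \emph{all} $\gamma\in(\beta',\beta)$ simultaneously, rather than for a single $\gamma$ — but this is exactly the content of Lemma~\ref{H beta gamma} (which in turn relies on the monotonicity $H_\beta^{\beta'}\subseteq H_\beta^\gamma$ from Lemma~\ref{l_apprioxiation_beta_shif_beta}~(i)), so it comes for free. A second small point to verify is that $\cK$ is non-empty and that Lemma~\ref{support_invariant_set} genuinely applies, i.e.\ that $1\notin\supp\mu$ is precisely the hypothesis of the second assertion there; both are clear. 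Thus the proof is short: invoke Lemma~\ref{support_invariant_set} and Lemma~\ref{H beta gamma} to locate $\supp\mu$ in $H_\beta^\gamma$, use Proposition~\ref{p_relation_T_beta_and_wt_T_beta}~(iii) to view $\mu$ as $T_\beta$-invariant, and sandwich $\mpe_{\beta,\gamma}(\phi)$ between $\int\!\phi\,\mathrm d\mu$ and $\mpe(T_\beta,\phi)=\mpe(U_\beta,\phi)$ via Proposition~\ref{mpe=}~(ii).
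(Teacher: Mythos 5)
Your proof takes essentially the same route as the paper: locate $\supp\mu$ inside $H_\beta^\gamma$ via Lemma~\ref{support_invariant_set} and Lemma~\ref{H beta gamma}, then observe that $\mu$ itself witnesses the restricted supremum. However, you misquote Lemma~\ref{support_invariant_set}: when $1\notin\supp\mu$, that lemma gives $T_\beta(\cK)=\cK$, not $U_\beta(\cK)=\cK$ (the latter is the conclusion when $0\notin\supp\mu$, which is not what your hypothesis supplies). Correspondingly, you should invoke the \emph{first} part of Lemma~\ref{H beta gamma} (the hypothesis $1\notin\cK=T_\beta(\cK)$), not the second. Since both halves of Lemma~\ref{H beta gamma} yield the same conclusion $\cK\subseteq H_\beta^\gamma$, the slip does not break the argument, but as written the citations are wrong; with that small correction the proof is exactly the paper's.
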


\begin{proof}
	Suppose $\mu\in \Mmax(U_\beta,\phi)$ and $1\notin \supp \mu \eqqcolon\cK$.
	It follows that $\mu \in \cM(I,T_\beta)$ by Proposition~\ref{p_relation_T_beta_and_wt_T_beta}~(iii),
	and $T_\beta(\cK)=\cK$ by Lemma~\ref{support_invariant_set}. Therefore,
	by Lemma~\ref{H beta gamma},
	there exists $\beta' \in (1,\beta)$ such that $\cK \subseteq H_\beta^\gamma$
	for all $\gamma\in(\beta',\beta)$.
	In particular, $\mu$ can be considered as an element of $\cM\bigl(H_\beta^\gamma, T_\beta\bigr)$
	for all $\gamma\in(\beta',\beta)$, and thus
	$\mpe_{\beta, \gamma}(\phi) = \mpe ( U_\beta, \phi )$ by (\ref{e_def_beta_gamma}), as required.
\end{proof}

If $\beta$ is emergent, we have the following corollary:  

\begin{cor}\label{l_case_two_one_optimal_emergent}
	Suppose $\beta>1$ is emergent, $\alpha \in (0,1]$, and $\phi\in \Holder{\alpha}(I)\smallsetminus \Crit^\alpha(\beta)$. Then there exists $\beta'\in(1,\beta)$ such that
	$\mpe_{\beta,\gamma}(\phi) = \mpe ( U_\beta , \phi )$
	for all $\gamma\in(\beta',\beta)$.
\end{cor}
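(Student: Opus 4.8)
The plan is to derive Corollary~\ref{l_case_two_one_optimal_emergent} directly from Lemma~\ref{l_case_two_one_optimal}, using the characterisation of $\Crit^\alpha(\beta)$ for emergent $\beta$ supplied by Lemma~\ref{emergent_e_alpha_lemma}. The key observation is that the hypothesis $\phi\in\Holder{\alpha}(I)\smallsetminus\Crit^\alpha(\beta)$ should be translated, via the equivalence of (i) and (iii) in Lemma~\ref{emergent_e_alpha_lemma}, into a statement about the support of \emph{some} maximizing measure not containing the point $1$, which is precisely the hypothesis needed to invoke Lemma~\ref{l_case_two_one_optimal}.

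So the proof would proceed as follows. First, I would note that since $\beta>1$ is emergent and $\alpha\in(0,1]$, Proposition~\ref{p_coding_mpe_relation}~(iii) guarantees that $\Mmax(U_\beta,\phi)\neq\emptyset$ (it equals $H_\beta(\Mmax(\sigma|_{X_\beta},\phi\circ h_\beta))$, which is non-empty by weak$^*$ compactness of $\MMM(X_\beta,\sigma)$ and continuity of $h_\beta$). Then, because $\phi\notin\Crit^\alpha(\beta)$, the contrapositive of the implication (iii) $\Rightarrow$ (i) in Lemma~\ref{emergent_e_alpha_lemma} tells us that $1\notin\supp\mu$ for \emph{every} $\mu\in\Mmax(U_\beta,\phi)$; in particular, picking any single $\mu\in\Mmax(U_\beta,\phi)$, we have $1\notin\supp\mu$. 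At this point the hypotheses of Lemma~\ref{l_case_two_one_optimal} are satisfied verbatim, so there exists $\beta'\in(1,\beta)$ with $\mpe_{\beta,\gamma}(\phi)=\mpe(U_\beta,\phi)$ for all $\gamma\in(\beta',\beta)$, which is exactly the desired conclusion.

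I do not expect any genuine obstacle here: this corollary is a straightforward specialisation, and essentially all the work has already been done in Lemma~\ref{emergent_e_alpha_lemma} (which itself relies on the Ma\~n\'e lemma, the revelation theorem, and the minimality of $\bigl(\overline{\cO_\beta'(1)},U_\beta\bigr)$ for emergent $\beta$) and in Lemma~\ref{l_case_two_one_optimal} (which relies on Lemma~\ref{support_invariant_set} and Lemma~\ref{H beta gamma}). The only minor point requiring a word of care is confirming that $\Mmax(U_\beta,\phi)$ is non-empty so that ``$1\notin\supp\mu$ for some $\mu$'' is a meaningful statement one can actually feed into Lemma~\ref{l_case_two_one_optimal}; this is handled by Proposition~\ref{p_coding_mpe_relation}~(iii). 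Below is the proof.

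\begin{proof}
	Since $\beta>1$ and $\phi\in\Holder{\alpha}(I)\subseteq C(I)$, Proposition~\ref{p_coding_mpe_relation}~(iii) gives $\Mmax(U_\beta,\phi)=H_\beta(\Mmax(\sigma|_{X_\beta},\phi\circ h_\beta))\neq\emptyset$, so we may choose some $\mu\in\Mmax(U_\beta,\phi)$. As $\phi\notin\Crit^\alpha(\beta)$ and $\beta$ is emergent, the equivalence of statements~(i) and~(iii) in Lemma~\ref{emergent_e_alpha_lemma} implies that $1\notin\supp\mu$. The hypotheses of Lemma~\ref{l_case_two_one_optimal} are therefore satisfied, and we conclude that there exists $\beta'\in(1,\beta)$ such that $\mpe_{\beta,\gamma}(\phi)=\mpe(U_\beta,\phi)$ for all $\gamma\in(\beta',\beta)$.
\end{proof}
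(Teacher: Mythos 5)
Your proof is correct and follows essentially the same route as the paper's own: invoke Proposition~\ref{p_coding_mpe_relation}~(iii) for non-emptiness of $\Mmax(U_\beta,\phi)$, use the (i)$\Leftrightarrow$(iii) equivalence in Lemma~\ref{emergent_e_alpha_lemma} to conclude $1\notin\supp\mu$, then apply Lemma~\ref{l_case_two_one_optimal}.
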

\begin{proof}
	Proposition~\ref{p_coding_mpe_relation}~(iii) implies $\Mmax(U_\beta,\phi)\neq \emptyset$. Since $\phi\notin \Crit^\alpha(\beta)$, Lemma~\ref{emergent_e_alpha_lemma} implies that
	if $\mu\in \Mmax(U_\beta,\phi)$ 
	then $1\notin \supp \mu$.
	The corollary now follows from Lemma~\ref{l_case_two_one_optimal}.
\end{proof}

\subsection{The regular set $\RLS{\texorpdfstring{^\alpha(\beta)}{(β)}}$}\label{subsec_U_alpha_beta}

In this subsection
we establish a key result, the Dense Regular Functions Theorem (Theorem~\ref{p_density_local_locking}), asserting the density of the so-called \emph{regular set} in $\Holder{\alpha}(I)$.

\begin{notation}\label{d_cU_beta}
	For each $\beta>1$, let
    $\Sim(\beta)$
    denote the set of those simple beta-numbers contained in the interval $(1,\beta)$.
    \end{notation}

    \begin{definition}
	For each $\beta>1$ and each $\alpha\in (0,1]$, define the \emph{regular set}
	$\RLS^\alpha(\beta)\subseteq \Holder{\alpha}(I)$ by
	\begin{equation*}\label{e_def_local_locking}
		\RLS^\alpha(\beta)
		\=
		\bigl\{ \phi\in \Holder{\alpha}(I) :
		\phi|_{H_\beta^{\gamma}}\in \Lock^\alpha\bigl(T_\beta|_{H_\beta^{\gamma}}\bigr)
		 \text{ for all }\gamma\in\Sim(\beta)\bigr\}.
	\end{equation*}
\end{definition}

To prove the Dense Regular Functions Theorem, we will first use that for each $\beta\in(1,+\infty)$, $\Sim(\beta)$ is dense in $(1,\beta)$ by \cite[Theorem~5]{Par60}. Then, by Definitions~\ref{beta shifts} and~\ref{d_classification_beta}, $\gamma\in\Sim(\beta)$ if and only if $\pi_\gamma(1)$ has finitely many nonzero terms and $\gamma\in (1,\beta)$. Note that the function $i_1$, as defined in Definition~\ref{d_X_beta_and_tilde_X_beta}, is 
by Proposition~\ref{p_relation_of_coding}~\ref{p_relation_of_coding__xiii}
a strictly increasing
map from $(1,\beta)$ to $\cB^\N$, hence $\Sim(\beta)$ is countable, and enumerated as $\Sim(\beta)=\{\gamma_1,\,\dots,\,\gamma_n,\,\dots\}$, say. Then for each $\phi \in \Holder{\alpha}(I)$, Theorem~\ref{co} can be used to obtain a sequence of functions $\{\phi_n\}_{n\in \N}$ in $\Holder{\alpha}(I)$ sufficiently close to $\phi$ such that $\phi_n |_{H_\beta^{\gamma_n}} \in \Lock^\alpha \bigl( T_\beta|_{H_\beta^{\gamma_n}} \bigr)$ for each $n \in \N$, and it can be verified that $\{\phi_n\}_{n\in \N}$ converges to a function $\phi_\infty \in \Holder{\alpha}(I)$ approximating $\phi$.

In the proof of Theorem~\ref{p_density_local_locking} below, we will need to extend a H\"older function, defined on a subset of $I$,
to the whole of $I$, without increasing its norm. The following lemma guarantees that this can be done.

\begin{lemma}\label{H extension}
	For $\alpha\in (0,1]$ and $\emptyset \neq \cK \subseteq I$, and $\phi \in C^{0, \alpha}(\cK)$, there exists $\psi \in C^{0, \alpha}(I)$ such that $\psi|_\cK = \phi$ and $\Hnorm{ \alpha, I }{\psi}= \Hnorm{\alpha, \cK}{\phi}$.
\end{lemma}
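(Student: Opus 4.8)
\textbf{Proof plan for Lemma~\ref{H extension}.}
The statement is a classical McShane--Whitney type extension result, so the plan is to construct $\psi$ explicitly by the standard inf-convolution formula and then verify the two required properties. Write $L \= \Hseminorm{\alpha, \cK}{\phi}$ and $M \= \Hnorm{\infty, \cK}{\phi}$, so that $\Hnorm{\alpha, \cK}{\phi} = L + M$. If $L = 0$ then $\phi$ is constant on $\cK$ and we simply take $\psi$ to be that constant, so assume $L > 0$. Define
\begin{equation*}
	\psi_0(x) \= \inf_{z \in \cK}\bigl( \phi(z) + L\, d(x,z)^\alpha \bigr), \quad x \in I.
\end{equation*}
First I would check that $\psi_0$ is finite (bounded below by $-M$ since $\phi \ge -M$, and bounded above by $M$ by taking any fixed $z_0 \in \cK$ and $d \le 1$... actually one only gets $\psi_0(x) \le \phi(z_0) + L$, which is enough for finiteness; a sharper bound comes later). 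Next, $\psi_0|_\cK = \phi$: for $x \in \cK$ the choice $z = x$ gives $\psi_0(x) \le \phi(x)$, while for any $z \in \cK$ the $\alpha$-H\"older bound on $\phi$ gives $\phi(z) + L\, d(x,z)^\alpha \ge \phi(x)$, using the elementary inequality $d(x,z)^\alpha \ge$ the relevant comparison (here it is immediate since $\Hseminorm{\alpha,\cK}{\phi} = L$ means $\abs{\phi(x) - \phi(z)} \le L\, d(x,z)^\alpha$). Then $\psi_0(x) \ge \phi(x)$, so equality holds.

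Second, I would verify that $\Hseminorm{\alpha, I}{\psi_0} = L$. For $x, y \in I$ and any $z \in \cK$, using the key inequality $d(x,z)^\alpha \le d(y,z)^\alpha + d(x,y)^\alpha$ (valid for $\alpha \in (0,1]$, since $t \mapsto t^\alpha$ is subadditive and the triangle inequality holds for $d$), we get $\phi(z) + L\, d(x,z)^\alpha \le \phi(z) + L\, d(y,z)^\alpha + L\, d(x,y)^\alpha$; taking the infimum over $z \in \cK$ yields $\psi_0(x) \le \psi_0(y) + L\, d(x,y)^\alpha$, and by symmetry $\abs{\psi_0(x) - \psi_0(y)} \le L\, d(x,y)^\alpha$, so $\Hseminorm{\alpha, I}{\psi_0} \le L$; the reverse inequality follows since $\psi_0$ extends $\phi$ and $\Hseminorm{\alpha, \cK}{\phi} = L$. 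The only remaining issue is the sup-norm: one does \emph{not} in general have $\Hnorm{\infty, I}{\psi_0} = M$; the inf-convolution can undershoot below $-M$ is impossible (it is $\ge \inf \phi \ge -M$), but it is automatically $\le \sup_{\cK}\phi \le M$ on $\cK$ and could a priori exceed... no: for $x \notin \cK$, $\psi_0(x) \le \phi(z_0) + L\, d(x,z_0)^\alpha$ which may exceed $M$. So I would \emph{truncate}: set $\psi \= \max\{\min\{\psi_0, M\}, -M\}$. Truncation to the interval $[-M, M]$ does not increase the H\"older seminorm (composition with a $1$-Lipschitz map), does not change $\psi_0$ on $\cK$ (since there $\abs{\psi_0} = \abs{\phi} \le M$), and forces $\Hnorm{\infty, I}{\psi} \le M$; but also $\Hnorm{\infty, I}{\psi} \ge \Hnorm{\infty, \cK}{\psi} = \Hnorm{\infty, \cK}{\phi} = M$, so equality. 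Similarly $\Hseminorm{\alpha, I}{\psi} \le L$ and $\ge \Hseminorm{\alpha, \cK}{\phi} = L$. Hence $\Hnorm{\alpha, I}{\psi} = L + M = \Hnorm{\alpha, \cK}{\phi}$, as required.

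The argument is essentially routine once the right formula is chosen; the only genuinely delicate point — and the thing I would be most careful about — is that the extension must preserve the \emph{full} H\"older norm $\Hnorm{\alpha, \cK}{\phi} = \Hseminorm{\alpha, \cK}{\phi} + \Hnorm{\infty, \cK}{\phi}$, not merely the seminorm; this is exactly why the truncation step to $[-\Hnorm{\infty,\cK}{\phi}, \Hnorm{\infty,\cK}{\phi}]$ is needed and why one must check that truncation is harmless on $\cK$ and non-increasing for both the seminorm and the sup-norm. Everything else (finiteness of the infimum, the subadditivity inequality $d(x,z)^\alpha \le d(y,z)^\alpha + d(x,y)^\alpha$, the fact that $\min$ and $\max$ with constants are $1$-Lipschitz) is standard and I would state it briefly without belabouring the estimates.
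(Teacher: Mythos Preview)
Your proof is correct and complete. The paper, by contrast, does not give a proof at all: it simply cites the McShane extension theorem from Weaver's \textit{Lipschitz algebras} \cite[Theorem~1.33]{We18} and moves on. So your approach is genuinely different in presentation---you supply the explicit inf-convolution construction and the truncation step, whereas the paper defers entirely to a reference. What your approach buys is self-containment: a reader sees exactly why both the seminorm and the sup-norm are preserved, and in particular your emphasis on the truncation step (needed to control $\Hnorm{\infty,I}{\psi}$, not just $\Hseminorm{\alpha,I}{\psi}$) makes explicit something the citation hides. What the paper's approach buys is brevity, since this is a standard auxiliary fact and not the focus of the article.
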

\begin{proof}
	This follows immediately from the McShane extension theorem (see e.g.~\cite[Theorem~1.33]{Wea18}).
\end{proof}

Now we are ready to state and prove Theorem~\ref{p_density_local_locking}:

\begin{theorem}[Dense Regular Functions]\label{p_density_local_locking}
	For all $\beta>1$, $\alpha\in(0,1]$, the
    regular set $\RLS^\alpha(\beta)$ is dense in $\Holder{\alpha}(I)$.
\end{theorem}
\begin{proof}
	Let $\myepsilon>0$ and $\phi \in C^{0, \alpha}(I)$ be arbitrary. Since $\Sim(\beta)$ is countable, we write
	\begin{equation*}
		\Sim(\beta)=\{\gamma_1,\,\dots,\,\gamma_n,\,\dots\}.
	\end{equation*}
	
	We will recursively construct a sequence of functions $\{\phi_n\}_{n\in\N}$ in $\Holder{\alpha}(I)$, 
	and two sequences of positive numbers $\{\delta_n\}_{n\in \N}$ and $\{e_n\}_{n\in\N}$ below:
	
	\smallskip
	\emph{Base step.} Define $\phi_0 \= \phi$ and $\delta_0 \= 1$.
	
	\smallskip
	\emph{Recursive step.} For $n \in \N$, assume that $\phi_0, \, \dots, \, \phi_{n-1}, \, \delta_0, \, \dots, \, \delta_{n-1}$ are defined. Define 
	\begin{equation}\label{e_e_n}
		e_n \= \min \bigl\{ 2^{-n}\myepsilon, \, 2^{-n}\delta_0, \, 2^{-n+1}\delta_1, \, \dots, \, 2^{-1}\delta_{n-1}  \bigr\}.
	\end{equation}
	Since $\gamma_n$ is a simple beta-number, Proposition~\ref{l_property_H_gamma} gives that $T_\beta|_{H_\beta^{\gamma_n}}$ is Lipschitz continuous, open, and distance-expanding, and then Contreras' Individual TPO theorem
	(Theorem~\ref{co}) guarantees that there exists $\psi_n\in \Holder{\alpha}\bigl(H_\beta^{\gamma_n}\bigr)$ satisfying
	\begin{equation*}\label{e_est_psi_n_}
		\psi_n \in \Lock^\alpha\bigl(T_\beta|_{H_\beta^{\gamma_n}} \bigr)
        \quad \text{ and } \quad
        \Hnormbig{\alpha,H_\beta^{\gamma_n}}{ \phi_{n-1}|_{H_\beta^{\gamma_n}} -\psi_n }<e_n.
	\end{equation*}
	Applying Lemma~\ref{H extension}, for the subset $H_\beta^{\gamma_n}\subseteq I$ and function $\phi_{n-1}|_{H_\beta^{\gamma_n}} - \psi_n$, we obtain an $\alpha$-H\"older extension function $\Phi_n$ defined on all of $I$, and with the same  $\alpha$-H\"older norm, in other words, there exists
	$\Phi_n\in \Holder{\alpha}(I)$ satisfying 
	$\Phi_n|_{H_\beta^{\gamma_n}} = \phi_{n-1}|_{H_\beta^{\gamma_n}} - \psi_n$ and $\Hnorm{\alpha,I}{\Phi_n }<e_n$. Defining $\phi_n\=\phi_{n-1}-\Phi_n$ then gives
	\begin{equation}\label{e_est_varphi_and_varphi_n}
		\phi_n|_{H_\beta^{\gamma_n}} = \psi_n
        \quad \text{ and } \quad
        \Hnorm{\alpha,I}{ \phi_{n-1} -\phi_n }<e_n.
	\end{equation}
	Finally, $\delta_n$ is defined to be any value such that	
	\begin{equation}\label{e_local_lock_neighbourhood}
		\bigl\{\Phi\in \Holder{\alpha}\bigl(H_\beta^{\gamma_n}\bigr): \Hnormbig{\alpha,H_\beta^{\gamma_n}}{\Phi-\phi_n|_{H_\beta^{\gamma_n}}}<\delta_n\bigr\} \subseteq \Lock^\alpha\bigl( T_\beta|_{H_\beta^{\gamma_n}}\bigr),
	\end{equation}
	where again Theorem~\ref{co} and Proposition~\ref{l_property_H_gamma} guarantee that such a $\delta_n$ exists. 
	Since each of $e_n$, $\phi_n$, and $\delta_n$ has been defined, the recursive step is complete.	
	
	By (\ref{e_est_varphi_and_varphi_n}) and (\ref{e_e_n}), $\{\phi_n\}_{n\in\N}$ converges uniformly to some $\phi_\infty\in \Holder{\alpha}(I)$, and
	\begin{equation}\label{e_est_varphi_and_varphi_infty}
		\Hnorm{\alpha,I}{\phi-\phi_\infty}\leq\sum_{n=1}^{+\infty}\Hnorm{\alpha,I}{ \phi_{n-1} -\phi_n }<\sum_{n=1}^{+\infty}e_n\leq \myepsilon.
	\end{equation}
	For each $n\in \N$,   from (\ref{e_e_n}) it follows that $\sum_{i=n+1}^{+\infty} e_i \le \delta_n$
	and then
	\begin{equation}\label{e_est_Hnorm_varphi_and_varphi_infty}
		\Hnormbig{\alpha,H_\beta^{\gamma_n}}{\phi_n|_{H_\beta^{\gamma_n}}-\phi_\infty|_{H_\beta^{\gamma_n}}}
        \leq\Hnorm{\alpha,I}{\phi_n-\phi_\infty}
        \leq\sum_{i=n}^{+\infty}\Hnorm{\alpha,I}{ \phi_{i} -\phi_{i+1} }
        <\sum_{i=n+1}^{+\infty}e_i
        \leq \delta_n.
	\end{equation} 
	But (\ref{e_local_lock_neighbourhood}) and (\ref{e_est_Hnorm_varphi_and_varphi_infty})
	imply that  $\phi_\infty|_{H_\beta^{\gamma_n}}  \in \Lock^\alpha(T_\beta|_{H_\beta^{\gamma_n}})$ for all $n\in\N$, in other words, $\phi_{\infty}\in \RLS^\alpha(\beta)$. But $\myepsilon>0$ was arbitrary, so the result follows.
\end{proof}

\begin{cor}\label{p_density_local_locking_corollary}
	For all $\beta>1$ and $\alpha\in(0,1]$, the set $\RLS^\alpha(\beta)\smallsetminus \Crit^\alpha(\beta)$ is a dense subset of $\Holder{\alpha}(I)\smallsetminus \Crit^\alpha(\beta)$.
\end{cor}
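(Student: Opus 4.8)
The plan is to deduce Corollary~\ref{p_density_local_locking_corollary} from Proposition~\ref{p_density_local_locking} together with the fact (Lemma~\ref{e_alpha_alternative_defn_lemma}) that $\Crit^\alpha(\beta)$ is a \emph{closed} subset of $\Holder{\alpha}(I)$. The point is purely topological: if $D$ is dense in a space $Y$ and $C\subseteq Y$ is closed, then $D\smallsetminus C$ is dense in $Y\smallsetminus C$. Here $Y=\Holder{\alpha}(I)$, $D=\RLS^\alpha(\beta)$, and $C=\Crit^\alpha(\beta)$.

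First I would fix an arbitrary $\phi\in \Holder{\alpha}(I)\smallsetminus \Crit^\alpha(\beta)$ and $\epsilon>0$. Since $\Crit^\alpha(\beta)$ is closed by Lemma~\ref{e_alpha_alternative_defn_lemma}, its complement is open, so there exists $r\in(0,\epsilon]$ with the open ball $B_{\Holder{\alpha}(I)}(\phi,r)\subseteq \Holder{\alpha}(I)\smallsetminus\Crit^\alpha(\beta)$; that is, every $\alpha$-H\"older function within $r$ of $\phi$ still lies outside $\Crit^\alpha(\beta)$. Next, by Proposition~\ref{p_density_local_locking}, $\RLS^\alpha(\beta)$ is dense in $\Holder{\alpha}(I)$, so there exists $\psi\in\RLS^\alpha(\beta)$ with $\Hnorm{\alpha}{\psi-\phi}<r$. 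Then $\psi$ lies in the ball $B_{\Holder{\alpha}(I)}(\phi,r)$, hence $\psi\notin\Crit^\alpha(\beta)$, so $\psi\in\RLS^\alpha(\beta)\smallsetminus\Crit^\alpha(\beta)$ and $\Hnorm{\alpha}{\psi-\phi}<\epsilon$. Since $\phi$ and $\epsilon$ were arbitrary, $\RLS^\alpha(\beta)\smallsetminus\Crit^\alpha(\beta)$ is dense in $\Holder{\alpha}(I)\smallsetminus\Crit^\alpha(\beta)$, which is the assertion.

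There is essentially no obstacle here: the only substantive inputs are Proposition~\ref{p_density_local_locking} (density of $\RLS^\alpha(\beta)$) and the closedness of $\Crit^\alpha(\beta)$ recorded in Lemma~\ref{e_alpha_alternative_defn_lemma}, and the rest is the elementary observation that intersecting a dense set with an open set yields a set dense in that open set. If one wished to avoid citing closedness of $\Crit^\alpha(\beta)$ and instead argue directly, one could note that membership of $\Crit^\alpha(\beta)$ is characterised by the closed condition $\int\!\phi\,\mathrm{d}\mu = Q(U_\beta,\phi)$ for all $\mu\in H_\beta(\CQG(\beta))$ (see (\ref{e_alpha_alternative_defn_eqn})), which again gives the needed openness of the complement; but invoking Lemma~\ref{e_alpha_alternative_defn_lemma} directly is cleaner.
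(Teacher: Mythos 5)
Your proof is correct and follows exactly the same route as the paper: density of $\RLS^\alpha(\beta)$ from Proposition~\ref{p_density_local_locking} combined with closedness of $\Crit^\alpha(\beta)$ from Lemma~\ref{e_alpha_alternative_defn_lemma}, then the elementary topological fact that a dense set intersected with an open set is dense in that open set. You merely spell out the $\epsilon$--$r$ details that the paper leaves as ``it follows immediately.''
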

\begin{proof}
	 This follows immediately from the fact that
	$\RLS^\alpha(\beta)$ is dense in $\Holder{\alpha}(I)$
	by Theorem~\ref{p_density_local_locking},
	and the fact that $\Crit^\alpha(\beta)$ is a closed subset of  $\Holder{\alpha}(I)$
	by Lemma~\ref{e_alpha_alternative_defn_lemma}.
\end{proof}

\subsection{A structural theorem for emergent parameters}\label{subsec_proof_of_emergent}

In this subsection we shall be primarily concerned with those parameters $\beta$ that are emergent.
For such $\beta$ it remains an open question as to whether $U_\beta$
has the typical periodic optimization property, nevertheless we shall establish a structural theorem
(Theorem~\ref{t_TPO_thm_emergent}) that identifies the critical set $\Crit^\alpha(\beta)$ as the only possible obstacle to TPO.
We first require:

\begin{cor}\label{u_alpha_beta_subset_p_alpha_beta}
	If $\beta>1$ is emergent and $\alpha\in(0,1]$, then
	\begin{equation}\label{u_e_p}
		\RLS^\alpha(\beta) \smallsetminus \Crit^\alpha(\beta) \subseteq \sP^\alpha(U_\beta) \smallsetminus \Crit^\alpha(\beta).
	\end{equation}
\end{cor}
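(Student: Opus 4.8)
The plan is to take an arbitrary $\phi\in\RLS^\alpha(\beta)\smallsetminus\Crit^\alpha(\beta)$ and show it lies in $\sP^\alpha(\beta)$, i.e.~produce a $(U_\beta,\phi)$-maximizing measure supported on a periodic orbit of $U_\beta$. Since $\phi\notin\Crit^\alpha(\beta)$ and $\beta$ is emergent, Corollary~\ref{l_case_two_one_optimal_emergent} gives a $\beta'\in(1,\beta)$ with $\mpe_{\beta,\gamma}(\phi)=\mpe(U_\beta,\phi)$ for all $\gamma\in(\beta',\beta)$. Because $\Sim(\beta)$ is dense in $(1,\beta)$ (Parry, \cite[Theorem~5]{Pa60}), I can fix some simple beta-number $\gamma\in(\beta',\beta)$, so that $\mpe\bigl(T_\beta|_{H_\beta^\gamma},\phi|_{H_\beta^\gamma}\bigr)=\mpe(U_\beta,\phi)$.

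Next I would exploit membership of $\RLS^\alpha(\beta)$: by Definition~\ref{d_cU_beta}, $\phi|_{H_\beta^\gamma}\in\Lock^\alpha\bigl(T_\beta|_{H_\beta^\gamma}\bigr)\subseteq\sP^\alpha\bigl(T_\beta|_{H_\beta^\gamma}\bigr)$, so there is a $\bigl(T_\beta|_{H_\beta^\gamma},\phi|_{H_\beta^\gamma}\bigr)$-maximizing measure $\nu$ supported on a periodic orbit $\cO$ of $T_\beta|_{H_\beta^\gamma}$. Then $\int\phi\,\mathrm{d}\nu=\mpe_{\beta,\gamma}(\phi)=\mpe(U_\beta,\phi)$, and $\nu\in\MMM(I,T_\beta)\subseteq\MMM(I,U_\beta)$ by Proposition~\ref{p_relation_T_beta_and_wt_T_beta}~(iii), so $\nu\in\Mmax(U_\beta,\phi)$. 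It remains to observe that $\cO$ is also a periodic orbit of $U_\beta$: since $T_\beta$ and $U_\beta$ agree off $D_\beta$, and a $T_\beta$-periodic orbit inside $H_\beta^\gamma$ avoids the point $1$ (hence, being forward-invariant and finite, avoids $D_\beta\subseteq Z_\beta$ — here one uses that $\cO=T_\beta(\cO)$ cannot meet $D_\beta$ without eventually hitting $0$, which is impossible for a periodic orbit other than $\{0\}$, and $\{0\}$ is itself a $U_\beta$-periodic orbit), we get $\cO\in\Per(U_\beta)$ by Proposition~\ref{p_relation_T_beta_and_wt_T_beta}~(ii) and $\nu=\mu_\cO$ is supported on a periodic orbit of $U_\beta$. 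Thus $\phi\in\sP^\alpha(\beta)$, and since $\phi\notin\Crit^\alpha(\beta)$ by hypothesis, $\phi\in\sP^\alpha(\beta)\smallsetminus\Crit^\alpha(\beta)$, giving the inclusion (\ref{u_e_p}).

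The main obstacle is the precise bookkeeping around the point $1$ and the set $D_\beta$: one must be careful that the periodic orbit furnished by Contreras' theorem on $H_\beta^\gamma$ genuinely lifts to a $U_\beta$-periodic orbit, which requires invoking $1\notin H_\beta^\gamma$ (from Lemma~\ref{l_bi_lipschitz_S_gamma_H_gamma}, or directly from $\pi_\gamma^*(1)\prec\pi_\beta^*(1)$ and (\ref{e_def_S_beta})) together with the characterisation of $\Per(U_\beta)$ versus $\Per(T_\beta)$ in Proposition~\ref{p_relation_T_beta_and_wt_T_beta}~(ii). A minor secondary point is confirming that the value of $\gamma$ can be chosen simultaneously in $\Sim(\beta)$ and in $(\beta',\beta)$, which is immediate from density of $\Sim(\beta)$; and that the identification $\mpe_{\beta,\gamma}(\phi)=\mpe\bigl(T_\beta|_{H_\beta^\gamma},\phi|_{H_\beta^\gamma}\bigr)$ is exactly (\ref{e_def_beta_gamma}). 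Everything else is a routine chase through the cited results.
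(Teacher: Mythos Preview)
Your proposal is correct and follows essentially the same route as the paper's proof: invoke Corollary~\ref{l_case_two_one_optimal_emergent} to get $\mpe_{\beta,\gamma}(\phi)=\mpe(U_\beta,\phi)$ for $\gamma$ close to $\beta$, choose a simple beta-number $\gamma$ in that range by density of $\Sim(\beta)$, use $\phi\in\RLS^\alpha(\beta)$ to obtain a periodic maximizing measure on $H_\beta^\gamma$, and lift it to $\MMM(I,U_\beta)$ via Proposition~\ref{p_relation_T_beta_and_wt_T_beta}~(iii). Your extra care in checking that the $T_\beta$-periodic orbit is also a $U_\beta$-periodic orbit is handled in the paper by the inclusion $\Per(T_\beta)\subseteq\Per(U_\beta)$ from Proposition~\ref{p_relation_T_beta_and_wt_T_beta}~(ii), so you could streamline that paragraph considerably.
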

\begin{proof}
	Suppose $\phi\in \RLS^\alpha(\beta) \smallsetminus \Crit^\alpha(\beta)$.
        By 	Corollary~\ref{l_case_two_one_optimal_emergent},
        there is a simple beta-number $\gamma_0\in (1,\beta)$ such that
	\begin{equation}\label{gamma_N_beta_phi_emergent}
		\mpe_{\beta,\gamma_0}(\phi) = \mpe ( U_\beta , \phi ) .
	\end{equation}
	The fact that $\phi\in \RLS^\alpha(\beta)$ implies that
	$
	\phi|_{H_\beta^{\gamma_0}}\in \Lock^\alpha\bigl(T_\beta|_{H_\beta^{\gamma_0}}\bigr)
	\subseteq
	\sP^\alpha \bigl(T_\beta|_{H_\beta^{\gamma_0}}\bigr)$.
	So there exists a periodic measure $\mu\in \MMM(I,T_\beta)\subseteq \MMM(I,U_\beta)$ (see Proposition~\ref{p_relation_T_beta_and_wt_T_beta}~(iii)) such that
	\begin{equation}\label{integral_phi_mu_equal_emergent}
		\int\! \phi\, \mathrm{d}\mu = \mpe_{\beta,\gamma_0}(\phi) .
	\end{equation}
	So from (\ref{gamma_N_beta_phi_emergent}) and (\ref{integral_phi_mu_equal_emergent}) we see that the periodic measure $\mu$
	satisfies $\int\! \phi\, \mathrm{d}\mu= Q(U_\beta,\phi)$,
	and therefore $\phi\in \sP^\alpha(U_\beta)$.
	But  $\phi\notin  \Crit^\alpha(\beta)$, so 
    (\ref{u_e_p}) follows.
\end{proof}

In the sequel, it will be convenient to articulate the relationship between the set of functions where at least one maximizing measure is periodic, and its subset consisting of those functions whose maximizing measure is unique, periodic, and stably maximizing under perturbations.
For this, recall that
$\sP^\alpha(U_\beta)$
denotes the set of those $\phi \in \Holder{\alpha}(I)$ with a $(U_\beta,\phi)$-maximizing measure supported on a periodic orbit of $U_\beta$,
and that $\Lock^\alpha(U_\beta)$
    consists of those functions 
    $\phi \in \sP^\alpha(U_\beta)$
    with $\card \Mmax(U_\beta ,  \phi) = 1$ and $\Mmax(U_\beta,  \phi) = \Mmax(U_\beta,  \psi)$ for all $\psi \in  \Holder{\alpha}(I)$ sufficiently close to $\phi$ in $\Holder{\alpha}(I)$.    
We wish to  show that $\Lock^\alpha(U_\beta)$ is dense in $ \sP^\alpha(U_\beta)$ (see Theorem~\ref{3.1} below).
A statement analogous to this 
appeared as Remark~4.5 in \cite{YH99}
for maps with hyperbolicity,
and as Proposition~1 in the unpublished note \cite{BZ15} for continuous maps.
Our method of proof, using ideas from \cite{BZ15}, begins with the following lemma.

\begin{lemma}\label{3.3}
	Suppose $\beta>1$, $\alpha \in (0,1]$, and let $\mu\in\MMM ( I, U_\beta )$ be supported on a periodic orbit $\cO^*$ of $U_\beta$. Then there exists $C_\mu >0$ such that for all $\nu\in\MMM (I,U_\beta )$ and  
	$\phi\in \Holder{\alpha} (I )$, 
	\begin{equation}\label{eq 3.2}
		\int_I \! \phi \, \mathrm{d}\nu
		\le\int_I \! \phi \, \mathrm{d} \mu+C_\mu \Hseminorm{\alpha,I}{\phi} \int_I \! d (\cdot,\cO^* )^\alpha \, \mathrm{d} \nu.
	\end{equation}
\end{lemma}

\begin{proof}
	Let us write $n \= \card\cO^*$.
	
	\smallskip
	\emph{Case I.} If $n=1$ then $\cO^*= \{y \}$ for some $y\in I$, and 
	(\ref{eq 3.2}) holds with $C_\mu=1$ because
	$\int_I \! \phi\, \mathrm{d}\nu
	\le\int_I \! (\phi (y ) + \Hseminorm{\alpha,I}{\phi}  d (\cdot,y ) ^\alpha) \, \mathrm{d}\nu 
	=\int_I \! \phi\, \mathrm{d}\mu+ \Hseminorm{\alpha,I}{\phi} \int_I\! { d (\cdot,\cO^* ) }^\alpha\, \mathrm{d}\nu$.

    \smallskip
	
	\emph{Case II}. If $n>1$ then by ergodic decomposition, it suffices to prove (\ref{eq 3.2}) for every ergodic $\nu \in \MMM (I, U_\beta )$. Fixing an arbitrary ergodic $\nu\in \MMM (I, U_\beta )$, the ergodic theorem
	implies that there exists $a\in I$ with
	\begin{align}
		\int_I \! \phi \, \mathrm{d}\nu &= \lim_{k\to+\infty} \frac{1}{k} S^{U_\beta}_k\phi(a) \quad \text{ and} \label{eq 3..4} \\
		\int_I \! d(\cdot , \cO^*)^\alpha \, \mathrm{d}\nu&=\lim_{k\to+\infty}\frac{1}{k}\sum_{i=0}^{k-1} d \bigl( U_\beta^i (a), \cO^* \bigr)^\alpha .\label{eq 3.4}
	\end{align}
	\emph{Claim.} There exists $C_\mu >0$ 
	and a sequence $\underline{y}= \{y_i\}_{i=-1}^{+\infty}$ with entries from $\cO^*$ such that
	\begin{equation}\label{eq 3.7}
		\lim_{k\to+\infty}\frac{1}{k}\card \{i\in [0,k-1]\cap \N_0:y_i=y \}=\frac{1}{n} \quad \text{ for each }  y\in \cO^* \text{ and }
	\end{equation}
	\begin{equation}\label{eq 3.8}
		\Absbig{ U_\beta^i (a )-y_i }\le C_\mu^{1/\alpha} d \bigl(U_\beta^i (a ),\cO^*\bigr) \quad \text{ for each } i\in\N_0.
	\end{equation}
	Note that a consequence of this Claim is, by (\ref{eq 3..4}), (\ref{eq 3.8}), (\ref{eq 3.7}), and (\ref{eq 3.4}), that
	\begin{equation*}
		\begin{aligned}
			\int_I\! \phi\, \mathrm{d}\nu
			&=\lim_{k\to+\infty}\frac{1}{k}\sum_{i=0}^{k-1}\phi \bigl(U_\beta^i (a ) \bigr)
			\le\lim_{k\to+\infty}\frac{1}{k}\sum_{i=0}^{k-1} \bigl(\phi (y_i )+ \Hseminorm{\alpha,I}{\phi}  \Absbig{ U_\beta^i (a )-y_i }^\alpha \bigr)\\
			&\le\lim_{k\to+\infty}\frac{1}{k}\sum_{i=0}^{k-1} \bigl(\phi (y_i )+C_\mu \Hseminorm{\alpha,I}{\phi} d \bigl(U_\beta^i (a ), \cO^* \bigr)^\alpha \bigr) & \\
			&= \frac{1}{n}\sum_{y\in \cO^*}\phi (y )+C_\mu \Hseminorm{\alpha,I}{\phi} \lim_{k\to+\infty} \frac{1}{k}\sum_{i=0}^{k-1} d \bigl(U_\beta^i (a ), \cO^* \bigr)^\alpha \\
			&=\int_I \! \phi\, \mathrm{d}\mu+C_\mu \Hseminorm{\alpha,I}{\phi} \int_I \! d (\cdot , \cO^* )^\alpha \, \mathrm{d} \nu .
		\end{aligned}
	\end{equation*}
	So the required inequality (\ref{eq 3.2}) will hold, and the lemma will follow.
	
	\smallskip
	\emph{Proof of Claim.} Our discussion will be divided into $2$ cases.
    Recall (cf.~Subsection~\ref{sct_Notation}) that since
    $\card\cO^*>1$, the minimum interpoint distance
    is given by
	$\Delta (\O^*)  = \min\{ d(x, y) : x, \, y \in \O^*, \, x \neq y \}$.

	\smallskip
    
	\emph{Case 1.} Assume that $\cO^*$ is not the orbit of $1$ under $U_\beta$. By Proposition~\ref{p_relation_T_beta_and_wt_T_beta}~(i),  $D_\beta\cap \cO^*=\emptyset$. Set 
	\begin{equation}\label{eq 3.5}
		\delta\= ( 1/2 ) \Delta(\cO^*)
	\end{equation}
	and $\myepsilon^*\=(1/2)\beta^{-n}d(\cO^*,D_\beta)$, so that $U_\beta^k\big|_{B(\cO^*,\myepsilon^*)}$ is continuous for all $0\le k\le n-1$. For each $x\in\cO^*$, there exists $\myepsilon_x\in (0,\myepsilon^*)$ such that $\Absbig{U_\beta^i(x)-U_\beta^i(y)}<\delta$ for all $y\in(x-\myepsilon_x,x+\myepsilon_x)$ and $0\le i\le n-1$. Moreover, 
	if $\myepsilon\=\min  \{\myepsilon_x : x\in\cO^* \}$ then 
	\begin{equation}\label{eq 3.6}
		\Absbig{U^k_\beta(x)-U^k_\beta(y)}<\delta
	\end{equation}
	for all $x\in\cO^*$, $y\in B(x,\myepsilon)$, and $0\le k\le n-1$.
	
	The sequence $\underline{y}$ is constructed recursively as follows.
	
	\smallskip
	
	\emph{Base step.} Choose an arbitrary $y_{-1}\in\cO^*$, and mark $y_{-1}$ as a bad point.
	
	\smallskip
	\emph{Recursive step.} For some $t\in \N_0$, assume that $y_{-1}, \, y_0, \, \dots, \, y_{t-1}$ are defined. 
	
	If $d \bigl(U_\beta^t (a ),\cO^* \bigr)<\myepsilon$, choose $y_t\in\cO^*$ such that $d\bigl(U_\beta^t (a ),\cO^*\bigr)=\Absbig{ U_\beta^t (a )-y_t }$, set $y_{t+i} \= U_\beta^i(y_t)$ for each $1\le i\le n-1$, and mark $y_t, \, y_{t+1}, \, \dots, \, y_{t+n-1}$ as good points.
	
	If $d\bigl(U_\beta^t (a ),\cO^*\bigr)\ge\myepsilon$, let $p_t$ denote the number of bad points in the set $ \{y_{-1}, \, y_0, \, \dots, \, y_{t-1} \}$, then set $y_t\=U_\beta^{p_t} (y_{-1} )$ and mark $y_t$ as a bad point.
	
	\smallskip
	Note that the required (\ref{eq 3.7}) is immediate from the above construction. 
	To prove  (\ref{eq 3.8}), note that
	for each bad point $y_i$, $i\in \N_0$, we have $d\bigl(U_\beta^i (a ),\cO^*\bigr)\ge\myepsilon \ge \myepsilon d\bigl(U_\beta^i (a ),y_i\bigr)$, and for each good point $y_i$, 
	we obtain $d\bigl(U_\beta^i (a ),\cO^*\bigr)= d\bigl(U_\beta^i (a ),y_i\bigr)$
	by (\ref{eq 3.5}),  (\ref{eq 3.6}), and our construction.
	Therefore (\ref{eq 3.8}) holds by choosing $C_\mu \= \max\{1, \, \myepsilon^{-\alpha}\}$, so Claim is proved for Case~1.

	\smallskip
    
	\emph{Case 2.} Assume that $\cO^*$ is the orbit of $1$ under $U_\beta$. 
	By Remark~\ref{r_after_classification_of_beta}, $\beta$ is a simple beta-number. By Proposition~\ref{p_relation_T_beta_and_wt_T_beta}~(i), $D_\beta\cap \cO^*_\beta(1) = U_\beta^{-1}(1) \cap \cO^* =\bigl\{U^{n-1}_\beta(1)\bigr\}$ and $U_\beta^{-1}(0) = \{0\}$. Set 
	\begin{equation}\label{eq_delta_lemma_locking_simple}
		\delta'\=\min\{ (1/2) \Delta(\cO^*), \, (1/2) d(0,\cO^*)\}.
	\end{equation} 
	As $U_\beta^{-1}(0) = \{0\}$, then $0\notin \cO^*$, so $\delta'>0$. For each $0 \le i \le n-1$ and
	$z_i\=U^{i}_\beta(1)$, we 
	claim that
	there exists $\myepsilon_i>0$ satisfying the following properties:
	
	\smallskip
	(1) $\Absbig{U_\beta^j(z_i)-U_\beta^j(y)}<\delta'$ for all $y\in(z_i-\myepsilon_i,z_i]$, $0\le j\le n-1$. 
	
	\smallskip
	(2) $\Absbig{U_\beta^j(z_i)-U_\beta^j(y)}<\delta'$ for all $y\in(z_i,z_i+\myepsilon_i)$, $0\le j \le n-i-1$.
	
	\smallskip
	(3) $\Absbig{U_\beta^j(y)}<\delta'$ for all $y\in(z_i,z_i+\myepsilon_i)$, $n-i \le j\le n-1$.
	
	\smallskip
	Indeed, by Lemma~\ref{l_continuity_T_U_on_x}~(i), $\lim_{y\nearrow x} U_\beta^j(y) = U_\beta^j(x)^-$ for each $x\in (0,1]$ and each $j \in \N$. Thus, there exists $\myepsilon_{i,1}>0$ satisfying property~(1). 
	
	Now define $A_i\=\emptyset$ when $i=n-1$ 
	and $A_i\= \bigl\{ z_i,\, \dots,\, U_\beta^{n-i-2}(z_i) \bigr\}$ when $i<n-1$. Note that for every $0\leq i\leq n-1$, $U_\beta = T_\beta$ in a neighbourhood of $A_i$ (see Remark~\ref{r_after_def_beta_expansions}) since $A_i\cap D_\beta=\emptyset$, so
	if $0 \le j\le n-i-1$ then
	by Lemma~\ref{l_continuity_T_U_on_x}~(i), we have $\lim_{y\searrow z_i} U_\beta^{j}(y) = U_\beta^{j}(z_i)^+$.
	Hence, there exists $\myepsilon_{i,2}>0$ satisfying property~(2).

	Since $U_\beta^{n-1}(1) \in D_\beta$, by (\ref{e_def_U_beta}), we get that $\lim_{y\searrow z_i} U_\beta^{j}(y) = 0$ for each $n-i \le j\le n-1$. Thus, there exists $\myepsilon_{i,3}>0$ satisfying property~(3). 
	
	Defining $\myepsilon_i \= \min \{ \myepsilon_{i,1}, \, \myepsilon_{i,2}, \, \myepsilon_{i,3}\}$,
	we see that $\myepsilon_i$ satisfies properties~(1), (2), and~(3).

	Now define $\myepsilon'\=\min\{\myepsilon_i : 0\le i \le n-1 \}$,
	and construct $\underline{y}$ 
	as in Case~1, except that $\myepsilon$ and $\delta$
	are replaced, respectively, by $\myepsilon'$ and $\delta'$. Then 
	(\ref{eq 3.7}) holds immediately, while for each bad point $y_i$, $i\in \N_0$, we have $d\bigl(U_\beta^i (a ),\cO^*\bigr)\ge\myepsilon' \ge \myepsilon' d\bigl(U_\beta^i (a ),y_i\bigr)$, and for each good point $y_i$, by (\ref{eq_delta_lemma_locking_simple}) and properties~(1), (2), and (3), either $d\bigl(U_\beta^i (a ),\cO^*\bigr)= d\bigl(U_\beta^i (a ),y_i\bigr)$ or $d\bigl(U_\beta^i (a ),\cO^*\bigr)\ge\delta' \ge \delta' d\bigl(U_\beta^i (a ),y_i\bigr)$. Hence (\ref{eq 3.8}) holds if we take $C_\mu \= \min \{ (\myepsilon')^{-\alpha}, \, (\delta')^{-\alpha}, \, 1 \}$, so Claim is proved for Case~2.
\end{proof}

Having established Lemma~\ref{3.3}, we can now prove the following Theorem~\ref{3.1}.

\begin{theorem} \label{3.1}
	If $\beta>1$ and $\alpha\in (0,1]$, then the set $\Lock^\alpha(U_\beta)$ is an open and dense subset of $\sP^\alpha(U_\beta)$.
\end{theorem} 
\begin{proof}
	If $\phi\in\sP^\alpha(U_\beta)$, choose $\mu\in\Mmax (U_\beta, \phi )$ supported on a periodic orbit $\cO^*$ of $U_\beta$, and for each $t>0$ define
	\begin{equation*}
		\phi_t\=\phi-t d (\cdot,\cO^* ) ^\alpha \in \Holder{\alpha}(I).
	\end{equation*}
	Clearly $\phi_t$ belongs to $\Holder{\alpha}(I)$, and converges to $\phi$ as $t\to 0$.
	By Lemma~\ref{3.3}, if $t >0$, $\nu\in \MMM (I, U_\beta) \smallsetminus\{\mu\}$, and $\psi\in \Holder{\alpha} (I )$ with $\Hnorm{\alpha,I}{\psi} <t/C_\mu$, then
	\begin{equation*}\label{eq 3.12}
		\begin{aligned}
			\int_I\! (\phi_t+\psi )\, \mathrm{d}\nu
            &=\int_I \! \phi \, \mathrm{d}\nu+\int_I \! \psi\, \mathrm{d} \nu-t\int_I\! d (\cdot,\cO^* ) ^\alpha\,\mathrm{d}\nu\\
			&\le\int_I \! \phi \, \mathrm{d}\mu+\int_I \! \psi \, \mathrm{d}\mu+ (C_\mu \Hseminorm{\alpha, I}{\psi}-t )\int_I \!  d (\cdot,\cO^*)  ^\alpha\, \mathrm{d}\nu \\
			&=\int_I\! (\phi_t+\psi )\, \mathrm{d}\mu+ (C_\mu \Hseminorm{\alpha,I}{\psi}-t )\int_I \!  d (\cdot,\cO^* ) ^\alpha\, \mathrm{d}\nu \\
			&<\int_I \! (\phi_t+\psi )\, \mathrm{d}\mu,  
		\end{aligned}
	\end{equation*}
	so $\Mmax \bigl(U_\beta, \phi_t+\psi \bigr)=\{\mu\}$,
	and consequently, $\phi_t\in\Lock^\alpha(U_\beta)$. Hence, $\Lock^\alpha(U_\beta)$ is dense in $\sP^\alpha(U_\beta)$. 
	Moreover, from their definitions,
	$\Lock^\alpha(U_\beta)$ is an open subset of $\sP^\alpha(U_\beta)$, therefore the theorem is established.
\end{proof}
	
\begin{rem}\label{r_counter_example_simple_beta}
As mentioned in Section~\ref{s:introduction}, 
for certain parameters $\beta$
there exist continuous functions $\phi$ without a $(T_\beta,\phi)$-maximizing measure
(e.g.~for $\beta=2$, the function $\phi(x)\=x$ has no maximizing measure; indeed, there is 
an open neighbourhood of $\phi$ in 
$\Holder{1}(I)$  consisting of functions with no maximizing measure);
this absence of a maximizing measure is
due to the lack of compactness of $\MMM(I, T_\beta)$.
More generally, for $\beta$ a simple beta-number and $\alpha\in(0,1]$, the above proof
of Theorem~\ref{3.1} can be used to show that there is a nonempty open subset of $\Holder{\alpha}(I)$ consisting of functions with no maximizing measure.
Specifically,
the function
$\phi \= - d \bigl(\cdot, \cO_\beta^*(1) \bigr)^\alpha$ has the locking property (with respect to $U_\beta$) in $\Holder{\alpha}(I)$, with the unique $(U_\beta,\phi)$-maximizing measure $\mu_{\cO_\beta^*(1)}$; since $\mu_{\cO_\beta^*(1)}$ is not $T_\beta$-invariant, but
$\mpe(T_\beta, \phi) = \mpe (U_\beta, \phi)$
by Proposition~\ref{mpe=}~(ii), and $\MMM(I, T_\beta) \subseteq \MMM(I,U_\beta)$ by Proposition~\ref{p_relation_T_beta_and_wt_T_beta}~(iii), any function $\psi$ sufficiently close to $\phi$ has no $(T_\beta,\psi)$-maximizing measure.
The phenomenon of absence of maximizing measures when $\beta$ is a simple beta-number  means that for $T_\beta$ (as distinct from $U_\beta$),
the TPO property does not hold in the whole of $\Holder{\alpha}(I)$, for the straightforward reason that
\emph{optimization itself} is not a typical property in $\Holder{\alpha}(I)$. 
\end{rem}

We can now prove a structural theorem for emergent parameters $\beta$:

	\begin{theorem}[Structural theorem for emergent parameters]\label{t_TPO_thm_emergent}
If $\beta>1$  is emergent and $\alpha \in (0,1]$, then
$\Holder{\alpha}(I)$ is equal to the union of the critical set $\Crit^\alpha(\beta)$ and the closure of the open set
$\Lock^\alpha(U_\beta)$.
	\end{theorem}
\begin{proof}
	Fix an arbitrary $\beta>1$ that is emergent, and $\alpha\in (0,1]$. 
    We wish to show that $\Lock^\alpha(U_\beta)\smallsetminus \Crit^\alpha(\beta)$ is dense in
	$\Holder{\alpha}(I)\smallsetminus \Crit^\alpha(\beta)$.
	By Theorem~\ref{3.1} and the fact that $\Crit^\alpha(\beta)$ is closed (see Lemma~\ref{e_alpha_alternative_defn_lemma}), it suffices to prove that $\sP^\alpha(U_\beta)\smallsetminus \Crit^\alpha(\beta)$ is dense in 
	$\Holder{\alpha}(I)\smallsetminus \Crit^\alpha(\beta)$.
	The set $\RLS^\alpha(\beta) \smallsetminus \Crit^\alpha(\beta)$ is a subset of $\sP^\alpha(U_\beta) \smallsetminus \Crit^\alpha(\beta)$
	by Corollary~\ref{u_alpha_beta_subset_p_alpha_beta},
	and 
	is dense in $\Holder{\alpha}(I)\smallsetminus \Crit^\alpha(\beta)$,
	by Corollary~\ref{p_density_local_locking_corollary}. Therefore,
	$\sP^\alpha(U_\beta)\smallsetminus \Crit^\alpha(\beta)$ is itself dense in 
	$\Holder{\alpha}(I)\smallsetminus \Crit^\alpha(\beta)$,
	as required.
\end{proof}

\subsection{Proof of Individual TPO theorems}\label{subsec:proofofTPOtheorems}

In this subsection, we prove the individual typical periodic optimization theorems stated in Section
\ref{s:introduction}, namely for generic $\beta$
(Theorem~\ref{typical_beta}), for Lebesgue almost every $\beta$ (Theorem~\ref{almost_every_beta}),
whenever $\beta$ is a beta-number (Theorem~\ref{t_TPO_thm_beta_number}),
and whenever $\beta$ is non-emergent (Theorem~\ref{t_TPO_thm_non_emergent}).

Having considered emergent parameters $\beta$ in Subsection~\ref{subsec_proof_of_emergent}, we begin with a number of results about non-emergent parameters:

\begin{lemma}\label{l_Q_gamma_equal_Q_non_emergent}
	Suppose $\beta >1$ is non-emergent, $\alpha \in (0,1]$, and $\phi \in \Holder{\alpha}(I)$. Then there exists $\beta' \in (1,\beta)$ such that $\mpe_{\beta,\beta'}(\phi) = \mpe ( U_\beta , \phi )$.
\end{lemma}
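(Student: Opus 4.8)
The plan is to reduce to the Revelation Theorem (Theorem~\ref{l_subordination}) together with Lemma~\ref{support_invariant_set} and Lemma~\ref{H beta gamma}, by using the hypothesis that $\beta$ is \emph{non-emergent}. By Proposition~\ref{p_coding_mpe_relation}~(iii) there exists a maximizing measure $\mu\in\Mmax(U_\beta,\phi)$. The key dichotomy is whether $1\in\supp\mu$ or not. If $1\notin\supp\mu$, then Lemma~\ref{l_case_two_one_optimal} immediately produces a $\beta'\in(1,\beta)$ with $\mpe_{\beta,\gamma}(\phi)=\mpe(U_\beta,\phi)$ for all $\gamma\in(\beta',\beta)$, and we are done. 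So the substance of the proof is the case $1\in\supp\mu$; here is where non-emergence of $\beta$ must be used, since for emergent $\beta$ the conclusion can genuinely fail (that is precisely the content of $\Crit^\alpha(\beta)$ and Theorem~\ref{t_TPO_thm_emergent}).

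So assume $1\in\supp\mu$. By Theorem~\ref{l_subordination}~(ii), $\tphi^-\equiv 0$ on $\cO_\beta'(1)$, and since $\tphi^-$ is left-continuous (Theorem~\ref{mane}~(ii)) and $\leq 0$, in fact $\tphi^-\equiv 0$ on the closure $\overline{\cO_\beta'(1)}$. Now invoke non-emergence: by Definition~\ref{d_emergent} there exists $\gamma_0\in(1,\beta)$ with $\overline{\cO^\sigma(\pi_\beta^*(1))}\cap\cS_{\gamma_0}\neq\emptyset$, equivalently (Proposition~\ref{cE}, via $h_\beta$ and Lemma~\ref{l_bi_lipschitz_S_gamma_H_gamma}) the set $\overline{\cO_\beta'(1)}\cap H_\beta^{\gamma_0}$ is not contained in $Z_\beta$; as in the proof that (ii) implies (i) in Proposition~\ref{cE}, one extracts a non-empty compact set $\cK\=h_\beta(\cS_{\gamma_0}\cap\overline{\cO^\sigma(\pi_\beta^*(1))})\subseteq\overline{\cO_\beta'(1)}\cap H_\beta^{\gamma_0}$ with $T_\beta(\cK)=\cK$, and moreover $1\notin\cK$ (since $\pi_\beta^*(1)\notin\cS_{\gamma_0}$ by Proposition~\ref{p_relation_of_coding}~\ref{p_relation_of_coding__xiii}, so $1\notin H_\beta^{\gamma_0}$). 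By Lemma~\ref{H beta gamma} applied to $\cK$, there is $\beta'\in(1,\beta)$ with $\cK\subseteq H_\beta^\gamma$ for every $\gamma\in(\beta',\beta)$; fix any such $\gamma$.

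It remains to build an invariant measure supported on $\cK$ that is $\phi$-maximizing. Since $U_\beta|_{\overline{\cO_\beta'(1)}}$ is continuous (the argument of Lemma~\ref{l_minimal_emergent} uses only that $0\notin\overline{\cO_\beta'(1)}$, which holds here because $\cK\neq\emptyset$ forces $\overline{\cO_\beta'(1)}\cap H_\beta^{\gamma_0}\neq\emptyset$; if instead $0\in\overline{\cO_\beta'(1)}$ one argues directly), $T_\beta|_{\cK}$ is continuous and we may pick $\nu\in\MMM(\cK,T_\beta)\subseteq\MMM(I,U_\beta)$ via the Krylov–Bogolyubov theorem. Because $\cK\subseteq\overline{\cO_\beta'(1)}\subseteq(\tphi^-)^{-1}(0)$ and $\tphi^-=\overline{\phi}+u_{\beta,\phi}^--u_{\beta,\phi}^-\circ U_\beta$ is a normalised coboundary perturbation of $\overline\phi$ with $Q(U_\beta,\tphi^-)=0$ (Lemma~\ref{normalised_cohomologous}~(i),(ii)), integrating $\tphi^-$ against $\nu$ gives $\int\overline{\phi}\,\mathrm{d}\nu=\int\tphi^-\,\mathrm{d}\nu=0$, hence $\int\phi\,\mathrm{d}\nu=\mpe(U_\beta,\phi)$, i.e.\ $\nu\in\Mmax(U_\beta,\phi)$ with $\supp\nu\subseteq\cK\subseteq H_\beta^\gamma$. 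By (\ref{e_def_beta_gamma}) this yields $\mpe_{\beta,\gamma}(\phi)\geq\int\phi\,\mathrm{d}\nu=\mpe(U_\beta,\phi)$; the reverse inequality $\mpe_{\beta,\gamma}(\phi)\leq\mpe(U_\beta,\phi)$ is immediate from the definitions since $\MMM(I,T_\beta)\subseteq\MMM(I,U_\beta)$. Thus $\mpe_{\beta,\beta'}(\phi)=\mpe(U_\beta,\phi)$ (after possibly relabelling $\gamma$ as the desired $\beta'$), completing the proof.

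\textbf{Main obstacle.} The routine parts are the two easy inequalities for $\mpe_{\beta,\gamma}$ and the coboundary computation. The delicate point — and the only place non-emergence enters — is extracting the non-empty $T_\beta$-invariant compact set $\cK\subseteq\overline{\cO_\beta'(1)}\cap H_\beta^{\gamma_0}$ avoiding the point $1$; this requires carefully transferring the symbolic statement $\overline{\cO^\sigma(\pi_\beta^*(1))}\cap\cS_{\gamma_0}\neq\emptyset$ through the bi-Lipschitz coding $h_\beta|_{\cS_{\gamma_0}}$ and checking that $\sigma$-invariance of the symbolic intersection passes to $T_\beta$-invariance of its image (using Proposition~\ref{p_relation_of_coding}~\ref{p_relation_of_coding__v}), exactly along the lines of the implication (ii)$\Rightarrow$(i) in Proposition~\ref{cE}.
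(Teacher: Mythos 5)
Your high-level plan — split on whether $1\in\supp\mu$ for a $\phi$-maximizing measure $\mu$; dispose of $1\notin\supp\mu$ via Lemma~\ref{l_case_two_one_optimal}; exploit non-emergence through Proposition~\ref{cE} when $1\in\supp\mu$ — matches the paper's, but the middle of your argument in the $1\in\supp\mu$ case has a genuine gap, and your route through it differs from the paper's.

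The gap: you assert that since $\tphi^-\equiv 0$ on $\cO_\beta'(1)$ (Theorem~\ref{l_subordination}~(ii)) and $\tphi^-$ is left-continuous with $\tphi^-\leq 0$, one gets $\tphi^-\equiv 0$ on the closure $\overline{\cO_\beta'(1)}$. This does not follow. Left-continuity propagates vanishing to a point $z$ of the closure only when $z$ is a limit from below of $\cO_\beta'(1)$; at a point $z$ approached by $\cO_\beta'(1)$ only from above, neither left-continuity nor $\tphi^-\leq 0$ forces $\tphi^-(z)=0$, and Theorem~\ref{mane} claims no right-continuity for $\tphi^-$. Since your construction of $\nu\in\Mmax(U_\beta,\phi)$ supported on $\cK$ rests entirely on $\cK\subseteq\overline{\cO_\beta'(1)}\subseteq(\tphi^-)^{-1}(0)$, the argument does not close.

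Two secondary problems. To invoke Lemma~\ref{H beta gamma} you need $T_\beta(\cK)=\cK$ for $\cK\=h_\beta\bigl(\cS_{\gamma_0}\cap\overline{\cO^\sigma(\pi_\beta^*(1))}\bigr)$, but an intersection of two $\sigma$-invariant sets is only forward-invariant, giving only $T_\beta(\cK)\subseteq\cK$; some argument (e.g.~passing to $\bigcap_n T_\beta^n(\cK)$) is needed to get equality. And the parenthetical justification that $0\notin\overline{\cO_\beta'(1)}$ is a non sequitur: $\cK\neq\emptyset$ and $\overline{\cO_\beta'(1)}\cap H_\beta^{\gamma_0}\neq\emptyset$ do not preclude $0\in\overline{\cO_\beta'(1)}$, since $0\in H_\beta^{\gamma}$ for every $\gamma$.

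The paper's proof avoids all three issues. It first disposes of $0\in\supp\mu$ (then $\tphi^-(0)=0$ or $\tphi^+(0)=0$, so $\delta_0$ is maximizing and $0\in H_\beta^\gamma$ for every $\gamma$). In the case $0\notin\supp\mu$, Lemma~\ref{support_invariant_set} gives $U_\beta(\supp\mu)=\supp\mu$, so $\overline{\cO_\beta'(1)}\subseteq\supp\mu$ once $1\in\supp\mu$; then, rather than trying to make $\tphi^-$ vanish on a whole invariant set, it picks a single point $x\in(\supp\mu\cap H_\beta^\gamma)\smallsetminus Z_\beta$ (which exists precisely by Proposition~\ref{cE}) and applies Theorem~\ref{l_subordination}~(i) at $x$ — not at $1$ — concluding that $\tphi^-$ or $\tphi^+$ vanishes along $\cO_\beta(x)=\cO_\beta'(x)$. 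Either alternative makes that orbit maximizing by Lemma~\ref{normalised_cohomologous}~(iii), and then continuity of $T_\beta|_{H_\beta^\gamma}$ together with \cite[Theorem~2.2]{Je19} bounds its time average by $\mpe_{\beta,\gamma}(\phi)$. To salvage your version, you could similarly extract a single point $x\in\cK\smallsetminus Z_\beta$ and show its orbit is maximizing, which sidesteps any global vanishing claim for $\tphi^-$.
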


\begin{proof}
	By Proposition~\ref{p_coding_mpe_relation}~(iii) there exists $\mu\in\Mmax ( U_\beta, \phi )$. Let us denote $\cK \= \supp\mu$.

	If $0 \in \cK$, then $\tphi^+(0) = 0=Q\bigl(U_\beta,\tphi^+\bigr)$ or $\tphi^-(0)=0=Q\bigl(U_\beta,\tphi^-\bigr)$ by Theorems~\ref{l_subordination}~(i) and~\ref{mane}~(ii),
	so 
	$\delta_0\in \MMM_{\max}(U_\beta,\phi)$ (by (\ref{e_def_tphi^-}), (\ref{e_def_tphi^+}), and  Lemma~\ref{normalised_cohomologous}~(i)),
	and therefore $\mpe_{\beta,\gamma}(\phi) = \mpe ( U_\beta , \phi )$ for every $\gamma\in(1,\beta)$.
	
	If $0 \notin \cK$ then 
    $U_\beta(\cK)=\cK$ by Lemma~\ref{support_invariant_set}.  
	Let us assume, for a contradiction, that the result is false, i.e.,~that 
	\begin{equation}\label{assume_false_gamma}
		\mpe_{\beta,\gamma}(\phi) < \mpe ( U_\beta , \phi )\ \text{ for all }\gamma\in(1,\beta).
	\end{equation}
	Lemma~\ref{l_case_two_one_optimal} then implies that $1\in\cK$.
	Since $\beta$ is non-emergent, Proposition~\ref{cE} implies that there exists $\gamma\in (1,\beta)$ such that
	$\overline{\cO_\beta^*(1)} \cap H_\beta^\gamma$ is not a subset of $Z_\beta$,
where we recall from (\ref{e_def_Z_beta}) that $Z_\beta=\bigl\{ x\in I:\pi_\beta(x)\neq \pi_\beta^*(x) \bigr\}$.
	But $1\in\cK$, so  $\overline{\cO_\beta^*(1)} \subseteq \cK$, and therefore
	$\cK \cap H_\beta^\gamma$ is not a subset of $Z_\beta$,
	in other words, there exists $x \in \bigl(\cK \cap H_\beta^\gamma\bigr) \smallsetminus Z_\beta$. 
	
	By Lemma~\ref{l_properties_z_beta}~(iii), the orbit $\cO_\beta(x)$ is equal to $\cO_\beta^*(x)$, and it
	is contained in $\cK \cap H_\beta^\gamma$ since $U_\beta(\cK)=\cK$ and $T_\beta\bigl(H_\beta^\gamma\bigr)\subseteq H_\beta^\gamma$. By Theorem~\ref{l_subordination}~(i), $\tphi^+|_{\cO_\beta(x)} \equiv 0$ or $\tphi^-|_{\cO_\beta(x)} \equiv 0$.
	In other words, $\cO_\beta(x)$ is contained in either $\bigl(\tphi^{-}\bigr)^{-1}(0)$ or $\bigl(\tphi^{+}\bigr)^{-1}(0)$. Since $\tphi^{-} \le 0$ and $\tphi^{+}\le 0$ (by Theorem~\ref{mane}~(ii)), Lemma~\ref{normalised_cohomologous}~(iii) implies that 
	$\cO_\beta(x)=\cO_\beta^*(x)$ is a $(T_\beta,\phi)$-maximizing orbit and a $(U_\beta,\phi)$-maximizing orbit (by Proposition~\ref{mpe=}~(ii)),
	so in particular, 
	\begin{equation}\label{time_Q}
		\lim_{n \to +\infty} \frac{1}{n} S_n \phi(x) = \mpe ( U_\beta, \phi ). 
	\end{equation}
	
	Now $\cO_\beta(x)\subseteq H_\beta^\gamma$ and $T_\beta|_{H_\beta^\gamma}$ is continuous (see Proposition~\ref{l_property_H_gamma}~(i)), so by \cite[Proposition~2.2]{Je19}, the corresponding time average is bounded above by the ergodic supremum, in other words,
	\begin{equation}\label{time_average_bounded_above}
		\mpe_{\beta,\gamma}(\phi) \ge \lim_{n \to +\infty} \frac{1}{n} S_n \phi(x).
	\end{equation}
	Now as an immediate consequence of (\ref{e_def_beta_gamma}),
	we have $\mpe ( U_\beta, \phi ) \ge \mpe_{\beta,\gamma}(\phi)$.
	So combining this inequality with (\ref{time_Q}) and (\ref{time_average_bounded_above}) gives
	$\mpe ( U_\beta, \phi ) = \mpe_{\beta,\gamma}(\phi)$, which gives the required
	contradiction to (\ref{assume_false_gamma}). The lemma follows.
\end{proof}

\begin{cor}\label{l_Q_gamma_equal_Q_non_emergent_corollary}
	Suppose $\beta >1$ is non-emergent, $\alpha \in (0,1]$, and $\phi \in \Holder{\alpha}(I)$. Then there exists $\beta' \in (1,\beta)$ such that
	\begin{equation}\label{beta_gamma_theta}
		\mpe_{\beta,\gamma}(\phi) = \mpe \bigl( U_\beta , \phi \bigr)\quad\text{ for all }\gamma\in[\beta',\beta).
	\end{equation}
\end{cor}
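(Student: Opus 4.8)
The plan is to deduce Corollary~\ref{l_Q_gamma_equal_Q_non_emergent_corollary} from Lemma~\ref{l_Q_gamma_equal_Q_non_emergent} by combining the latter with a monotonicity property of the restricted maximum ergodic average in the parameter $\gamma$. First I would invoke Lemma~\ref{l_Q_gamma_equal_Q_non_emergent} to obtain some $\beta'\in(1,\beta)$ with $\mpe_{\beta,\beta'}(\phi)=\mpe(U_\beta,\phi)$. The task is then to upgrade the single value $\beta'$ to the whole half-open interval $[\beta',\beta)$, which amounts to showing $\gamma\mapsto\mpe_{\beta,\gamma}(\phi)$ is non-decreasing on $(1,\beta)$ together with the trivial upper bound $\mpe_{\beta,\gamma}(\phi)\le \mpe(U_\beta,\phi)$ coming directly from the definition (\ref{e_def_beta_gamma}).

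The key step is the monotonicity: if $1<\gamma_1<\gamma_2<\beta$ then $\mpe_{\beta,\gamma_1}(\phi)\le \mpe_{\beta,\gamma_2}(\phi)$. This follows from Lemma~\ref{l_apprioxiation_beta_shif_beta}~(i), which gives $\cS_{\gamma_1}\subseteq\cS_{\gamma_2}$, hence $H_\beta^{\gamma_1}=h_\beta(\cS_{\gamma_1})\subseteq h_\beta(\cS_{\gamma_2})=H_\beta^{\gamma_2}$ by (\ref{Hbetagamma}). Since $\mpe_{\beta,\gamma}(\phi)$ is a supremum of $\int\!\phi\,\mathrm{d}\mu$ over $\mu\in\MMM(I,T_\beta)$ with $\supp\mu\subseteq H_\beta^\gamma$ (see (\ref{e_def_beta_gamma})), enlarging the set $H_\beta^\gamma$ enlarges the collection of admissible measures, so the supremum can only increase. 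Then for each $\gamma\in[\beta',\beta)$ we have
\begin{equation*}
	\mpe(U_\beta,\phi)=\mpe_{\beta,\beta'}(\phi)\le \mpe_{\beta,\gamma}(\phi)\le \mpe(U_\beta,\phi),
\end{equation*}
where the first inequality is the monotonicity just established and the second is immediate from (\ref{e_def_beta_gamma}) since any $\mu\in\MMM(I,T_\beta)$ is in $\MMM(I,U_\beta)$ by Proposition~\ref{p_relation_T_beta_and_wt_T_beta}~(iii). This forces equality throughout, giving (\ref{beta_gamma_theta}).

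I do not expect any serious obstacle here: the corollary is essentially a bookkeeping consequence of Lemma~\ref{l_Q_gamma_equal_Q_non_emergent} once the (elementary) monotonicity of $\gamma\mapsto H_\beta^\gamma$ and hence of $\gamma\mapsto\mpe_{\beta,\gamma}(\phi)$ is noted. The only mild care needed is to keep straight that the relevant monotonicity and the upper bound both go through $\MMM(I,T_\beta)$-measures supported in the relevant Cantor sets, and that the inclusion $H_\beta^{\beta'}\subseteq H_\beta^\gamma$ for $\gamma\ge\beta'$ is exactly what Lemma~\ref{l_apprioxiation_beta_shif_beta}~(i) provides (this inclusion is already used, e.g., at the end of the proof of Lemma~\ref{H beta gamma}).
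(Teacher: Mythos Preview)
Your proposal is correct and follows essentially the same approach as the paper: invoke Lemma~\ref{l_Q_gamma_equal_Q_non_emergent} to get a single $\beta'$, then sandwich $\mpe_{\beta,\beta'}(\phi)\le \mpe_{\beta,\gamma}(\phi)\le \mpe(U_\beta,\phi)$ using the inclusion $H_\beta^{\beta'}\subseteq H_\beta^\gamma$ for $\gamma\ge\beta'$. The paper is terser (it simply asserts the inequalities follow from $H_\beta^{\beta'}\subseteq H_\beta^\gamma\subseteq I$ and (\ref{e_def_beta_gamma})), whereas you spell out the provenance of the inclusion via Lemma~\ref{l_apprioxiation_beta_shif_beta}~(i) and the upper bound via Proposition~\ref{p_relation_T_beta_and_wt_T_beta}~(iii).
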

\begin{proof}
	Let $\beta'$ be as in Lemma~\ref{l_Q_gamma_equal_Q_non_emergent}. If $\gamma\in[\beta',\beta)$ then 
	\begin{equation}\label{beta_gamma_theta_inequalities}
		Q_{\beta,\beta'}(\phi) \le Q_{\beta,\gamma}(\phi) \le Q(U_\beta,\phi),
	\end{equation}
	an immediate consequence of (\ref{e_def_beta_gamma}), since $H_\beta^{\beta'}\subseteq H_\beta^\gamma\subseteq I$.
	But $Q(U_\beta,\phi)=Q_{\beta,\beta'}(\phi)$,
	by Lemma~\ref{l_Q_gamma_equal_Q_non_emergent},
	so (\ref{beta_gamma_theta_inequalities}) implies the required equality (\ref{beta_gamma_theta}).
\end{proof}

The proof of the following result is similar to that of Corollary~\ref{u_alpha_beta_subset_p_alpha_beta}:

\begin{cor}\label{u_alpha_beta_subset_p_alpha_beta_non_emergent}
	If $\beta>1$ is non-emergent, then $\RLS^\alpha(\beta)\subseteq \sP^\alpha(U_\beta)$.
\end{cor}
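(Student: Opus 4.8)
The plan is to mimic the proof of Corollary~\ref{u_alpha_beta_subset_p_alpha_beta}, but using Corollary~\ref{l_Q_gamma_equal_Q_non_emergent_corollary} in place of Corollary~\ref{l_case_two_one_optimal_emergent}, and thereby avoiding any appeal to $\Crit^\alpha(\beta)$ (which is what accounts for the stronger conclusion in the non-emergent case). So fix $\alpha\in(0,1]$, suppose $\beta>1$ is non-emergent, and let $\phi\in\RLS^\alpha(\beta)$; the goal is to produce a periodic measure $\mu\in\MMM(I,U_\beta)$ with $\int\!\phi\,\mathrm{d}\mu=\mpe(U_\beta,\phi)$, which by definition of $\sP^\alpha(\beta)$ shows $\phi\in\sP^\alpha(\beta)$.

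First I would invoke Corollary~\ref{l_Q_gamma_equal_Q_non_emergent_corollary} to obtain $\beta'\in(1,\beta)$ with $\mpe_{\beta,\gamma}(\phi)=\mpe(U_\beta,\phi)$ for all $\gamma\in[\beta',\beta)$. Since $\Sim(\beta)$ is dense in $(1,\beta)$ (by \cite[Theorem~5]{Pa60}, as noted after the statement of Proposition~\ref{p_density_local_locking}), there is a simple beta-number $\gamma_0\in(\beta',\beta)$, so that $\mpe_{\beta,\gamma_0}(\phi)=\mpe(U_\beta,\phi)$. Next, since $\phi\in\RLS^\alpha(\beta)$ and $\gamma_0\in\Sim(\beta)$, Definition~\ref{d_cU_beta} gives $\phi|_{H_\beta^{\gamma_0}}\in\Lock^\alpha\bigl(T_\beta|_{H_\beta^{\gamma_0}}\bigr)\subseteq\sP^\alpha\bigl(T_\beta|_{H_\beta^{\gamma_0}}\bigr)$, so there is a periodic orbit $\cO$ of $T_\beta|_{H_\beta^{\gamma_0}}$ — hence a periodic orbit of $T_\beta$ contained in $H_\beta^{\gamma_0}$ — whose associated periodic measure $\mu_{\cO}$ satisfies $\int\!\phi\,\mathrm{d}\mu_{\cO}=\mpe\bigl(T_\beta|_{H_\beta^{\gamma_0}},\phi|_{H_\beta^{\gamma_0}}\bigr)=\mpe_{\beta,\gamma_0}(\phi)$ (using (\ref{e_def_beta_gamma})). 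By Proposition~\ref{p_relation_T_beta_and_wt_T_beta}~(iii), $\mu_{\cO}\in\MMM(I,T_\beta)\subseteq\MMM(I,U_\beta)$.

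Combining the two displayed equalities, $\int\!\phi\,\mathrm{d}\mu_{\cO}=\mpe_{\beta,\gamma_0}(\phi)=\mpe(U_\beta,\phi)$, so $\mu_{\cO}$ is a $(U_\beta,\phi)$-maximizing measure supported on a periodic orbit of $U_\beta$; hence $\phi\in\sP^\alpha(\beta)$, as required. Since $\phi\in\RLS^\alpha(\beta)$ was arbitrary, $\RLS^\alpha(\beta)\subseteq\sP^\alpha(\beta)$.

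There is essentially no obstacle here beyond bookkeeping: the entire content has already been packaged into Corollary~\ref{l_Q_gamma_equal_Q_non_emergent_corollary} (which in turn rests on the revelation theorem, Theorem~\ref{l_subordination}, and the emergent-number characterisation, Proposition~\ref{cE}) and into Contreras' theorem as imported via $\RLS^\alpha(\beta)$. The one point to be slightly careful about is matching the supremum in (\ref{e_def_beta_gamma}) with the maximizing periodic measure coming from $\Lock^\alpha\bigl(T_\beta|_{H_\beta^{\gamma_0}}\bigr)$: a function lying in $\Lock^\alpha(T)\subseteq\sP^\alpha(T)$ has, by definition, a maximizing measure supported on a periodic orbit of $T$, and "maximizing'' for $T_\beta|_{H_\beta^{\gamma_0}}$ means attaining $\mpe\bigl(T_\beta|_{H_\beta^{\gamma_0}},\cdot\bigr)$, which is exactly $\mpe_{\beta,\gamma_0}(\phi)$; this is all consistent, so the argument goes through verbatim in analogy with Corollary~\ref{u_alpha_beta_subset_p_alpha_beta}.
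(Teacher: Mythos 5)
Your argument is correct and is essentially the paper's own proof: both invoke Corollary~\ref{l_Q_gamma_equal_Q_non_emergent_corollary} to match $\mpe_{\beta,\gamma_0}(\phi)$ with $\mpe(U_\beta,\phi)$ for a simple beta-number $\gamma_0$, then use $\phi|_{H_\beta^{\gamma_0}}\in\Lock^\alpha\bigl(T_\beta|_{H_\beta^{\gamma_0}}\bigr)\subseteq\sP^\alpha\bigl(T_\beta|_{H_\beta^{\gamma_0}}\bigr)$ together with Proposition~\ref{p_relation_T_beta_and_wt_T_beta}~(iii) to produce a periodic $U_\beta$-maximizing measure. The only (harmless) difference is that you spell out the appeal to the density of $\Sim(\beta)$ in $(1,\beta)$, which the paper leaves implicit.
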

\begin{proof}
	Suppose $\phi\in \RLS^\alpha(\beta)$. By Corollary~\ref{l_Q_gamma_equal_Q_non_emergent_corollary},
	there exists $\beta' \in (1,\beta)$ such that
	$
	\mpe_{\beta,\gamma}(\phi) = \mpe \bigl( U_\beta , \phi \bigr)
	$
	for all $\gamma\in[\beta',\beta)$, so in particular there is a simple beta-number $\gamma_0\in (1,\beta)$ such that
	\begin{equation}\label{gamma_N_beta_phi}
		\mpe_{\beta,\gamma_0}(\phi) = \mpe ( U_\beta , \phi ).
	\end{equation}
	The fact that $\phi\in \RLS^\alpha(\beta)$ implies that
	$
	\phi|_{H_\beta^{\gamma_0}}\in \Lock^\alpha\bigl(T_\beta|_{H_\beta^{\gamma_0}}\bigr)
	\subseteq
	\sP^\alpha \bigl(T_\beta|_{H_\beta^{\gamma_0}}\bigr) $.
	So there exists a periodic measure $\mu\in \MMM(I,T_\beta)\subseteq \MMM(I,U_\beta)$ (see Proposition~\ref{p_relation_T_beta_and_wt_T_beta}~(iii))  such that
	\begin{equation}\label{integral_phi_mu_equal}
		\int \!\phi\, \mathrm{d}\mu = \mpe_{\beta,\gamma_0}(\phi) .
	\end{equation}
	Thus from (\ref{gamma_N_beta_phi}) and (\ref{integral_phi_mu_equal}) we see that the periodic measure $\mu$
	satisfies $\int\! \phi\, \mathrm{d}\mu= Q(U_\beta,\phi)$.
	Therefore $\phi\in \sP^\alpha(U_\beta)$, as required.
\end{proof}

We are now in a position to prove our Individual TPO theorems.
We establish the following slightly stronger version of Theorem~\ref{t_TPO_thm_non_emergent} (which in particular implies Theorem~\ref{t_TPO_thm_non_emergent}):

\setcounter{thml}{9}

\begin{thml}[Individual TPO for non-emergent parameters]\label{t_TPO_thm_non_emergent'}
    Fix $\alpha\in (0,1]$. If $\beta>1$ is non-emergent, then both $\Lock^\alpha(T_\beta)$ and $\Lock^\alpha(U_\beta)$ are open and dense subsets of $\Holder{\alpha}(I)$.
\end{thml}

\begin{proof}
	Let $\beta>1$ be non-emergent. By Corollary~\ref{c_relation_emergent&betanumbers}, $\beta$ cannot be a simple beta-number. So
    Theorem~\ref{p_relation_T_beta_and_wt_T_beta}~(iv) implies that $\Per(T_\beta) = \Per(U_\beta)$ and $\cM(I,T_\beta) = \cM(I,U_\beta)$. This implies that $\Lock^\alpha(T_\beta) = \Lock^\alpha(U_\beta)$. Thus it suffices to show that $\Lock^\alpha(U_\beta)$ is an open dense subset of $\Holder{\alpha}(I)$.
    
    Now $\Lock^\alpha(U_\beta)$ is by definition an open subset of $\Holder{\alpha}(I)$,
	and Theorem~\ref{3.1}
	asserts that  $\Lock^\alpha(U_\beta)$ is dense in $\sP^\alpha(U_\beta)$, so it suffices
	to prove that $\sP^\alpha(U_\beta)$ is dense in $\Holder{\alpha}(I)$.
	Since $\beta>1$ is non-emergent, Corollary~\ref{u_alpha_beta_subset_p_alpha_beta_non_emergent}
	gives
	$\RLS^\alpha(\beta)\subseteq \sP^\alpha(U_\beta)$,
	and $\RLS^\alpha(\beta)$ is dense in $\Holder{\alpha}(I)$
	by Theorem~\ref{p_density_local_locking}. Therefore, it follows
	that $\sP^\alpha(U_\beta)$ is dense in $\Holder{\alpha}(I)$, as required.
\end{proof}

The following is a slightly stronger version of Theorem~\ref{typical_beta}, which in particular implies Theorem~\ref{typical_beta}.

\setcounter{thml}{3}

\begin{thml}[Individual TPO for generic parameters $\beta$]\label{typical_beta'}
    Fix $\alpha\in (0,1]$. For a residual set of values $\beta>1$, $\Lock^\alpha(T_\beta)$ is an open and dense subset of $\Holder{\alpha}(I)$.
\end{thml}

\begin{proof}
By Corollary~\ref{emergent_small_set}, the set
$(1,+\infty)\smallsetminus\emergent$ of non-emergent parameters is a residual subset of $(1,+\infty)$.
By Theorem~\ref{t_TPO_thm_non_emergent'}, if
$\beta\in (1,+\infty)\smallsetminus\emergent$ then $\Lock^\alpha(T_\beta)$ is an open and dense subset of $\Holder{\alpha}(I)$, so the result follows. 
\end{proof}

The following is a slightly stronger version of Theorem~\ref{almost_every_beta}, which in particular implies Theorem~\ref{almost_every_beta}.

\begin{thml}[Individual TPO for almost every parameter $\beta$]\label{almost_every_beta'}
    Fix $\alpha\in (0,1]$. For Lebesgue almost every $\beta>1$, $\Lock^\alpha(T_\beta)$ is an open and dense subset of $\Holder{\alpha}(I)$.
\end{thml}

\begin{proof}
By Corollary~\ref{emergent_small_set}, the set
$(1,+\infty)\smallsetminus\emergent$ of non-emergent parameters has full Lebesgue measure.
By Theorem~\ref{t_TPO_thm_non_emergent'}, if
$\beta\in (1,+\infty)\smallsetminus\emergent$ then $\Lock^\alpha(T_\beta)$ is an open and dense subset of $\Holder{\alpha}(I)$, so the result follows. 
\end{proof}

We now prove a slightly stronger version of Theorem~\ref{t_TPO_thm_beta_number}, which in particular implies Theorem~\ref{t_TPO_thm_beta_number}.

\setcounter{thml}{8}

\begin{thml}[Individual TPO for beta-numbers]\label{t_TPO_thm_beta_number'}
    Fix $\alpha\in (0,1]$. If $\beta>1$ is a beta-number, then $\Lock^\alpha(U_\beta)$ is an open and dense subset of $\Holder{\alpha}(I)$.
\end{thml}

\begin{proof}
	First assume that $\beta$ is a non-simple beta-number. By Corollary~\ref{c_relation_emergent&betanumbers}, $\beta$ is not emergent, so the result follows from Theorem~\ref{t_TPO_thm_non_emergent'}.
	
	Now assume that $\beta$ is a simple beta-number. By Corollary~\ref{c_relation_emergent&betanumbers}, $\beta$ is emergent.
	Theorem~\ref{t_TPO_thm_emergent} gives that
	$\Holder{\alpha}(I)$ is equal to the union of $\Crit^\alpha(\beta)$ and the closure of $\Lock^\alpha(U_\beta)$,
	so it suffices to show that $\Crit^\alpha(\beta)$ is a subset of the closure of $\Lock^\alpha(U_\beta)$ (as $\Lock^\alpha(U_\beta)$ is by definition an open subset of $\Holder{\alpha}(I)$).
	If $\phi\in \Crit^\alpha(\beta)$ then
	from (\ref{e_alpha_alternative_defn_eqn}) we see that
	the periodic measure supported by
	$\cO_\beta^*(1)$ is $(U_\beta,\phi)$-maximizing, so
	$\phi \in \sP^\alpha (U_\beta)$, and therefore 
	$\phi$ belongs to the closure of $\Lock^\alpha(U_\beta)$ by Theorem~\ref{3.1}.
	Theorem~\ref{t_TPO_thm_beta_number'} follows.
\end{proof}

\section{Joint TPO: beta-transformations}\label{JTPO_beta_section}

In this section, we prove Theorem~\ref{jtpo_tbeta},
the Joint TPO theorem for beta-transformations, and then deduce Theorem~\ref{GPO_open_dense} (Individual TPO for generic potentials).
In fact Theorem~\ref{jtpo_tbeta} will follow from
a stronger
Theorem~\ref{t.product.typical.periodic},
that in particular establishes the joint typical periodic optimization property for both beta-transformations and upper beta-transformations. 
The proof of Theorem~\ref{t.product.typical.periodic}
comprises two steps.
The first step, consisting of the key joint perturbation result (Theorem~\ref{l_8_main_lemma}), is to prove that for any 
\emph{non-simple beta-number} $\beta$, any $\phi\in \Lock^\alpha(U_\beta)$ that is uniquely $U_\beta$-maximized on a periodic orbit $\cO_\beta$, and any $\gamma<\beta$ sufficiently close to $\beta$, we can perform a small perturbation of $\phi$ so as to render it uniquely $U_\gamma$-maximized by the $U_\gamma$-periodic orbit $(h_\gamma \circ \pi_\beta)(\cO_\beta)$. In this first step, the beta-transformations Ma\~n\'e lemma (Theorem~\ref{mane}) is key to making the perturbation, 
and the subsequent analysis is
inspired by ideas in \cite[Section~4]{Boc19}
(itself based on the preprint version of
\cite{HLMXZ25}), together with some more careful estimates exploiting various specific characteristics of the family of beta-transformations. 
The choice of $\beta$ as a non-simple beta-number is an essential feature of this step, since 
on the one hand the functions $u_{\beta,\phi}^\pm$ in the Ma\~n\'e lemma (Theorem~\ref{mane}) are required to be piecewise $\alpha$-H\"older (which is the case if $\beta$ is a beta-number, cf.~Remark~\ref{post_mane_holder_remark}(i)), and on the other hand the perturbative analysis 
(specifically, a shadowing result, Corollary~\ref{l8.shadow})
requires that the orbit $\cO_\beta$ does not contain the point $1$, thereby forcing $\beta$ to be non-simple. The second step
consists of combining the result from the first step with the fact that non-simple beta-numbers are dense in $(1,+\infty)$, and invoking Theorem~\ref{t_TPO_thm_beta_number'} (the strong version of Individual TPO for beta-numbers), in order to
deduce Theorem~\ref{t.product.typical.periodic}.

\subsection{Shadowing for beta-transformations}

As a preliminary step, we first give a simple bound on the distance between $h_\beta(\underline{a})$ and $h_\gamma(\underline{a})$ for $\beta> \gamma>1$ and $\underline{a}\in X_\gamma$. 

\begin{lemma}\label{l8.dist.h}
	If $1<\gamma<\beta$ then for each $\underline{a}\in X_\gamma$,  
	\begin{equation}\label{e8.dist.h}
		\abs{h_\gamma(\underline{a}) - h_\beta(\underline{a})} \le \frac{(\beta-\gamma) \gamma^2}{\beta(\gamma-1)^2}.
	\end{equation}
\end{lemma}

\begin{proof}
	Writing $\underline{a} = a_1 a_2 \dots$, since $\underline{a}\in X_\gamma$ then $a_n \le \gamma$ for each $n\in \N$, so from (\ref{hbeta}), 
	$		\abs{h_\gamma(\underline{a}) - h_\beta(\underline{a})} = \sum_{n=1}^{+\infty} a_n \frac{\beta^n-\gamma^n} {\beta^n \gamma^n} \le \sum_{n=1}^{+\infty} \frac{\beta^n-\gamma^n}{\beta^n \gamma^{n-1}}$,
	and therefore	
	$		\abs{h_\gamma(\underline{a}) - h_\beta(\underline{a})}  \le (\beta-\gamma) \sum_{n=1}^{+\infty} \frac{\sum_{i=0}^{n-1} \beta^i \gamma^{n-1-i} }{\beta^n \gamma^{n-1}}  \le (\beta-\gamma) \sum_{n=1}^{+\infty} \frac{n}{\beta \gamma^{n-1}} 
		= \frac{(\beta-\gamma) \gamma^2}{\beta(\gamma-1)^2}$.
\end{proof}

From Lemma~\ref{l8.dist.h}, we are able to estimate the distance between a $U_\beta$-orbit $\cO_\beta$ and its image $(h_\gamma \circ \pi_\beta)(\cO_\beta)$; this rather explicit version of a shadowing lemma will simplify our subsequent proof of Theorem~\ref{l_8_main_lemma}, where it is used systematically.

\begin{cor}\label{l8.shadow}
	Fix $\beta>1$, and let 
    $\cO_\beta\in\Per(U_\beta)$ be such that $1\notin \cO_\beta$. Then there exists $c \in (0,\beta-1)$ such that 
    if $\gamma\in (\beta-c,\beta)$,
    then
    $h_\gamma \circ \pi_\beta$ is well defined 
    on $\cO_\beta$, and:
    \begin{enumerate}[label=\rm{(\roman*)}]
	\smallskip
	\item
     $\cO_\gamma\=(h_\gamma \circ \pi_\beta)(\cO_\beta)$ is a $U_\gamma$-periodic orbit with $\card \cO_\gamma = \card \cO_\beta$.
    \smallskip
    \item
    There exists $M>0$ such that for each $x\in \cO_\beta$,  
	\begin{equation}\label{e8.shadow}
		\abs{(h_\gamma\circ \pi_\beta)(x) - x} \le M(\beta-\gamma).
	\end{equation}
	\item
    If $\cO_\beta\neq \{0\}$ then there exists $s>0$ such that for each $x\in \cO_\beta$,   
	\begin{equation}\label{e8.shadow2}
		s(\beta-\gamma) \le \abs{(h_\gamma \circ \pi_\beta)(x) - x}.
	\end{equation} 
    \end{enumerate}
\end{cor}

\begin{proof}
Since $1\notin \cO_\beta$,  Proposition~\ref{p_relation_T_beta_and_wt_T_beta}~(ii) implies that  $\cO_\beta$ is also $T_\beta$-periodic. Applying Lemma~\ref{H beta gamma} with $\cK= \cO_\beta$, there exists $c>0$ such that $\cO_\beta \subseteq H_\beta^\gamma$ for each $\gamma\in (\beta-c, \beta)$, and without loss of generality $c$ may be chosen to be strictly smaller than $\beta-1$.
For each $\gamma\in (\beta-c, \beta)$, Lemma~\ref{l_bi_lipschitz_S_gamma_H_gamma} implies that $\pi_\beta\bigl(H_\beta^\gamma\bigr) \subseteq \cS_\gamma \subseteq X_\gamma$,
and since $h_\gamma$ is defined on $X_\gamma$
(see (\ref{hbeta})),
it follows that $h_\gamma \circ \pi_\beta$
is well defined on $H_\beta^\gamma$, hence in particular on $\cO_\beta$, as required.

\smallskip

(i) By Proposition~\ref{p_relation_of_coding}~(iii) and (vi), $\pi_\beta(\cO_\beta)$ is a $\sigma$-periodic orbit with $\card  \pi_\beta(\cO_\beta)  = \card \cO_\beta$. So by Proposition~\ref{p_coding_mpe_relation}~(iv), $\cO_\gamma = (h_\gamma \circ \pi_\beta)(\cO_\beta)$ is a $U_\gamma$-periodic orbit with $\card \cO_\gamma = \card \pi_\beta(\cO_\beta) = \card \cO_\beta$. 

(ii) Now $c<\beta-1$, so 
for $\gamma\in(\beta-c, \beta)$ the expression
$\frac{\gamma^2}{\beta (\gamma-1)^2}$ has finite upper bound $M$ (equal to $\frac{(\beta-c)^2}{\beta(\beta-c-1)^2}$).  Moreover, applying Proposition~\ref{p_relation_of_coding}~(iv) and Lemma~\ref{l8.dist.h}, for each $x\in \cO_\beta$, we have 
\begin{equation} \label{e:Pf_l8.shadow}
\begin{split}
    |(h_\gamma \circ \pi_\beta)(x) - x| 
    &= |(h_\gamma \circ \pi_\beta)(x) - (h_\beta \circ \pi_\beta)(x)| \\
    &\le (\beta-\gamma) \gamma^2 \beta^{-1} (\gamma-1)^{-2} \\
    &\le M(\beta-\gamma).
\end{split}
\end{equation}
(iii) Since $\cO_\beta\neq\{0\}$, then $(0)^\infty \notin \pi_\beta(\cO_\beta)$. Since $\pi_\beta(\cO_\beta)$ is finite, there exists $N\in \N$ such that if $x\in \cO_\beta$ then $(0)^N 1 (0)^\infty \prec \pi_\beta(x)$. Let $x\in \cO_\beta$, and write $\pi_\beta(x) = a_1 a_2 \dots$, and let $n_x \le N+1$ be the smallest integer such that $a_{n_x} \neq 0$, so that by (\ref{e:Pf_l8.shadow}) and (\ref{hbeta}), 
	\begin{align*}
		\abs{(h_\gamma \circ \pi_\beta)(x) - x} 
         & = \abs{(h_\gamma \circ \pi_\beta)(x) - (h_\beta \circ \pi_\beta)(x)}\\
		   & = \sum_{i=n_x}^{+\infty} a_i \frac{\beta^i- \gamma^i}{\beta^i \gamma^i} 
         \ge \frac{\beta^{n_x} - \gamma^{n_x}}{ \beta^{n_x}  \gamma^{n_x}} 
         \ge \frac{\beta-\gamma}{ \beta^{n_x} \gamma} 
         \ge \frac{\beta-\gamma}{\beta^{N+2}} .
	\end{align*}
Thus (\ref{e8.shadow2}) holds with $s\= \beta^{-N-2}$.
\end{proof}

We will need the following expression for the ergodic supremum (the analogue for continuous maps is well known, see e.g.~\cite[Proposition~2.2]{Je19}):

\begin{lemma}\label{l8.Q=supliminf}
	Given any $\beta>1$ and any $\phi \in C(I)$, 
    \begin{equation*}
    \mpe(U_\beta, \phi) 
    = \sup_{x\in I} \liminf_{n\to +\infty} \frac{1}{n} S_n^{U_\beta}  \phi(x).
    \end{equation*} 
\end{lemma}

\begin{proof}
	Since $\sigma\: X_\beta\to X_\beta$ is a continuous map on a compact metric space, and $\phi \circ h_\beta$ is continuous,  \cite[Proposition~2.2]{Je19} gives 
	\begin{equation}\label{e8.Q=code}
		\mpe \bigl(\sigma|_{X_\beta}, \phi \circ h_\beta \bigr) 
        = \sup_{\underline{a}\in X_\beta} \liminf_{n\to +\infty} \frac{1}{n} S_n^{\sigma} (\phi \circ h_\beta) (\underline{a}).
	\end{equation}
	Applying Lemma~\ref{l_properties_z_beta}~(ii) with $W=I$ gives $X_\beta = \pi_\beta(Z_\beta) \cup \pi^*_\beta(I)$. By Proposition~\ref{p_relation_of_coding}~(ii) and (\ref{e_def_Z_beta}), for each $\underline{b} \in \pi_\beta(Z_\beta)$ there exists $m\in \N$ such that $\sigma^m(\underline{b}) = (0)^\infty$. Thus, noting that $(0)^\infty = \pi_\beta^*(0)$, we obtain that 
    $
    \liminf\limits_{n\to +\infty} \frac{1}{n} S_n^\sigma (\phi \circ h_\beta)(\underline{b}) 
    = \liminf\limits_{n\to +\infty} \frac{1}{n} S_n^{\sigma} (\phi \circ h_\beta) ((0)^\infty)  
    \le \sup\limits_{\underline{a}\in \pi^*_\beta(I)} \liminf\limits_{n\to +\infty} \frac{1}{n} S_n^{\sigma} (\phi \circ h_\beta) (\underline{a})
    $
    for each $\underline{b} \in \pi_\beta(Z_\beta)$. This, together with the fact that $X_\beta = \pi_\beta(Z_\beta) \cup \pi^*_\beta(I)$, implies that
	\begin{equation}\label{e8.Q=X}
		\sup_{\underline{a}\in X_\beta} \liminf_{n\to +\infty} \frac{1}{n} S_n^{\sigma} (\phi \circ h_\beta) (\underline{a}) = \sup_{\underline{a}\in \pi^*_\beta(I)} \liminf_{n\to +\infty} \frac{1}{n} S_n^{\sigma} (\phi \circ h_\beta) (\underline{a}).
	\end{equation}
	By Proposition~\ref{p_relation_of_coding}~(iv) and (v), 
	\begin{equation}\label{e8.Q=pi*}
    \begin{aligned}
      		\sup_{x\in I} \liminf_{n\to +\infty} \frac{1}{n} S_n^{U_\beta} \phi (x) 
        &= \sup_{x\in I} \liminf_{n\to +\infty} \frac{1}{n} S_n^{\sigma} (\phi \circ h_\beta) \bigl(\pi_\beta^*(x)\bigr) \\
        &= \sup_{\underline{a}\in \pi^*_\beta(I)} \liminf_{n\to +\infty} \frac{1}{n} S_n^{\sigma} (\phi \circ h_\beta) (\underline{a}) .  
    \end{aligned}
	\end{equation}
	So combining (\ref{e8.Q=code}), (\ref{e8.Q=X}), (\ref{e8.Q=pi*}), and using Proposition~\ref{p_coding_mpe_relation}~(iii), completes the proof.
\end{proof}

\subsection{Joint perturbation for beta-transformations}

The following key Theorem~\ref{l_8_main_lemma}, valid for \emph{non-simple beta-numbers}, is the first perturbative step (described at the start of this section) towards proving Theorem~\ref{t.product.typical.periodic}.
As noted previously, it has a similar character to the joint perturbation theorems for expanding maps
(Theorem~\ref{t.criticaltheorem})
and for Anosov diffeomorphisms
(Theorems~\ref{jointperturbationuniformhyp}
and \ref{anosovJTPO_C1}),
though here the analysis is more intricate, and the proof is lengthier.

\begin{theorem}[Joint Perturbation: beta-transformations]\label{l_8_main_lemma}
	Fix $\alpha\in (0,1]$, and suppose $\beta>1$ is a non-simple beta-number. 
    \begin{enumerate}[label=\rm{(\roman*)}]
		\smallskip
		\item
    If $\cO_\beta$ is a $T_\beta$-periodic orbit, then there exist $C_1, \, C_2 >0$ such that if $\gamma \in (\beta-C_2, \beta)$ and $\phi \in \Holder{\alpha}(I)$ with $\Mmax(T_\beta, \phi) = \{ \mu_{\cO_\beta} \}$, then the $T_\gamma$-periodic measure $\mu_{\cO_\gamma}$ supported by 
    $\cO_\gamma \= (h_\gamma \circ \pi_\beta)(\cO_\beta)$,
    is the unique $T_\gamma$-maximizing measure for the function 
\begin{equation}\label{perturbation_minus_2C1}
    \phi- 2C_1 \Hseminorm{\alpha}{\phi}  (\beta- \gamma)^{\alpha/2} d(\cdot, \cO_\gamma)^\alpha.
    \end{equation}
    \item
    If $\cO_\beta$ is a $U_\beta$-periodic orbit, then there exist $C_1, \, C_2 >0$ such that if $\gamma \in (\beta-C_2, \beta)$ and $\phi\in \Holder{\alpha}(I)$ with $\Mmax(U_\beta, \phi) = \{ \mu_{\cO_\beta} \}$, then the $U_\gamma$-periodic measure $\mu_{\cO_\gamma}$ supported by 
    $\cO_\gamma \= (h_\gamma \circ \pi_\beta)(\cO_\beta)$,
    is the unique $U_\gamma$-maximizing measure for the function defined by (\ref{perturbation_minus_2C1}).
    \end{enumerate}
\end{theorem}

\begin{proof} Fix a non-simple beta-number $\beta>1$.

(i)  First we show that part~(i) follows readily from part~(ii).
 Given a $T_\beta$-periodic orbit $\cO_\beta$,  Proposition~\ref{p_relation_T_beta_and_wt_T_beta}~(ii) implies that $\cO_\beta$ is also $U_\beta$-periodic.  
 Assuming that part~(ii) of this theorem has been proved, let $C_1, \, C_2 >0$ be as in (ii),  let $\gamma \in (\beta-C_2, \beta)$ and $\gamma' \in (\beta - C_2, \gamma)$, and recall that $\cO_\gamma = (h_\gamma \circ \pi_\beta)(\cO_\beta)$. By the definition of
 $h_\beta\: X_\beta\to I$ (see (\ref{hbeta})), $\max \{ x : x\in \cO_\gamma \} < \max \{ x : x\in \cO_{\gamma'} \} \le 1$, so $1\notin \cO_\gamma$. By (ii) and Proposition~\ref{p_relation_T_beta_and_wt_T_beta}~(ii), $\cO_\gamma$ is also a $T_\gamma$-periodic orbit, so $\mu_{\cO_\gamma} \in \cM(I,T_\gamma)$. By (ii) we know that $\mu_{\cO_\gamma}$ is the unique $U_\gamma$-maximizing measure for the function given by (\ref{perturbation_minus_2C1}), so the fact that $\cM(I,T_\gamma) \subseteq \cM(I, U_\gamma)$ (cf.~Proposition~\ref{p_relation_T_beta_and_wt_T_beta}~(iii)) implies that $\mu_{\cO_\gamma}$ is also the unique $T_\gamma$-maximizing measure for this function, as required.
    
 (ii)   To prove (ii), suppose that $\cO_\beta$ is a $U_\beta$-periodic orbit. Since $\beta$ is non-simple, the point $1$ is not $U_\beta$-periodic, and therefore not an element of $\cO_\beta$. 
	By Corollary~\ref{l8.shadow}~(ii) and (iii), it follows that there exist constants $M>0$ and $ c\in(0,\beta-1)$ such that if $\gamma\in(\beta-c,\beta)$ then (\ref{e8.shadow}) holds, and if 
	moreover $\cO_\beta\neq\{0\}$ then there also exists $s>0$ such that (\ref{e8.shadow2}) holds. Let us write $u\=u_{\beta,\phi}^-$ for the bounded left-continuous (cf.~Theorem~\ref{mane}~(i)) function defined by (\ref{e_def_u_phi_-}), and write $K_\beta \= K_{\alpha, \beta} = \frac{1}{\beta^\alpha -1}$ (cf.~(\ref{kalphabetadefn})). In the following proof we shall systematically exploit two inequalities. The first is that
	\begin{equation}\label{e8.psi<0}
		\psi_\beta \= \overline{\phi} + u - u \circ U_\beta \le 0, 
	\end{equation}
    where $\overline{\phi} \= \phi - \mpe(U_\beta, \phi)$,
    which is a consequence of (\ref{e_def_tphi^-}) and Theorem~\ref{mane}~(ii).
	The second is that
	\begin{equation}
		u(x) - u(y)  \le K_\beta \Hseminorm{\alpha}{\phi} \abs{x-y}^\alpha \label{e8.Hbound}
	\end{equation}
	holds if $x<y$ and $[x,y) \cap \cO_\beta^*(1) = \emptyset$, and also holds if $y<x$. 
	Note that if $x<y$ and $[x,y) \cap \cO_\beta^*(1) = \emptyset $ then (\ref{e8.Hbound}) follows from Theorem~\ref{mane}~(iii) and the left continuity of $u$, whereas if $y<x$ then (\ref{e8.Hbound}) follows from Lemma~\ref{l_Bousch_Op_preserve_space}, together with (\ref{e_calibrated_sub-action_exists}) and (\ref{e_def_u_phi_-}).
	
	Recall (cf.~Subsection~\ref{sct_Notation}) that if $F \subseteq I$ is a finite set, then $\Delta(F)$ denotes its minimum interpoint distance, i.e., $\Delta(F) = \min \{ \abs{x-y} : x, \, y \in F, \, x\neq y\}$ if $\card F\ge 2$ and $\Delta(F) = +\infty$ if $\card F=1$. 
	In the following, the finite set $F$ will be chosen as either a periodic orbit or a preperiodic orbit.
	
	We define the following notation for various constants\footnote{Note that the form of the presentation of $C_2$ in (\ref{d_8_C2}) is for ease of reference in subsequent calculations rather than for economy of notation, as certain elements of the set on the right-hand side are manifestly dominated by others.
    For future reference, we note that the bound $C_2\le c$ is needed so as to invoke Corollary~\ref{l8.shadow}, while $C_2\le \frac{1}{2}$ and $C_2\le \frac{1}{2}\Delta(\cO_\beta^*(1))$ are used in proving Claim~3, 
    the inequalities $C_2\le M^{-1}r$ and $C_2\le \beta r$ are used in proving Claim~1, and $C_2\le \frac{1}{2M} \Delta(\cO_\beta^*(1))$ is used in proving that $d(\cO_\gamma, \cO_\beta^*(1)) \ge s(\beta-\gamma)$ in Subcase~(ii) of Case~C.
    As for the constant $C_1$, the bound $C_1 \ge 1$ is assumed in order to simplify calculations, the second term in (\ref{d_8_C1}) is needed to check that $\rho \le r/\beta$ (cf.~(\ref{rho_r_beta})), the third term is used in deriving (\ref{e_8_Snpsi3}), and the fourth term in (\ref{d_8_C1'}) is used in deriving (\ref{e8.222}).} 
    that will be used in our perturbative arguments (recall that $D_\beta$ is the discontinuity set, defined in (\ref{e_D_beta})):
	\begin{align}
		p &\= \card \cO_\beta, \label{d_8_p}\\
		r &\= \begin{cases}
			\min \bigl\{ \frac{d(\cO_\beta, D_\beta)}{3}, \, \frac{\Delta(\cO_\beta)}{4} , \, \frac{d (\cO_\beta, \cO_\beta^*(1)) }{2} \bigr\} & \text{ if } \cO_\beta \cap \cO_\beta^*(1) =\emptyset, \\
			\min \bigl\{ \frac{d(\cO_\beta, D_\beta)}{3}, \,  \frac{\Delta(\cO_\beta) }{4} \bigr\} & \text{ if }  \cO_\beta \cap \cO_\beta^*(1) \neq \emptyset,
		\end{cases} \label{d_8_r}\\
		C_2 &\= \min \Bigl\{ c ,\, \frac{1}{2}, \, \frac{1}{2}\Delta \bigl(\cO_\beta^*(1) \bigr), \, \frac{1}{2M}\Delta \bigl(\cO_\beta^*(1) \bigr), \, M^{-1}r, \, \beta r  \Bigr\}, \label{d_8_C2}\\
		L_1 & \= 1+ \frac{1}{  (\beta-C_2)^\alpha -1} + 2 \beta^\alpha K_\beta, \label{d_8_L1}\\
		L_2 & \= 3K_\beta + 1+ M^\alpha, \label{d_8_L2}\\
		C_1 & \= \max \bigl\{1, \,  L_2 \cdot C_2^{\alpha/2} \beta^\alpha  r^{-\alpha}, \, r^{-\alpha} \beta^\alpha (p+1+L_1)L_2 \bigr\}. \label{d_8_C1}
	\end{align}   
	Note that $C_2 \le c < \beta-1$, so that $\beta-C_2>1$ and $\ln(\beta-C_2)>0$. If $\cO_\beta\neq\{0\}$, we define the additional constants
	\begin{align}
		L_3 &\= 1+\frac{1}{\ln(\beta-C_2)} \Bigl( \ln \beta + \frac{1}{\alpha} \ln L_2 - \ln s \Bigr), \label{d_8_L3}\\
		L_4 &\= \frac{s^\alpha}{\beta^\alpha}L_1 + pL_2+L_3L_2 + \frac{L_2 }{e \alpha \ln(\beta-C_2)}, \label{d_8_L4}
	\end{align}
	and make the additional assumption that
	$C_1\ge r^{-\alpha}\beta^\alpha (L_4+L_2)$, in other words, if $\cO_\beta\neq\{0\}$ we re-define $C_1$ by
	\begin{equation}\label{d_8_C1'}
		C_1  \= \max \bigl\{1, \,  L_2 \cdot C_2^{\alpha/2} \beta^\alpha r^{-\alpha}, \, r^{-\alpha} \beta^\alpha (p+1+L_1)L_2 , \, r^{-\alpha}\beta^\alpha (L_4+L_2)\bigr\} .
	\end{equation}
	Note that the constants $L_1$, $L_2$, $L_3$, and $L_4$ are defined purely for convenience, in order to lighten the notation.

	Now suppose $\gamma \in (\beta-C_2, \beta)$, and define
	\begin{align}
		r_\gamma &\= \begin{cases}
			\min \bigl\{ d(\cO_\gamma, D_\gamma), \, \frac{\Delta(\cO_\gamma)}{2} , \, d \bigl(\cO_\gamma, \cO_\beta^*(1) \bigr) \bigr\} & \text{if }  \cO_\beta \cap \cO_\beta^*(1) = \emptyset,\\
			\min \bigl\{ d(\cO_\gamma, D_\gamma), \,  \frac{\Delta(\cO_\gamma) }{2}  \bigr\}& \text{if }  \cO_\beta \cap \cO_\beta^*(1) \neq \emptyset,
		\end{cases} \label{d_8_r_gamma} \\
		\psi_\gamma &\= \overline{\phi} + u - u \circ U_\gamma, \label{d_8_psi}\\
		\tau &\= (3K_\beta + 1) \Hseminorm{\alpha}{\phi}(\beta-\gamma)^\alpha. \label{d.8.tau}
	\end{align}
	
	Before embarking on the perturbative part of the proof, we shall establish the following three preparatory claims:
	
	\smallskip
	\emph{Claim~1.} $r>0$, $C_2>0$, and $r_\gamma \ge r$.
	
	\smallskip
	\emph{Proof of Claim~1.} First we show that $r>0$. If there were some point $x\in \cO_\beta \cap D_\beta$, then $U_\beta(x)$ would belong to $\cO_\beta \cap \{1\}$ by (\ref{e_D_beta}), but this contradicts the fact that $1$ is not a $U_\beta$-periodic point, since $\beta$ is a non-simple beta-number; thus in fact $d(\cO_\beta, D_\beta)>0$. The strict positivity of the minimum interpoint distance $\Delta(\cO_\beta)$ follows directly from its definition, and the strict positivity of $d \bigl(\cO_\beta, \cO_\beta^*(1) \bigr)$ clearly does hold in the case where $\cO_\beta \cap \cO_\beta^*(1) = \emptyset$. Thus we have shown that $r>0$. It readily follows that $C_2>0$ as well (since the terms not involving $r$ on the right-hand side of (\ref{d_8_C2}) are clearly positive).
	
	It remains to show that $r_\gamma \ge r$, and for this we will use the definitions of the constants $r, \, r_\gamma, \, C_2$, and that of the discontinuity set $D_\beta$ (see (\ref{e_D_beta})). 
	Consider $x\in\cO_\gamma$ and $x'\in \cO_\beta$ with $x = (h_\gamma \circ \pi_\beta)(x')$. Suppose $z= i/\gamma \in D_\gamma$ for some nonnegative integer $i\le \gamma$, and fix $w\in \cO_\beta^*(1)$. By Corollary~\ref{l8.shadow}, and using that $C_2\le M^{-1}r$, we have $\abs{x-x'} \le M(\beta-\gamma) \le MC_2 \le r$. Now set $z' \= i/\beta \in D_\beta$, and using that $C_2\le \beta r$, we have $\abs{z-z'} = \frac{i (\beta-\gamma)}{\beta \gamma} \le \frac{C_2}{\beta}\le r$. Hence, if $\cO_\beta \cap \cO_\beta^*(1) \neq \emptyset$, the above inequalities, together with the triangle inequality, and the definition of $r$, yield
	\begin{equation}\label{xz_ineq} 
		\abs{x-z} \ge \abs{x'-z'} - \abs{x'-x} -\abs{z'-z} \ge d(\cO_\beta,D_\beta) -r-r \ge 3r-2r= r.     
	\end{equation}
	If $\cO_\beta \cap \cO_\beta^*(1) = \emptyset$, then (\ref{xz_ineq}) also holds, and the definition of $r$ in this case additionally yields
	\begin{equation}\label{xw_ineq}
		\abs{x-w} \ge \abs{x'-w} - \abs{x-x'} \ge d \bigl( \cO_\beta, \cO_\beta^*(1) \bigr) -r \ge 2r-r=r.
	\end{equation}
	From (\ref{xz_ineq}) we deduce that $d(\cO_\gamma, D_\gamma) \ge r$, and if $\cO_\beta \cap \cO_\beta^*(1) = \emptyset$
	then from (\ref{xw_ineq}) we deduce that
	$d \bigl(\cO_\gamma, \cO_\beta^*(1) \bigr) \ge r$. If $p=1$, by Corollary~\ref{l8.shadow}~(i), $\card \cO_\gamma=1$, so $\Delta(\cO_\gamma) = +\infty$. As $d(\cO_\gamma, D_\gamma) \ge r$ and if $\cO_\beta \cap \cO_\beta^*(1) = \emptyset$ then $d \bigl(\cO_\gamma, \cO_\beta^*(1) \bigr) \ge r$, we obtain $r_\gamma\ge r$. If $p \ge 2 $, by Corollary~\ref{l8.shadow}~(i), $\card \cO_\gamma \ge 2$. In this case, consider $y\in \cO_\gamma$ and $y'\in \cO_\beta$ with $x\neq y$ and $y=(h_\gamma \circ \pi_\beta)(y')$. Similarly, we have $\abs{y-y'} \le r$. Recalling that $\abs{x-x'}\le r$, we have
    \begin{equation}\label{xy_ineq}
        \abs{x-y} \ge \abs{x'-y'} - \abs{x'-x} -\abs{y'-y} \ge \Delta(\cO_\beta) - 2r \ge 4r-2r=2r, 
    \end{equation}
    which implies that $\frac{1}{2}\Delta(\cO_\gamma) \ge r$. Recalling that $d(\cO_\gamma, D_\gamma) \ge r$ and if $\cO_\beta \cap \cO_\beta^*(1) = \emptyset$ then $d \bigl(\cO_\gamma, \cO_\beta^*(1) \bigr) \ge r$, we obtain  that $r_\gamma\ge r$.
    Therefore, Claim~1 is proved.
	
	\smallskip
	\emph{Claim~2.} For each $x\in \cO_\gamma$, if $s, \, t \in B(x, r)$ then $U_\gamma(s) - U_\gamma(t) = \gamma(s-t)$, moreover, $d(s,\cO_\gamma) = \abs{s-x}$.
	
	\smallskip
	\emph{Proof of Claim~2.} Note that $r\le r_\gamma$ by Claim~1. Given $x\in \cO_\gamma$ and $s, \, t\in B(x,r) \subseteq B(x, r_\gamma)$, note first that since $r_\gamma \le d(\cO_\gamma, D_\gamma)$, then $B(x,r_\gamma) \cap D_\gamma = \emptyset$, thus $U_\gamma(s) - U_\gamma(t) = \gamma(s-t)$. Secondly, since $\abs{s-x} < r_\gamma \le \frac{1}{2} \Delta(\cO_\gamma)$, then $x$ is the closest point in $\cO_\gamma$ to $s$, so that $d(s,\cO_\gamma) = \abs{s-x}$. So Claim~2 is proved. 
	
	\smallskip
	
	The following\footnote{\label{Claim3remarks} Note that Claim~3 represents an analogue of the inequality (\ref{e.psi'<tau}), established in the context of expanding maps. While (\ref{e.psi'<tau}) is obtained relatively easily, the proof of Claim~3 is more delicate (in particular, it need not hold if either $\gamma>\beta$ or $\beta$ is not a beta-number), and exploits particular properties of beta-transformations.}
    Claim~3 means that the function $u$ can be regarded as a sub-action for $(U_\gamma, \phi)$: 
	
	\smallskip
	\emph{Claim~3.} $\psi_\gamma \le \tau$.
	
	\smallskip
	\emph{Proof of Claim~3.} Recall that $\lfloor x \rfloor' \= \max\{n\in \Z : n <x\}$ is a nondecreasing function that only takes integer values.
	
	Fix $x\in I$. First consider the case where $\lfloor \gamma x \rfloor' = \lfloor \beta x \rfloor'$. By the definition of $U_\beta$ (see (\ref{e_def_U_beta})), $U_\beta(x) = U_\gamma(x) + x(\beta-\gamma) > U_\gamma(x)$, so by (\ref{d_8_psi}), (\ref{e8.psi<0}), and (\ref{e8.Hbound}),
	\begin{equation}\label{e8_C31}
		\begin{aligned}
            \psi_\gamma(x) 
            &= \psi_\beta(x) + u ( U_\beta(x)) - u( U_\gamma(x)) \\
            &\le K_\beta \Hseminorm{\alpha}{\phi} (U_\beta(x) - U_\gamma(x))^\alpha  
            \le K_\beta \Hseminorm{\alpha}{\phi} (\beta-\gamma)^\alpha.		    
		\end{aligned}
	\end{equation}
	Next consider the case where $\lfloor \gamma x \rfloor' < \lfloor \beta x \rfloor'$, and set $y \= x- \frac{\beta-\gamma}{ \beta}$. Since $\beta-\gamma < C_2 \le \frac{1}{2}$, 
	\begin{equation}\label{e8.gamma}
    	\begin{aligned}
		\gamma y 
        = \gamma x - \gamma  \beta^{-1} (\beta-\gamma) 
        &= \beta x - \bigl( x+ \gamma \beta^{-1} \bigr)(\beta-\gamma)\\
        &> \beta x - 2(\beta-\gamma) 
        >\beta x -1.
        \end{aligned}
	\end{equation}
	Thus, $\lfloor \beta x  \rfloor' -1 \le \lfloor \gamma y \rfloor' \le \lfloor \gamma x \rfloor' < \lfloor \beta x \rfloor'$. So $\lfloor \gamma y  \rfloor' = \lfloor \gamma x \rfloor' = \lfloor \beta x \rfloor' -1$. On the other hand, we have $\gamma y < \beta y = \beta x - (\beta-\gamma) = \gamma x +(x-1)(\beta-\gamma) \le \gamma x$, thus 
	\begin{equation}\label{gamma_y_beta}
		\lfloor \gamma y \rfloor' = \lfloor \beta y \rfloor'.\end{equation} 
	In view of (\ref{gamma_y_beta}), the inequality
	(\ref{e8_C31}) holds (with $y$ replacing $x$), in other words,
	\begin{equation}\label{e8_C31_y}
		\psi_\gamma(y) \le K_\beta \Hseminorm{\alpha}{\phi} (\beta-\gamma)^\alpha.
	\end{equation}
	Now $y<x$, so by (\ref{e8.Hbound}) 
    we have
	$u(x) - u(y)  \le K_\beta \Hseminorm{\alpha}{\phi} \abs{x-y}^\alpha$, and combining this with (\ref{e8_C31_y}) and (\ref{d_8_psi}) gives
	\begin{equation}\label{e8_C32}
		\begin{aligned}
			&\psi_\gamma(x) - \abs{ u(U_\gamma(x)) - u ( U_\gamma(y)) } \\
            &\qquad\le \psi_\gamma(y) + \abs{ \phi(x) - \phi(y) } + u(x) - u(y)   \\ 
			&\qquad\le (2K_\beta +1) \Hseminorm{\alpha}{\phi} (x-y)^\alpha 
			\le (2K_\beta +1) \Hseminorm{\alpha}{\phi} (\beta-\gamma)^\alpha .
		\end{aligned}
	\end{equation}
	The last inequality above follows from $x-y = (\beta-\gamma)/\beta < \beta-\gamma$. Since $C_2 \le \frac{1}{2} \Delta\bigl(\cO_\beta^*(1)\bigr)$, and $1\in \cO_\beta^*(1)$, we have $\cO_\beta^*(1) \cap (1-2C_2, 1) =\emptyset$. By (\ref{e8.gamma}) and $\lfloor \beta x \rfloor'= \lfloor \gamma y \rfloor'+1$, we have $\gamma y > \beta x - 2C_2 \ge \lfloor \gamma y \rfloor'+1-2C_2$, which implies that $U_\gamma(y) \in (1-2C_2, 1]$. Since $\lfloor \gamma y \rfloor' =\lfloor \gamma x \rfloor'$, we have $1-2C_2 < U_\gamma(y) < U_\gamma(y) + \gamma(x-y) =  U_\gamma(x) \le 1$. Thus, applying (\ref{e8.Hbound}) (with the points $x$ and $y$ replaced by $U_\gamma(x)$ and $U_\gamma(y)$) gives
	\begin{equation}\label{e8_C33}
		\abs{ u( U_\gamma(x)) - u ( U_\gamma(y)) } 
        \le K_\beta \Hseminorm{\alpha}{\phi} (\gamma(x-y))^\alpha 
        < K_\beta \Hseminorm{\alpha}{\phi} (\beta-\gamma)^\alpha,
	\end{equation}
	where the last inequality follows from $x-y = (\beta-\gamma)/\beta <(\beta-\gamma)/\gamma$. Combining (\ref{e8_C31}), (\ref{e8_C32}),  (\ref{e8_C33}), and recalling that
	$\tau = (3K_\beta + 1) \Hseminorm{\alpha}{\phi}(\beta-\gamma)^\alpha$ (cf.~(\ref{d.8.tau})), we obtain that $\psi_\gamma \le \tau$, so Claim~3 is proved.

	\smallskip
	
	Having proved the above Claims 1, 2, and 3, we are now ready to begin the perturbative part of the proof, by defining the functions
	\begin{align}
		\phi_\gamma' &\= \overline{\phi}- C_1 \Hseminorm{\alpha}{\phi} (\beta- \gamma)^{\alpha/2} d(\cdot, \cO_\gamma)^\alpha \quad\text{ and} \label{d_8_phi_gamma'} \\
		\psi_\gamma' &\= \psi_\gamma - C_1 \Hseminorm{\alpha}{\phi} (\beta- \gamma)^{\alpha/2} d(\cdot, \cO_\gamma)^\alpha 
        = \phi_\gamma' + u - u \circ U_\gamma, \label{d_8_psi_gamma}
    \end{align}
	noting that the second equality in (\ref{d_8_psi_gamma}) follows from (\ref{d_8_psi}) and (\ref{d_8_phi_gamma'}). 

    Note that the function defined (cf.~(\ref{perturbation_minus_2C1})) in the statement of the theorem differs from $\phi_\gamma'$ by $Q(U_\beta,\phi)- C_1 \Hseminorm{\alpha}{\phi} (\beta- \gamma)^{\alpha/2} d(\cdot, \cO_\gamma)^\alpha$,
    and this latter function clearly has $\mu_{\cO_\gamma}$ as its unique $U_\gamma$-maximizing measure, since the maximum of the function is attained uniquely on the $U_\gamma$-periodic orbit $\cO_\gamma$.
    Consequently, if it can be shown that 
    the periodic measure
    $\mu_{\cO_\gamma}$
	is $\bigl(U_\gamma,\phi_\gamma'\bigr)$-maximizing,
    then it will follow that this measure is the \emph{unique} $U_\gamma$-maximizing measure for the function defined in (\ref{perturbation_minus_2C1}), and the theorem will be proved.

    We therefore aim to prove that
    $\mu_{\cO_\gamma}$
	is $\bigl(U_\gamma,\phi_\gamma'\bigr)$-maximizing.
    Using (\ref{d_8_phi_gamma'}),
	combined with Corollary~\ref{l8.shadow} and the assumption that the unique $(U_\beta,\phi)$-maximizing measure is supported by the period-$p$ orbit $\cO_\beta$, 
	we see that
	\begin{equation}\label{d_8_eta}
    \begin{aligned}	
        \eta 
        \= \int_I \! \phi_\gamma' \,\mathrm{d}\mu_{\cO_\gamma} 
        &= \int_I \! \overline{\phi} \,\mathrm{d}\mu_{\cO_\gamma} \\
        &\ge \int_I \! \overline{\phi} \,\mathrm{d}\mu_{\cO_\beta} - \frac{1}{p}\sum_{x\in \cO_\beta} \Absbig{ \bigl( \overline{\phi} \circ h_\gamma \circ \pi_\beta \bigr) (x) - \overline{\phi}(x)}  \\
        &\ge  -\Hseminorm{\alpha}{\phi} M^\alpha (\beta-\gamma)^\alpha,
    \end{aligned}
	\end{equation}
	and deduce from (\ref{d_8_eta}), using  (\ref{d_8_L2}) and (\ref{d.8.tau}),  that 
	\begin{equation}\label{e8.tau-eta}
		\tau-\eta \le L_2 \Hseminorm{\alpha}{\phi} (\beta-\gamma)^\alpha.
	\end{equation}
	Since $\psi_\gamma\le\tau$ by Claim~3, and $\int\! \psi_\gamma\, \mathrm{d}\mu_{\cO_\gamma}
	=\int\! \overline{\phi}\, \mathrm{d}\mu_{\cO_\gamma}
	=
	\int\! \phi_\gamma'\, \mathrm{d}\mu_{\cO_\gamma}=\eta$ (cf.~(\ref{d_8_psi}) and (\ref{d_8_eta})), then
	\begin{equation}\label{tau_minus_eta}
		\tau - \eta \ge 0.
	\end{equation}
	
	Now $\cO_\gamma$ is a $U_\gamma$-periodic orbit, by Corollary~\ref{l8.shadow}~(i),
	so $\mu_{\cO_\gamma}\in\MMM(I,U_\gamma)$, 
	and $\eta = \int_I \! \phi_\gamma' \,\mathrm{d}\mu_{\cO_\gamma}$ (cf.~(\ref{d_8_eta})),
	so to show that $\mu_{\cO_\gamma}$ is a $\bigl(U_\gamma,\phi_\gamma'\bigr)$-maximizing measure,
	it suffices to prove that
	$\mpe\bigl(U_\gamma,\phi_\gamma'\bigr)\le \eta$.
	But 
    \begin{equation*}
        \mpe\bigl(U_\gamma, \phi_\gamma'\bigr) = \sup_{x\in I} \liminf_{n\to +\infty} \frac{1}{n} S_n^{U_\gamma}  \phi_\gamma'(x)
    \end{equation*} by Lemma~\ref{l8.Q=supliminf},
	so in view of (\ref{d_8_psi_gamma}), and the fact that $u$ is bounded, if we can show that 
	\begin{equation}\label{e8.sufcon}
		\liminf_{n \to +\infty} \frac{1}{n} S_n^{U_\gamma} \psi_\gamma'(x) 
		\le \eta \quad \text{ for all }x\in I,
	\end{equation}
	then the required
    fact that $\mu_{\cO_\gamma}$ is  $\bigl(U_\gamma,\phi_\gamma'\bigr)$-maximizing 
     will follow. 
	
	We therefore aim to prove (\ref{e8.sufcon}).
	Defining
	\begin{equation}\label{e_8_rho}
		\rho \=  (\tau - \eta) ^{1/\alpha}  (C_1 \Hseminorm{\alpha}{\phi })^{-1/\alpha} (\beta-\gamma)^{-1/2},
	\end{equation}
	note that the definition of $\psi_\gamma'$ (cf.~(\ref{d_8_psi_gamma})),
	together with $\psi_\gamma\le \tau$ (by Claim~3), implies that 
	\begin{equation}\label{psi_gamma_eta}
		\psi_\gamma'(x) \le \eta \quad \text{ for all }x \notin B(\cO_\gamma, \rho).
	\end{equation}

	Now fix an arbitrary point $x\in I$. 
	In order to show that $\liminf\limits_{n \to +\infty} \frac{1}{n} S_n^{U_\gamma} \psi_\gamma'(x) 
	\le \eta $
	we will recursively construct a sequence $\{x_k\}_{k\in\N}$ in $\cO_\gamma^*(x)$ and a sequence $\{n_k\}_{k\in \N}$ in $\N$ satisfying 
    \begin{equation*}
    x_{k+1} = U_\gamma^{n_k} (x_k) \quad \text{ and } \quad S_{n_k}^{U_\gamma} \psi_\gamma' (x_k) \le n_k \eta.
    \end{equation*} 
	
	\smallskip
	\emph{Base step.} Define $x_1 \= x$.
	
	\smallskip
	\emph{Recursive step.} Assume that for some $t\in \N$, $\{x_k\}_{k=1}^t$ and $\{n_k\}_{k=1}^{t-1}$ are defined. We now divide our discussion into three cases, the third of which requires some delicate analysis.
	
	\smallskip
	\emph{Case~A.} Assume $x_t \in \cO_\gamma$. Then define $n_t \= p $ and $x_{t+1} \= U_\gamma^{n_t}(x_t) = x_t$. Thus, by (\ref{d_8_psi_gamma}) and (\ref{d_8_eta}), we have 
	\begin{equation}\label{e_8_Snpsi1}
		S_{n_t}^{U_\gamma} \psi_\gamma' (x_t) = n_t \eta.
	\end{equation}
	
	\smallskip
	\emph{Case~B.} Assume $x_t \notin B(\cO_\gamma, \rho)$. Then define $n_t \= 1$ and $x_{t+1} \= U_\gamma^{n_t}(x_t) = U_\gamma(x_t)$, so that (\ref{psi_gamma_eta}) gives 
	\begin{equation}\label{e_8_Snpsi2}
		S_{n_t}^{U_\gamma} \psi_\gamma' (x_t) = \psi_\gamma'(x_t) \le \eta.
	\end{equation}
	
	\smallskip
	\emph{Case~C.} Assume $x_t \in B( \cO_\gamma, \rho) \smallsetminus \cO_\gamma$. Note that $C_1\ge \frac{L_2 \cdot C_2^{\alpha/2} \beta^\alpha}{ r^\alpha}$ by (\ref{d_8_C1}),
	and $\beta-\gamma<C_2$,
	so using (\ref{e8.tau-eta}) and (\ref{e_8_rho}) it follows that 
	\begin{equation}\label{rho_r_beta}
		\rho \le r/\beta.
	\end{equation}
    Let $y$ be a point in $\cO_\gamma$ that is closest to $x_t$, in the sense that 
     \begin{equation}\label{y_defined_closest}
     \abs{x_t-y} = d( x_t, \cO_\gamma ),\end{equation}
     so $x_t\in B(y,\rho) \subseteq B(y,r)$ by the Case~C assumption together with (\ref{rho_r_beta}).
	Next, define 
    \begin{equation} \label{N_definition_as_largest_m_definition_as_smallest}
     \begin{aligned}
      N   &\= \min \bigl\{i\in\N_0: d \bigl(U_\gamma^{i+1}(x_t), U_\gamma^{i+1}(y) \bigr) \ge r \bigr\}  \quad \text{ and } \\
      m   &\= \min \bigl\{i\in\N_0: d \bigl(U_\gamma^i(x_t), U_\gamma^i(y) \bigr) \ge \rho  \bigr\}, 
      \end{aligned}
     \end{equation} 
     noting that such $N$ and $m$ exist by Claim~2 and the fact that $\rho < r $ (cf.~(\ref{rho_r_beta})). 
     So Claim~2 gives
	\begin{equation}\label{e8.Claim2result}
		\Absbig{U_\gamma^{i+1}(x_t)-U_\gamma^{i+1}(y)}=\gamma\Absbig{U_\gamma^i(x_t)-U_\gamma^i(y)}= \gamma d \bigl( U_\gamma^i(x_t), \cO_\gamma \bigr)
	\end{equation}
	for all $0\le i\le N$. Now $\gamma<\beta$, so combining (\ref{rho_r_beta}) and (\ref{e8.Claim2result}), we conclude that
	\begin{equation}\label{claim2_gives}
     \begin{aligned}
		d \bigl(U_\gamma^N(x_t), \cO_\gamma \bigr)  
        &= \gamma^{-1} \Absbig{ U_\gamma^{N+1}(x_t)- U_\gamma^{N+1}(y) }  \\
        &\ge \gamma^{-1} d\bigl( U_\gamma^{N+1}(x_t), \cO_\gamma \bigr) \ge r/\gamma > r/\beta.
     \end{aligned}
	\end{equation}
	Now $d(x_t, \cO_\gamma)<\rho$, so (\ref{claim2_gives}) and (\ref{rho_r_beta}) imply that
	\begin{equation}\label{e8.1<m<N}
		1 \le m \le N.
	\end{equation}
	In this case, we define 
    $n_t \= N+1$ and $x_{t+1} \= T_\gamma^{n_t}(x_t)$. 
    
    It is convenient to divide the following discussion into two subcases.
	
	\smallskip
	\emph{Subcase~(i).} Assume $B(\cO_\gamma, \gamma\rho) \cap \cO_\beta^*(1) = \emptyset$. In this subcase, by (\ref{e8.1<m<N}), (\ref{e8.Claim2result}), and the definition of $m$ (cf.~(\ref{N_definition_as_largest_m_definition_as_smallest})), we have 
    \begin{equation*}
    \abs{x_t-y} 
    < \Absbig{ U_\gamma^m(x_t) -  U_\gamma^m(y)}
    = \gamma \Absbig{ U_\gamma^{m-1}(x_t) -  U_\gamma^{m-1}(y)} 
    <\gamma \rho 
    \le d \bigl( \cO_\gamma, \cO_\beta^*(1) \bigr).
    \end{equation*}
    Then by (\ref{d_8_psi}), Theorem~\ref{mane}~(iii), and (\ref{d_8_L1}),
	\begin{align}\label{e8_Smd}
			&\Absbig{S_m^{U_\gamma} \psi_\gamma (x_t) - S_m^{U_\gamma} \psi_\gamma(y)} \\
            &\qquad \le \Absbig{S_m^{U_\gamma} \overline{\phi} (x_t) - S_m^{U_\gamma} \overline{\phi}(y)} + \abs{u(x_t)-u(y)} + \Absbig{u \bigl(U_\gamma^m(x) \bigr) - u \bigl( U_\gamma^m(y) \bigr)} \notag\\
			&\qquad\le \Hseminorm{\alpha}{\phi} \biggl( \sum_{i=0}^{m-1} \Absbig{U_\gamma^i(x_t) - U_\gamma^i(y)}^\alpha + 2K_\beta \rho^\alpha \gamma^\alpha \biggr)  \notag\\
			&\qquad\le \Hseminorm{\alpha}{\phi} \biggl( \Absbig{U_\gamma^{m-1}(x_t) - U_\gamma^{m-1}(y)}^\alpha \sum_{i=0}^{m-1} \gamma^{-i \alpha} + 2K_\beta \rho^\alpha \beta^\alpha \biggr)\notag\\
			&\qquad< \Hseminorm{\alpha}{\phi} \rho^\alpha \bigl( 1+   (\gamma^\alpha -1)^{-1} + 2K_\beta \beta^\alpha \bigr)  
			< \Hseminorm{\alpha}{\phi} \rho^\alpha L_1. \notag
	\end{align}
    Recalling that $p$ denotes the common cardinality of $\cO_\beta$ and $\cO_\gamma$
    (cf.~Corollary~\ref{l8.shadow}~(i) and (\ref{d_8_p})), let us
	write $m = qp+l$, where $q, \, l$ are integers with $q\ge 0$ and $l\in [0,p-1]$.  Thus, 
	since $\psi_\gamma\le\tau$ by Claim~3, and $\eta= \int\! \phi_\gamma'\, \mathrm{d}\mu_{\cO_\gamma}
	=
	\int\! \overline{\phi}\, \mathrm{d}\mu_{\cO_\gamma}
	=
	\int\! \psi_\gamma\, \mathrm{d}\mu_{\cO_\gamma}
	=
	p^{-1}S_p^{U_\gamma} \psi_\gamma(y)$,
	we have  
	\begin{equation}\label{e8_Sm<}
		S_m^{U_\gamma} \psi_\gamma(y)  
            = qS_p^{U_\gamma} \psi_\gamma(y) + S_l^{U_\gamma} \psi_\gamma(y) 
            \le qp \eta +l\tau
		= m\eta +l(\tau-\eta)
		\le m\eta + p(\tau-\eta),
	\end{equation}
	where the final inequality uses that $\tau-\eta\geq 0$ (cf.~(\ref{tau_minus_eta})).
	
	The two preceding inequalities (\ref{e8_Smd}) and (\ref{e8_Sm<}), together with the definition of $\rho$ (cf.~(\ref{e_8_rho})), and
	the fact that $C_1\ge 1$ and $\beta-\gamma < C_2 <1$, give
	\begin{equation}\label{e_8_Smpsi}
		\begin{aligned}
			S_m^{U_\gamma} \psi_\gamma (x_t) 
            &\le m \eta + p(\tau-\eta) +  \Hseminorm{\alpha}{\phi} \rho^\alpha L_1 \\
			 &= m \eta + (\tau-\eta) \bigl( p+  L_1 C_1^{-1}(\beta-\gamma)^{-\alpha/2} \bigr)\\
		& \le m \eta +  (\tau-\eta) (\beta-\gamma)^{-\alpha/2}    (p + L_1) \\
			& \le m \eta + (p  + L_1)L_2 \Hseminorm{\alpha}{\phi}(\beta-\gamma) ^{\alpha/2},
		\end{aligned}
	\end{equation}
	where in the final inequality we used that
	$\tau-\eta \le L_2 \Hseminorm{\alpha}{\phi} (\beta-\gamma)^\alpha$
	(cf.~(\ref{e8.tau-eta})).
	
	Now $n_t=N+1$, so
	$S_{n_t}^{U_\gamma} \psi_\gamma' (x_t)
	=
	S_m^{U_\gamma} \psi_\gamma'(x_t)
	+
	S_{N-m}^{U_\gamma} \psi_\gamma' \bigl(U_\gamma^m(x_t) \bigr)
	+
	\psi_\gamma' \bigl(U_\gamma^N (x_t) \bigr),
	$
	and since
	$\psi_\gamma' \le \psi_\gamma$ (cf.~(\ref{d_8_psi_gamma})) and $\psi_\gamma'\bigl( U_\gamma^i (x_t)\bigr) \le \eta$ for $m\le i <N$ by (\ref{psi_gamma_eta}), 
	then
	\begin{equation}\label{n_t_N_sum}
		S_{n_t}^{U_\gamma} \psi_\gamma' (x_t)
		\le
		S_m^{U_\gamma} \psi_\gamma(x_t)
		+
		(N-m)\eta
		+
		\psi_\gamma' \bigl(U_\gamma^N (x_t) \bigr).
	\end{equation}
	Now $\psi_\gamma'=\psi_\gamma - C_1 \Hseminorm{\alpha}{\phi} (\beta- \gamma)^{\alpha/2} d(\cdot, \cO_\gamma)^\alpha$ (cf.~(\ref{d_8_psi_gamma})),
	so 
	the fact that
	$d \bigl(U_\gamma^N(x_t), \cO_\gamma \bigr) > r/\beta$ by (\ref{claim2_gives}), and that $\psi_\gamma\le \tau$ by Claim~3, together give
	\begin{equation*}
    \psi_\gamma'\bigl(U_\gamma^N (x_t) \bigr) 
	\le \tau -	C_1 \Hseminorm{\alpha}{\phi} (\beta- \gamma)^{\alpha/2} (r/\beta)^\alpha,
    \end{equation*}
	and combining with (\ref{n_t_N_sum}) gives
	\begin{equation}\label{combined}
		S_{n_t}^{U_\gamma} \psi_\gamma' (x_t)
		\le
		S_m^{U_\gamma} \psi_\gamma(x_t)
		+
		(N-m)\eta
		+ \tau -
		C_1 \Hseminorm{\alpha}{\phi} (\beta- \gamma)^{\alpha/2} r^\alpha\beta^{-\alpha}.
	\end{equation}
	Combining
	(\ref{e_8_Smpsi})
	and (\ref{combined}) gives
	\begin{align*}
		S_{n_t}^{U_\gamma} \psi_\gamma' (x_t)
		&\le
		m \eta + (p  + L_1)L_2 \Hseminorm{\alpha}{\phi}(\beta-\gamma) ^{\alpha/2}  \\
		&\quad +
		(N-m)\eta
		+ \tau -
		C_1 \Hseminorm{\alpha}{\phi} (\beta- \gamma)^{\alpha/2} r^\alpha \beta^{-\alpha},
	\end{align*}
	and therefore
	\begin{equation}\label{e_8_Snpsi3}
		\begin{aligned}
			&S_{n_t}^{U_\gamma} \psi_\gamma' (x_t)- n_t \eta \\
			&\qquad \le (p  + L_1 ) L_2 \Hseminorm{\alpha}{\phi} (\beta-\gamma) ^{\alpha/2} + (\tau-\eta)  - C_1 \Hseminorm{\alpha}{ \phi}  r^\alpha \beta^{-\alpha} (\beta-\gamma)^{\alpha/2}\\
			&\qquad \le (p+1 + L_1 ) L_2 \Hseminorm{\alpha}{\phi} (\beta-\gamma) ^{\alpha/2} - C_1 \Hseminorm{\alpha}{ \phi}  r^\alpha \beta^{-\alpha} (\beta-\gamma)^{\alpha/2} 
			 \le 0,
		\end{aligned}
	\end{equation}
	where the final two inequalities follow from the fact that $\tau-\eta 
	\le L_2 \Hseminorm{\alpha}{\phi} (\beta-\gamma)^\alpha
	\le L_2 \Hseminorm{\alpha}{\phi} (\beta-\gamma)^{\alpha/2}
	$
	(by (\ref{e8.tau-eta}), and since $\beta-\gamma<C_2<1$), and 
	$C_1\ge r^{-\alpha}\beta^\alpha(p+1+L_1)L_2$
	(cf.~(\ref{d_8_C1})). 
	
	\smallskip
	\emph{Subcase~(ii).}  Assume that $B(\cO_\gamma, \gamma\rho) \cap \cO_\beta^*(1) \neq \emptyset$. Now $\gamma<\beta$, so (\ref{rho_r_beta}) and Claim~1 give
	\begin{equation}\label{gamma_rho_r_gamma}
		\gamma \rho <\beta\rho \le r \le r_\gamma,\end{equation}  
	which implies that 
	\begin{equation}\label{beta_periodic_critical_intersection}
		\cO_\beta \cap \cO_\beta^*(1) \neq \emptyset,
	\end{equation}
	since if (\ref{beta_periodic_critical_intersection}) were false then the definition of $r_\gamma$
	(cf.~(\ref{d_8_r_gamma}))
	would give $r_\gamma \le  d \bigl(\cO_\gamma, \cO_\beta^*(1) \bigr)$,
	and combining with (\ref{gamma_rho_r_gamma})
	would yield
	$d \bigl(\cO_\gamma, \cO_\beta^*(1) \bigr)>\gamma\rho$,
	contradicting the assumption of this subcase.
	
	Now      (\ref{beta_periodic_critical_intersection})
	means that
	\begin{equation*}
    \cO_\beta \subseteq \cO_\beta^*(1),
    \end{equation*}
	in other words,  $\cO_\beta$ is precisely
	the $U_\beta$-periodic orbit corresponding to the eventually periodic critical orbit
	$\cO_\beta^*(1)$.
	In particular, $\cO_\beta\neq\{0\}$, since $0$ is never in the critical orbit $\cO_\beta^*(1)$ (as the only $U_\beta$-preimage of $0$ is $0$ itself). Recalling that $s>0$ is the constant obtained from Corollary~\ref{l8.shadow}~(iii) when $\cO_\beta \neq \{0\}$, then for each $x\in \cO_\beta$, we obtain from Corollary~\ref{l8.shadow} that
	\begin{equation}\label{e8_dO}
		s(\beta-\gamma) 
        \le \abs{(h_\gamma \circ \pi_\beta)(x) - x}
        \le M(\beta-\gamma) < C_2M 
        \le \frac{1}{2} \Delta \bigl( \cO_\beta^*(1) \bigr),
	\end{equation}
	using that $\beta-\gamma<C_2$ and $C_2 \le \frac{1}{2M} \Delta \bigl(\cO_\beta^*(1)\bigr)$ (by (\ref{d_8_C2})).
	A consequence of the bound
	$\abs{(\pi_\gamma \circ h_\beta)(x) - x} \le \frac{1}{2} \Delta \bigl( \cO_\beta^*(1) \bigr)$ from
	(\ref{e8_dO}) is that if $x'\in\cO_\gamma$,	
	then the point $x\in\cO_\beta$ satisfying
	$x' = (h_\gamma \circ \pi_\beta)(x)\in \cO_\gamma$ must be the closest point in $\cO_\beta^*(1)$ to $x'$, so that $\abs{ x-x' } = d \bigl(x',\cO_\beta^*(1)\bigr)$. This, together with (\ref{e8_dO}), implies that  
\begin{equation}\label{gamma_periodic_beta_critical}
		d \bigl(\cO_\gamma, \cO_\beta^*(1) \bigr) \ge s(\beta- \gamma),
	\end{equation}
    which is a key estimate for this subcase, facilitating the following 
    analysis.\footnote{\label{subcase(ii)remarks} Some remarks on the differences between Subcases~(i) and~(ii) are in order; in particular, although the proof strategy for Subcase~(i) resembles arguments used in Section~\ref{sec_JTPO_distance_expanding}, Subcase~(ii) has no such analogue. While in Subcase~(i), $\psi_\gamma$
    is $\alpha$-H\"older on
    $B(\cO_\gamma, \rho)$, so that
    the length-$m$ orbit sums in (\ref{e8_Smd})
    can be bounded in terms of
     $\Absbig{U_\gamma^i( x_t ) - U_\gamma^i ( y ) }$,
     the same is not true in Subcase~(ii), 
       and since $m$ could be arbitrarily large (as the distance 
      $\abs{ x_t-y }$
      could be arbitrarily small), 
      the crude bound
      $\psi_\gamma \bigl( U_\gamma^i(x_t) \bigr) \le \tau$
      would then be insufficient
to obtain an effective bound analogous to (\ref{e_8_Smpsi}).
      However, (\ref{gamma_periodic_beta_critical})
      allows us to use estimates analogous to
       (\ref{e8_Smd}) for the first $k$ terms in the length-$m$ orbit sum, while the
       (proportionally small number of) remaining terms can be crudely bounded
      using $\psi_\gamma \bigl( U_\gamma^i(x_t) \bigr) \le \tau$, and the corresponding term
      $(m-k)(\tau -\eta)$
      in (\ref{e8.111_pre})
       can be estimated by a multiple of 
       $(\beta-\gamma)^{\alpha/2} d(\cdot, \cO_\gamma)^\alpha$.}
	
	Now define
	\begin{equation}\label{d_8_delta}
		\delta \= \beta^{-1}s(\beta-\gamma),
	\end{equation} 
	so 
	that (\ref{gamma_periodic_beta_critical}) and (\ref{d_8_delta}) give
	\begin{equation}\label{gamma_betadelta_critical_empty}
		B(\cO_\gamma, \beta\delta) \cap \cO_\beta^*(1) = \emptyset,
	\end{equation}
	and since the assumption of this subcase
	is that  $B(\cO_\gamma, \gamma\rho) \cap \cO_\beta^*(1) \neq \emptyset$, comparison with (\ref{gamma_betadelta_critical_empty})
	yields 
	\begin{equation}\label{delta_<_rho}
		\delta < (\gamma/\beta)\rho < \rho.
	\end{equation}
	
	Recalling 
    from (\ref{y_defined_closest})
    that $y\in\cO_\gamma$ is defined to satisfy $\abs{x_t-y}=d(x_t,\cO_\gamma)$, we now
	 define
	\begin{equation}\label{k_min_def}
		k\= \min\bigl\{i \in \N_0 : d \bigl( U_\gamma^i(x_t), U_\gamma^i(y) \bigr) \ge \delta  \bigr\}\in\N_0.
	\end{equation}
	In particular,
	(\ref{delta_<_rho}) and (\ref{k_min_def}) imply
	that $0 \le k\le m$,
	since $m$ was defined
    (cf.~(\ref{N_definition_as_largest_m_definition_as_smallest}))
    to be the smallest integer such that
	$d \bigl(U_\gamma^m(x_t), \cO_\gamma \bigr) \ge \rho$.
	Note that $\rho/\delta>1$ by (\ref{delta_<_rho}), $\beta-C_2 \ge 1$ since $C_2 \le c <\beta-1$, and $0\le k\le m\le N$ by (\ref{e8.1<m<N}). By (\ref{e8.Claim2result}) and the definitions of $m$ (cf.~(\ref{N_definition_as_largest_m_definition_as_smallest}))
    and $k$
    (cf.~(\ref{k_min_def})), we have 
	\begin{equation}\label{e8_ineqkm1}
			m \le k +  \lceil \log_\gamma ( \rho/\delta ) \rceil \le k +1 +\log_{\beta -C_2} ( \rho/\delta ).
	\end{equation}
	Combining (\ref{e8_ineqkm1})
    with the definitions $\rho =  (\tau - \eta) ^{1/\alpha} (C_1 \Hseminorm{\alpha}{\phi })^{-1/\alpha} (\beta-\gamma)^{-1/2}$ 
    (cf.~(\ref{e_8_rho})), and $\delta = \beta^{-1}s(\beta-\gamma)$
    (cf.~(\ref{d_8_delta})), and
    the bound $\tau-\eta \le L_2 \Hseminorm{\alpha}{\phi} (\beta-\gamma)^\alpha$
    (cf.~(\ref{e8.tau-eta})), we see that
	\begin{equation}\label{e8_ineqkm2}
			m  \le k+1 + \frac{1}{\ln (\beta-C_2)} \Bigl( \ln\beta + \frac{1}{\alpha} (\ln L_2 - \ln C_1) - \ln s -\frac{1}{2} \ln(\beta-\gamma) \Bigr).
	\end{equation}
	Now $C_1\ge 1$ (cf.~(\ref{d_8_C1})) and
    $L_3 = 
    1+\frac{1}{\ln(\beta-C_2)} 
    \bigl( \ln \beta + \frac{1}{\alpha} \ln L_2 - \ln s \bigr)$
    (cf.~(\ref{d_8_L3})), so (\ref{e8_ineqkm2}) implies that
	\begin{equation}\label{e8_ineqkm}
		m\le  k +L_3 - \frac{ \ln(\beta-\gamma)}{2\ln(\beta-C_2)}.
	\end{equation}
	In the case where $k\ge 1$, the definition of $k$ in (\ref{k_min_def}),
	together with $k\le m\le N$, (\ref{e8.Claim2result}), and (\ref{gamma_betadelta_critical_empty}),
	gives  
    $
    \abs{x_t-y} 
    < \delta 
    \le\Absbig{U_\gamma^k(x_t) - U_\gamma^k(y)} 
    = \gamma\Absbig{U_\gamma^{k-1}(x_t) - U_\gamma^{k-1}(y)} 
    < \gamma\delta 
    < \beta\delta 
    \le d \bigl( \cO_\gamma, \cO_\beta^*(1) \bigr)$.

	We now wish to estimate orbit sums for the function $\psi_\gamma$. By exactly the same reasoning as was used to derive 
	(\ref{e8_Smd}) in Subcase~(i) (except that here $\delta$ replaces the $\rho$ in (\ref{e8_Smd})), we obtain
	\begin{equation}\label{e8_Skd}
		\Absbig{S_{k}^{U_\gamma} \psi_\gamma (x_t) - S_{k}^{U_\gamma} \psi_\gamma(y)} \le \Hseminorm{\alpha}{\phi} \delta^\alpha L_1 .
	\end{equation}
	Similarly, 
	the arguments used to derive (\ref{e8_Sm<}) 
	in Subcase~(i) can be used here (with $k$ replacing the $m$ used in (\ref{e8_Sm<}))
	to obtain 
	\begin{equation} \label{e8_Sk<}
		S_{k}^{U_\gamma} \psi_\gamma(y)  \le k\eta + p(\tau-\eta).
	\end{equation}
	Obviously, (\ref{e8_Skd}) and (\ref{e8_Sk<}) also hold
	for $k=0$.

	Now $\psi_\gamma\le \tau$ by Claim~3, so
	\begin{equation}\label{e8_SK<}
		S_{m-k}^{U_\gamma} \psi_\gamma \bigl( U_\gamma^{k}(x_t) \bigr) \le (m-k)\tau.
	\end{equation}

	Combining (\ref{e8_Sk<}), (\ref{e8_Skd}), and (\ref{e8_SK<})
	gives
	\begin{equation}\label{e8.111_pre}
		\begin{aligned}
			S_m^{U_\gamma} \psi_\gamma(x_t) & \le 	S_{k}^{U_\gamma} \psi_\gamma(y) + \Absbig{S_{k}^{U_\gamma} \psi_\gamma (x_t) - S_{k}^{U_\gamma} \psi_\gamma(y)} + S_{m-k}^{U_\gamma} \psi_\gamma \bigl( U_\gamma^{k}(x_t) \bigr) \\
			&\le m\eta + \Hseminorm{\alpha}{\phi} \delta^\alpha L_1 + p(\tau-\eta) + (m-k)(\tau-\eta),
		\end{aligned}
	\end{equation}
	and using (\ref{d_8_delta}) and (\ref{e8_ineqkm}) to estimate the right-hand side of (\ref{e8.111_pre}) yields
	\begin{equation}\label{e8.111_pre_pre}
		S_m^{U_\gamma} \psi_\gamma(x_t)   
		\le m\eta + \Hseminorm{\alpha}{ \phi} \frac{s^\alpha}{\beta^\alpha} (\beta-\gamma)^\alpha L_1 + \biggl( p+L_3 - \frac{ \ln(\beta-\gamma)}{2\ln(\beta-C_2)} \biggr)(\tau-\eta).
	\end{equation}
	But $\tau-\eta \le L_2 \Hseminorm{\alpha}{\phi} (\beta-\gamma)^\alpha$
	by (\ref{e8.tau-eta}), 
	so (\ref{e8.111_pre_pre}) gives
	\begin{equation}\label{e8.111_pre_pre_pre}
		S_m^{U_\gamma} \psi_\gamma(x_t)   
		\le m \eta + \Hseminorm{\alpha}{\phi} (\beta-\gamma)^\alpha \biggl(\frac{s^\alpha}{\beta^\alpha}L_1 + pL_2+L_3L_2 - \frac{L_2 \ln(\beta-\gamma)}{2 \ln(\beta-C_2)}  \biggr).
	\end{equation}
	Now $(\beta-\gamma)^\alpha \le (\beta-\gamma)^{\alpha/2}$,
	and we can use the fact\footnote{It is readily shown that 
		$-\frac{2}{e\alpha}$
		is the minimum of the function $x\mapsto x^{\alpha/2}\ln x$.}
	that 
	$x^{\alpha/2}\ln x \ge  -\frac{2}{e\alpha}$ for all $x>0$ to derive $-(\beta-\gamma)^\alpha \ln(\beta-\gamma) \le \frac{2(\beta-\gamma)^{\alpha/2}}{e\alpha}$, so
	(\ref{e8.111_pre_pre_pre}) implies
	\begin{equation}\label{e8.111}
		\begin{aligned}
			S_m^{U_\gamma} \psi_\gamma(x_t)
			&\le m\eta +  \Hseminorm{\alpha}{\phi} (\beta-\gamma)^{\alpha/2} \biggl(\frac{s^\alpha}{\beta^\alpha}L_1 + pL_2+L_3L_2 + \frac{L_2 }{e \alpha \ln(\beta-C_2)}  \biggr) \\
			&= m\eta + \Hseminorm{\alpha}{\phi} (\beta-\gamma)^{\alpha/2}L_4,
		\end{aligned}
	\end{equation}
	where we use that
	$L_4=\frac{s^\alpha}{\beta^\alpha}L_1 + pL_2+L_3L_2 + \frac{L_2 }{e \alpha \ln(\beta-C_2)}$
	(cf.~(\ref{d_8_L4})).

	Now we wish to argue that
	$S_{n_t}^{U_\gamma} \psi_\gamma' (x_t)\le n_t \eta$, and will do so 
	in a way that is analogous to the derivation of (\ref{e_8_Snpsi3}) in Subcase~(i).
	Specifically, we use that $\psi_\gamma' \le \psi_\gamma$
	by (\ref{d_8_psi_gamma}), $\psi_\gamma'\bigl( U_\beta^i (x_t)\bigr) \le \eta$ for $m\le i <N$ by
	(\ref{psi_gamma_eta}) (cf.~(\ref{N_definition_as_largest_m_definition_as_smallest})), and $d \bigl(U_\gamma^N(x_t), \cO_\gamma \bigr) > r/\beta$ (cf.~(\ref{claim2_gives})), together 
	with (\ref{e8.111}) above, the definition (\ref{d_8_psi_gamma}) of $\psi_\gamma'$, Claim~3, 
	the bound
	$\tau-\eta\le L_2 \Hseminorm{\alpha}{\phi} (\beta-\gamma)^\alpha \le L_2 \Hseminorm{\alpha}{\phi} (\beta-\gamma)^{\alpha/2}$
	(cf.~(\ref{e8.tau-eta})), and the fact that
	$C_1\ge r^{-\alpha}\beta^\alpha(L_4+L_2)$ (which is valid since $\cO_\beta\neq\{0\}$, cf.~(\ref{d_8_C1'})), to see that
	\begin{equation}\label{e8.222}
		\begin{aligned}
			&S_{n_t}^{U_\gamma} \psi_\gamma' (x_t)- n_t \eta \\
            &\qquad \le S_m^{U_\gamma} \psi_\gamma(x_t) - m \eta + S_{N-m}^{U_\gamma} \psi_\gamma' \bigl(U_\gamma^m(x_t) \bigr) - (N-m)\eta +
			\psi_\gamma' \bigl(U_\gamma^N (x_t) \bigr) - \eta \\
			&\qquad \le L_4 \Hseminorm{\alpha}{\phi} (\beta-\gamma) ^{\alpha/2} + (\tau-\eta)  - C_1 \Hseminorm{\alpha}{ \phi}  r^\alpha \beta^{-\alpha} (\beta-\gamma)^{\alpha/2}\\
			&\qquad \le (L_4+L_2) \Hseminorm{\alpha}{\phi} (\beta-\gamma) ^{\alpha/2} - C_1 \Hseminorm{\alpha}{ \phi}  r^\alpha \beta^{-\alpha} (\beta-\gamma)^{\alpha/2} 
			 \le 0.
		\end{aligned}
	\end{equation}

	This completes Subcase~(ii), and therefore Case~C is complete. Having concluded each of Cases~A, B, and~C, the recursive step is now complete.
	
	\smallskip
	
	By (\ref{e_8_Snpsi1}), (\ref{e_8_Snpsi2}), (\ref{e_8_Snpsi3}), and (\ref{e8.222}), $S_{n_k}^{U_\gamma} \psi_\gamma'(x_k) \le n_k \eta$ for all $k\in \N$. Therefore, defining $N_k \= n_1 + \cdots + n_k$ for each $k\in \N$, we have
	\begin{equation*}
		\liminf_{n\to +\infty} \frac{1}{n}S_n^{U_\gamma} \psi_\gamma'(x) \le \liminf_{k \to +\infty} \frac{1}{N_k} \sum_{i=1}^{k} S_{n_i}^{U_\gamma} \psi_\gamma'(x_i) \le \liminf_{k\to +\infty}\frac{1}{ N_k} \sum_{i=1}^k n_i \eta = \eta.
	\end{equation*}
	But this is precisely the required inequality (\ref{e8.sufcon}), and therefore, as noted prior to the statement of (\ref{e8.sufcon}), we deduce that $\mu_{\cO_\gamma}$ is
	$(U_\gamma,\phi_\gamma')$-maximizing. 
	In other words, the periodic
    measure $\mu_{\cO_\gamma}$ is $U_\gamma$-maximizing
    for the function
\begin{equation}\label{phi_gamma_prime_restated}
    \phi_\gamma' = \overline{\phi}- C_1 \Hseminorm{\alpha}{\phi} (\beta- \gamma)^{\alpha/2} d(\cdot, \cO_\gamma)^\alpha.
    \end{equation}
Now the function 
\begin{equation}\label{distance_function_o_gamma}
    - C_1 \Hseminorm{\alpha}{\phi} (\beta- \gamma)^{\alpha/2} d(\cdot, \cO_\gamma)^\alpha
\end{equation}
    attains its maximum value precisely on the set $\cO_\gamma$, so its \emph{unique} $U_\gamma$-maxi\-mizing measure
    is $\mu_{\cO_\gamma}$.
    It follows that the sum of the functions
    (\ref{phi_gamma_prime_restated}) and (\ref{distance_function_o_gamma}),
    namely the function
\begin{equation*}
    \overline{\phi}- 2C_1 \Hseminorm{\alpha}{\phi} (\beta- \gamma)^{\alpha/2} d(\cdot, \cO_\gamma)^\alpha,
    \end{equation*}
    has  $\mu_{\cO_\gamma}$ as its
    unique $U_\gamma$-maximizing measure.
    Consequently, the function defined
    (cf.~(\ref{perturbation_minus_2C1}))
    in the statement of the theorem, namely
$
    \phi- 2C_1 \Hseminorm{\alpha}{\phi} (\beta- \gamma)^{\alpha/2} d(\cdot, \cO_\gamma)^\alpha$,
also has  $\mu_{\cO_\gamma}$ as its
    unique $U_\gamma$-maximizing measure,
    and this completes the proof of part~(ii) of the theorem.
\end{proof}

\subsection{Proof of Theorem~\ref{jtpo_tbeta} (Joint TPO for beta-transformations)}

In order to prove Theorem~\ref{t.product.typical.periodic} (and hence Theorem~\ref{jtpo_tbeta}), a
final ingredient is that
 the set of non-simple beta-numbers is dense in $(1,+\infty)$.
 This complement to Parry's result on the density of simple beta-numbers \cite[Theorem~5]{Par60} does not seem to be available in the literature, so we prove it here:

\begin{lemma}\label{l8_density_nonsimple}
	The set of non-simple beta-numbers is a dense subset of $(1,+\infty)$.
\end{lemma}
\begin{proof}
	Since the set of simple beta-numbers is countable, it suffices to
    show that if $1<\beta_1 <\beta_2$, where
    neither $\beta_1$ nor $\beta_2$ is a simple beta-number,
    then there is a non-simple beta-number $\gamma\in(\beta_1,\beta_2)$.
    
    Writing 
    \begin{equation*}
    \pi_{\beta_1}(1) = a_1 a_2 \dots \quad \text{ and } \quad \pi_{\beta_2}(1) = b_1 b_2 \dots,
    \end{equation*} 
    the fact that $\beta_1<\beta_2$ means, by Proposition~\ref{p_relation_of_coding}~(xiii), that $\pi_{\beta_1}(1) \prec \pi_{\beta_2}(1)$. Hence, there exists  $m\in\N$ with $a_1 \dots a_m \prec b_1 \dots b_m$, and $b_m\neq 0$.
    Since
     $\beta_2$ is not a simple beta-number, it follows that $b_{m+1} b_{m+2} \dots \neq (0)^\infty$, so there exists $n\in\N$ such that $b_1 \dots b_m (0)^n \prec b_1 \dots b_{m+n}$, and this $n$ can be chosen such that $n\ge m$. 
     Defining 
     \begin{equation*}
     \underline{c} \= b_1 \dots b_m (0)^n \bigr( a_1 (0)^{m-1} \bigl)^\infty,
     \end{equation*}
     we see that $\pi_{\beta_1}(1) \prec \underline{c} \prec \pi_{\beta_2}(1)$, and claim that there exists a non-simple beta-number $\gamma\in (\beta_1, \beta_2)$ such that $\pi_\gamma(1) = \underline{c}$.

     Recall that by \cite[Corollary~1]{Par60}, to prove that there exists $\gamma>1$ with $\pi_\gamma(1) = \underline{c}$, it suffices to show that $\sigma^t(\underline{c}) \prec \underline{c}$ for all $t\in \N$.
     For this, note first that $a_1 (0)^{m-1} \preceq a_1 \dots a_m \prec b_1 \dots b_m$, and $b_1 >0$, so if $t \ge m$
     then $\sigma^t(\underline{c}) \prec \underline{c}$. On the other hand, if $t <m$ then 
     \begin{equation}\label{double_lex_ineq}
     b_{t+1} \dots b_m (0)^{t} \preceq b_{t+1} \dots b_{t+m} \preceq b_1 \dots b_m,
     \end{equation}
     the second inequality in (\ref{double_lex_ineq}) following from the fact that $\sigma^t ( \pi_{\beta_2}(1)  ) \prec \pi_{\beta_2}(1)$ (cf.~\cite[Theorem~3]{Par60}). Now $b_m \neq 0$, so
     (\ref{double_lex_ineq}) implies the strict inequality
\begin{equation}\label{strict_lex_ineq}
     b_{t+1} \dots b_m (0)^{t} \prec b_1 \dots b_m.
     \end{equation}
      The fact that $n\ge m > t$ implies that $b_{t+1} \dots b_m (0)^{t}$ are the first $m$ terms of $\sigma^t(\underline{c})$, 
      and since moreover $b_1 \dots b_m$
      are the first $m$ terms of $\underline{c}$, (\ref{strict_lex_ineq}) implies that
       $\sigma^t(\underline{c}) \prec \underline{c}$. 
Having shown that $\sigma^t(\underline{c}) \prec \underline{c}$ for all $t\in \N$,
it follows that there exists $\gamma>1$ with $\pi_\gamma(1) = \underline{c}$.
       
     Now  $\underline{c}$ is pre-periodic under $\sigma$, so $\gamma$ is a beta-number, and
      $\underline{c}$ has nonzero tails
      since $a_1>0$, thus $\gamma$ is a non-simple beta-number. Finally, the fact that $\pi_{\beta_1}(1) \prec \underline{c} \prec \pi_{\beta_2}(1)$ implies, by Proposition~\ref{p_relation_of_coding}~(xiii), that $\gamma\in (\beta_1, \beta_2)$.
\end{proof}

We are now able to prove the joint typical periodic optimization theorem using the results above:
the following Theorem~\ref{t.product.typical.periodic}
represents a slightly stronger version of 
Theorem~\ref{jtpo_tbeta}:

\setcounter{thml}{2}

\begin{thml}[Joint TPO for beta-transformations and upper beta-transformations]\label{t.product.typical.periodic}
    Given $\alpha\in (0,1]$, 
the sets 
$\fL^\alpha_U \=\bigl\{ (\beta,\phi)\in (1,+\infty)\times \Holder{\alpha}(I) : \phi\in \Lock^\alpha(U_\beta) \bigr\}$ 
    and
    $\fL^\alpha_T \=\bigl\{ (\beta,\phi)\in (1,+\infty)\times \Holder{\alpha}(I) : \phi\in \Lock^\alpha(T_\beta) \bigr\}$ 
    both contain 
    an open and dense subset of $(1,+\infty) \times \Holder{\alpha}(I)$.
\end{thml}

\begin{proof}
    We first prove the result for $\fL^\alpha_U$. Suppose $\beta\in (1,+\infty)$, $\phi \in \Holder{\alpha}(I)$, and let $\myepsilon>0$ be arbitrary. Note that the set of non-simple beta-numbers is dense in $(1,+\infty)$, by Lemma~\ref{l8_density_nonsimple}, so there exists a non-simple beta-number $\theta>1$ with $\abs{\theta- \beta} < \myepsilon$. By Theorem~\ref{t_TPO_thm_beta_number'} (Individual TPO theorem for beta-numbers), applied to $\theta$, there exists $\Phi \in \Lock^\alpha(U_\theta)$ with $\Hnorm{\alpha}{\phi- \Phi} < \myepsilon$. If $\cO_\theta$ denotes the unique maximizing periodic orbit for $\Phi$, then there exists $\delta>0$ such that $\cO_\theta$ is the unique maximizing periodic orbit for each function in $B(\Phi, \delta)$ (with respect to $\Hnorm{\alpha}{\cdot}$), and we may assume that $\delta< (1/2) \Hseminorm{\alpha}{\Phi}$. Let $\psi\in B(\Phi,\delta)$. Thus, we have $\Hseminorm{\alpha}{\Phi-\psi} \le \Hnorm{\alpha}{\Phi-\psi} \le \delta \le (1/2)\Hseminorm{\alpha}{\Phi}$. As $\Hseminorm{\alpha}{\,\cdot\,}$ is sub-additive, we get 
	\begin{equation}\label{e8.bound.Hsn}
		\frac{1}{2} \Hseminorm{\alpha}{\Phi} \le \Hseminorm{\alpha}{\psi} \le \frac{3}{2} \Hseminorm{\alpha}{\Phi}.
	\end{equation}
	
	Now let $C_1, \, C_2$ denote the constants obtained by applying Theorem~\ref{l_8_main_lemma} (ii) to $\theta$ and $\cO_\theta$, and set 
	\begin{equation}\label{d_8_E}
		E \= \min \bigl\{C_2, \, ( \delta / (36 C_1 \Hseminorm{\alpha}{\Phi} ) )^{2/\alpha}  \bigr\}.
	\end{equation}
	For each $\gamma \in ( \theta- E , \theta )$ and each $\psi \in B(\Phi, \delta/2)$, we set $\cO_\gamma \= (h_\gamma \circ \pi_\theta)(\cO_\theta)$ and 
	\begin{equation}\label{d_8_psip}
		\psi' \= \psi + 6C_1 \Hseminorm{\alpha}{\psi}  (\theta- \gamma)^{\alpha/2 } d(\cdot, \cO_\gamma)^\alpha. 
	\end{equation}
	Note that 
    $
    \Hnorm{\alpha}{ d(\cdot,\cO_\gamma)^\alpha}
    = \Hseminorm{\alpha}{d(\cdot,\cO_\gamma)^\alpha} + \Hnorm{\infty}{d(\cdot,\cO_\gamma)^\alpha} 
    \le 2$.
    Then from $\abs{\theta- \gamma} < E$, (\ref{d_8_E}), and (\ref{d_8_psip}), we conclude that 
    \begin{equation*}
    \Hnorm{\alpha}{\psi' - \psi} 
    \le 12C_1 \Hseminorm{\alpha}{\psi} E^{\alpha/2} 
    \le 18C_1 \Hseminorm{\alpha}{\Phi} E^{\alpha/2} 
    \le  \delta / 2, 
    \end{equation*}
    so that $\psi'\in B(\Phi,\delta) \subseteq \Lock^\alpha(U_\theta)$. Therefore, applying Theorem~\ref{l_8_main_lemma} (ii) to $\theta$, $\cO_\theta$, and $\psi'$, we obtain that the measure $\mu_{\cO_\gamma}$ uniquely maximizes $\psi' - 2C_1 \Hseminorm{\alpha}{\psi'}(\theta-\gamma)^{\alpha/2} d(\cdot, \cO_\gamma)^\alpha$. As $\psi, \, \psi'\in B(\Phi,\delta)$, applying (\ref{e8.bound.Hsn}) gives $6C_1 \Hseminorm{\alpha}{\psi} \ge 3C_1 \Hseminorm{\alpha}{\Phi} \ge 2C_1 \Hseminorm{\alpha}{\psi'}$, so the measure $\mu_{\cO_\gamma}$ also uniquely maximizes the function $\psi = \psi' - 6C_1 \Hseminorm{\alpha}{\psi}  (\theta- \gamma)^{\alpha/2 } d(\cdot, \cO_\gamma)^\alpha$. Therefore, $(\theta-E, \theta) \times B(\Phi, \delta/2)$ is contained in the interior of the set $\fL^\alpha_U$. Since $\myepsilon>0$ was arbitrary, this completes the proof for the set $\fL^\alpha_U$.

It remains to prove the assertion about $\fL^\alpha_T$. Now non-simple beta-numbers are non-emergent by Corollary~\ref{c_relation_emergent&betanumbers}, and $\Lock^\alpha(T_\beta)$ is an open and dense subset of $\Holder{\alpha}(I)$ when $\beta>1$ is a non-simple beta-number
by Theorem~\ref{t_TPO_thm_non_emergent'}, so an argument analogous to the one above,
using part~(i) (rather than part~(ii)) of Theorem~\ref{l_8_main_lemma}, 
can be used to show that $\fL^\alpha_T$ contains
an open dense subset of $(1,+\infty) \times \Holder{\alpha}(I)$, as required.
\end{proof}

Having proved the Joint TPO result 
Theorem~\ref{t.product.typical.periodic}, we can now
deduce Theorem~\ref{GPO_open_dense}. We establish the following slightly stronger version of Theorem~\ref{GPO_open_dense} (which in particular implies Theorem~\ref{GPO_open_dense}):

\setcounter{thml}{5}

\begin{thml}[Individual TPO for generic potentials]\label{GPO_open_dense'}
    Fix $\alpha\in (0,1]$. There is a residual subset $R \subseteq \Holder{\alpha}(I)$ such that for all $\phi\in R$, there is an open and dense set of parameters $B_\phi \subseteq (1,+\infty)$ such that $\phi\in \Lock^\alpha(T_\beta)$ for all $\beta\in B_\phi$.
\end{thml}

\begin{proof}
The set  $\fL^\alpha_T =\bigl\{ (\beta,\phi)\in (1,+\infty)\times \Holder{\alpha}(I) : \phi\in \Lock^\alpha(T_\beta) \bigr\}$ 
   contains an open dense subset of 
   $(1,+\infty) \times \Holder{\alpha}(I)$, by
   Theorem~\ref{t.product.typical.periodic}, and the space
    $(1,+\infty)$ is second countable. 
    A standard result from topology (see e.g.~\cite[Lemma~8.42]{Ke95}) then implies that there is a residual subset $R\subseteq\Holder{\alpha}(I)$ such that if $\phi \in R$ then $\{ \beta\in (1,+\infty) : \phi \in \Lock^\alpha(T_\beta) \}$ contains an open and dense subset of $(1,+\infty)$, as required.
\end{proof}

\subsection{A Joint TPO question for symbolic dynamics}

Beyond the dynamical systems for which we have established Joint TPO,
and in view of the fact that all of these systems admit a \emph{symbolic} description
(e.g.~via Markov partitions, or in terms of beta-shifts),
it is natural to ask whether a variant Joint TPO phenomenon holds in a purely symbolic
setting, where the space of all subshifts is equipped with a suitable topology,
allowing the phase space itself to be varied (instead of varying the dynamics on a fixed phase space).
We note that there has been some investigation of
typical properties of subshifts, both from a 
topological (see e.g.\ \cite{Hoc08, PS23}) and
a probabilistic (see e.g.\ \cite{McG12}) viewpoint.

Specifically, for $(\Sigma_d,\sigma)$ the one-sided full shift on $d\ge 2$
symbols, equipped with a standard metric, consider
the space 
\begin{equation*}
\fS_d \= \{X\subseteq \Sigma_d : X\neq \emptyset
           \text{ is closed and } \sigma(X)=X\}
\end{equation*}
of all subshifts of $(\Sigma_d,\sigma)$.
Equipped with the Hausdorff metric, $\fS_d$ is compact, and we
associate each $X\in\fS_d$ with the dynamical system
$(X,\sigma|_X)$.
For $\alpha\in(0,1]$,
we say that a pair $(X,\phi)\in \fS_d \times \Holder{\alpha}(\Sigma_d)$
has the \emph{periodic optimization property} if the maximizing
measure for $(\sigma|_X,\phi|_X)$ is unique and supported on a periodic
orbit of~$\sigma|_X$.
This motivates the following question.

\begin{question}\label{question}
For $\alpha\in(0,1]$, let $S$ be a suitable subset of $\fS_d$, for example consisting of all transitive subshifts of finite
type, or of all transitive sofic subshifts.
Does there exist an open dense subset $U\subseteq S\times
\Holder{\alpha}(\Sigma_d)$ such that every $(X,\phi)\in U$ has the
periodic optimization property?
\end{question}

Some care is needed in selecting
an appropriate set $S$
for Question~\ref{question}:
choosing it to consist of all 
transitive\footnote{The transitivity condition is imposed to avoid trivialities,
noting that a generic (not necessarily transitive) subshift is  somewhat degenerate (cf.~\cite{PS23}), with only finitely many ergodic invariant measures, each of which is periodic.} 
subshifts of finite
type is dynamically natural, and already poses challenges going beyond the techniques
introduced in this article (e.g.\ this $S$ is countable, and not a Baire space).
Moreover, although every transitive subshift of finite type is a Lipschitz distance-expanding
open map on its phase space, and therefore has Individual TPO by Theorem \ref{co}, 
the fact that our Joint TPO result for expanding maps (Theorem~\ref{expandingJTPO}) treats a \emph{fixed} compact
 metric space~$X$ 
(which moreover is assumed to be locally connected, unlike $\Sigma_d$)
means that it has no direct bearing on Question~\ref{question}.
While the set of pairs with the periodic optimization property is readily seen to be dense in
$S\times\Holder{\alpha}(\Sigma_d)$,  establishing
openness
appears to be a difficult problem, requiring new ideas.
In particular, the joint perturbation mechanism, a crucial ingredient in the proofs
of Joint TPO for the various systems treated in this
article, does not apply in the setting of Question~\ref{question},
in view of a more radical non-persistence of periodic orbits
than was the case for our treatment of beta-transformations
(where the structural monotonicity provided a degree of persistence).

\appendix
\section{Beta-transformations and maximizing measures: proofs}\label{beta_proofs_section}

The purpose of this appendix is to prove the results stated
in Section~\ref{betashiftsection}.

\begin{proof}[\bf Proof of Lemma~\ref{l_continuity_T_U_on_x}]
	Define functions $f,g_1,g_2 \: \R \to \R$ by
	\begin{equation*}
		f(u)\=\beta u,\quad g_1(u)\=u-\lfloor u \rfloor,\quad g_2(u)\=u-\lfloor u \rfloor'.
	\end{equation*}	
	Then $f$ is continuous and strictly increasing, and for each $u\in \R$, we have 
	\begin{equation}\label{e_g_1_a_2_f}
		\lim_{x\nearrow u} g_1(x)= g_2(u)^{-},\quad\,\lim_{x\searrow u} g_1(x)= g_1(u)^{+},\quad\,\lim_{x\nearrow u} g_2(x)= g_2(u)^{-}.
	\end{equation} 
	
	(i) By (\ref{e_def_T_beta}), we have $T_\beta=g_1\circ f$ and $T^n_\beta=g_1\circ f\circ T^{n-1}_\beta$. By (\ref{e_g_1_a_2_f}), $\lim_{x\searrow a}T_\beta(x)=T_\beta(a)^+$. Assume that $\lim_{x\searrow a}T^{n}_\beta(x)=T^{n}_\beta(a)^+$ when $n=k$. When $n=k+1$,  
	\begin{equation*}
		\lim_{x\searrow a}T^{k+1}_\beta(x)=\lim_{x\searrow a}T_\beta \bigl(T^{k}_\beta(x)\bigr)= \lim_{y\searrow T_\beta^k(x)} T_\beta(y) =T_\beta^{k+1}(a)^+.
	\end{equation*} 
	By induction, for all $n\in \N$, $\lim_{x\searrow a}T^n_\beta(x)=T_\beta^n(a)^+$. Similarly, we can prove $\lim_{x\nearrow b}U^n_\beta(x)=U_\beta^n(b)^-$. 
	
	(ii) By (\ref{en}) and (\ref{en*}), we have $\ve_n(\cdot,\beta)=\lfloor\cdot\rfloor \circ f\circ T_\beta^{n-1}$ and
	$\ve^*_n(\cdot,\beta)=\lfloor\cdot\rfloor' \circ f\circ U_\beta^{n-1}$. Hence (ii) follows from (i) and the fact that $\lfloor \cdot \rfloor$ is right-continuous and $\lfloor \cdot \rfloor'$ is left-continuous.  
	\smallskip
	
	(iii) follows immediately from (\ref{e_def_T_beta}), (\ref{en}), (\ref{en*}), and $U_\beta(0)=0$.
	\smallskip
	
	(iv) Since $T_\beta=g_1\circ f$ and $U_\beta=g_2\circ f$ (see (\ref{e_def_T_beta}) and (\ref{e_def_U_beta})), by (\ref{e_g_1_a_2_f}),
	\begin{equation*}
		\lim_{x\nearrow b}T_\beta(x)
		= \lim_{x\nearrow b}(g_1\circ f)(x)
		=\lim_{u\nearrow f(b)}g_1(u)
		=(g_2\circ f)(b)^-=U_\beta(b)^-.
	\end{equation*} 
	Assume that $\lim_{x\nearrow b}T_\beta^n(x)= U_\beta^n(b)^{-}$ holds for $n=k$. When $n=k+1$,
	\begin{equation*}
		\lim_{x\nearrow b}T^{k+1}_\beta(x)
		= \lim_{x\nearrow b}T_\beta\bigl(T^{k}_\beta(x)\bigr)
		= \lim_{u\nearrow U^k_\beta(b)}T_\beta(u)
		=U_\beta\bigl(U^{k}_\beta(b)\bigr)^-
		=U^{k+1}_\beta(b)^- .
	\end{equation*}
	
	Hence the first part of (iv) follows by induction.
	
	By (\ref{en}) and the first part of (iv),
	\begin{equation*}
		\lim_{x\nearrow b}\ve_n(x,\beta)=\lim_{x\nearrow b}(\lfloor\cdot \rfloor\circ f)\bigl(T_\beta^{n-1}(x)\bigr)=\lim_{u\nearrow U_\beta^{n-1}(b)}(\lfloor\cdot \rfloor\circ f)(u)=\lim_{v\nearrow f(U_\beta^{n-1}(b))}\lfloor v \rfloor.
	\end{equation*}
	By the fact that $\lim_{x\nearrow u}\lfloor x \rfloor=\lfloor u\rfloor'$ for all $u\in \R$ and (\ref{en*}),
	\begin{equation*}
		\lim_{v\nearrow f(U_\beta^{n-1}(b))}\lfloor v \rfloor
		=\bigl\lfloor \beta U_\beta^{n-1}(b)) \bigr\rfloor'
		=\ve^*_n(b,\beta).
	\end{equation*}
	The second part of (iv) follows from the above two equalities.
\end{proof}

\begin{proof}[\bf Proof of Lemma~\ref{l_continuity_T_U_on_beta}]
	Without loss of generality we can assume that $x\neq 0$ (see Lemma~\ref{l_continuity_T_U_on_x}~(iii)). Define functions $f,g_1,g_2 \: \R \to \R$ by
	\begin{equation*}
		f(u)\=xu,\quad g_1(u)\=u-\lfloor u \rfloor,\quad g_2(u)\=u-\lfloor u \rfloor'.
	\end{equation*}	
	Note that $f$ is continuous and strictly increasing and
	\begin{equation}\label{e_g_1_a_2_f_2}
		\lim_{x\nearrow u} g_1(x)= g_2(u)^{-},\quad\lim_{x\searrow u} g_1(x)= g_1(u)^{+},\quad\lim_{x\nearrow u} g_2(x)= g_2(u)^{-}.
	\end{equation}
	
	(i) By (\ref{e_def_T_beta}), we have $T_\beta(x)=(g_1\circ f)(\beta)$ and $T^n_\beta(x)=g_1 \bigl(\beta T^{n-1}_\beta(x)\bigr)$. By (\ref{e_g_1_a_2_f_2}), $\lim_{\gamma\searrow \beta} T_\gamma(x)= T_\beta(x)^+$. Since $\beta>1$, if $\lim_{\gamma\searrow \beta} T^{k-1}_\gamma(x)= T_\beta^{k-1}(x)^+$ for some $k\in \N$, we have $\lim_{\gamma\searrow \beta}\gamma T^{k-1}_\gamma(x)=\beta T_\beta^{k-1}(x)^+$. Then by (\ref{e_g_1_a_2_f_2}), $\lim_{\gamma\searrow \beta} T^{k}_\gamma(x)= T_\beta^{k}(x)^+$. By induction, for each $\beta>1$, $\lim_{\gamma\searrow \beta}T^n_\gamma(x)=T_\beta^n(x)^+$. Similarly, we have $\lim_{\gamma\nearrow \beta}U^n_\beta(x)=U_\beta^n(x)^-$.
	
	(ii) By (i), and the fact that $\beta>1$, we have  $\lim_{\gamma\searrow \beta}\gamma T^{n-1}_\gamma(x)=\beta T_\beta^{n-1}(x)^+$ and $\lim_{\gamma\nearrow \beta}\gamma U^{n-1}_\gamma(x)= \beta U_\beta^{n-1}(x)^-$. Since $\lfloor\cdot \rfloor$ is right-continuous and $\lfloor\cdot \rfloor'$ is left-continuous, (ii) follows from (\ref{en}) and (\ref{en*}).   
	\smallskip
	
	(iii) Since $T_\beta(x)=(g_1\circ f)(\beta)$ and $U_\beta(x)=(g_2\circ f)(\beta)$, by (\ref{e_g_1_a_2_f_2}), 
	\begin{equation*}
		\lim_{\gamma\nearrow \beta}T_\gamma(x)
		= \lim_{\gamma\nearrow \beta}g_1(\gamma x)
		= \lim_{u\nearrow f(\beta)}g_1(u)=g_2(f(\beta))^-=U_\beta(x)^-.
	\end{equation*} 
	Assume that $\lim_{\gamma\nearrow \beta}T_\gamma^n(x)= U_\beta^n(x)^{-}$ holds for $n=k$. When $n=k+1$, by (\ref{e_g_1_a_2_f_2}),
	\begin{equation*}
		\lim_{\gamma\nearrow \beta}T^{k+1}_\gamma(x)
		= \lim_{\gamma\nearrow \beta}g_1\bigl(\gamma T^{k}_\gamma(x)\bigr)
		=\lim_{u\nearrow \beta U^k_\beta(x)}g_1(u)
		=g_2\bigl(\beta U^{k}_\beta(x)\bigr)^-=U^{k+1}_\beta(x)^-.
	\end{equation*}
	
	Hence the first part of (iii) follows from induction.
	
	By (\ref{en}), (\ref{en*}), the first part of (iii), and the fact that $\lim_{x\nearrow u}\lfloor x \rfloor=\lfloor u\rfloor'$ for all $u\in \R$,
	\begin{equation*}
		\lim_{\gamma\nearrow \beta}\ve_n(x,\gamma)=\lim_{\gamma\nearrow \beta}\bigl\lfloor \gamma T_\gamma^{n-1}(x) \bigr\rfloor=\lim_{u\nearrow \beta U_\beta^{n-1}(x)}\lfloor u \rfloor= \bigl\lfloor \beta U_\beta^{n-1}(x) \bigr\rfloor'=\ve_n^*(x,\beta).
	\end{equation*}
	Therefore, the second part of (iii) follows.
\end{proof}

\begin{proof}[\bf Proof of Lemma~\ref{l_equivalent_def_pi_*}]
	The first part follows from Lemma~\ref{l_continuity_T_U_on_x}~(iii), whereas the second part follows from Lemma~\ref{l_continuity_T_U_on_x}~(iv).
\end{proof}

\begin{proof}[\bf Proof of Proposition~\ref{p_relation_of_coding}]
	\smallskip
	Statements~\ref{p_relation_of_coding__i}, \ref{p_relation_of_coding__ii}, \ref{p_relation_of_coding__viii}, and~\ref{p_relation_of_coding__ix} follow from \cite[Lemma~1.2]{YT21}.
	
	\smallskip
	The statements about $\pi_\beta$ in \ref{p_relation_of_coding__iv} and~\ref{p_relation_of_coding__vi} follow from \cite[Proposition~3.2]{IT74}, while statements about $\pi^*_\beta$ in~\ref{p_relation_of_coding__iv} and~\ref{p_relation_of_coding__vi} follow from \cite[Lemma~1.2]{YT21}.
	
	\smallskip
	Statements~\ref{p_relation_of_coding__x} and~\ref{p_relation_of_coding__xi} follow from \cite[Proposition~3.2]{IT74}.
	
	\smallskip
	It remains to prove statements~\ref{p_relation_of_coding__iii}, \ref{p_relation_of_coding__v}, \ref{p_relation_of_coding__vii}, and~\ref{p_relation_of_coding__xii}--\ref{p_relation_of_coding__xiv}.
	
	\smallskip
	\ref{p_relation_of_coding__iii} follows from (\ref{en}), (\ref{en*}), and Definition~\ref{beta shifts}.
	
	\smallskip
	\ref{p_relation_of_coding__v} From~\ref{p_relation_of_coding__iii} and~\ref{p_relation_of_coding__iv}, we have $h_\beta \circ \sigma \circ \pi_\beta = h_\beta \circ \pi_\beta \circ T_\beta = T_\beta = T_\beta \circ h_\beta \circ \pi_\beta$. Thus, $h_\beta \circ \sigma = T_\beta \circ h_\beta$ on $\pi_\beta(I)$. Similarly, we have $h_\beta \circ \sigma = U_\beta \circ h_\beta$ on $\pi^*_\beta(I)$.
	
	\smallskip
	\ref{p_relation_of_coding__vii} Consider arbitrary $x,y\in I$ with $0\leq x<y\leq 1$. By Lemma~\ref{l_equivalent_def_pi_*}, we have $\lim_{z\nearrow y}\pi_\beta(z)=\pi_\beta^*(y)$. Combining this with the fact that $\pi_\beta$ is strictly increasing (see~\ref{p_relation_of_coding__vi}), we obtain $\pi_\beta(x)\prec \pi_\beta^*(y)$.
	
	\smallskip
	\ref{p_relation_of_coding__xii} Fix $\beta>1$. Consider a pair of sequences $A= a_1 a_2 \dots$ and $B= b_1 b_2 \dots$ in $X_\beta$. Assume that $d_\beta(A,B) = \beta^{-k}$ for some integer $k \in \N$. By (\ref{hbeta}), we have
	$
		\abs{h_\beta(A) - h_\beta(B)} \le \sum_{n=k}^{+\infty} \frac{\abs{a_n-b_n }}{\beta^n} \le \beta \sum_{n=k}^{+\infty} \frac{1}{\beta^n} = \frac{1}{\beta^{k-2} (\beta-1)} = \frac{\beta^2}{\beta-1} d_\beta(A,B)$.
	
	\ref{p_relation_of_coding__xiii} By (\ref{ix beta}) and Lemma~\ref{l_equivalent_def_pi_*}, $i_0(\beta)=i_0^*(\beta)=(0)^\infty$ for all $\beta>1$. 
	
	Fix an arbitrary $x\in (0,1]$. It is easy to see that $i_x(\beta_1)\neq i_x(\beta_2)$ when $\beta_1\neq \beta_2$. So it suffices to prove that $i_x$ is nondecreasing. Assume that there exist $1<\beta_1<\beta_2$ such that $i_x(\beta_2)\prec i_x(\beta_1)$. Then there exists $n\in \N$ such that
	$\ve_k(\beta_1,x)=\ve_k(\beta_2,x)$ for all $k\in\{1,\, \dots,\, n-1\}$ and $\ve_n(\beta_1,x)>\ve_n(\beta_2,x)$. Then we have $\ve_n(\beta_1,x)\geq \ve_n(\beta_2,x)+1\geq 1$. By~\ref{p_relation_of_coding__iii} and~\ref{p_relation_of_coding__iv}, for each $\beta>1$, we have
	$
		x=h_\beta(\pi_\beta(x))=\sum_{k=1}^{n-1}\frac{\ve_k(x,\beta)}{\beta^k}+\frac{\ve_{n}(x,\beta)}{\beta^{n}}+\frac{T_\beta^n(x)}{\beta^n}$.

	So we obtain
	$
			x
			=\sum_{k=1}^{n-1}\frac{\ve_k(x,\beta_2)}{\beta^k_2}+\frac{\ve_{n}(x,\beta_2)}{\beta_2^{n}}+\frac{T_{\beta_2}^n(x)}{\beta_2^n}
			\leq \sum_{k=1}^{n-1}\frac{\ve_k(x,\beta_2)}{\beta^k_2}+\frac{\ve_{n}(x,\beta_2)+1}{\beta_2^{n}}
			\leq\sum_{k=1}^{n}\frac{\ve_k(x,\beta_1)}{\beta^k_2} <\sum_{k=1}^{n}\frac{\ve_k(x,\beta_1)}{\beta^k_1}
			\leq x$,
	which leads to a contradiction. So $i_x$ is strictly increasing. The proof that $i_x^*$ is strictly increasing follows by using a similar argument.
	
	\smallskip 	
	\ref{p_relation_of_coding__xiv} Fix $x\in I$. By Lemma~\ref{l_continuity_T_U_on_beta}~(ii), we have that $\ve_n(x,\cdot)$ is right-continuous and $\ve^*_n(x,\cdot)$ is left-continuous for each $n\in\N$. By Definition~\ref{beta shifts} and~(\ref{ix beta}), \ref{p_relation_of_coding__xiv} follows.
\end{proof}

\begin{proof}[\bf Proof of Proposition~\ref{p_relation_T_beta_and_wt_T_beta}]
	(i) These properties follow immediately from the definitions of $T_\beta$, $U_\beta$, and $D_\beta$.
	
	\smallskip
	(ii) Fix a periodic orbit $\cO_\beta$ of $T_\beta$ that is not $\{0\}$. By (i) we have $1\notin \cO_\beta$ and $\cO_\beta\cap D_\beta=\emptyset$. Then $U_\beta(x)=T_\beta(x)$ for all $x\in \cO_\beta$. Hence $\cO_\beta$ is also a periodic orbit of $U_\beta$. 
	
	Fix a periodic orbit $\cO_\beta^*$ of $U_\beta$. If $1\in \cO_\beta^*$, by (i) we get that $\cO_\beta^*$ is not a periodic orbit of $T_\beta$. If $1\notin \cO_\beta^*$, by (i) we have $\cO_\beta^*\cap D_\beta=\emptyset$. Then we have $U_\beta(x)=T_\beta(x)$ for all $x\in \cO_\beta^*$. Hence $\cO_\beta^*$ is a periodic orbit for $T_\beta$.
	
	\smallskip
	(iii) Fix an arbitrary $\mu\in \MMM(I,T_\beta)$. By (i) we have $\mu(\{1\})=\mu\bigl(T_\beta^{-1}(1)\bigr)=\mu(\emptyset)=0$ and $\mu(D_\beta)=\mu\bigl(T_\beta^{-1}(0)\bigr)-\mu(\{0\})=0$. Then we have $\mu\bigl(U_\beta^{-1}(0)\bigr)=\mu(\{0\})$ and $\mu\bigl(U_\beta^{-1}(1)\bigr)=\mu(D_\beta)=0=\mu(\{1\})$. By definition we have $T_\beta^{-1}(Y)=U_\beta^{-1}(Y)$ for all Borel measurable subsets $Y\subseteq (0,1)$. Hence, we have $\mu\in \MMM(I,U_\beta)$. 
	
	Now fix an arbitrary $\nu\in \MMM(I,U_\beta)$. If $\nu(\{1\})=0$, then by (i) we have $\nu\bigl(T_\beta^{-1}(1)\bigr)=\nu(\emptyset) = 0 = \nu(\{1\})$, $\nu(D_\beta)=\nu\bigl(U_\beta^{-1}(1)\bigr)=\nu(\{1\})=0$, and $\nu\bigl(T_\beta^{-1} (0) \bigr) =\nu(\{0\})+\nu(D_\beta)=\nu(\{0\})$. By definition we have $T_\beta^{-1}(Y)=U_\beta^{-1}(Y)$ for all Borel measurable subsets $Y\subseteq (0,1)$. Hence, we have $\nu\in \MMM(I,T_\beta)$. If on the other hand $\nu(\{1\})>0$, since $T_\beta^{-1}(1)=\emptyset$ by (i), we have $\nu\notin\MMM(I,T_\beta)$.
	
	\smallskip
	(iv) and (v) Recall that $1$ is a periodic point of $U_\beta$ if and only if $\beta$ is a simple beta-number (see Remark~\ref{r_after_classification_of_beta}). Then the first part of (iv) and the first part of (v) follow immediately from (ii).
	
	Fix an arbitrary $\mu\in \MMM(I,U_\beta)$. If $\beta$ is not a simple beta-number, then $1$ is not a periodic point of $U_\beta$ (see Remark~\ref{r_after_classification_of_beta}) and it is straightforward to check that $\mu(\{1\})=0$. 
	By (iii), $\mu\in \MMM(I ,T_\beta)$. The second part of (iv) follows.
	
	Assume that $\beta$ is a simple beta-number. Since $\cO_\beta^*(1)$ is a periodic orbit of $U_\beta$, then it is easy to see that $\mu(\{x\})=\mu(\{y\})$ for all $x,\, y\in \cO_\beta^*(1)$. Write $t\=\mu \bigl( \cO_\beta^*(1) \bigr)$. When $t=1$, we have $\mu(\{x\})=1 \big/ \card  \cO_\beta^*(1)$ for all $x\in \cO_\beta^*(1)$. In this case, $\mu=\mu_{\cO_\beta^*(1)}$. When $t\in [0,1)$, let us write $\nu\=\frac{1}{1-t}\bigl(\mu-t\mu_{\cO_\beta^*(1)}\bigr)$. Then $\nu\in \MMM(I,U_\beta)$ and $\nu(\{1\})=0$. By (iii), $\nu\in \MMM(I ,T_\beta)$. In this case, $\mu=t\mu_{\cO_\beta^*(1)}+(1-t)\nu$. The second part of (v) follows.
	
	\smallskip
	(vi) Fix arbitrary $x, \, y \in I$ with $0 < y-x < 1/(2\beta)$. If there exists an integer $i$ such that $i/\beta \le x < y < (i+1)/\beta$, then $T_\beta(y) - T_\beta(x) = \beta(y-x)$. Otherwise, there exists an integer $i$ such that $(i-1)/\beta < x < i/\beta \le y < (i+1)/\beta$. Then we have $T_\beta(y) - T_\beta(x) = \beta(y-x) - 1 < -1/2 < -\beta\abs{y-x}$. Similarly, we can prove $\abs{U_\beta(y) - U_\beta(x)} \geq \beta \abs{y-x}$.
\end{proof}

\begin{proof}[\bf Proof of Lemma~\ref{support_invariant_set}]
	Let us write $\cK\=\supp \mu$.
	
	Assume that $0 \notin \cK$ and denote $\delta_1\=d(\cK,0)>0$. 
	By (\ref{e_def_U_beta}), for each $y\in D_\beta$, we obtain
	\begin{equation*}
		\mu((y,y+\delta_1/\beta)\cap I)\leq \mu\bigl(U_\beta^{-1}(0,\delta_1)\bigr)=\mu((0,\delta_1))=0.
	\end{equation*}
	So $\cK\cap(y,y+\delta_1/\beta)=\emptyset$ for each $y\in D_\beta$. Hence for each pair of $x,\, y\in \cK$ with $\abs{x-y}<\delta_1/\beta$, we have $(x,y)\cap D_\beta=\emptyset$ and $U_\beta(x)-U_\beta(y)=\beta(x-y)$. So $U_\beta|_{\cK}$ is continuous and $\mu$ can be seen as an invariant measure for $(\cK,U_\beta|_{\cK})$. Therefore, $U_\beta(\cK)=\cK$ 
	(\cite[p.~156]{Ak93}). 
	
	Assume that $1 \notin \cK$ and denote $\delta_2\=d(\cK,1)>0$. By Proposition~\ref{p_relation_T_beta_and_wt_T_beta}~(iii), $\mu\in \MMM(I,T_\beta)$. 
	By (\ref{e_def_T_beta}), for each $y\in D_\beta$, we obtain
	$
		\mu((y-\delta_2/\beta,y)\cap I)\leq \mu\bigl(T_\beta^{-1}(1-\delta_2, 1)\bigr)=\mu((1-\delta_2, 1))=0$.
    So $\cK\cap(y-\delta_2/\beta,y)=\emptyset$ for each $y\in D_\beta$. Hence for each pair of $x,\, y\in \cK$ with $\abs{x-y}<\delta_2/\beta$, we have $(x,y)\cap D_\beta=\emptyset$ and $T_\beta(x)-T_\beta(y)=\beta(x-y)$. So $T_\beta|_{\cK}$ is continuous and  $\mu$ can be seen as an invariant measure for $(\cK,T_\beta|_{\cK})$. Therefore, $T_\beta(\cK)=\cK$ (\cite[p.~156]{Ak93}). 
\end{proof}

\begin{proof}[\bf Proof of Proposition~\ref{2.11}]
	(i) follows from \cite[Theorem~3]{Par60} and Proposition~\ref{p_relation_of_coding}~\ref{p_relation_of_coding__viii}, while (ii) is exactly \cite[Lemma~4.4]{IT74}.
\end{proof}

\begin{proof}[\bf Proof of Lemma~\ref{l_apprioxiation_beta_shif_beta}]
	\smallskip
	(i) Assume that $1<\beta'<\beta$. By Proposition~\ref{p_relation_of_coding}~\ref{p_relation_of_coding__xiii} and (\ref{ix beta}), we have $\pi^*_{\beta'}(1)\prec \pi^*_\beta(1)$. By (\ref{e_def_S_beta}), $\cS_{\beta'}\subseteq \cS_\beta$.

	\smallskip
	(ii) Assume that $\pi^*_\beta(1) = a_1 a_2 \dots$. For each $n\in\N$, put $A_n \= a_1 \dots a_n 0 0 \dots$. By Proposition~\ref{p_relation_of_coding}~\ref{p_relation_of_coding__xiv}, we get that $\pi^*_\beta(1) = \lim_{\gamma \nearrow \beta} \pi^*_\gamma(1)$. Thus, for each $n\in \N$, there exists $\gamma_n \in (1,\beta)$ such that $ A_n\preceq \pi_{\gamma_n}^*(1)$. Fix arbitrary $B = b_1 b_2 \ldots\in \cS_\beta$. Put $B_n \= b_1 \dots b_n 0 0 \dots$ for each $n\in \N$. By (\ref{e_def_S_beta}), for each $k\in \N_0$ and $n\in \N$, we have $\sigma^k(B_n) \preceq A_n\preceq \pi_{\gamma_n}^*(1)$. Thus, $B_n \in \cS_{\gamma_n}$. Note that $\lim_{n\to +\infty} B_n = B$, so $B \in \overline{\bigcup_{\gamma\in(1,\beta)}\cS_\gamma}$. Since $B$ is chosen arbitrarily, we obtain from (i) that $\cS_\beta=\overline{\bigcup_{\gamma\in(1,\beta)}\cS_\gamma}$.
\end{proof}

\begin{proof}[\bf Proof of Lemma~\ref{H beta gamma}]
	If $\cK$ is a nonempty compact set with $1\notin \cK=T_\beta(\cK)$, then
	the largest point in $\cK$ is strictly smaller than 1.
	By Proposition~\ref{p_relation_of_coding}~\ref{p_relation_of_coding__vi},~\ref{p_relation_of_coding__xiv}, and (\ref{ix beta}), there exists $\beta' \in (1,\beta)$ such that $ \max \{\pi^*_\beta(x) : x\in \cK\}\prec \pi^*_{\beta'} (1) .$ Furthermore, by Proposition~\ref{p_relation_of_coding}~\ref{p_relation_of_coding__vi},~\ref{p_relation_of_coding__viii}, and the fact that $\pi^*_{\beta'}(1)\in X_\beta$ (see (\ref{e_def_S_beta}) and Lemma~\ref{l_apprioxiation_beta_shif_beta}~(i)), we have $\max \{\pi_\beta(x) : x\in \cK\}\preceq \pi^*_{\beta'} (1) .$ By Proposition~\ref{p_relation_of_coding}~\ref{p_relation_of_coding__iii}, we get $\sigma(\pi_\beta(\cK)) = \pi_\beta(\cK)$. So if $z\in \cK$ 
	then $\sigma^n(\pi_\beta(z)) \preceq \pi^*_{\beta'}(1)$ for all $n\in \N_0$, and thus by (\ref{e_def_S_beta}), $\pi_\beta(z)\in \cS_{\beta'}$. Hence, by the definition of $H_\beta^\gamma$ and Proposition~\ref{p_relation_of_coding}~\ref{p_relation_of_coding__iv}, we have $\cK = h_\beta(\pi_\beta(\cK)) \subseteq h_\beta(\cS_{\beta'}) = H_\beta^{\beta'}$. By (\ref{Hbetagamma}) and Lemma~\ref{l_apprioxiation_beta_shif_beta}~(i), we have $H_\beta^{\beta'} \subseteq H_\beta^{\gamma}$ for each $\gamma \in (\beta', \beta)$. Hence, $\cK \subseteq H_\beta^\gamma$ for each $\gamma \in (\beta', \beta)$.
	
	Now let $\cK^*$ be a nonempty compact set with $1 \notin \cK^* = U_\beta(\cK^*)$. Applying Proposition~\ref{p_relation_T_beta_and_wt_T_beta}~(i), we have $\cK^* \cap D_\beta =\emptyset$, so $T_\beta(x) = U_\beta(x)$ for each $x\in \cK^*$. Thus, $T_\beta(\cK^*)=\cK^*$. Therefore, there exists $\beta' \in (1,\beta)$ such that $\cK^* \subseteq H_\beta^\gamma$ for each $\gamma \in  (\beta', \beta)$.
\end{proof}

\begin{proof}[\bf Proof of Lemma~\ref{l_bi_lipschitz_S_gamma_H_gamma}]
	Define $\delta\=d\bigl(H_\beta^\gamma,1\bigr)$. By Proposition~\ref{p_relation_of_coding}~\ref{p_relation_of_coding__i} and (\ref{ix beta}) and~\ref{p_relation_of_coding__xiii}, we have $\pi_\gamma^*(1)\prec \pi_\beta^*(1)\preceq \pi_\beta(1)$. Hence $\pi_\beta(1),\,\pi_\beta^*(1)\notin \cS_\gamma$  (see (\ref{e_def_S_beta})). So by Proposition~\ref{p_relation_of_coding}~\ref{p_relation_of_coding__x},~\ref{p_relation_of_coding__xi}, and (\ref{Hbetagamma}), we have $1\notin H_\beta^\gamma$ and $0<\delta \leq 1$.
	
	Assume that $x,y\in H_\beta^\gamma$ satisfy $d_\beta(\pi_\beta(x),\pi_\beta(y))=\beta^{-n}$ and $x<y$, then
	\begin{equation*}
		\pi_\beta(x)=a_1\dots a_{n-1}b_n b_{n+1}\dots,\quad\pi_\beta(y)=a_1\dots a_{n-1}c_n c_{n+1}\dots,
	\end{equation*}
	where $b_n< c_n$. Then by Proposition~\ref{p_relation_of_coding}~\ref{p_relation_of_coding__iv} and the definition of $h_\beta$, 
	\begin{equation}\label{dxy}
     \begin{aligned}
		d(x,y)
        &= d(h_\beta(\pi_\beta(x)), h_\beta(\pi_\beta(y))) \\
        &= \beta^{-n+1} d(h_\beta(b_n b_{n+1} \dots), h_\beta(c_n c_{n+1} \dots)) 
        \le \beta^{-n+1}. 
     \end{aligned}
	\end{equation} 
	Moreover, by the definition of $h_\beta$ and $H_\beta^\gamma$, we have 
	\begin{align*}
		h_\beta(b_n b_{n+1} \dots) &= (b_n  + h_\beta(b_{n+1} \dots))/\beta \le (b_n+1-\delta)/\beta        \quad \text{ and} \\
		h_\beta(c_n c_{n+1} \dots) &= (c_n  + h_\beta(c_{n+1} \dots))/\beta \ge c_n/\beta \ge (b_n+1)/\beta.
	\end{align*}
	So we have $d(h_\beta(b_n b_{n+1} \dots), h_\beta(c_n c_{n+1} \dots)) \ge \delta/\beta$. Thus, by (\ref{dxy}), we have 
	\begin{equation}\label{dxy<}
		d(x,y)\geq \delta\beta^{-n}=\delta d_\beta(\pi_\beta(x),\pi_\beta(y)).
	\end{equation}	
	Let us write $C\=\max\{\beta,\, 1/\delta\}\geq 1$. Combining (\ref{dxy}) and (\ref{dxy<}), we have
	\begin{equation*}
		C^{-1}d(x,y)\leq d_\beta(\pi_\beta(x),\pi_\beta(y))\leq C d(x,y).\qedhere
	\end{equation*}
\end{proof}

\begin{proof}[\bf Proof of Lemma~\ref{l_property_H_gamma}]
	By (\ref{e_def_S_beta}), $\cS_\gamma$ is closed and $\sigma(\cS_\gamma)\subseteq \cS_\gamma$. By Proposition~\ref{p_relation_of_coding}~\ref{p_relation_of_coding__iv}, (\ref{Hbetagamma}), and Lemma~\ref{l_bi_lipschitz_S_gamma_H_gamma}, $\pi_\beta|_{H_\beta^\gamma}$ is bi-Lipschitz with inverse $h_\beta|_{\cS_\gamma}$ and $H_\beta^\gamma$ is closed. By  Proposition~\ref{p_relation_of_coding}~\ref{p_relation_of_coding__iii}, $T_\beta \bigl( H_\beta^\gamma \bigr)\subseteq H_\beta^\gamma$. Since $T_\beta|_{H_\beta^\gamma} =h_\beta|_{\cS_\gamma}  \circ \sigma|_{\cS_\gamma} \circ \pi_\beta|_{H_\beta^\gamma}$ (see Proposition~\ref{p_relation_of_coding}~\ref{p_relation_of_coding__iii} and~\ref{p_relation_of_coding__v}), we obtain (i).
	
	\smallskip
	(ii) follows from Proposition~\ref{p_relation_T_beta_and_wt_T_beta}~(vi).
	
	\smallskip
	To verify (iii), assume that $\gamma$ is a simple beta-number. Then $(\cS_\gamma,\sigma)$ is a subshift of finite type (see e.g.~\cite[Proposition~4.1]{Bl89}), and therefore $\sigma|_{\cS_\gamma}$ is open (see e.g.~\cite[Theorem~3.2.12]{URM22}).
	By Lemma~\ref{l_bi_lipschitz_S_gamma_H_gamma} and Proposition~\ref{p_relation_of_coding}~\ref{p_relation_of_coding__iv}, we know that $\pi_\beta|_{H_\beta^\gamma}$ and $h_\beta|_{\cS_\gamma}$ are homeomorphisms. It follows that $T_\beta|_{H_\beta^\gamma}=h_\beta|_{\cS_\gamma} \circ \sigma|_{\cS_\gamma} \circ \pi_\beta|_{H_\beta^\gamma}$ is open. 
\end{proof}

	\begin{proof}[\bf Proof of Proposition~\ref{p_cylinders}]
		Denote $\pi^*_\beta(1) = a_1 a_2 \dots$ in this proof. Recall that $(\pi_\beta \circ T_\beta)(x) =( \sigma \circ \pi_\beta)(x)$ and $(h_\beta \circ \pi_\beta)(x) = x$ for each $x\in I$ (see Proposition~\ref{p_relation_of_coding}~\ref{p_relation_of_coding__iii} and~\ref{p_relation_of_coding__iv}). By (\ref{e_def_n_cylinders}) and Proposition~\ref{p_relation_of_coding}~\ref{p_relation_of_coding__vi}, $x\in I^n$ if and only if
		\begin{equation}\label{e_equi_def_n_cylinders}
			\myepsilon_{1}\myepsilon_{2}\dots \myepsilon_n (0)^\infty\preceq \pi_\beta(x)\prec \myepsilon_{1}\myepsilon_{2}\dots (\myepsilon_n+1) (0)^\infty.
		\end{equation}
		
		\smallskip
		(i) By (\ref{e_def_n_cylinders}) and Proposition~\ref{p_relation_of_coding}~\ref{p_relation_of_coding__vi}, $I^n_1\cap I^n_2=\emptyset$ for each $I^n_1,\, I^n_2\in W^n$ with $I^n_1\neq I^n_2$. For each $x\in [0,1)$, assume $\pi_\beta(x)=x_1 x_2\dots x_n\dots$. Then $x\in I(x_1,\dots,x_n)$. So $[0,1) = \bigcup_{J^n \in W^n} J^n$. 
		
		\smallskip
		(ii) For arbitrary $x, \, y \in I^n$, assume that $\pi_\beta(x) = \myepsilon_1  \dots  \myepsilon_n  x_1  x_2  \dots$ and $\pi_\beta(y) = \myepsilon_1  \dots  \myepsilon_n  y_1  y_2  \dots$. So 
        $$y-x = h_\beta(\pi_\beta(y)) - h_\beta(\pi_\beta(x)) = \beta^{-n}\sum_{i=1}^{+\infty} \beta^{-i} (y_i-x_i)$$ and 
		$
			T_\beta^m(y) - T_\beta^m(x)
			= h_\beta (\sigma^m (\pi_\beta(y))) - h_\beta (\sigma^m (\pi_\beta(x))) 
			= \beta^{m-n}\sum_{i=1}^{+\infty} \beta^{-i} (y_i-x_i) 
			= \beta^m(y-x)$,
		as required.
		
		\smallskip
		(iii) Fix arbitrary $b \in \{0, \, \dots, \, \myepsilon_n-1\}$. Since $(\ve_1,  \dots,  \ve_n)$ is admissible, by Proposition~\ref{2.11}~(i), there exist $y_b, \, z_b$ with $\pi_\beta(y_b) = \ve_1 \ve_2 \dots \ve_{n-1} b (0)^\infty$ and $\pi_\beta(z_b) = \ve_1 \ve_2 \dots \ve_{n-1} (b+1) (0)^\infty$. Since $\pi_\beta$ is strictly increasing (see Proposition~\ref{p_relation_of_coding}~\ref{p_relation_of_coding__vi}), using (\ref{e_equi_def_n_cylinders}), $x\in I(\ve_1,  \dots,  \ve_{n-1}, b)$ if and only if $y_b\le x  < z_b$. Now using (\ref{hbeta}), $\diam I(\myepsilon_1,\dots,\myepsilon_{n-1},b)= z_b-y_b = h_\beta(\pi_\beta(z_b)) -h_\beta (\pi_\beta (y_b)) = \beta^{-n}$. Moreover, $T_\beta^n(I(\myepsilon_1,\dots,\myepsilon_{n-1},b))=[0,1)$ by (ii). Therefore $I(\myepsilon_1,\dots,\myepsilon_{n-1},b)\in W_0^n$ with the right endpoint $z_b = h_\beta (\ve_1 \dots \ve_{n-1} (b+1) (0)^\infty) = (b+1)\beta^{-n} + \sum_{i=1}^{n-1}\myepsilon_i \beta^{-i}$.   
		
		\smallskip
		(iv) By (ii), $T_{\beta}^n|_{I^n}$ is continuous and increasing. Write $I^n = [x,y)$ and assume $y<1$. As $T_\beta(I^n) = [0,1)$, so $T_\beta(x) =0 \le x$ and $\lim_{z\nearrow y} T_\beta(z) = 1>y$. By the intermediate value theorem, $T_{\beta}^n$ has a fixed point in $I^n$.
		
		\smallskip
		(v) 
		Denote
		\begin{equation*}
			m\=\max(\{j\in\N:\myepsilon_{n-j+i} = a_i\text{ for all }1\leq i\leq j \}\cup\{0\} ).
		\end{equation*} 
		Let $y$ be the left endpoint of $I^n$ and $z\=h_\beta(A)$ with			 
		\begin{equation}\label{e_max_I^n}
			A \= \myepsilon_1  \dots  \myepsilon_{n-m}\pi_\beta^*(1) 
               = \myepsilon_1  \dots  \myepsilon_n  a_{m+1}  a_{m+2}  \dots.
		\end{equation}		
		We first check that $A \in \cS_\beta$. Fix $k \in \N_0$ arbitrarily. If $k<n-m$, then $\myepsilon_{k+1}  \dots \myepsilon_n (0)^\infty \prec a_1  \dots  a_{n-k} (0)^\infty$ by the maximality of $m$ and Proposition~\ref{p_relation_of_coding}~\ref{p_relation_of_coding__vi}, and therefore $\sigma^k(A) \prec \pi^*_\beta(1)$. If $k \ge n-m$, then $\sigma^k(A) = \sigma^{k-(n-m)}\bigl(\pi^*_\beta(1)\bigr) \preceq \pi^*_\beta(1)$ by (\ref{e_max_I^n}) and Proposition~\ref{p_relation_of_coding}~\ref{p_relation_of_coding__vi}. We obtain $A \in \cS_\beta$ by (\ref{e_def_S_beta}). Since $h_\beta(A)=z$ and $A=\myepsilon_1  \dots  \myepsilon_{n-m}\pi_\beta^*(1)$, by Proposition~\ref{p_relation_of_coding}~\ref{p_relation_of_coding__ii} and~\ref{p_relation_of_coding__xi}, $A=\pi_\beta^*(z)$.
		
		Consider arbitrary $x\in I^n$, then $\pi_\beta(x)\in \cS_\beta$ by Definition~\ref{beta shifts}. So $\sigma^{n-m}(\pi_\beta(x)) \preceq \pi_\beta^*(1) = \sigma^{n-m}(A)$ by Proposition~\ref{2.11}~(ii) and (\ref{e_max_I^n}). Combining this with the fact that the first $n-m$ terms of $\pi_\beta(x)$ and $A$ coincide (see (\ref{e_equi_def_n_cylinders})), we get $\pi_\beta(x) \preceq A$. Since $h_\beta$ is nondecreasing (see Proposition~\ref{p_relation_of_coding}~\ref{p_relation_of_coding__x}), $x\leq z=h_\beta(A)$ for all $x\in I^n$ by Proposition~\ref{p_relation_of_coding}~\ref{p_relation_of_coding__iv}. 
		Consider arbitrary $w\in[y,z)$. By Proposition~\ref{p_relation_of_coding}~\ref{p_relation_of_coding__vii} and~\ref{p_relation_of_coding__ii}, $\pi_\beta(w)\preceq \pi_\beta^*(z)=A\preceq \pi_\beta(z)$. Hence $w\in I^n$ for all $w\in[y,z)$ by (\ref{e_equi_def_n_cylinders}) and (\ref{e_max_I^n}).
		
		By our discussion above, $z$ is the right endpoint of $I^n$. Moreover, by (ii) and (\ref{left_endpoint}), $T_{\beta}^n(I^n)$ is a left closed and right open interval with left endpoint $0$ and right endpoint $\beta^n(z-y)=h_\beta\bigl(\sigma^m \bigl(\pi^*_\beta(1)\bigr)\bigr) = U_\beta^m(1)$ (see Proposition~\ref{p_relation_of_coding}~\ref{p_relation_of_coding__iii} and~\ref{p_relation_of_coding__iv} for the last equality).
	\end{proof}

\begin{proof}[\bf Proof of Lemma~\ref{l_Distortion}]
	By Proposition~\ref{p_cylinders}~(ii), we have
    \begin{equation*}
\begin{split}
    |S_{n}\phi(x)-S_{n}\phi(y)|
    &\leq \Hseminorm{\alpha}{\phi} \sum_{i=0}^{n-1} \Absbig{T^i_{\beta}(x)-T^i_{\beta}(y)}^\alpha \\
    &= \Hseminorm{\alpha}{\phi} \sum_{i=0}^{n-1} \Absbig{T_{\beta}^n(x)-T_{\beta}^n(y)}^\alpha \beta^{-(n-i)\alpha} \\
    &\leq \Hseminorm{\alpha}{\phi} \Absbig{T_{\beta}^n(x)-T_{\beta}^n(y)}^{\alpha} \big/ (\beta^\alpha-1).\qedhere
\end{split}
\end{equation*}
\end{proof}

\begin{proof}[\bf Proof of Lemma~\ref{l_properties_z_beta}]
(i) Fix an arbitrary $x\in I$. By (\ref{e_def_Z_beta}), Proposition~\ref{p_relation_of_coding}~\ref{p_relation_of_coding__ii}, and Lemma~\ref{l_equivalent_def_pi_*}, $x\in Z_\beta$ if and only if there exists $n\in\N$ and $z_1,\dots,z_n\in \N$ with $z_n>0$ such that
$
	\pi_\beta(x)=z_1\dots z_n (0)^\infty$.
By Proposition~\ref{p_relation_of_coding}~\ref{p_relation_of_coding__iv} and~\ref{p_relation_of_coding__v}, $T_\beta^n(x)=\bigl(T_\beta^n\circ h_\beta\circ\pi_\beta\bigr)(x)=(h_\beta\circ \sigma^n\circ\pi_\beta)(x)$. So the condition above is equivalent to the condition that there exists $n\in\N$ such that $x\in T_\beta^{-n}(0)\smallsetminus T_\beta^{-(n-1)}(0)$. Hence
\begin{equation*}
	Z_\beta=\bigcup_{n\in\N}\bigl(T_\beta^{-n}(0)\smallsetminus T_\beta^{-(n-1)}(0)\bigr)=\Bigl(\bigcup_{n\in\N}T_\beta^{-n}(0)\Bigr)\smallsetminus\{0\}.
\end{equation*}
Similarly, we can prove that $Z_\beta=\bigcup_{n\in\N}U_\beta^{-n}(1)$. In particular, by Proposition~\ref{p_relation_T_beta_and_wt_T_beta}~(i), $D_\beta=U_\beta^{-1}(1)\subseteq Z_\beta$.

 (ii) Fix an arbitrary $W\subseteq I$. By Proposition~\ref{p_relation_of_coding}~\ref{p_relation_of_coding__xi}, we have $h_\beta^{-1}(W)=\pi_\beta(W)\cup\pi_\beta^*(W)$. By (\ref{e_def_Z_beta}), 
\begin{equation*}
 \begin{aligned}
 	h_\beta^{-1}(W)
 	&=\pi^*_\beta(W)\cup\pi_\beta(W)
 	=\pi_\beta^*(W)\cup(\pi_\beta(W\smallsetminus Z_\beta)\cup\pi_\beta(W\cap Z_\beta))\\
 	&=\bigl(\pi_\beta^*(W)\cup\pi^*_\beta(W\smallsetminus Z_\beta)\bigr)\cup\pi_\beta(W\cap Z_\beta)
 	=\pi_\beta^*(W)\cup\pi_\beta(W\cap Z_\beta).
 \end{aligned}
\end{equation*}	

For each $A\in \pi_\beta(Z_\beta)$, $A$ has finitely many nonzero terms and $(0)^\infty\notin \pi_\beta(Z_\beta)$  (see Proposition~\ref{p_relation_of_coding}~\ref{p_relation_of_coding__ii} and Lemma~\ref{l_equivalent_def_pi_*}). By Proposition~\ref{p_relation_of_coding}~\ref{p_relation_of_coding__i},~\ref{p_relation_of_coding__ii}, and Lemma~\ref{l_equivalent_def_pi_*}, $\pi_\beta^*(x)$ has infinitely many nonzero terms for all $x\in(0,1]$ and $\pi_\beta^*(0)=(0)^\infty$. The second part of (ii) follows.

\smallskip
(iii) By Proposition~\ref{p_relation_of_coding}~\ref{p_relation_of_coding__iv} and~\ref{p_relation_of_coding__v}, for each $x\in I$, 
$			T_\beta^n(x)=\bigl(T_\beta^n\circ h_\beta\circ\pi_\beta\bigr)(x)=(h_\beta\circ \sigma^n\circ\pi_\beta)(x)$ and $
			U_\beta^n(x)=\bigl(U_\beta^n\circ h_\beta\circ\pi^*_\beta\bigr)(x)=\bigl(h_\beta\circ \sigma^n\circ\pi^*_\beta\bigr)(x)$,
so these, together with (\ref{e_def_Z_beta}), give (iii).

(iv) Assume that $x\in (0,1]\smallsetminus Z_\beta$, so that $\pi_\beta(x)=\pi_\beta^*(x)$ by (\ref{e_def_Z_beta}). 
From the fact that
$\pi_\beta^*$ is left-continuous, the fact that $\pi_\beta^*(y)\preceq \pi_\beta(y)$ for all $y\in I$, and the fact that $\pi_\beta$ is strictly increasing (see Proposition~\ref{p_relation_of_coding}~\ref{p_relation_of_coding__ix},~\ref{p_relation_of_coding__ii}, and~\ref{p_relation_of_coding__vi}), we have 
$
	\pi_\beta^*(x)=\lim_{y\nearrow x}\pi^*_\beta(y)\preceq \lim_{y\nearrow x}\pi_\beta(y)\preceq \pi_\beta(x)$.
So $\lim_{y\nearrow x}\pi_\beta(y)= \pi_\beta(x)$. 
Combining this with the fact that $\pi_\beta$ is right-continuous on $[0,1)$, 
and that $x\in (0,1]\smallsetminus Z_\beta$ was arbitrary, we see that $\pi_\beta$ is continuous on $I\smallsetminus Z_\beta$.
The fact that $\pi_\beta^*$ is continuous on $I\smallsetminus Z_\beta$ can be proved similarly.

(v) If $x \in Z_\beta$ then $(0)^\infty \in \cO^\sigma(\pi_\beta(x))$ 
by Proposition~\ref{p_relation_of_coding}~\ref{p_relation_of_coding__ii} and Lemma~\ref{l_equivalent_def_pi_*}.
So 
$
	\pi_\beta (Z_{\beta}) \subseteq \bigl(\bigcup_{n=1}^{+\infty} \sigma^{-n}((0)^\infty)\bigr) 
	\smallsetminus \{ (0)^\infty\}$.
If $\mu\in \MMM(X_\beta,\sigma)$ and $n \in \N$, then $\mu(\sigma^{-n}((0)^\infty)) = \mu(\{(0)^\infty\})$ and $(0)^\infty\in \sigma^{-n}((0)^\infty)$. 
This implies that 
$\mu \bigl( \bigl( \bigcup_{n=1}^{+\infty} \sigma^{-n}((0)^\infty)\bigr) \smallsetminus \{(0)^\infty\} \bigr) =0$,
and therefore $\mu(\pi_\beta (Z_{\beta}))=0$. 
\end{proof}

\begin{proof}[\bf Proof of Proposition~\ref{p_coding_mpe_relation}]
	(i)	By Proposition~\ref{p_relation_of_coding}~\ref{p_relation_of_coding__iii}, $\sigma \bigl( \pi^*_\beta(I) \bigr) \subseteq \pi^*_\beta(I)$. By Lemma~\ref{l_properties_z_beta}~(ii) applied to $W=I$, we have $X_\beta = \pi^*_\beta(I) \cup \pi_\beta(Z_\beta)$. Thus, by Lemma~\ref{l_properties_z_beta}~(v), we have that $\MMM\bigl(\pi_\beta^*(I),\sigma\bigr)$ can be naturally identified with $\MMM(X_\beta,\sigma)$. More precisely, $\mu(\,\cdot\,) \mapsto \mu\bigl(\,\cdot \, \cap \pi^*_\beta(I)\bigr)$ is a bijection from $\cM\bigl(\pi^*_\beta(I), \sigma\bigl)$ to $\cM(X_\beta, \sigma) .$ So $H_\beta$ can be seen as the pushforward of $h_\beta|_{\pi_\beta^*(I)}$ from $\cM\bigl(\pi^*_\beta(I), \sigma\bigr)$ to $\cP(I)$. Proposition~\ref{p_relation_of_coding}~\ref{p_relation_of_coding__v} implies that $H_\beta(\MMM(X_\beta,\sigma))\subseteq \MMM(I,U_\beta)$.
	
	For each $\mu\in \MMM(I, U_\beta)$ and each Borel measurable subset $Y\subseteq X_\beta$, by (\ref{e_def_G}) and Proposition~\ref{p_relation_of_coding}~\ref{p_relation_of_coding__iv}, we have
	\begin{equation}\label{e_property_G}
		G_\beta(\mu)(Y)=\mu\bigl(\bigl(\pi_\beta^*\bigr)^{-1}(Y)\bigr)=\mu\bigl(\bigl(\pi_\beta^*\bigr)^{-1}(Y\cap \pi_\beta^*(I))\bigr)=\mu\bigl(h_\beta\bigl(Y\cap \pi_\beta^*(I)\bigr)\bigr).
	\end{equation}
	Hence we derive that $(H_\beta\circ G_\beta)(\mu)=\mu$ for all $\mu \in \MMM(I,U_\beta)$. More precisely, for each Borel measurable subset $W\subseteq I$, by (\ref{e_def_H}), (\ref{e_property_G}), Lemma~\ref{l_properties_z_beta}~(ii), and Proposition~\ref{p_relation_of_coding}~\ref{p_relation_of_coding__iv},
	\begin{equation*}
		\begin{aligned}
			(H_\beta\circ G_\beta)(\mu)(W)
            &=G_\beta(\mu)\bigl(h_\beta^{-1}(W)\bigr)
			=\mu\bigl(h_\beta \bigl(h_\beta^{-1}(W)\cap \pi_\beta^*(I)\bigr)\bigr)\\
			&= \mu\bigl(h_\beta \bigl(\bigl(\pi_\beta^*(W)\cup \pi_\beta(W\cap Z_\beta)\bigr)\cap \pi_\beta^*(I)\bigr)\bigr)\\
			&=\mu\bigl(h_\beta \bigl(\bigl(\pi_\beta^*(W)\cap \pi_\beta^*(I)\bigr)\cup\bigl(\pi_\beta(W\cap Z_\beta)\cap \pi_\beta^*(I)\bigr)\bigr)\bigr)\\
			&=\mu\bigl(h_\beta\bigl(\pi_\beta^*(W)\bigr)\bigr)
			=\mu(W) .
		\end{aligned}
	\end{equation*}
	For each $\nu\in \MMM(X_\beta, \sigma)$ and each Borel measurable subset $W\subseteq I$, by (\ref{e_def_H}) and  Lemma~\ref{l_properties_z_beta}~(ii) and (v), we have
	\begin{equation}\label{e_property_H}
		H_\beta(\nu)(W)=\nu\bigl(h_\beta^{-1}(W)\bigr)=\nu\bigl(\pi_\beta^*(W)\cup \pi_\beta(W\cap Z_\beta)\bigr)=\nu\bigl(\pi_\beta^*(W)\bigr).
	\end{equation}
	Hence we derive that $(G_\beta\circ H_\beta)(\nu)=\nu$ for all $\nu \in \MMM(X_\beta,\sigma)$. More precisely, for each Borel measurable subset $Y\subseteq X_\beta$, by (\ref{e_def_G}) and (\ref{e_property_H}),
	\begin{equation*}
		(G_\beta\circ H_\beta)(\nu)(Y)=H_\beta(\nu)\bigl(\bigl(\pi_\beta^*\bigr)^{-1}(Y)\bigr)=\nu\bigl(\pi_\beta^* \bigl(\bigl(\pi_\beta^*\bigr)^{-1}(Y)\bigr)\bigr)=\nu(Y).
	\end{equation*} 
	By the above, $H_\beta$ is a bijection from $\MMM(X_\beta,\sigma)$ to $\MMM(I,U_\beta)$. Moreover, by Proposition~\ref{p_relation_of_coding}~\ref{p_relation_of_coding__x}, $H_\beta$ is continuous from $\MMM(X_\beta,\sigma)$ to $\PPP(I)$.
		
	The weak$^*$ compactness of $\MMM(X_\beta,\sigma)$ follows immediately from the compactness of $X_\beta$ and the continuity of $\sigma$. 
	By \cite[Theorem~6.4]{Wal82}, the set of probability measures on $I$ is Hausdorff in the weak$^*$ topology, hence $\MMM(I,U_\beta)$ is Hausdorff, and therefore $H_\beta$ is a homeomorphism from $\MMM(X_\beta,\sigma)$ to $\MMM(I,U_\beta)$, with $G_\beta^{-1}=H_\beta$.

	\smallskip 
	(ii) follows immediately from (i), and the weak$^*$ compactness of $\MMM(X_\beta,\sigma)$. 
	
	\smallskip 
	(iii) Since $G_\beta$ is the pushforward of $\pi_\beta^*$, for each $\mu\in \MMM(X_\beta, \sigma)$, by Proposition~\ref{p_relation_of_coding}~\ref{p_relation_of_coding__iv} and statement~(i), we have
	\begin{equation*}
		\int_I\!\phi \,\mathrm{d} H_\beta(\mu)
		=\int_I\!\bigl(\phi\circ h_\beta\circ \pi_\beta^*\bigr) \,\mathrm{d} H_\beta(\mu)
		=\int_{X_\beta}\!(\phi\circ h_\beta) \,\mathrm{d} (G_\beta\circ H_\beta)\mu
        =\int_{X_\beta}\!(\phi\circ h_\beta) \,\mathrm{d} \mu.
	\end{equation*}
	By (i), we obtain the required identities
	\begin{equation*}
		\mpe(U_\beta,\phi)
        =\mpe \bigl(\sigma|_{X_\beta},\phi\circ h_\beta \bigr)
        \quad \text{ and } \quad 
        \MMM_{\max}( U_\beta, \phi )
        =H_\beta \bigl(\MMM_{\max}\bigl(\sigma|_{X_\beta},\phi\circ h_\beta\bigr)\bigr) .
	\end{equation*}
	Since $\MMM(X_\beta,\sigma)$ is weak$^*$ compact and $h_\beta$ is continuous (see Proposition~\ref{p_relation_of_coding}~\ref{p_relation_of_coding__x}),  $\MMM_{\max}(\sigma|_{X_\beta},\phi\circ h_\beta)$ is nonempty.

    \smallskip
    (iv) First note that $\pi_\beta(0) = \pi_\beta^*(0) = (0)^\infty$ (see Lemma~\ref{l_equivalent_def_pi_*}). 
    Next, by Proposition~\ref{p_relation_of_coding}~(ii), any $\underline{a} \in \pi_\beta(I) \smallsetminus \pi_\beta^*(I)$ is of the form $\underline{a} = a_1 a_2 \dots a_n (0)^{\infty}$ with $a_n>0$, and therefore not a $\sigma$-periodic point. Applying Lemma~\ref{l_properties_z_beta}~(ii) in the case $W= I$ gives 
    \begin{equation}\label{X_beta_subset}
    X_\beta = h_\beta^{-1}(I) \subseteq \pi_\beta(I) \cup \pi_\beta^*(I).
    \end{equation}
    Any $(X_\beta,\sigma)$-periodic orbit $\cO$ cannot intersect
    $\pi_\beta(I) \smallsetminus \pi_\beta^*(I)$, as noted above,
    therefore
    (\ref{X_beta_subset}) implies that
    $\cO \subseteq \pi_\beta^*(I)$. So the fact that
    $h_\beta\circ\sigma=U_\beta\circ h_\beta$ on $\pi_\beta^*(I)$
(by Proposition~\ref{p_relation_of_coding}~(v)) implies that $h_\beta(\cO)$ is an $(I,U_\beta)$-orbit and that $U_\beta(h_\beta(\cO)) = h_\beta(\cO)$. So $h_\beta(\cO)$ is an $(I,U_\beta)$-periodic orbit.
    
    To prove that $\card \cO = \card h_\beta(\cO)$ it suffices to show that $h_\beta$ is injective on $\cO$. Indeed, if $h_\beta$ were not injective on $\cO$, then by Proposition~\ref{p_relation_of_coding}~(ii) and (xi), there would exist a point in $\cO$ of the form $z_1 z_2 \dots z_n (0)^\infty$ for $z_n>0$, and this is not a $\sigma$-periodic point, a contradiction.
\end{proof}

\begin{proof}[\bf Proof of Proposition~\ref{mpe=}]
	Suppose $\beta$ is not a simple beta-number. Then 
	$\MMM(I, T_\beta)=\MMM(I,U_\beta)$ is weak$^*$ compact
	(see Proposition~\ref{p_relation_T_beta_and_wt_T_beta}~(iv) and Proposition~\ref{p_coding_mpe_relation}~(ii)). So (i) holds, and both (ii) and (iii) follow immediately from (i).
	
	If $\beta$ is a simple beta-number,
	by Proposition~\ref{p_relation_T_beta_and_wt_T_beta}~(v), it suffices to prove that $\mu_{\cO_\beta^*(1)}$ is contained in the weak$^*$ closure of $\MMM(I, T_\beta)$.
	Note that $\cO^*_\beta(1)$ and $\cO^\sigma\bigl(\pi_\beta^*(1)\bigr)$ are periodic orbits of $U_\beta$ and $\sigma$, respectively.
	By \cite[p.~249]{Si76}, the
	periodic measures 
	are weak$^*$ dense in $\MMM(X_\beta,\sigma)$, so
	there exists a sequence of periodic orbits $\{\cO_n\}$ of $(X_\beta,\sigma)$ satisfying: 
	
	(a) $\cO_n\neq\cO^\sigma(\pi_\beta^*(1))$ for all $n\in \N$, and
	
	(b) $\mu_{\cO_n}$ converges to $\mu_{\cO^\sigma(\pi_\beta^*(1))}$ in the weak$^*$ topology as $n$ tends to $+\infty$. 
	
	By Proposition~\ref{p_coding_mpe_relation}~(iv), $ \{ h_\beta(\cO_n) \}_{n\in \N}$ are $U_\beta$-periodic orbits. Note that $h_\beta \bigl(\cO^\sigma\bigl(\pi^*_\beta(1)\bigr)\bigr) = \cO_\beta^*(1)$, so that by Proposition~\ref{p_coding_mpe_relation}~(i), $H_\beta(\mu_{\cO_n})=\mu_{h_\beta(\cO_n)}$ converges to $H_\beta\bigl(\mu_{\cO^\sigma(\pi_\beta^*(1))}\bigr)=\mu_{\cO_\beta^*(1)}$ in the weak$^*$ topology, and $h_\beta(\cO_n) \neq \cO_\beta^*(1)$ for each $n\in \N$.
	But $H_\beta(\mu_{\cO_n})\in  \MMM(I,T_\beta)$ for each $n\in\N$,
	by Proposition~\ref{p_relation_T_beta_and_wt_T_beta}~(iii), 
	so (i) follows.	Both (ii) and (iii) follow immediately from (i).
\end{proof}

\end{document}